\def\beq{\begin{equation}}
\def\eeq{\end{equation}}
\theoremstyle{plain}
\newtheorem{theorem}[subsubsection]{Theorem}
\newtheorem*{theorem*}{Theorem}
\newtheorem{proposition}[subsubsection]{Proposition}
\newtheorem*{proposition*}{Proposition}
\newtheorem{lemma}[subsubsection]{Lemma}
\newtheorem*{lemma*}{Lemma}
\newtheorem*{fact*}{Fact}
\newtheorem{corollary}[subsubsection]{Corollary}
\newtheorem*{corollary*}{Corollary}
\theoremstyle{definition}
\newtheorem{definition}[subsubsection]{Definition}
\theoremstyle{remark}
\newtheorem{remark}[subsubsection]{Remark}
\newtheorem{example}[subsubsection]{Example}
\renewcommand{\comment}[1] {  }
\DeclareFontFamily{OT1}{rsfs}{}
\DeclareFontShape{OT1}{rsfs}{n}{it}{<-> rsfs10}{}
\DeclareMathAlphabet{\mathscr}{OT1}{rsfs}{n}{it}
\newcommand{\from}{\leftarrow}
\newcommand{\codim}{\mathrm{codim}}
\newcommand{\Res}{\mathrm{Res}}
\newcommand{\Z}{\mathbb{Z}}
\newcommand{\adele}{\mathbb{A}_k}
\newcommand{\CC}{\mathbb{C}}
\newcommand{\RR}{\mathbb{R}}
 \newcommand{\xx}{\mathbf{x}}
\newcommand{\QQ}{\mathbb{Q}}
\newcommand{\Hom}{\operatorname{Hom}}
\newcommand{\Aut}{{\operatorname{Aut}}}
\newcommand{\Gm}{\mathbb{G}_m}
\newcommand{\Ga}{\mathbb{G}_a}
\newcommand{\GL}{\operatorname{GL}}
\newcommand{\PGL}{\operatorname{PGL}}
\newcommand{\SL}{\operatorname{SL}}
\newcommand{\SO}{{\operatorname{SO}}}
\newcommand{\spec}{\operatorname{spec}}
\newcommand{\Vol}{\operatorname{Vol}}
\newcommand{\diag}{{\operatorname{diag}}}
\newcommand{\ev}{\operatorname{ev}}
\newcommand{\im}{\operatorname{im}}
\newcommand{\std}{{\operatorname{std}}}
\newcommand{\RTF}{{\operatorname{RTF}}}
\newcommand{\Ob}{{\operatorname{ob}}}
\newcommand{\conj}{{\textrm{\small{-conj}}}}
\newcommand{\ob}{{\operatorname{ob}}}
\newcommand{\Isom}{{\operatorname{Isom}}}
\newcommand{\pt}{{\operatorname{pt}}}
\newcommand{\naive}{{\operatorname{naive}}}
\newcommand{\ps}{{\operatorname{pre-stack}}}
\newcommand{\PresX}{\operatorname{Pres}_{\mathcal X}}
\newcommand{\cc}{\mathfrak c}
\newcommand{\red}[1]{#1}
\newcommand{\Ff}{\mathcal F}
\newcommand{\F}{{\mathfrak F}} 
\newcommand{\id}{\operatorname{id}}
\newcommand{\op}{{\operatorname{op}}}
\newcommand{\x}{\mathbf{x}}
\let\@cleartopmattertags\relax
\newcommand\articleend
  \let\authors\@empty
  \let\contribs\@empty
  \let\xcontribs\@empty
  \let\toccontribs\@empty
  \let\addresses\@empty
  \let\thankses\@empty
\let\@wraptoccontribs\wraptoccontribs
\begin{document}
\numberwithin{equation}{section}
\setcounter{tocdepth}{1}
\title{The Schwartz space of a smooth \linebreak semi-algebraic stack}
\author{Yiannis Sakellaridis}
\email{sakellar@rutgers.edu}
\address{Department of Mathematics and Computer Science, Rutgers University -- Newark, 101 Warren Street, Smith Hall 216, Newark, NJ 07102, USA. \medskip  \linebreak  and \medskip \linebreak
\hspace{1cm} Department of Mathematics,
School of Applied Mathematical and Physical Sciences,
National Technical University of Athens,
Heroon Polytechneiou 9,
Zografou 15780, Greece.}

\begin{abstract}
Schwartz functions, or measures, are defined on any smooth semi-algebraic (``Nash'') manifold, and are known to form a cosheaf for the semi-algebraic restricted topology. We extend this definition to smooth semi-algebraic stacks, which are defined as geometric stacks in the category of Nash manifolds. 

Moreover, when those are obtained from algebraic quotient stacks of the form $X/G$, with $X$ a smooth affine variety and $G$ a reductive group defined over a number field $k$, we define, whenever possible, an ``evaluation map'' at each semisimple $k$-point of the stack, without using truncation methods. This corresponds to a regularization of the sum of those orbital integrals whose semisimple part corresponds to the chosen $k$-point.

These evaluation maps produce, in principle, a distribution which generalizes the Arthur--Selberg trace formula and Jacquet's relative trace formula, although the former, and many instances of the latter, cannot actually be defined by the purely geometric methods of this paper. In any case, the stack-theoretic point of view provides an explanation for the pure inner forms that appear in many versions of the Langlands, and relative Langlands, conjectures.
\end{abstract}

   \subjclass{Primary 58H05; Secondary 22A22, 11F70}

   \keywords{stacks, Schwartz spaces, orbital integrals}

\maketitle

\begin{flushright}
 \emph{To Joseph Bernstein, \\ in admiration and gratitude.}
\end{flushright}

\tableofcontents

\section{Introduction} 

\subsection{Overview}

It is an insight of Joseph Bernstein \cite{B-stacks} that several problems in representation theory and, in particular, the Arthur--Selberg trace formula and its generalizations (such as Jacquet's relative trace formula) should be understood in terms of quotient stacks of the form $\mathcal X=X/G$, where $X$ is an affine variety and $G$ a reductive group. For the Arthur--Selberg trace formula this stack is the adjoint quotient of a reductive group; for the relative trace formula it can be a more general stack of the form $H_1\backslash G/H_2$ (specializing to the adjoint quotient of $H$ when $G=H\times H$ and $H_1=H_2=$the diagonal copy of $H$ in $G$).

One piece of evidence for the relevance of algebraic stacks is the appearance, in all nice formulations of the Langlands conjectures (and their relative variants), of pure inner forms of the groups that one originally wants to consider. Compare, for example, with the local Langlands conjectures as stated by Vogan \cite{Vogan}, or the local Gan--Gross--Prasad conjectures \cite{GGP}, which have now been proven in most cases \cite{W-GP, MW-GP, BP, GI, Atobe}. 

Eventually, one should have a local and a global Langlands conjecture attached to (at least) stacks of the form $(X\times X)/G^\diag$, where $X$ is some ``nice'' (e.g., spherical) $G$-variety. (In the group case $X=H$, $G=H\times H$, this specializes again to the adjoint quotient of $H$.) Both conjectures should describe, in terms of Langlands parameters, the ``spectral decomposition'' of a distinguished functional on the local and global Schwartz space of the stack. Locally, this functional should be, essentially, the $L^2$-inner product of two functions on $X$, while globally it should be the pertinent ``trace formula''. It is not my intention in this article to provide a formulation of such conjectures, but to start building the theoretical framework necessary for such a formulation.
It should also be mentioned that there is a more advanced version of the local Langlands conjecture due to Kaletha \cite{Kaletha} which includes non-pure inner forms. It is clear that one needs to go beyond the point of view of the present article to incorporate Kaletha's conjecture.

Let $\mathcal X$ denote a \emph{smooth} algebraic stack over a number field $k$ or a local, locally compact field $F$. For non-smooth stacks we \emph{do not know} the appropriate definitions of Schwartz spaces etc., even when the stack is a variety --- however, this is an important problem, cf.\ \cite{BNS}. The goal of this article is to define:

\begin{itemize}
\item locally, the \emph{Schwartz space} $\mathcal S(\mathcal X(F))$, generalizing the well-known Schwartz space of smooth, rapidly decaying (or of compact support, in the non-Archimedean case) measures on the $F$-points of a smooth variety;
\item globally, a notion of ``evaluation'' of a Schwartz function at a closed $k$-point $x:\spec k \to \mathcal X$, where the space of Schwartz functions is by definition a restricted tensor product of the local Schwartz spaces, divided by a Tamagawa measure. I do this under a certain set of restrictions on the algebraic stack.
\end{itemize}

For smooth stacks of the form $\mathcal X= X/G$, an equivalent definition of the local Schwartz space $\mathcal S(\mathcal X(F))$ was suggested to me by Joseph Bernstein: without loss of generality, $G\simeq \GL_n$, and then we can set
\begin{equation}\label{firstdef}
\mathcal S(\mathcal X(F)):= \mathcal S(X(F))_{G(F)},
\end{equation}
the coinvariant space of the space of Schwartz measures on $X(F)$. (For topological spaces, we define coinvariants by taking the quotient by the \emph{closure} of the span of vectors of the form $g\cdot v-v$; a more refined notion is provided by derived categories that we will encounter below.) The requirement $G\simeq \GL_n(F)$ is in order to have trivial Galois cohomology, i.e., so that the $F$-points of $X$ surject onto the $F$-points of $\mathcal X$ --- otherwise the definition would depend on the presentation of the stack. This explains the appearance of pure inner forms: for another presentation $\mathcal X = W/H$, where $H$ is an arbitrary reductive group, choose a faithful representation $H\hookrightarrow G=\GL_n$ and set $X=W\times^H G$. Then $\mathcal X= X/G = W/H$, but in terms of the quotient $W/H$, an $F$-point of $\mathcal X$ corresponds to a torsor $T$ (equivalently: a pure inner form) of $H$ over $F$, together with an $H$-equivariant map: $T\to W$. As we will see, the $G(F)$-coinvariants of $\mathcal S(X(F))$ can be identified with
$$ \bigoplus_{T} \mathcal S(W^T(F))_{H^T(F)},$$
where $T$ runs over all isomorphism classes of $T$-torsors, $W^T = W\times^H T$ and $H^T$ is the inner form $\Aut^H(T)$.

In this article I follow a more general approach for the definition of Schwartz spaces, which turns out to specialize to Bernstein's in the case of quotient stacks. To define the Schwartz space for a general smooth algebraic stack, I extend and exploit the following well-known fact:
$$\mbox{\emph{Schwartz spaces form a cosheaf.}}$$
This is true in the semi-algebraic (restricted) topology on the $F$-points of a smooth variety or, more generally, on a smooth semi-algebraic (``Nash'') manifold, and has been systematically exploited, e.g., in \cite{DuCloux, AGSchwartz}. It is, however, true in a stronger sense, namely: in the \emph{smooth} Grothendieck topology on the category of Nash manifolds. This is the observation that allows us to define spaces of Schwartz measures as a cosheaf on the smooth topology over a smooth semi-algebraic stack. 

These Schwartz spaces are nuclear Fr\'echet spaces in the Archimedean case, and the push-forward maps with respect to smooth morphisms of varieties are \emph{strict}, i.e., have closed image. Taking the functor of global sections over our stack $\mathcal X(F)$, we obtain a well-defined element $\mathcal S_\bullet(\mathcal X(F))$ in the derived category of nuclear Fr\'echet spaces. (For the notions of derived categories of non-abelian categories see Appendix \ref{app:homology}.) For a smooth quotient stack $\mathcal X=X/G$, the zeroth homology essentially coincides with Bernstein's definition that we saw above, and of course higher homology groups correspond to the higher derived functors of coinvariants.

In the non-Archimedean case, the same statements are true in the derived category of vector spaces without topology. It should be mentioned, however, that in positive characteristic there are serious issues with semi-algebraic geometry. On the other hand, one does not need the semi-algebraic structure to define Schwartz spaces in the non-Archimedean case: the usual notion of compactly supported smooth measures or functions makes sense on $F$-analytic manifolds. 
Thus, in positive characteristic one should read any mention of ``Nash'' manifolds in this paper as referring to $F$-analytic manifolds, and any mention of semi-algebraic restricted topology as referring to the usual topology on $F$-points. In the non-Archimedean case in characteristic zero, the reader can choose either of the two approaches. Although the definitions make sense for $F$-analytic manifolds, there is something to be gained by restricting to the semi-algebraic case, namely: one preserves a notion of ``polynomial growth''. Moreover, one might wish to replace the usual Schwartz spaces of compactly supported smooth measures by the Fr\'echet spaces of ``almost smooth'' measures defined in \cite[Appendix A]{SaBE1}.

The second goal, defining an ``evaluation map'', is performed only for smooth quotient stacks of the form $\mathcal X=X/G$ and corresponds to defining regularized orbital integrals for the $G(\adele)$-action on $\mathcal S(X(\adele))$. There is little stacky here, although a lot of work goes into showing that the constructions are independent of choices and give rise to well-defined functionals on the Schwartz space of a stack. Once we choose a closed (``semisimple'') point $x: \spec k\to \mathcal X$, corresponding to a $G(k)$-orbit on $X(k)$ (by appropriately choosing the presentation $\mathcal X=X/G$ among pure inner twists), the goal is to define a regularization for the sum of $G(\adele)$-orbital integrals corresponding to $G(k)$-orbits on $X(k)$ whose ``semisimple'' part is isomorphic to $x$. 
In the literature, this has been done almost invariably by generalizing the methods of truncation \cite{A-truncation} in the monumental work of Arthur (which, in turn, is a vast generalization of Selberg's). 

There is a very general approach to truncated orbital integrals by J.\ Levy \cite{Levy} (who, however, does not produce invariant distributions), and several variants of the truncation method which are better suited for applications to the relative trace formula by Jacquet, Lapid, Rogawski, Ichino, Yamana, Zydor and others \cite{JLR,IY, Zydor1, Zydor2, Zydor3}.  Joseph Bernstein sometimes expressed the wish to see this done without truncation, and this is what I do here, in a restricted setting that does not include the adjoint quotient of the group (but for reasons that cannot be addressed in a purely geometric way). It does include, however, many other interesting examples of relative trace formulas, such as those related to the Gross--Prasad conjectures (see Example \ref{exampleGP}). Its scope is not restricted to the trace formula and its variants; see Example \ref{exampleKR} for an application to the Kudla-Rallis regularized period of the theta correspondence.

It is well-known, of course, that truncation at first does not produce an invariant distribution. What I do here, before attempting to integrate, consists of two steps. First, replace the problem with a linear one, namely replace the quotient $X/G$, by the \'etale neighborhood $V/H$ of the given semisimple point $x$, provided by Luna's \'etale slice theorem. Here $H = \Aut(x)$ and $V$ is the $H$-representation obtained by considering the action of $H$ on the normal bundle to the $G$-orbit corresponding to $x$. In the usual trace formula, this is nothing other than replacing the adjoint quotient of the group by its Lie algebra version (or rather, the adjoint Lie algebra quotient for the centralizer of the given semisimple point). 

The second step is the crucial one, and it consists in analyzing the asymptotic behavior of the function:
$$\Sigma_V f: h\mapsto \sum_{\gamma \in V(k)} f(\gamma h),$$
where $f$ is a Schwartz function on the vector space $V(\adele)$. The above function is an ``asymptotically finite'' function on $[H]=H(k)\backslash H(\adele)$, i.e., it coincides, up to a Schwartz function, with functions which have specified behavior of multiplicative type close to infinity. To describe this asymptotic behavior one needs a quite natural class of compactifications of $[H]$ which I term ``equivariant toroidal'', because they are described by data similar to those of toric varieties.\footnote{However, these compactifications are not related to the toroidal compactifications of Shimura varieties but, rather, to the ``reductive Borel--Serre'' compactification.} The ``asymptotically finite'' functions are then described as sections of a certain cosheaf on such a compactification, which depends on some characters, or \emph{exponents}, that can easily be read off from the weights of the representation $V$.
Although the method and arguments here are not fundamentally different from those used in truncation methods, the approach is completely general and provides a conceptually clear answer for when one should not expect to have an invariant, regularized orbital integral out of purely geometric considerations: when the ``exponents'' of $\Sigma_V f$ are critical, i.e., equal to the modular character whose inverse describes the decay of volumes at infinity.

In cases with critical exponents, no reasonable invariant distribution can be defined, to the best of my understanding, by purely geometric methods: this would amount to finding a continuous invariant extension, to the Fr\'echet space of functions on $\RR^\times_+$ that are of rapid decay in a neighborhood of $\infty$ but constant (up to a rapidly decaying function) in a neighborhood of zero, of the distribution defined by a multiplicative Haar measure. Such a continuous invariant extension does not exist as follows, e.g., from Tate's thesis.
The critical case includes the adjoint quotient of a group; as we know from Arthur \cite{A-invariant1, A-invariant2}, a combination of geometric and spectral considerations can give rise to an invariant distribution on the Schwartz space in this case.

\subsection{Outline of the paper}

In Section \ref{sec:stacks} I introduce ``Nash stacks'' over a local field $F$ in characteristic zero.   They are generalizations of Nash manifolds, i.e., $F$-analytic manifolds with a restricted topology and a finite open cover by analytic submanifolds of $F^n$ described by polynomial equalities and inequalities (and some generalizations of those). The main advantage of Nash, over arbitrary $F$-analytic, manifolds is that they come equipped with a notion of ``polynomial growth'' for functions, which allows one to define spaces of Schwartz functions and measures. Nash stacks are a generalization, which allows one to talk of quotients $X/G$ of Nash manifolds by Nash groups, as if $G$ were acting freely and properly on $X$. As mentioned above, in positive characteristic we simply work with $F$-analytic manifolds, losing this notion of polynomial growth, because some statements of semi-algebraic geometry that we need are not known. Rather than considering two different cases when the arguments are identical, I leave it to the reader to translate all statements from the semi-algebraic to the analytic setting.

The reader who is not familiar with stacks should be reminded of the quotient $X/G$ of algebraic varieties, where $X  = \Res_{E/F} \Gm$, the restriction of scalars of the multiplicative group from a quadratic extension $E$ of a field $F$ to $F$, and $G$ is the kernel of the norm map from $E^\times$ to $F^\times$. The quotient is an algebraic variety isomorphic to $\Gm$ (over $F$), in such a way that the map $X\to X/G$ is the norm map: $\Res_{E/F} \Gm \to \Gm$. However, an $F$-point of the quotient does not necessarily correspond to a $G(F)$-orbit on $X(F)=E^\times$, since the norm map is not surjective on $F$-points: $E^\times \to F^\times$; rather, it corresponds to a subvariety of $X$, defined over $F$, which is a \emph{$G$-torsor} ($G$-principal homogeneous space), although it may not have any $F$-points.
 
Generalizing this observation, for any group variety $G$ acting on a variety $X$ over $F$, the $Y$-points of the algebraic stack $X/G$ (where $Y$ is $\spec F$, or any scheme over it) correspond to (\'etale) $G$-torsors $T$ over $Y$, together with a $G$-equivariant map: $T\to X$. This is not a set but a category fibered over the category of $F$-schemes $Y$, and objects in the fiber over $\spec F$ (i.e., ``$F$-points'') may have a non-trivial group of automorphisms when the map $T\to X$ is not an immersion. 

In \S \ref{ssdefstacks} I define Nash stacks by transferring this idea to the category $\mathfrak N$ of Nash manifolds. Again, one should not try to describe ``points'' of $\mathfrak X$, since they do not form a set; instead, one should describe what is a morphism from a Nash $F$-manifold $Y$ to $\mathfrak X$ --- thus obtaining a category fibered over $\mathfrak N$.
For stacks of the form $\mathfrak X=X/G$ with $X$ a Nash manifold and $G$ a Nash group acting on it, a morphism from $Y$ to $\mathfrak X$ corresponds to a $G$-torsor $T$ over $Y$ (in the category of Nash manifolds), together with a $G$-equivariant morphism $T\to X$ of Nash manifolds.

General Nash stacks are defined as ``geometric stacks in the category $\mathfrak N$ of Nash manifolds'', that is: stacks fibered over this category (endowed with the semi-algebraic, equivalently the smooth, Grothendieck topology) and admitting a smooth (i.e., submersive) surjective map (``presentation'') from a Nash stack.

I also discuss how to obtain a Nash stack from a smooth algebraic stack $\mathcal X$ over $F$ --- this is completely achieved in the case of stacks which are quotients of smooth varieties by linear groups, Proposition \ref{groupquotient}. The idea here is to use the groupoid $R_X(F)\rightrightarrows X(F)$, where $X\to \mathcal X$ is a presentation by a smooth scheme and $R_X = X\times_{\mathcal X} X$, but there is a problem with this approach: the Nash groupoid obtained depends on the choice of algebraic presentation. The reason is easy to see and lies at the heart of the motivation for this paper: as we saw above in the example of the norm map, a quotient algebraic stack $X/G$ has more $F$-points than can be accounted for by the $F$-points of $X$. An $F$-point of $X/G$ represented by a torsor $T$ and an equivariant map $T\to X$ can be accounted for by the $F$-points of the ``pure inner'' twist: $X^T = X\times^G T$, which is a space with an action of the inner form $G^T=\Aut^G(T)$. Thus, we only obtain a Nash stack when there is an ``$F$-surjective presentation'' $X\to \mathcal X$ of the algebraic stack $\mathcal X$, i.e., one for which the map of isomorphism classes $X(F)\to \mathcal X(F)$ is surjective. A more general and presentation-free approach to construct a stack is sketched in Appendix \ref{app:Nash}, but I do not know if it always produces a Nash stack.

In Section \ref{sec:localSchwartz}, I introduce the Schwartz space of a Nash stack, generalizing the space of (complex-valued) Schwartz measures on a Nash manifold. Measures are better suited than functions for this purpose, because they have natural push-forwards, and push-forwards of Schwartz measures under smooth (submersive) semi-algebraic maps are also Schwartz. As mentioned above, the main observation that makes the definition possible is that Schwartz measures form a cosheaf for the smooth topology (Proposition \ref{cosheaf-smooth}). Thus, we can postulate that the collection of all Schwartz spaces $\mathcal S(X)$, for all smooth covers $X\to \mathfrak X$ of a given Nash stack $\mathfrak X$ ``is'' its Schwartz cosheaf $\mathfrak G_{\mathfrak X}$. By the Schwartz space $\mathcal S(\mathfrak X)$ we mean the ``global sections'' of this cosheaf, i.e., the (coarse) zeroth homology of the canonical element $\mathcal S_\bullet(\mathfrak X)$ in the derived category of nuclear Fr\'echet spaces (vector spaces without topology in the non-Archimedean case) that one obtains from \v Cech homology of this cosheaf. A main result of this section is Theorem \ref{independent}, which says that this ``complex'' can be computed from any presentation of the Nash stack, which is a consequence of the fact that the Schwartz cosheaves of Nash manifolds are acyclic for the smooth topology, Proposition \ref{Poincare}.
 Proposition \ref{groupquotient-Schwartz} and its corollary confirm that, for quotient stacks, this definition coincides with the one suggested by Bernstein.
 
 As the referee has pointed out, there is a need for extending the notion of  Schwartz space and, in particular, Theorem \ref{independent} to non-trivial ``bundles'' over a Nash stack in order to include, for example, $G$-coinvariants (where $G$ is a Nash group) of the Schwartz sections of a $G$-equivariant vector bundle over a Nash manifold $X$. Such ``quotients'' with $X/G$ a (real or $p$-adic) Nash manifold appear, for example, in Bernstein's ``Frobenius reciprocity'' for distributions \cite{Be-distr}, and extensions of this idea by Baruch \cite{Baruch} and Aizenbud and Gourevitch \cite{AGdeRham}. While clearly very important for applications, the study of Schwartz sections of non-trivial ``bundles'' on stacks is beyond the scope of the present article; I hope to return to it in the near future.

In the following sections the goal is to define the global ``evaluation maps'' for algebraic stacks of the form $X/G$ defined over a global field, where $X$ is smooth affine and $G$ is reductive. When $X$ is a smooth algebraic variety defined over a global field $k$, one has a global Schwartz space $\mathcal F(X(\adele))$ of functions on the adelic points of $X$. The difference between measures and functions is not very significant globally, where we have Tamagawa measures, although it can only be bridged without extra data at the level of ``stalks'' over a $k$-point of the invariant-theoretic quotient $\cc = X\sslash G$. Ignoring this technical difficulty for now (for which we will need the whole Section \ref{sec:stalks} to carefully develop the appropriate notions of stalks), we have a global Schwartz space $\mathcal S(\mathcal X(\adele))$ for a smooth quotient stack $\mathcal X= X/G$, which should be the right space of ``test functions'' for the trace formula and its generalizations. The ``relative trace formula'' for $\mathcal X$ is defined in \S \ref{ssRTF} as the sum
\begin{equation} \RTF_{\mathcal X}: f\mapsto \sum_x \ev_x,
\end{equation}
where $x$ runs over isomorphism classes of closed (``semisimple'') $k$-points into $\mathcal X$, and $\ev_x$ is a certain ``evaluation'' functional at $x$. 

The evaluation functional is the analog of (regularized) orbital integrals: We may assume without loss of generality that the point $x$ corresponds to a semisimple $G(k)$-orbit on $X(k)$, and our evaluation map is a regularization of the sum of $G(\adele)$-orbital integrals corresponding to all $G(k)$-orbits on $X(k)$ whose semisimple part (cf.\ Proposition \ref{Jordan}) is isomorphic to $x$. In the present paper I only define these evaluation maps when the (reductive) stabilizer $H$ of the given semisimple $k$-point is connected, for reasons of simplicity. More importantly, as mentioned earlier, these evaluation maps are defined whenever there are no ``critical exponents'', i.e., no logarithmic divergence. 

The definition is given in Section \ref{sec:evaluation} with the help of the linearized version of the quotient stack, i.e., with the help of Luna's \'etale slice theorem. In the world of the trace formula, this linearized version is known as the ``Lie algebra version''. Luna's \'etale slice Theorem \ref{lunathm} allows us to replace $\mathcal X$, in the neighborhood of the given semisimple $k$-point $x$ corresponding to a closed $G$-orbit $C$ on $X$, by the quotient $N_C X/G$, the normal bundle of $C$ divided by the action of $G$. Thus, we arrive at stacks of the form $V/H$, where $V$ is a representation of a reductive group $H$ (the automorphism group of $x$), and we have isomorphisms between the stalk of the global Schwartz space $\mathcal S(\mathcal X(\adele))$ around $x$ and the stalk of $\mathcal S(\mathcal V(\adele))$ around the point corresponding to $0\in V$. These isomorphisms depend on choices, though, and a lot of care is taken to show that the definitions given are independent of those choices. Luna's slice theorem and its corollaries are discussed in Section \ref{sec:stalks}.

In Section \ref{sec:orbitalintegrals}, the main goal is to define the functional ``evaluation at zero'' for $V/H$; this is the core of the argument, and has nothing to do with stacks and their Schwartz spaces, hence can be read independently.  Having done that, we check in \S \ref{ssinvariance} that the definition does not depend on the choice of \'etale neighborhood $V/H$, therefore we have a well-defined map $\ev_x$ on the global Schwartz space of $\mathcal X$ (Definition \ref{evX}). 

The regularization technique used here rests upon a description of the asymptotic behavior of the function: 
$$\Sigma_V f(h) := \sum_{\gamma\in V(k)} f(\gamma h) $$
on $[H]=H(k)\backslash H(\adele)$, instead of a description of its truncated orbital integrals. The main result, Theorem \ref{isfinite} says that this function is ``asymptotically finite'', where ``finite'' means that the function is, up to a rapidly decaying one, a generalized eigenfunction with respect to some multiplicative group actions. To describe such ``multiplicative group actions'' one needs a theory of ``equivariant toroidal compactifications''. These are blow-ups of the ``reductive Borel--Serre compactification'' which are described by some toric-type invariants, more precisely: a fan in the Weyl chamber of anti-dominant coweight for $H$. I think that this description of the function $\Sigma_V f$ is very natural, and it gives rise to a natural notion of regularized orbital integral (essentially as the analytic continuation of a Mellin transform). I note that it is \emph{easier to describe} the asymptotic behavior of this function, than it is to describe the asymptotic behavior of its truncated orbital integrals: the behavior of the function is partitioned in cones of the anti-dominant chamber that are dual to the weights of $H$ acting on $V$, while for the behavior of truncated orbital integrals one needs several projections of those weights onto the faces corresponding to proper parabolics.
The argument, which has also been used with truncation methods, essentially rests on the fact that the ``nilpotent cone'' of $V$ consists of those elements $v$ for which there is a cocharacter $\lambda$ into $H$ with $\lim_{t\to 0} v\cdot \lambda(t) =0$; and that along each cocharacter, the function becomes $\Sigma_V f$ becomes multiplicative up to a rapidly decaying function, as can be seen from a simple application of the Poisson summation formula on the vector space $V$.

\subsection{Acknowledgements}

This article is dedicated to Joseph Bernstein, in deep appreciation of his mathematical genius and gratitude for everything that he has taught me. It is impossible to overstate the impact that his ideas, and his generosity in sharing them, have had on the world of mathematics. It has been the greatest privilege of my mathematical life to have had him as my post-doctoral mentor, besides frequent mathematical conversations that I still enjoy with him.

I would also like to thank Pierre-Henri Chaudouard for inviting me to Paris in January 2015, during which some of the ideas in this paper were worked out, and for many conversations on the trace formula which proved very fruitful for the present paper, including references to the work of Jason Levy and explanations on the thesis of Michal Zydor. Finally, I thank Brian Conrad, Fran\c{c}ois Loeser and Akshay Venkatesh for helpful feedback on some questions, Dmitry Gourevitch for several corrections on an earlier draft, and the referee for a careful reading and many insightful comments.

This work was supported by NSF grants DMS-1101471 and DMS-1502270.

\subsection{General notation}

Here is a general guide to the notation most frequently used. 

\begin{itemize}
\item $k$ is used for a number field, $\adele$ for its ring of adeles, $F$ for a local, locally compact field.
\item $\mathcal S$ is used for ``Schwartz'' spaces of complex-valued measures, $\mathcal F$ is used for ``Schwartz'' spaces of functions. These are always sections of certain cosheaves, and they do not have to be ``Schwartz'' in the usual sense of rapid decay: in Section \ref{sec:orbitalintegrals}, I also define extensions of these cosheaves, where the measures/functions have ``asymptotically finite'' behavior. That means that, instead of being of rapid decay at infinity, they extend over a compactification to sections of vector bundles determined by characters of tori.
\item The quotient algebraic stack of an algebraic variety $X$ by a group $G$, or the quotient Nash stack of a Nash manifold $X$ by a Nash group $G$, will simply be denoted by $X/G$; it is more usual in the literature to denote it by $[X/G]$. If $X$ is an affine variety (over, say, the field $k$), the invariant-theoretic quotient $\spec k[X]^G$ will be denoted by $X\sslash G$.
\item I typically use calligraphic letters ($\mathcal X, \mathcal Y$...) to denote algebraic stacks, and gothic letters ($\mathfrak X, \mathfrak Y$...) for Nash stacks (or $\mathcal X(F), \mathcal Y(F)$, when the Nash stack is obtained by taking $F$-points of an algebraic stack). For a stack $\mathcal X$ over some category (such as schemes or Nash manifolds), and an object $U$ in that category, the groupoid of $\mathcal X$ over $U$ is denoted by $\mathcal X_U$, and its set of isomorphism classes by $\mathcal X(U)$. 
\item If $H$ is an algebraic group defined over $k$, the automorphic quotient space $H(k)\backslash H(\adele)$ is denoted by $[H]$.
\end{itemize}

\section{Nash stacks} \label{sec:stacks}

In this section we work over a local field $F$.

\subsection{General definition} \label{ssdefstacks}

When $F$ has characteristic zero, we let $\mathfrak N$ be the category of Nash (smooth semi-alge\-braic) $F$-manifolds. I point the reader to \cite{DuCloux, AGSchwartz} for a discussion and references on Nash manifolds over $\mathbb R$. For definitions that include the complex case, cf.\ \cite{TT, KB}. In the non-Archimedean case, I do not know any reference for Nash manifolds, but their construction can clearly be performed in exactly the same manner by glueing a \emph{finite} number of smooth semi-algebraic submanifolds of $\mathbb A^n(F)=F^n$ \footnote{I will often be using $\mathbb A^n$, the symbol for affine space, to emphasize the semi-algebraic structure that is deduced from the algebraic one.} as in \cite[\S 3.3]{AGSchwartz}. I will recall the definitions below, and point the reader to \cite{Denef} for a discussion of semi-algebraic sets and functions in the non-Archimedean case. 

In positive characteristic there are many issues with semi-algebraic geometry. For the purposes of this paper, in order to proceed, one would need to know that Macintyre's theorem \cite{Denef} holds for \emph{submersive} (``smooth'') semi-algebraic maps: the image of such a map is also semi-algebraic. Since this statement is, to the best of my knowledge, not known, one needs to translate all statements to the category of $F$-analytic manifolds; hence, $\mathfrak N$ will stand for this category, and every reference of ``Nash'' manifolds should be replaced by ``$F$-analytic'' manifolds. The restricted topology of open semi-algebraic subsets, to be used for Nash manifolds, should be replaced by the usual topology on $F$-analytic manifolds.

Back to characteristic zero, I summarize the definitions of Nash manifolds and related notions:

\begin{definition}
A \emph{semi-algebraic subset} of $\mathbb A^n(F)$ is any subset obtained by boolean combinations (finite intersections, finite unions, complements) of subsets of the form
$$\{ x\in F^n| \exists y \,\, f(x) =y^m\}$$
where $f\in F[X_1,\dots, X_n]$, $m\in \mathbb N$. The above sets generate the closed sets for a restricted topology on $\mathbb A^n(F)$, the \emph{semi-algebraic topology}.
\end{definition}

I remind the reader that ``restricted'' refers to the fact that infinite unions of open sets do not need to be open. Any references to ``closed'', ``open'' sets and to ``sheaves'' will refer to this topology, unless otherwise specified.

Notice that we get all polynomial equalities by setting $m=0$, and in the real case we get all polynomial inequalities from taking complements of the condition: $\exists y\,\, f(x)=y^2$. In the complex case, on the other hand, the allowed inequalities can all be written in the form $f(x)\ne 0$, and hence in that case semi-algebraic subsets of $\mathbb A^n(F)$ are precisely the constructible subsets. For inequalities derived from the defining condition in the non-Archimedean case, s.\ \cite[Lemma 2.1]{Denef}.

\begin{definition}
A \emph{smooth semi-algebraic} or \emph{Nash submanifold} $M$ of $\mathbb A^n(F)$ is a closed semi-algebraic subset which is also an $F$-analytic submanifold. It is equipped with the induced semi-algebraic restricted topology, and the sheaf $\mathfrak O_M$ of \emph{smooth semi-algebraic} or \emph{Nash functions}, i.e., those $F$-valued functions which are $F$-analytic and whose graph is a semi-algebraic subset of $M\times \mathbb A^1(F)$.
\end{definition}

\begin{definition}
A \emph{Nash $F$-manifold} is an $F$-analytic manifold $M$ equipped with a restricted topology and a sheaf $\mathfrak O_M$ of $F$-valued functions, which admits a finite open cover $M=\bigcup_i M_i$ such that each $(M_i,\mathfrak O_M)$ is isomorphic to a Nash submanifold of $\mathbb A^n(F)$ together with its sheaf of Nash functions. A morphism: $X\to Y$ of Nash manifolds is a Nash map between them, i.e., an $F$-analytic map whose graph is a semi-algebraic subset of $X\times Y$.
\end{definition}

In the complex case, Nash manifolds are simply smooth algebraic varieties (of finite type over $\CC$), and Nash maps/functions are algebraic.

We now proceed to define Nash stacks. The approach is analogous to the definition of differentiable stacks in \cite{BX}.

We consider the category $\mathfrak N$ of Nash $F$-manifolds, where the morphisms are Nash maps, as a site, equipped with the Grothendieck topology generated by \emph{covering families} $\{U_i\to X\}_{i\in I}$ such that:
\begin{enumerate}[(i)]
\item the set of indices $I$ is finite;
\item the (Nash) maps $U_i\to X$ are \'etale, that is: local diffeomorphisms;
\item the total map $\bigsqcup U_i\to X$ is surjective. 
\end{enumerate}

In any case other than the complex case, the second condition can be replaced by the condition that the maps are open embeddings, since every \'etale morphism becomes an open embedding over elements of a finite open cover. In fact, it is true, more generally, over $\RR$ or $p$-adic fields (see \cite[Appendix A]{AGdeRham} for $\RR$, and it can similarly be proven for $p$-adic fields) that every \emph{smooth} --- that is: \emph{submersive} --- semi-algebraic map between Nash manifolds admits local Nash sections over a finite open cover of the image. In the complex (algebraic) case, any smooth map is known to admit sections locally in the \'etale topology. Hence, in every case, this Grothendieck topology is equivalent to the Grothendieck topology generated by smooth covers, i.e., the one obtained by replacing the second condition above by the requirement that the maps $U_i\to X$ are smooth.

For a covering family as above, and an ordered set of indices $(i_1, i_2, \dots, i_r)$ we will be denoting $U_{i_1}\times_U \times \cdots \times_U U_{i_r}$ by $U_{i_1 i_2 \cdots i_r}$.

Let $\mathfrak X\to \mathfrak N$ be a category fibered in groupoids. Recall that this means that:
\begin{enumerate}[(i)] 
\item for every arrow $V\to U$ in $\mathfrak N$, and every object $x$ of $\mathfrak X$ lying over $U$, there exists an arrow $y\to x$ in $\mathfrak X$ lying over $V\to U$;
\item for every diagram of the form $W\to V \to U$ in $\mathfrak N$ and arrows in $\mathfrak X$: $z\to x$ lying over $W \to U$ and $y\to x$ lying over $V \to U$, there exists a unique arrow $z\to y$ lying over $W\to  V$, such that the composition $z\to y\to x$ equals the given $z\to x$.
\end{enumerate}

The full subcategory of objects in $\mathfrak X$ over a given object $U$ in $\mathfrak N$ (the \emph{fiber} of $\mathfrak X$ over $U$) is denoted by $\mathfrak X_U$, and its set of isomorphism classes by $\mathfrak X(U)$. An object $u\in \mathfrak X_U$ will sometimes be denoted by $u_U$ or $(U,u)$, and thought of as a $1$-morphism in the $2$-category of categories fibered in groupoids over $\mathfrak N$: 
$$u:U \to \mathfrak X.$$

The axioms imply that for any morphism $V\to U$ in $\mathfrak N$ and any $x_U \in \ob(\mathfrak X_U)$, the base change $x_V\to x_U$, $x_V \in \ob(\mathfrak X_V)$ is defined up to isomorphism. Categories fibered in groupoids over $\mathfrak N$ form a $2$-category, with $1$-morphisms being the functors which are compatible with the ``structure morphism'' to $\mathfrak N$, and $2$-morphisms being isomorphisms between those functors. Fiber products are defined in a standard way \cite[\S (2.2.2)]{LMB} for any pair of $1$-morphisms $\mathfrak X\to \mathfrak Y\from \mathfrak X'$, with objects of $\mathfrak X\times_{\mathfrak Y}\mathfrak X'$ over $U\in \Ob(\mathfrak N)$ being the triples: $(x, x', g)$, with $x\in \Ob(\mathfrak X_U)$, $x'\in\Ob(\mathfrak X'_U)$ and $g$ an arrow (isomorphism) between the image of $x$ and the image of $x'$ in $\mathfrak Y_U$.

Recall that $\mathfrak X$ is called a \emph{stack} if ``both the isomorphisms and the objects are glued from local data'' (with respect to the given Grothendieck topology), that is:
\begin{enumerate}[(i)]
\item For every object $U$ in $\mathfrak N$ and two objects $x,y$ in $\mathfrak X_U$, the pre-sheaf\footnote{The definition of this pre-sheaf requires choices of base changes, cf.\ \cite[Tags \href{http://stacks.math.columbia.edu/tag/02Z9}{02Z9} and \href{http://stacks.math.columbia.edu/tag/02XJ}{02XJ}]{stacks-project}. The conditions of it being a sheaf are independent of choices, however.} of isomorphisms between $x$ and $y$:
$$ \Isom(x,y): \mathfrak N_U \to (Set)$$
$$ (V\to U)\mapsto \Hom_{\mathfrak X_V} (x_V, y_V)$$
is a sheaf. That is, any isomorphism $\phi:x\to y$ is uniquely determined by its restrictions $\phi_i: x_{U_i}\to y_{U_i}$ to any covering family $\{U_i\to U\}_i$, and vice versa any family of such isomorphisms $\phi_i$ with $\phi_i| U_{ij} = \phi_j| U_{ij}: x_{U_{ij}}\to y_{U_{ij}}$ determines an isomorphism $\phi: x\to y$. (This condition defines a \emph{pre-stack}.)
\item Given $\{U_i\to U\}_i$ a covering family, $x_i \in \ob(\mathfrak X_{U_i})$ and morphisms: $\phi_{ij}: x_{i, U_{ij}} \to x_{j, U_{ij}}$ whose restrictions to $U_{ijk}$ satisfy the cocycle condition: $\phi_{jk}\circ \phi_{ij} = \phi_{ik}$, there exists an object $x$ in $\mathfrak X_U$ and a family of isomorphisms $\phi_i: x_{U_i} \to x_i$, such that $\phi_j|U_{ij}= \phi_{ji} \circ (\phi_i|U_{ij})$. The object $x$ and the isomorphisms $\phi_i$ are necessarily unique up to unique isomorphism, by (i).
\end{enumerate}

We say that a stack as above is \emph{representable} if there is a Nash manifold $X$, viewed as the stack $\Hom(\bullet, X)$ over $\mathfrak N$, together with an equivalence: $X\xrightarrow{\sim}\mathfrak X$. We say that a $1$-morphism of stacks: $\mathfrak X\to \mathfrak Z$ is \emph{smooth}, or a \emph{representable submersion}, if for every morphism: $U\to \mathfrak Z$, where 
$U\in\ob(\mathfrak N)$ viewed as a stack, the fiber product $\mathfrak X \times_{\mathfrak Z} U$ is representable by a Nash manifold, and the induced morphism: $\mathfrak X\times_{\mathfrak Z} U \to U$ is a submersion. 

\begin{remark}
As in \cite{BX}, we first define representable submersions, before defining representable morphisms in general; the reason is that the above definition cannot be used for a general morphism: $X\to Z$ of Nash manifolds. Indeed, for another morphism $U\to Z$, the fiber product $X\times_{Z} U$ is not necessarily defined as a Nash manifold. It is, however, if $X\to Z$ is smooth, i.e., a submersion. (This is a difficulty that does not exist, for example, in the category of schemes, but does exist in other geometric categories, such as differentiable manifolds.)
\end{remark}

A $1$-morphism $f:\mathfrak X\to \mathfrak Y$ of stacks over $\mathfrak N$ is called an \emph{epimorphism} if every $y\in \mathfrak Y_U$, where $U$ is a Nash stack, lifts \'etale-locally to $\mathfrak X$, that is: there is a covering family (consisting, without loss of generality, of a single element) $(U'\to U)$ and an object $x\in \mathfrak X_{U'}$ such that for all $V\to U'$, $f(x_V) \simeq y_V$ in $\mathfrak Y_V$.

\begin{definition} A \emph{Nash}, or \emph{smooth semi-algebraic} stack is a stack $\mathfrak X$ as above which admits a representable, epimorphic submersion from a Nash manifold: 
\begin{equation}\label{presentation}X\to \mathfrak X\end{equation}
(where $X$ is a Nash manifold). 
\end{definition}
We will call such an epimorphism a \emph{presentation} of $\mathfrak X$.  

We call a $1$-morphism: $\mathfrak X\to \mathfrak Z$ of Nash stacks \emph{representable} if for one, equivalently any, presentation $V\to \mathfrak Z$ the stack $\mathfrak X \times_{\mathfrak Z} V$ is representable. A smooth, \'etale etc.\ morphism of stacks is a representable morphism such that for every $U\to \mathfrak Z$ the morphism $\mathfrak X\times_{\mathfrak Z} U\to U$ has the said property.

\begin{lemma} \label{fiberprod}
Let $\mathfrak X\to \mathfrak Z$ be a smooth (submersive) $1$-morphism, and $\mathfrak Y\to \mathfrak Z$ any $1$-morphism of Nash stacks.

Then the fiber product $\mathfrak X\times_{\mathfrak Z} \mathfrak Y$ is a Nash stack.
\end{lemma}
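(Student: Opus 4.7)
The plan is to verify the two defining properties of a Nash stack for $\mathfrak X \times_{\mathfrak Z} \mathfrak Y$: that it is a stack in groupoids over $\mathfrak N$, and that it admits a representable, epimorphic submersion from a Nash manifold.

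That the fiber product is a stack is a purely categorical verification. Given descent data for objects (or isomorphisms) of $\mathfrak X \times_{\mathfrak Z} \mathfrak Y$ along a covering family $\{U_i \to U\}$ in $\mathfrak N$, one descends the two components in $\mathfrak X_{U_i}$ and $\mathfrak Y_{U_i}$ separately (using the stack conditions on $\mathfrak X$ and $\mathfrak Y$), and then the compatibility $2$-isomorphism in $\mathfrak Z_{U_i}$ (using that the isomorphism pre-sheaves in the stack $\mathfrak Z$ are sheaves). No feature of $\mathfrak N$ beyond the site axioms enters this step.

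For the presentation, I fix presentations $p_X: X \to \mathfrak X$ and $p_Y: Y \to \mathfrak Y$ and set $W := X \times_{\mathfrak Z} Y$. To exhibit this as a Nash manifold I factor the construction into two base changes. Because $\mathfrak X \to \mathfrak Z$ is a smooth $1$-morphism and $Y \to \mathfrak Z$ is a morphism from a Nash manifold, the defining property of representable submersions gives a Nash manifold $Z_1 := \mathfrak X \times_{\mathfrak Z} Y$ with a submersion $Z_1 \to Y$. Next, $Z_1$ carries a canonical projection to $\mathfrak X$, and since $p_X$ is a representable presentation, the base change $X \times_{\mathfrak X} Z_1$ is a Nash manifold. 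A standard $2$-categorical manipulation identifies this with $X \times_{\mathfrak Z} Y = W$.

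It remains to verify that the canonical $1$-morphism $q: W \to \mathfrak X \times_{\mathfrak Z} \mathfrak Y$, assembled from $p_X$, $p_Y$ and the tautological $\mathfrak Z$-compatibility, is a representable, epimorphic submersion. For the epimorphism property, given an object $(\xi, \eta, g)$ of $(\mathfrak X \times_{\mathfrak Z} \mathfrak Y)_U$ over a Nash manifold $U$, I apply the epimorphism property of $p_X$ and $p_Y$ to lift $\xi$ and $\eta$ over \'etale covers of $U$, pass to a common refinement, and transport $g$ through the chosen lifting isomorphisms to obtain a lift into $W$. For the submersion property, I compute
\[
W \times_{\mathfrak X \times_{\mathfrak Z} \mathfrak Y} U \;\simeq\; X \times_{\mathfrak X} U \times_{\mathfrak Y} Y
\]
by rearranging $2$-fiber products; the right-hand side is a Nash manifold submersive over $U$, obtained as the successive base change of the representable submersions $p_X$ and $p_Y$ along morphisms out of Nash manifolds (which is exactly the setting in which their representability applies). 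The main obstacle I anticipate is the careful $2$-categorical bookkeeping in the displayed identification, which requires tracking the compatibility $2$-isomorphisms on both sides; everything else is a routine unwinding of the definitions from \S\ref{ssdefstacks}.
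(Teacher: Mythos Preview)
Your proof is correct, but it works harder than necessary compared to the paper's argument. The paper observes that a presentation of $\mathfrak Y$ alone suffices: since $\mathfrak X \to \mathfrak Z$ is a representable submersion, the fiber product $\mathfrak X \times_{\mathfrak Z} Y$ with the Nash manifold $Y$ is \emph{already} a Nash manifold by definition, and the map $\mathfrak X \times_{\mathfrak Z} Y \to \mathfrak X \times_{\mathfrak Z} \mathfrak Y$ is the base change of the presentation $Y \to \mathfrak Y$, hence itself a representable epimorphic submersion. No presentation of $\mathfrak X$ is ever invoked.

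Your route instead fixes presentations of both $\mathfrak X$ and $\mathfrak Y$, builds $W = X \times_{\mathfrak Z} Y$ in two steps, and then must verify directly that $W \to \mathfrak X \times_{\mathfrak Z} \mathfrak Y$ is a presentation, including the fiber-product identification $W \times_{\mathfrak X \times_{\mathfrak Z} \mathfrak Y} U \simeq (X \times_{\mathfrak X} U) \times_U (U \times_{\mathfrak Y} Y)$. This is all valid, and has the minor virtue of being symmetric in $\mathfrak X$ and $\mathfrak Y$, but the extra base change through $X$ and the accompanying $2$-categorical bookkeeping are avoidable. The paper's shortcut is precisely to exploit the smoothness hypothesis on $\mathfrak X \to \mathfrak Z$ at the level of stacks rather than first passing to a manifold cover of $\mathfrak X$.
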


\begin{proof}
Indeed, if $Y\to \mathfrak Y$ is a presentation of $\mathfrak Y$, then it is easy to see that $\mathfrak X\times_{\mathfrak Z} Y$, which by assumption is a Nash manifold, gives a presentation of $\mathfrak X\times_{\mathfrak Z} \mathfrak Y$.
\end{proof}

\subsection{Nash groupoids}

Assume that $[R_X\rightrightarrows X]$ is a groupoid object in $\mathfrak N$. This consists of a pair of smooth (submersive) morphisms between Nash manifolds (denoted $s$ and $t$ for ``source'' and ``target''), with a ``multiplication'': $R_X \times_{t, X, s} R_X \to R_X$, an identity section and an inverse map (all maps assumed to be Nash), satisfying axioms analogous to the usual group axioms.

There is a Nash stack $\mathfrak X$ associated to such a groupoid object, defined as follows: the fiber of $\mathfrak X$ over $U\in \ob(\mathfrak N)$ has as objects all \emph{$R_X$-torsors over $U$} in the sense of \cite[\S 12]{Noohi}, that is: smooth epimorphisms of Nash manifolds: $T\to U$, together with a Cartesian map of groupoids
$$[T\times_U T \rightrightarrows T] \to [R_X \rightrightarrows X].$$ 
I will usually denote such an object by the pair of maps $(T\to U, T\to X)$, and sometimes just by $T\to U$, or by $T$.

\begin{proposition}
Let $\mathfrak X$ be a Nash stack, and $X\to \mathfrak X$ a presentation. Set $R_X = X\times_{\mathfrak X} X$. Then $\mathfrak X$ is canonically (up to $2$-isomorphism) isomorphic to the Nash stack associated to the groupoid object $[R_X\rightrightarrows X]$.
\end{proposition}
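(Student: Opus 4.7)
Write $\mathfrak Y$ for the Nash stack associated to the groupoid $[R_X\rightrightarrows X]$. The plan is to exhibit mutually quasi-inverse 1-morphisms $\Phi:\mathfrak Y\to \mathfrak X$ and $\Psi:\mathfrak X\to \mathfrak Y$, using descent in one direction and base change in the other. For $\Phi$, I would proceed as follows: given an $R_X$-torsor $(T\to U,\,f:T\to X)$ in $\mathfrak Y_U$, the composite $T\xrightarrow{f} X\to \mathfrak X$ is an object of $\mathfrak X_T$. On $R_X = X\times_{\mathfrak X} X$ there is a tautological 2-isomorphism $\sigma:s^*(X\to \mathfrak X)\xrightarrow{\sim}t^*(X\to \mathfrak X)$, and pulling $\sigma$ back along the morphism component $T\times_U T\to R_X$ of the Cartesian groupoid map gives a 2-isomorphism between the two restrictions of $T\to \mathfrak X$ along $T\times_U T\rightrightarrows T$. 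The composition and identity axioms for the Cartesian groupoid map turn this into a descent datum satisfying the cocycle identity on $T\times_U T\times_U T$. Since $T\to U$ is a smooth epimorphism, hence a covering in the Grothendieck topology on $\mathfrak N$, the descent axiom (ii) in the definition of a stack produces an object $\Phi(T)\in \mathfrak X_U$, unique up to unique isomorphism.

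For $\Psi$, given $u:U\to \mathfrak X$ set $T_u := U\times_{\mathfrak X} X$. Because $X\to \mathfrak X$ is a representable epimorphic submersion (and using Lemma \ref{fiberprod}), $T_u$ is a Nash manifold, $T_u\to U$ is a smooth epimorphism, and the second projection supplies $T_u\to X$. The canonical identification
\[
T_u\times_U T_u \;\cong\; U\times_{\mathfrak X}(X\times_{\mathfrak X}X) \;=\; U\times_{\mathfrak X} R_X,
\]
followed by the projection to $R_X$, is, by inspection, a Cartesian map of Nash groupoids $[T_u\times_U T_u\rightrightarrows T_u]\to [R_X\rightrightarrows X]$, so we obtain $\Psi(u)\in \mathfrak Y_U$, functorial in $U$. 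To see $\Phi\circ \Psi\simeq \id_{\mathfrak X}$, note that applying $\Phi$ to $\Psi(u)$ means descending the object $T_u\to X\to \mathfrak X$ of $\mathfrak X_{T_u}$ along $T_u\to U$; but this descends tautologically to $u$ itself, so uniqueness of descent gives the natural 2-isomorphism. Conversely, for $\Psi\circ \Phi\simeq \id_{\mathfrak Y}$, starting from $(T\to U, T\to X)\in \mathfrak Y_U$ and applying $\Psi$ to $\Phi(T)$, the defining (universal) property of $\Phi(T)$ furnishes a canonical map $T\to U\times_{\mathfrak X}\Phi(T)\times_{\Phi(T)}X = T_{\Phi(T)}$ compatible with both the smooth surjection to $U$ and the map to $X$, which one checks is an isomorphism of $R_X$-torsors.

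The main obstacle I foresee is the bookkeeping inside the construction of $\Phi$: one must verify that the defining axioms of a Cartesian map of groupoids (identity, composition, source/target compatibility) translate exactly into the cocycle condition for a descent datum on $\mathfrak X$ over the cover $T\to U$. This is the crux of the proposition and is what forces the correspondence, since the groupoid structure on $R_X=X\times_{\mathfrak X}X$ is, by construction, the composition of 2-isomorphisms in $\mathfrak X$; everything else (representability of $T_u$, smoothness of $T_u\to U$, functoriality in $U$, naturality of the quasi-inverse equivalences) is a routine consequence of the 2-categorical fiber-product formalism on $\mathfrak N$ together with the definition of a Nash stack.
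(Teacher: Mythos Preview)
Your proof is correct and follows essentially the same strategy as the paper: the map $\Psi$ via the fiber product $U\times_{\mathfrak X}X$ is exactly the paper's construction, and your $\Phi$ via descent along the cover $T\to U$ is the paper's construction in slightly different packaging (the paper first passes to an \'etale refinement $U'\to U$ over which $T$ admits a section, then descends from $\mathfrak X_{U'}$, whereas you descend directly from $\mathfrak X_T$). The paper's proof is terser and omits the verification that the two 1-morphisms are mutually quasi-inverse, which you carry out; so your version is in fact more complete.
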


\begin{proof}
The proof is identical to the one in the case of algebraic or differentiable stacks. I recall the construction of the $1$-morphisms:

On the one hand, we have an $1$-morphism from $\mathfrak X$ to $[R_X \rightrightarrows X]$ by assigning to every $u\in \mathfrak X_U$ the fiber product $T = U\times_{u,\mathfrak X} X$. Since $X\to \mathfrak X$ is an epimorphism, it follows that $T$ is an $R_X$-torsor over $U$.

Vice versa, even without the epimorphism assumption we have an $1$-morphism in the opposite direction: for $U\in \ob(\mathfrak N)$ and an $R_X$-torsor $T\to U$ choose, over an \'etale cover $U'\to U$, a section $U' \to T$. The composition with the map to $X$ and then to $\mathfrak X$ gives an object in $\mathfrak X_{U'}$, which by the stack axiom can be seen do descend to an object in $\mathfrak X_U$.
\end{proof}

\begin{remark}
In the complex case, $F=\CC$, there is no difference between Nash and smooth algebraic stacks of finite type over $\CC$. This follows, for example, from the description of a Nash stack in terms of groupoid objects and the fact that all Nash manifolds are algebraic, and all Nash maps are algebraic.
\end{remark}

The following closely related lemma will be useful later:

\begin{lemma}\label{epi}
Let $X'\to X$ be a smooth epimorphism of Nash manifolds, and let $[R_X\rightrightarrows X]$ be a groupoid object. Let $[R_{X'}\rightrightarrows X']$ be obtained by base change, that is: $R_{X'} = X'\times_{X,s}R_X\times_{t,X} X'$. Then the Nash stacks defined by the two groupoids are canonically equivalent.
\end{lemma}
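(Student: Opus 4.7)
My plan is to apply the preceding Proposition twice, identifying both groupoids as the Čech groupoids attached to two different presentations of the same Nash stack.

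Let $\mathfrak X$ denote the Nash stack associated to the groupoid $[R_X\rightrightarrows X]$. By the preceding Proposition (in its construction), the tautological morphism $\pi: X\to \mathfrak X$ is a presentation and $R_X\simeq X\times_{\mathfrak X} X$ as groupoid objects. I want to show that the composition $\pi': X'\to X\to \mathfrak X$ is also a presentation; then the preceding Proposition applied to $\pi'$ will give $\mathfrak X\simeq$ the Nash stack associated to $[X'\times_{\mathfrak X} X'\rightrightarrows X']$, and a direct computation of this fiber product will identify it with $R_{X'}$.

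First I verify that $\pi'$ is representable and submersive. For any morphism $u:U\to \mathfrak X$ from a Nash manifold $U$, there is a canonical identification
$$X'\times_{\mathfrak X} U \;\simeq\; X'\times_{X}\bigl(X\times_{\mathfrak X} U\bigr).$$
Since $\pi$ is a representable submersion, $X\times_{\mathfrak X} U$ is a Nash manifold submersive over $U$; since $X'\to X$ is smooth, the further fiber product over $X$ exists in $\mathfrak N$ by Lemma \ref{fiberprod} (applied with $\mathfrak Z=X$, $\mathfrak X=X'$ and $\mathfrak Y=X\times_{\mathfrak X}U$), and submersiveness is preserved by composition and by base change of submersions. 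For the epimorphism property, any $y\in \mathfrak X_U$ lifts étale-locally via $\pi$ to an object of the form $x_{U_0}$ for some covering $U_0\to U$ and $U_0\to X$; further, $X'\to X$ is a smooth epimorphism, so (after possibly refining the cover $U_0$, using that smooth semi-algebraic maps admit étale-local sections as recalled in \S\ref{ssdefstacks}) this lifts to $U_0\to X'$, yielding an étale-local lift of $y$ through $\pi'$. Hence $\pi'$ is a presentation of $\mathfrak X$.

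Having established this, the preceding Proposition gives $\mathfrak X$ $2$-equivalent to the Nash stack associated to the groupoid $[X'\times_{\mathfrak X} X'\rightrightarrows X']$. It remains to compute
$$X'\times_{\mathfrak X} X' \;\simeq\; X'\times_X\bigl(X\times_{\mathfrak X} X\bigr)\times_X X' \;\simeq\; X'\times_{X,s} R_X\times_{t,X} X' \;=\; R_{X'},$$
where the middle identification uses $R_X\simeq X\times_{\mathfrak X} X$ with source and target maps given by the two projections. This identification of fiber products is compatible with the source, target, composition, identity and inverse structure maps on both sides (this is a formal diagram chase in the $2$-category of stacks over $\mathfrak N$). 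Therefore the Nash stack attached to $[R_{X'}\rightrightarrows X']$ is canonically $2$-equivalent to $\mathfrak X$, as required.

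The only non-formal step is the verification that $\pi'$ is a presentation; within this, the slightly delicate point is the existence of étale-local sections of the smooth epimorphism $X'\to X$, but this is exactly the property that makes the smooth and étale Grothendieck topologies on $\mathfrak N$ equivalent, as discussed in \S\ref{ssdefstacks}.
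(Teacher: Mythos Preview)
Your proof is correct and takes a genuinely different route from the paper's. The paper argues directly at the level of torsors: it writes down the $1$-morphism $T\mapsto T'=T\times_X X'$ from $R_X$-torsors to $R_{X'}$-torsors, and then constructs an explicit inverse by choosing (\'etale-locally) a section of $T'\to U$, forming $T=U\times_{\beta,X,s}R_X$, and checking the canonical identification $T'\simeq T\times_X X'$ so that the local constructions glue. Your argument instead bootstraps off the preceding Proposition: you show that the composite $X'\to X\to\mathfrak X$ is again a presentation, and then invoke the Proposition twice to identify both groupoid stacks with $\mathfrak X$, the only computation being $X'\times_{\mathfrak X}X'\simeq X'\times_{X,s}R_X\times_{t,X}X'$. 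This is cleaner and more conceptual; the paper's approach has the virtue of being self-contained (it does not lean on the Proposition) and of making the equivalence explicit at the object level, which is occasionally useful later. One small remark: your appeal to Lemma~\ref{fiberprod} is slightly off target, since what you actually need is the elementary fact that a fiber product of Nash \emph{manifolds} along a submersion is again a Nash manifold --- but this is of course true and implicit in the setup.
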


\begin{proof}
Again, I will just describe the $1$-morphisms in terms of objects.

Given $U\in\ob(\mathfrak N)$ and an $R_X$-torsor $T\to U$, we obtain an $R_{X'}$-torsor $T'\to U$ by base change: $T' = T \times_X X'$.

Vice versa, given an $R_{X'}$-torsor $T'\to U$ assume, at first, that it admits a section
$$U \to T'.$$
Let $\alpha: U\to X'$ be the map obtained by composing with $T'\to X'$, and $\beta$ the map obtained by further composing with $X'\to X$. We let 
$$T = U\times_{\beta, X, s} R_X.$$
It is an $R_X$-torsor over $U$. I claim that, canonically,
\begin{equation}\label{Ttoprime} T' \simeq T\times_{X,s} R_{X'}. \end{equation}
Indeed, the $R_{X'}$-action and the section define an isomorphism
$$ U\times_{\alpha, X', s} R_{X'}\xrightarrow{\sim} T'.$$

Moreover, $R_{X'} = X' \times_{X,s} R_X \times_{t,X} X'$ because the same holds as schemes, and the claim now follows easily.

For the general case, the map $T'\to U$ always admits a section locally in the \'etale (or semi-algebraic, if $F\ne \CC$) topology, and the canonical isomorphisms \eqref{Ttoprime} (or even just the map $T'\to T$) suffice to glue them, because every local trivialization for $T'$ induces one for $T$.
\end{proof}

\subsection{From smooth algebraic to Nash stacks} \label{algtoNash}

Let $\mathcal X$ be a smooth algebraic stack over $F$. By this I will mean, throughout, that it is an algebraic stack over $F$ which admits a presentation as a smooth groupoid $[R_X \rightrightarrows X]$, where $X$ and $R_X= X\times_{\mathcal X} X$ are smooth schemes of finite type over $F$. I would like to attach to it a Nash stack $\mathfrak X$, also denoted by $\mathcal X(F)$. (The notation $\mathcal X(F)$ really stands for the isomorphism classes in the groupoid $\mathcal X_F$ of points $\spec F\to \mathfrak X$, but we may think of this set as having the richer structure of a Nash stack, just as we think of the set $X(F)$ as a Nash manifold, when $X$ is a smooth algebraic variety over $F$; thus, this notation should not cause confusion.) I do not quite achieve that in full generality, because I do not know if there is always a presentation of an algebraic stack over $F$ to which all $F$-points lift. This is true in many cases of interest, such as quotients of smooth varieties by linear algebraic groups, and in any case we can always give to $\mathcal X(F)$ the structure of a stack over $\mathfrak N$, whether there is such a presentation or not.

To do that, we need to define what is a (``semi-algebraic'') morphism from a Nash manifold $U$ to $\mathcal X$. 
I follow the standard approach of \cite[\S 20]{Noohi} that uses presentations (``charts'') of $\mathcal X$ as a quotient of a smooth groupoid. However, in contrast to \cite{Noohi}, we are not working over an algebraically closed field; as a consequence, once we have properly defined the stack $\mathfrak X$, for an epimorphism $X\to \mathcal X$ of algebraic stacks, where $X$ is a smooth variety over $F$, it does not necessarily induce an epimorphism $X(F)\to \mathfrak X$ of stacks over $\mathfrak N$. Thus, we cannot work with an arbitrary presentation $X\to \mathcal X$ and hope that every morphism $U\to \mathfrak X$ (where $U$ is a Nash manifold) will produce a surjective map: $(U\times_{\mathfrak X} X(F)) \to U$. 
 
For example, for $\mathcal X = \spec F/G$ with the standard presentation: $\spec F \xrightarrow{\pi} \spec F/G$, where $G$ is an algebraic group with $H^1(F, G)\ne 0$, we should clearly have a morphism of Nash stacks: $\pt \xrightarrow{\alpha} \mathfrak X$ induced from the algebraic morphism: $\spec F \xrightarrow{\alpha} \spec F/G$ corresponding to a non-trivial $G$-torsor; however, the fiber product
$$ \pt \times_{\alpha,\mathfrak X,\pi} \pt$$
is empty, because this is the case for the $F$-points of the algebraic fiber product
$$ \spec F \times_{\alpha,\mathfrak X,\pi} \spec F.$$

Thus, to define the Nash stack $\mathfrak X$ we will either have to choose a presentation which is ``surjective on $F$-points'', or take some kind of limit, over all possible presentations $X\to \mathcal X$ of the algebraic stack $\mathcal X$, of the Nash stacks associated to the groupoids $[(X\times_{\mathcal X} X)(F)\rightrightarrows X(F)]$. I follow the first approach, which is simpler although not very nice, because it relies on choosing a presentation (if a presentation which is surjective on $F$-points exists). Of course, it turns out that the resulting stack does not depend on the $F$-surjective presentation chosen. In Appendix \ref{app:Nash}, I will outline a more canonical approach which produces a stack over $\mathfrak N$ for \emph{every} smooth algebraic stack of finite type over $\mathfrak N$; this poses the interesting problem of determining, in general, when the resulting stack is Nash.

Hence, let $\mathcal X=$ a smooth algebraic stack over $F$, and assume  that it admits a presentation $X\to \mathcal X$ \emph{with the property that} $X(F) \to \mathcal X(F)$ (= the set of isomorphism classes of objects in $\mathcal X_F$) \emph{is surjective.} We will call such a presentation ``\emph{$F$-surjective}''.

\begin{lemma}\label{equivpres}
Let $X\to \mathcal X$, $X'\to \mathcal X$ be two algebraic presentations of $\mathcal X$ such that the induced maps 
\begin{equation}\label{pointmaps}X(F) \to \mathcal X(F),\,\,\,\, X'(F)\to \mathcal X(F)\end{equation}
 are surjective.

Let $R_X = X \times_{\mathcal X} X$, $R_{X'} = X' \times_{\mathcal X} X'$ (fiber products as algebraic stacks). Then the Nash groupoids
$$ [R_X(F) \rightrightarrows X(F)],\,\,\, [R_{X'}(F)\rightrightarrows X'(F)]$$
are canonically equivalent (up to 2-isomorphism).
\end{lemma}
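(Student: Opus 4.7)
The plan is to exhibit a common refinement of the two presentations and to invoke Lemma~\ref{epi} twice.

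First, I would form the algebraic fiber product $X'' := X \times_{\mathcal X} X'$. Since $X \to \mathcal X$ and $X' \to \mathcal X$ are both smooth, each of the two projections $X'' \to X$ and $X'' \to X'$ is smooth, being a base change of a smooth morphism. In particular $X''$ is a smooth scheme of finite type over $F$, and the induced maps of $F$-points $X''(F) \to X(F)$ and $X''(F) \to X'(F)$ are submersions of Nash manifolds. The key check at this stage is that these maps are also \emph{surjective} on $F$-points: given $x \in X(F)$, its image $[x] \in \mathcal X(F)$ lifts, by the $F$-surjectivity of \eqref{pointmaps} applied to $X'$, to some $x' \in X'(F)$, and any choice of isomorphism $\phi$ in $\mathcal X_F$ between the images of $x$ and $x'$ determines an $F$-point $(x,x',\phi) \in X''(F)$ over $x$. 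A symmetric argument works for $X''(F) \to X'(F)$. Hence both maps are smooth epimorphisms of Nash manifolds.

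Next, I would identify the pulled-back groupoid. Because the functor of $F$-points commutes with fiber products, and because
\[
X'' \times_{\mathcal X} X'' \;\cong\; X'' \times_{X, s} R_X \times_{t, X} X''
\]
as $F$-schemes (a straightforward rearrangement of fiber products: a point of the left-hand side is a pair $((x_1, x'_1,\phi_1),(x_2,x'_2,\phi_2))$ together with an isomorphism $\psi : [x_1] \xrightarrow{\sim} [x_2]$ in $\mathcal X$, and the datum $(x_1, x_2, \psi)$ is precisely an element of $R_X$), the Nash groupoid $[R_{X''}(F) \rightrightarrows X''(F)]$ coincides with the groupoid obtained from $[R_X(F) \rightrightarrows X(F)]$ by base change along $X''(F) \to X(F)$. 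By Lemma~\ref{epi}, the Nash stacks defined by $[R_{X''}(F) \rightrightarrows X''(F)]$ and $[R_X(F) \rightrightarrows X(F)]$ are canonically equivalent. The analogous identification, now using the projection $X'' \to X'$, gives a canonical equivalence with $[R_{X'}(F) \rightrightarrows X'(F)]$.

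Composing these two canonical equivalences through $X''$ yields the desired equivalence $[R_X(F) \rightrightarrows X(F)] \simeq [R_{X'}(F) \rightrightarrows X'(F)]$, canonical up to the $2$-isomorphism supplied by Lemma~\ref{epi}. The main subtlety is the $F$-surjectivity of the projections from $X''(F)$; everything else is formal, reducing either to smoothness being preserved under base change or to the compatibility of $(\cdot)(F)$ with fiber products of schemes of finite type over $F$. One might also note that, strictly speaking, the equivalence is only canonical up to the choice of quasi-inverses in Lemma~\ref{epi}, but these ambiguities are absorbed into the $2$-isomorphism of the statement.
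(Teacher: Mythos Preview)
Your proof is correct and follows essentially the same approach as the paper: form the common refinement $X'' = X \times_{\mathcal X} X'$, check that both projections are smooth and $F$-surjective, and invoke Lemma~\ref{epi} twice. You have in fact supplied more detail than the paper does, particularly in verifying the $F$-surjectivity of the projections and the base-change identification of $R_{X''}$.
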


\begin{proof}
The equivalence implied in the lemma is induced from the obvious morphisms from the third groupoid $[R_{X''}(F)\rightrightarrows X''(F)]$, where $X'' = X\times_{\mathcal X} X'$. Notice that since both maps \eqref{pointmaps} are surjective, the same holds for the corresponding map from $X''(F)$, and also for the maps from $X''(F)$ to $X(F), X'(F)$.

This reduces us to the case when we have smooth, $F$-surjective epimorphisms
$$ X' \to X \to \mathcal X, $$
in which case the claim is Lemma \ref{epi}.
\end{proof}

We can now define:

\begin{definition}
Given a smooth algebraic stack $\mathcal X$ of finite type over $F$, which admits an $F$-surjective presentation, the associated Nash stack $\mathfrak X = \mathcal X(F)$ is ``the'' Nash groupoid
$$[R_X(F) \rightrightarrows X(F)],$$
where $X\to \mathcal X$ is any $F$-surjective presentation. 
\end{definition}

The association $\mathcal X\mapsto \mathcal X(F)$ is a functor from the 2-category of smooth stacks of finite type over $F$ which admit $F$-surjective presentations to the category of Nash stacks over $F$. Indeed, as in the proof of Lemma \ref{epi}, given a morphism $\mathcal X \to \mathcal Y$ of two algebraic stacks, and $F$-surjective presentations $X\to \mathcal X$, $Y\to\mathcal Y$, a morphism $U\to \mathcal X(F)$ lifts \'etale-locally to $X(F)$, and then in turn to $(X\times_{\mathcal Y} Y)(F)$. As in the lemma, it can be seen that this gives rise to a well-defined morphism $U\to \mathcal Y(F)$. 

The functor preserves fiber products:

\begin{proposition}
Let $\mathcal X \to \mathcal Z \from \mathcal Y$ be morphisms of smooth algebraic stacks of finite type over $F$, all of which admit $F$-surjective presentations. Then we have
$$ (\mathcal X \times_{\mathcal Z} \mathcal Y)(F) \simeq \mathcal X(F) \times_{\mathcal Z(F)} \mathcal Y(F)$$
canonically, up to $2$-isomorphism.
\end{proposition}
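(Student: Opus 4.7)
The plan is to exhibit a natural equivalence by showing that both sides are canonically represented by the same Nash groupoid. Throughout, at least one of $\mathcal X\to\mathcal Z$, $\mathcal Y\to\mathcal Z$ should be smooth, so that Lemma \ref{fiberprod} guarantees the right-hand side is a Nash stack; this is the implicit setting.

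Choose $F$-surjective algebraic presentations $X\to\mathcal X$ and $Y\to\mathcal Y$, and form $W:=X\times_{\mathcal Z} Y$. Since the diagonal of the algebraic stack $\mathcal Z$ is representable, $W$ is a scheme, and by the smoothness hypothesis it is smooth. I would first check that $W\to\mathcal X\times_{\mathcal Z}\mathcal Y$ is itself an $F$-surjective smooth presentation: smoothness and the algebraic epimorphism property follow from base change from $X\to\mathcal X$ or $Y\to\mathcal Y$, and for $F$-surjectivity I note that an $F$-point of $\mathcal X\times_{\mathcal Z}\mathcal Y$ is a triple $(x,y,\phi)$ with $x\in\mathcal X(F)$, $y\in\mathcal Y(F)$, and $\phi$ an isomorphism in $\mathcal Z_F$ between their images; choosing lifts $\tilde x\in X(F)$ and $\tilde y\in Y(F)$ via $F$-surjectivity and composing $\phi$ with the canonical isomorphisms yields an isomorphism between the images of $\tilde x$ and $\tilde y$ in $\mathcal Z_F$, i.e., an $F$-point of $W$ lying above $(x,y,\phi)$.

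Next I would compute the scheme $R_W:=W\times_{\mathcal X\times_{\mathcal Z}\mathcal Y}W$ by a direct diagram chase of fibered categories, which exhibits a canonical isomorphism
$$R_W \;\simeq\; R_X\times_{\mathcal Z} R_Y,$$
where $R_X=X\times_{\mathcal X}X$ and $R_Y=Y\times_{\mathcal Y}Y$, and the maps to $\mathcal Z$ factor through $\mathcal X$ and $\mathcal Y$ respectively (with the two projections $R_X\rightrightarrows X\to\mathcal Z$ canonically identified by the tautological $\mathcal X$-isomorphism). By the definition of the functor $\mathcal X\mapsto\mathcal X(F)$, the left-hand side $(\mathcal X\times_{\mathcal Z}\mathcal Y)(F)$ is then the Nash stack associated to the Nash groupoid $[R_W(F)\rightrightarrows W(F)]$. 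On the right-hand side, $X(F)\to\mathcal X(F)$ and $Y(F)\to\mathcal Y(F)$ are presentations by construction, and Lemma \ref{fiberprod} presents $\mathcal X(F)\times_{\mathcal Z(F)}\mathcal Y(F)$ by the Nash fiber product $X(F)\times_{\mathcal Z(F)}Y(F)$. The identification $X(F)\times_{\mathcal Z(F)}Y(F)=W(F)$ as Nash manifolds follows from the representability of the diagonal of $\mathcal Z$, which ensures that isomorphisms in the groupoid $\mathcal Z_F$ between the images of $\tilde x$ and $\tilde y$ are precisely the $F$-points of the associated $\mathrm{Isom}$-scheme; unwinding the Nash groupoid structure on this presentation yields the same $[R_W(F)\rightrightarrows W(F)]$, which produces the desired equivalence.

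The hard part, though routine in flavor, is the two diagram chases: the algebraic identification $R_W\simeq R_X\times_{\mathcal Z} R_Y$ and the compatibility $(X\times_{\mathcal Z}Y)(F)\simeq X(F)\times_{\mathcal Z(F)}Y(F)$. Both reduce to the fact that morphisms from a scheme to $\mathcal Z$ are representable, which concretizes all fiber products over $\mathcal Z$ and over $\mathcal Z(F)$ as honest schemes and Nash manifolds. Once this bookkeeping is done, the $2$-naturality of the resulting comparison, and in particular its independence from the chosen $F$-surjective presentations, follows along the lines of Lemma \ref{equivpres}.
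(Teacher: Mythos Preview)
Your strategy---presenting both sides by the same Nash groupoid $[R_W(F)\rightrightarrows W(F)]$---is a legitimate and conceptually clean alternative to the paper's approach, which instead builds an explicit quasi-inverse by manipulating $R_Z(F)$-torsors over a test object $U$. Your verification that $W=X\times_{\mathcal Z}Y$ gives an $F$-surjective presentation of $\mathcal X\times_{\mathcal Z}\mathcal Y$, and the identification $R_W\simeq R_X\times_{\mathcal Z}R_Y$, are both correct. You are also more careful than the paper in flagging the smoothness hypothesis needed for the fiber product to be a smooth stack.

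The gap is in the step you label ``routine'': the identification $(X\times_{\mathcal Z}Y)(F)\simeq X(F)\times_{\mathcal Z(F)}Y(F)$. This is not a formal consequence of the representability of the diagonal of $\mathcal Z$. Representability tells you that $X\times_{\mathcal Z}Y$ is a scheme and hence that its $F$-points form a Nash manifold, but it does \emph{not} tell you that this Nash manifold coincides with the Nash-stack fiber product on the right. The right-hand side is defined via the groupoid $[R_Z(F)\rightrightarrows Z(F)]$: an isomorphism in $\mathcal Z(F)_U$ between the images of $a:U\to X(F)$ and $b:U\to Y(F)$ is an isomorphism of the pulled-back $R_Z(F)$-torsors $a^*(X\times_{\mathcal Z}Z)(F)$ and $b^*(Y\times_{\mathcal Z}Z)(F)$, not a priori a Nash section of the algebraic $\Isom$-scheme. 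Matching these two descriptions requires exactly the torsor computation the paper performs (lifting \'etale-locally to $(X\times_{\mathcal Z}Z)(F)$ and $(Y\times_{\mathcal Z}Z)(F)$, then using the torsor isomorphism to land in $(X\times_{\mathcal Z}Y\times_{\mathcal Z}Z)(F)$). In other words, the ``special case'' you reduce to \emph{is} the content of the proposition; you have reorganized the argument but not bypassed its core. To complete your proof you must carry out this identification explicitly, and doing so will reproduce the paper's torsor argument.
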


\begin{proof}
The $1$-morphism from $(\mathcal X \times_{\mathcal Z} \mathcal Y)(F)$ to $\mathcal X(F) \times_{\mathcal Z(F)}  \mathcal Y(F)$ is clear.

Let us construct its quasi-inverse. First, we notice that the proposition is clearly true iff all three stacks are schemes. We will reduce the general case to that. Denote by $X\to\mathcal X$, $Y\to \mathcal Y$, $Z\to\mathcal Z$ three $F$-surjective presentations.

Recall that an object in $\mathcal X(F) \times_{\mathcal Z(F)} \times Y(F)$ over $U\in \ob(\mathfrak N)$ consists of morphisms $x:U\to \mathcal X(F)$, $y:U\to Y(F)$ together with an isomorphism of their compositions with the maps to $\mathcal Z(F)$.

By definition of the functor on $1$-morphisms, the composition $U\to \mathcal X(F)\to\mathcal Z(F)$ is obtained, \'etale-locally, by a map
$$\alpha: U\to (X\times_{\mathcal Z} Z)(F).$$
(We assume that all statements that hold \'etale-locally hold already over our Nash manifold $U$, in order to simplify notation.)
Similarly for the composition $U\to \mathcal X(F)\to\mathcal Z(F)$:
$$\beta: U\to (Y\times_{\mathcal Z} Z)(F).$$

Now, these maps depend on choices of sections, and it is not necessarily the case that composing with projection to $Z(F)$ the two maps will coincide. (Denote these compositions by $\bar\alpha$, $\bar\beta$.) However, the isomorphism of their compositions with the maps to $\mathcal Z(F)$ is an isomorphism between the $R_Z(F)$-torsors
$$ T:= U\times_{\bar\alpha, Z(F), s} R_Z(F) \simeq U\times_{\bar\beta, Z(F), s} R_Z(F),$$
where these isomorphisms are over $Z(F)$ with respect to the ``target'' map $t: R_Z(F)\to Z(F)$.

Therefore, the compositions of the two sequences of maps below coincide:

$$ T \xrightarrow{\alpha \times_{Z(F),s} R_Z(F)} (X\times_{\mathcal Z} Z)(F) \times_{Z(F),s} R_{Z(F)} = (X\times_{\mathcal Z} R_Z)(F) \xrightarrow{t}$$
$$\xrightarrow{t} (X\times_{\mathcal Z} Z)(F) \to Z(F),$$
$$T \xrightarrow{\beta \times_{Z(F),s} R_Z(F)} (Y\times_{\mathcal Z} Z)(F) \times_{Z(F),s} R_{Z(F)} = (Y\times_{\mathcal Z} R_Z)(F) \xrightarrow{t}$$
$$\xrightarrow{t} (Y\times_{\mathcal Z} Z)(F) \to Z(F).$$

Hence we get a map
$$ T\to (X\times_{\mathcal Z} Z)(F) \times_{Z(F)} (Y\times_{\mathcal Z} Z)(F) = (X\times_{\mathcal Z} Y \times_{\mathcal Z} Z)(F).$$

Taking a section, \'etale-locally, of the map $T\to U$, and composing with the map $(X\times_{\mathcal Z} Y \times_{\mathcal Z} Z)(F)\to (X\times_{\mathcal Z} Y)(F)$, 
we obtain that the map $U\to \mathcal X(F) \times_{\mathcal Z(F)} \mathcal Y(F)$ factors \'etale-locally through $(X\times_{\mathcal Z} Y)(F)$, and hence factors through $(\mathcal X\times_{\mathcal Z}\mathcal Y)(F)$.
\end{proof}

\subsection{Quotients by linear groups} \label{Nash-quotient}

Consider the case of an algebraic quotient stack $\mathcal X=X/G$, where $X$ is a smooth variety over a local field $F$ and $G$ is a (smooth) linear algebraic group acting on it.

The Nash stack associated to the Nash groupoid
$$[X(F)\times G(F)\rightrightarrows X(F)]$$
will be denoted by $X(F)/G(F)$.

Let $T$ be a $G$-torsor, and set\footnote{Throughout the paper we will be using the same notation for a right $G$-torsor and the corresponding left $G$-torsor obtained by inverting the $G$-action; for example, $T\times^G T$ is canonically the trivial $G$-torsor.} $X^T=X\times^G T$, $G^T = \Aut^G(T)$ (an inner form of $G$). Then $X^T$ carries a $G^T$-action, and we have a canonical equivalence of algebraic stacks:
\begin{equation}\label{isom-stacks} X^T/G^T \simeq X/G.\end{equation}

\begin{proposition}\label{groupquotient}
The algebraic stack $\mathcal X$ admits an $F$-surjective presentation and hence $\mathfrak X=\mathcal X(F)$ is a Nash stack. It is canonically equivalent to the Nash stack $X_\rho(F)/\GL_N(F)$, for any faithful representation $\rho:G\hookrightarrow \GL_N$ (where $N\in \mathbb Z_{>0}$) and $X_\rho= X\times^G \GL_N$.

Equivalently, via \eqref{isom-stacks} it is canonically equivalent with the disjoint union
\begin{equation}
\bigsqcup_T X^T(F)/G^T(F),
\end{equation}
where $T$ runs over a set of representatives for the isomorphism classes of $G$-torsors over $F$. 
\end{proposition}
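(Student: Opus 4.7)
The plan is to first produce an explicit $F$-surjective algebraic presentation of $\mathcal X$ using the faithful representation $\rho$, then read off the identification $\mathfrak X \simeq X_\rho(F)/\GL_N(F)$ from the general machinery of \S\ref{algtoNash}, and finally assemble the disjoint union decomposition one pure inner form at a time via the algebraic isomorphism \eqref{isom-stacks}.

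First, I would fix $\rho: G \hookrightarrow \GL_N$ and form $X_\rho = X \times^G \GL_N$ with the residual right $\GL_N$-action on the second factor. Since this action is free and commutes with the diagonal $G$-action, the quotient stack $X_\rho/\GL_N$ is canonically isomorphic to $X/G = \mathcal X$ as algebraic stacks, and the natural map $X_\rho \to \mathcal X$ is a $\GL_N$-torsor, in particular smooth and surjective. To check that it is $F$-surjective in the sense of \S\ref{algtoNash}, take any $\xi = (T, \phi : T \to X) \in \mathcal X(F)$; the fiber of $X_\rho \to \mathcal X$ over $\xi$ is the associated $\GL_N$-torsor $T \times^G \GL_N$, which is trivial over $F$ by Hilbert~90, since $H^1(F, \GL_N) = 0$. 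Hence $\xi$ lifts to an $F$-point of $X_\rho$, and the presentation is $F$-surjective.

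Second, since the $\GL_N$-action is free we have $X_\rho \times_{\mathcal X} X_\rho = X_\rho \times \GL_N$ canonically, so the Nash groupoid attached to this presentation at $F$-points is precisely $[X_\rho(F) \times \GL_N(F) \rightrightarrows X_\rho(F)]$, whose associated Nash stack is $X_\rho(F)/\GL_N(F)$ by definition. Thus $\mathfrak X = \mathcal X(F)$ is a Nash stack, canonically equivalent to $X_\rho(F)/\GL_N(F)$; independence of the choice of $\rho$ is immediate from Lemma~\ref{equivpres}.

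For the disjoint union description, I would invoke the equivalence \eqref{isom-stacks} separately for each isomorphism class of $G$-torsor $T$, obtaining 1-morphisms of Nash stacks
$$\iota_T : X^T(F)/G^T(F) \longrightarrow \mathfrak X.$$
The presentation $X^T \to X/G$ produced this way is \emph{not} $F$-surjective in general: its essential image consists precisely of those $F$-points of $\mathcal X$ whose underlying $G$-torsor is isomorphic to $T$, because an $F$-point of $X^T$ corresponds to the trivial $G^T$-torsor with equivariant map to $X^T$, which under \eqref{isom-stacks} transports to the $G$-torsor $T$ with some equivariant map to $X$. On that piece of $\mathfrak X$ the functor $\iota_T$ is an equivalence, since automorphisms of the $F$-point $(T,\phi)$ in $\mathcal X$ are by definition $G$-equivariant automorphisms of $T$ preserving $\phi$, i.e.\ elements of the stabilizer of the corresponding point in $X^T(F)$ under the $G^T(F)$-action. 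Summing $\iota_T$ over a set of representatives for $H^1(F, G)$ then yields essential surjectivity, and vanishing of morphisms between $F$-points of distinct torsor types (immediate from the groupoid description of $\mathcal X_F$) gives the desired equivalence $\bigsqcup_T X^T(F)/G^T(F) \xrightarrow{\sim} \mathfrak X$.

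The delicate point is keeping these identifications canonical as equivalences of \emph{Nash stacks}, not merely of abstract groupoids: one must verify that the structure maps involved (the contracted product, the free $\GL_N$-action on $X_\rho$, the twisting isomorphism $X^T/G^T \simeq X/G$) are Nash morphisms on $F$-points, and that the canonical equivalences of Lemma~\ref{equivpres} are Nash. This is routine given that all the algebraic constructions are of finite type and compatible with fiber products, a fact already established in \S\ref{algtoNash}.
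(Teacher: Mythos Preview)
Your proposal is correct and follows essentially the same approach as the paper: both use Hilbert~90 for $\GL_N$ to produce the $F$-surjective presentation $X_\rho \to \mathcal X$, and both argue the disjoint union description by noting that $F$-points of $\mathcal X$ lying over non-isomorphic $G$-torsors are disjoint. The only cosmetic difference is that the paper packages the second part by taking $\tilde X = \bigsqcup_T X^T$ as a single $F$-surjective presentation and computing $R_{\tilde X}(F) = \bigsqcup_T R_{X^T}(F)$ directly, whereas you assemble the equivalence from the individual $\iota_T$; these are the same argument in slightly different clothing.
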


Notice that $X^T(F)$ could be empty.

\begin{remark}
In characteristic zero, the set $H^1(F,G)$ classifying $G$-torsors is finite, thus the above disjoint union is finite and, hence, a Nash manifold. In positive characteristic, any linear group can be embedded in a group with finite (or even trivial) cohomology, but since we are working with $F$-analytic manifolds in this case, we do not need the above disjoint union to be finite.
\end{remark}

\begin{proof}
If $\rho:G\hookrightarrow \GL_N$ is a faithful representation and $X_\rho$ is as in the statement of the proposition, we have a canonical equivalence of algebraic stacks:
\begin{equation}\label{isomorphism}
\mathcal X = X/G = X_\rho/\GL_N.
\end{equation}
Because $\GL_N$ has trivial Galois cohomology (Hilbert 90), the set $X_\rho(F)$ surjects onto the set of isomorphism classes of $F$-points of $\mathcal X$.  
Therefore, $\mathfrak X$ is a well-defined Nash stack.

Alternatively, we observe that the presentation $\tilde X:=\bigsqcup_T X^T \to \mathcal X$ is surjective on $F$-points, and hence we get that $\mathfrak X$ is equivalent to the stack defined by the Nash groupoid $[R_{\tilde X}(F)\rightrightarrows \tilde X(F)]$, where $R_{\tilde X} = \tilde X \times_{\mathcal X}\tilde X$. For non-isomorphic torsors $T_1, T_2$, the images of $X^{T_1}(F)$ and $X^{T_2}(F)$ in $\mathcal X(F)$ are disjoint (consist of different isomorphism classes), and therefore we have
$$R_{\tilde X}(F) = \bigsqcup_T R_{X^T}(F),$$
where $R_{X^T} = X^T \times_{\mathcal X} X^T$.
\end{proof}

\section{Local Schwartz spaces} \label{sec:localSchwartz}

\subsection{Schwartz measures on Nash manifolds}

In what follows we will need to do some homological algebra in the quasi-abelian category of nuclear Fr\'echet spaces; I point the reader to \cite{Schneiders, Buehler, FS} for presentations of the topic. The basic notions have been summarized in Appendix \ref{app:homology}. We recall that a complex of nuclear Fr\'echet spaces is \emph{strictly exact} if the \emph{set-theoretic} images and kernels coincide; in particular, such a complex is made up of \emph{strict} morphisms, i.e., morphisms with closed image. Derived categories of a quasi-abelian category are well-defined, by localizing the homotopy category of complexes with respect to the null system of strictly exact complexes. Two complexes are said to be (strictly) \emph{quasi-isomorphic} if they have the same image in the derived category; equivalently, if they are dominated by a third one by maps whose mapping cones are strictly exact.
In the non-Archimedean case, of course, the corresponding spaces are simply vector spaces without topology, and any mention of Fr\'echet spaces should be ignored.\footnote{Unless one wants to work with \emph{almost smooth} functions, cf.\ \cite[Appendix A]{SaBE1}.}

The space of Schwartz functions on Nash manifolds is well-known, and has been studied as a cosheaf on the restricted topology of semi-algebraic open sets in \cite{DuCloux, AGSchwartz}. Again, in Appendix \ref{app:homology}, I recall the basic notions that we will be using about cosheaves. In the Archimedean case, the sections of the Schwartz cosheaf over an affine open semi-algebraic set $U$ are those functions $f$ on $U$ such that $Df$ is bounded for every Nash differential operator on $U$. For the notions of Nash differential operators, densities, measures etc.\ cf.\ \cite[\S 3.4, 3.5 and A.1]{AGSchwartz}. In the non-Archimedean case, Schwartz functions are simply locally constant, compactly supported functions. 
By \cite[Theorem 5.4.3]{AGSchwartz}, this cosheaf is (strictly) \emph{flabby}: extension maps are strict and injective. Hence (cf.\ Lemma \ref{flabby}), it is acyclic for the semi-algebraic topology.

\begin{remark} \label{restriction-manifolds} For a finite extension $F'/F$ of local fields, and a Nash $F'$-manifold $X$, the space of Schwartz functions on $X$ does not change if we consider $X$ as a Nash $F$-manifold, and more generally its cosheaf of Schwartz functions for the $F'$-semi-algebraic topology is simply the restriction of the corresponding cosheaf for the $F$-semi-algebraic topology. This, together with an acyclicity result that we will prove (Proposition \ref{Poincare}), will render it indifferent for Schwartz spaces whether a Nash $F'$-manifold, or stack, is considered as an $F'$-manifold/stack or an $F$-manifold/stack.
\end{remark}

Functions do not have well-defined push-forwards, and therefore we will instead be working with spaces of \emph{Schwartz measures}: these are products of Schwartz functions by Nash measures. In the non-Archimedean case, Schwartz functions are by definition locally constant and compactly supported, and hence so are Schwartz measures. We will be denoting the space of Schwartz measures on a Nash manifold $X$ by $\mathcal S(X)$.
I will now show that Schwartz measures actually form an acyclic cosheaf in the \emph{smooth} topology over a Nash manifold. 

Let $\mathfrak N_\infty$ be the category whose objects are Nash manifolds (that is, the objects of $\mathfrak N$) and whose morphisms are only those morphisms of Nash manifolds which are smooth (=submersive). Consider $\mathfrak N_\infty$ as a site, with the Grothendieck topology generated by smooth covers, that is collections of smooth morphisms $\{p_i:U_i\to X\}$ such that $\bigsqcup_i p_i: \bigsqcup_i U_i \to X$ is surjective.

\begin{proposition}\label{cosheaf-smooth}
The spaces of Schwartz measures form a cosheaf of nuclear Fr\'echet spaces on $\mathfrak N_\infty$, with strict push-forward maps. That is, every smooth morphism $\pi:X\to Y$ of Nash manifolds induces, functorially, a strict morphism: $\pi_!:\mathcal S(X)\to \mathcal S(Y)$, and if $\pi$ is surjective, it induces a coequalizer diagram:
\begin{equation}\label{sheaf-smooth} \mathcal S(X\times_Y X) \overset{s_!}{\underset{t_!}\rightrightarrows} \mathcal S(X) \xrightarrow{\pi_!} \mathcal S(Y).\end{equation}
\end{proposition}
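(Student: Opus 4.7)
The plan is first to construct the push-forward $\pi_!$ explicitly in local charts, then to deduce the two structural properties --- strictness and the coequalizer condition --- by reducing to a trivial projection and invoking the known cosheaf property of $\mathcal S$ for the semi-algebraic (open) topology, which was already cited in the paragraph preceding the statement.

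For the construction: by the Nash submersion theorem, every point of $X$ has a Nash neighborhood on which $\pi$ is isomorphic to a projection $V\times W\to V$ with $V$ a Nash open of $Y$. Using $\mathcal S(V\times W)=\mathcal S(V)\hat\otimes\mathcal S(W)$, define $\pi_!$ on measures supported in such a chart as $\mathrm{id}\hat\otimes\int_W$, which is continuous. Glue via a Nash partition of unity on $X$; independence of the partition is Fubini on double charts. Functoriality $(\pi\circ\rho)_!=\pi_!\circ\rho_!$ for composable submersions is again Fubini in local coordinates. For strictness (and the stronger statement that $\pi_!$ is surjective when $\pi$ is a smooth surjection): local sections of $\pi$ exist over the elements of a finite Nash open cover $\{V_j\}$ of $Y$; if $\{\phi_j\}$ is a subordinate partition of unity and $\mu\in\mathcal S(Y)$, then each $\phi_j\mu$ is lifted to $(\phi_j\mu)\otimes g_j\in\mathcal S(V_j\times W_j)$ with $g_j\in\mathcal S(W_j)$ of integral $1$, and $\pi_!$ of the sum equals $\mu$. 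Continuous surjectivity forces strictness, by the open mapping theorem in the category of nuclear Fr\'echet spaces (and tautologically in the non-Archimedean case).

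For the coequalizer assertion, the product case is essentially a direct calculation. With $X=Y\times W\to Y$, the diagram becomes
\begin{equation*}
\mathcal S(Y)\hat\otimes\mathcal S(W)\hat\otimes\mathcal S(W)\overset{\mathrm{id}\hat\otimes\mathrm{id}\hat\otimes\int}{\underset{\mathrm{id}\hat\otimes\int\hat\otimes\mathrm{id}}\rightrightarrows}\mathcal S(Y)\hat\otimes\mathcal S(W)\xrightarrow{\mathrm{id}\hat\otimes\int}\mathcal S(Y).
\end{equation*}
Fix $g_0\in\mathcal S(W)$ with $\int g_0=1$. For $\mu\in\ker(\pi_!)$, the element $\nu$ obtained by inserting $g_0$ in the middle factor of $\mu$ satisfies $(t_!-s_!)(\nu)=\mu$; this map $\mu\mapsto\nu$ is a continuous section of $t_!-s_!$ on $\ker(\pi_!)$, so the sequence is indeed a coequalizer.

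For a general smooth surjection $\pi$, cover $X$ by finitely many Nash opens $\{U_i\}$ on which $\pi$ is a trivial projection as above, in such a way that the images cover $Y$ by Nash opens $\{V_i\}$. The cosheaf property of $\mathcal S$ in the semi-algebraic topology (\cite{AGSchwartz}, recalled above) gives strictly exact \v Cech resolutions of $\mathcal S(X)$, of $\mathcal S(X\times_Y X)$, and of $\mathcal S(Y)$ by Schwartz spaces of finite intersections $U_{i_1\cdots i_r}$ and $V_{j_1\cdots j_s}$. Each of these finite intersections remains a trivial projection under $\pi$, so Step 3 applies termwise and the coequalizer assertion follows by a standard double-complex argument (all differentials strict, all rows exact, pass to totalizations). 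The main obstacle I foresee is precisely this last step: keeping track of strictness at every stage of the Fr\'echet double complex, which is why establishing surjectivity (and hence strictness) of $\pi_!$ in full generality must precede the gluing rather than be a consequence of it.
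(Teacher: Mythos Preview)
Your steps 1--3 are correct and close in spirit to the paper. The gap is in step 4: the claim that ``each of these finite intersections remains a trivial projection under $\pi$'' is false. If $U_i\simeq V_i\times W_i$ and $U_j\simeq V_j\times W_j$, the intersection $U_i\cap U_j$ is an open subset of $V_i\times W_i$ whose fibers over $V_i\cap V_j$ can vary with the base point; it need not be of the form $V'\times W'$. So your termwise application of step 3 to the \v Cech double complex does not go through, and the double-complex gluing argument collapses at degree $\ge 1$.

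The paper sidesteps this entirely: rather than glue, it observes that the explicit section you built in step 3 works \emph{globally}, once surjectivity of $\pi_!$ is known, provided one inserts one extra trick. Given $f\in\ker(\pi_!)$, first factor $f=\pi^*\phi\cdot f_0$ with $\phi$ a strictly positive Schwartz function on $Y$ (this is a standard consequence of the fact that Schwartz functions factor as products, \cite[Theorem A.1.1]{AGKl}); positivity of $\phi$ forces $\pi_! f_0=0$ as well. Now choose $h\in\mathcal S(X)$ (divided by a fixed base measure on $Y$, so that $\pi_! h$ is a function) with $\pi_! h=\phi$, and set $F(x_1,x_2)=f_0(x_1)h(x_2)$ on $X\times_Y X$. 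Then $(s_!-t_!)F=f_0\cdot\pi^*\phi-(\pi_! f_0)\cdot h=f$. The point of the factorization is that in your product-case argument you implicitly used $h=1\otimes g_0$, which is \emph{not} a Schwartz measure on $X$ when $X$ is not a product; replacing the target $1$ by a Schwartz $\phi$ repairs this. No gluing is needed, and the only local input is the surjectivity of $\pi_!$, which you already established.
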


The proof will rely on the following:

\begin{proposition}
Let $F\ne \CC$. Let $f:X\to Y$ be a smooth morphism of Nash $F$-manifolds. We can cover $X$ by a finite number of open subsets $U_i$ such that for each $i$ the restriction of $f$ to $U_i$ factors through an open embedding
$$ e_i: U_i \hookrightarrow Y\times \mathbb A^n(F)$$
and the projection to $Y$.

Moreover, we may choose the data so that the image of $e_i$ contains the zero section: $f(U_i)\times\{0\}$; in particular, there are smooth local sections of $f$: $f(U_i)\to U_i$.

Finally, choosing the data this way, there is a(n $F$-valued) Nash function $T_i$ on $f(U_i)$, for every $i$, with the property that the tubular neighborhood
\begin{equation}\label{tubularnbhd} B_{T_i}:= \{(y,v)\in f(U_i)\times F^n\,\, | \,\, |T_i(y)|\cdot \Vert v\Vert \le 1\},
\end{equation} 
where $\Vert\bullet\Vert$ is some fixed norm on $F^n$, is contained in $e_i(U_i)$.
\end{proposition}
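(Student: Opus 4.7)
The plan is to reduce to the local model of a submersion by constructing Nash ``straightening'' charts and then controlling their shape by a semi-algebraic width function.

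First, I would invoke the Nash (semi-algebraic analytic) implicit function theorem, which is available for $F\ne\CC$: since $f$ is a submersion, for each $x\in X$ one finds Nash functions $g_1,\ldots,g_n$ (with $n=\dim X-\dim Y$) on some semi-algebraic open neighborhood $U$ of $x$ such that
\[
e_x:=(f,g_1,\ldots,g_n)\colon U\longrightarrow Y\times \mathbb A^n(F)
\]
is a Nash open embedding with $e_x(x)=(f(x),0)$. Shrinking $U$ to $e_x^{-1}(W\times \mathbb A^n(F))$, where $W:=\{y\in f(U):(y,0)\in e_x(U)\}$ is open semi-algebraic in $Y$, arranges that $f(U)=W$ and that the image of $e_x$ contains the zero section $W\times\{0\}$; in particular $y\mapsto e_x^{-1}(y,0)$ is a Nash section of $f$ over $W$. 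This produces, for each $x\in X$, a chart of the kind required by the first two assertions.

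Second, to extract a \emph{finite} subcover, I would use that a Nash manifold admits by definition a finite open cover by semi-algebraic pieces, each embedded as a Nash submanifold of some $\mathbb A^N(F)$. On each such piece, the charts just constructed form a semi-algebraic open cover, and any such cover admits a finite subcover (via Tarski--Seidenberg and semi-algebraic cell decomposition in the real case, and Macintyre's theorem in the non-Archimedean case). This yields the desired finite family $\{U_i\}$, together with the embeddings $e_i$ and local sections.

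Third, I would produce the Nash tube function $T_i$. The image $e_i(U_i)$ is open semi-algebraic in $Y\times \mathbb A^n(F)$ and contains $f(U_i)\times\{0\}$, so the function
\[
\rho_i(y):=\sup\bigl\{r\ge 0:\{y\}\times\{v\in F^n:\|v\|\le r\}\subset e_i(U_i)\bigr\}
\]
is strictly positive on $f(U_i)$, and its graph is semi-algebraic by Tarski--Seidenberg (respectively Macintyre). Any tube $\{(y,v):\|v\|\le \rho_i(y)\}$ lies inside $e_i(U_i)$, so it suffices to take $T_i$ to be a Nash function on $f(U_i)$ with $|T_i(y)|\ge 1/\rho_i(y)$. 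The main obstacle I anticipate is precisely this passage from a merely semi-algebraic to a Nash object: the function $1/\rho_i$ need not itself be Nash, and one needs a Łojasiewicz-type inequality (together, possibly, with a further semi-algebraic refinement of the cover to keep the domain tame) to majorize it by a genuine Nash function, which one then takes as $T_i$.
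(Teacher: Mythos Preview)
The paper does not prove this proposition from scratch: it simply cites Aizenbud--Gourevitch, taking the tubular neighborhood from \cite[Theorem 3.6.2]{AGSchwartz} and the local factorization $U_i\hookrightarrow Y\times\mathbb A^n(F)$ together with the Nash sections from \cite[Proposition A.0.4 and 2.4.3]{AGdeRham}, and remarks that the non-Archimedean case is analogous but easier. Your proposal is therefore not an alternative route so much as a sketch of what those references actually carry out; your steps 1 and 3 are on target, and you correctly isolate the \L ojasiewicz-type majorization of $1/\rho_i$ by a genuine Nash function as the substantive point in step 3.

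There is, however, a gap in step 2. The assertion that ``any semi-algebraic open cover admits a finite subcover'' is false as stated: affine Nash manifolds are not quasi-compact in the restricted topology (e.g.\ $\RR=\bigcup_n(-n,n)$), and neither Tarski--Seidenberg nor cell decomposition yields finite subcovers of arbitrary covers. Your family $\{U_x\}_{x\in X}$ is produced point by point, with ad hoc choices of the Nash functions $g_1,\dots,g_n$ at each $x$, so there is no a priori reason a finite subfamily suffices. What makes the argument in the cited references work is that the straightening can be done \emph{definably}: after embedding $X$ and $Y$ in affine space, the choice of which $n$ coordinate directions are transverse to the fibers, and the locus on which the resulting map $(f,g)$ is an open embedding, are governed by semi-algebraic conditions in $x$; a cell decomposition (or Hardt-type trivialization) adapted to these conditions then yields finitely many pieces, on each of which a single chart of the required form works. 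If you rephrase step 1 so that the charts form a semi-algebraic \emph{family} rather than a pointwise collection, step 2 goes through.
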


Notice that the first statement is the analog of the \'etale factorization of smooth morphisms in algebraic geometry, though here we can replace \'etale by an open embedding, because (as a special case of this proposition) every \'etale map admits semi-algebraic local sections (when $F\ne \mathbb C$). We notice that the case $F=\CC$ is excluded, because the semi-algebraic topology is not sufficiently fine in this case; most importantly, sets of the form $|x|<c$ in $\mathbb A^1(\CC)$ are not semi-algebraic; however, we will be able to use this proposition by restriction of scalars from $\CC$ to $\RR$, when we need it.

\begin{proof}
Starting from the last statement, the existence of such tubular neighborhoods is \cite[Theorem 3.6.2]{AGSchwartz} over $\mathbb R$. 

Using this, Aizenbud and Gourevitch prove the existence of local sections in \cite[2.4.3]{AGdeRham}. In the course of the proof of this result, namely in the proof of \cite[Proposition A.0.4]{AGdeRham}, they show the existence of local open embeddings $U_i \hookrightarrow Y\times \mathbb A^n(F)$.

The proofs in the non-Archimedean case are similar, but easier.
\end{proof}

\begin{proof}[Proof of Proposition \ref{cosheaf-smooth}]
We may, and will, assume that $F\ne \CC$, by considering any complex Nash manifolds $X$ and $Y$ as in the statement of the proposition as Nash $\RR$-manifolds.

Since it is known that Schwartz spaces form a co\-sheaf (clearly with strict push-forward maps) for the restricted topology of semi-algebraic open covers,  we may replace $X$ by a cover $U_i$ such as in the previous proposition, and $Y$ by the image of $U_i$. Thus, we may assume that
$$Y\times\{0\} \subset X\subset Y\times \mathbb A^n(F),$$
with the map $f$ being the standard projection to $Y$. We also fix a Nash function as in the proposition (here denoted $T$), such that $X\supset B_T$. We will denote the set $B_T$ by $X_T$, in order to remember that it is a subset of $X$. 
We may assume that $T$ is bounded away from zero, so that its absolute value is a smooth function on $Y$.

For a direct product of Nash manifolds $Z\times U$ it is known that $\mathcal S(Z\times U) = \mathcal S(Z)\hat\otimes\mathcal S(U)$. Thus, the push-forward of measures in $\mathcal S(Z\times U)$ to $Z$ clearly lands in $\mathcal S(Z)$, and the map $\mathcal S(Z\times U)\to \mathcal S(Z)$ is a strict epimorphism. In our setting, since $\mathcal S(X)\subset \mathcal S(Y\times\mathbb A^n(F))$, we get that push-forward of Schwartz measures on $X$ is a continuous map into Schwartz measures on $Y$. On the other hand, the Nash function $T$ essentially allows us to treat a subset of $X$ as a product space. Namely, we can fix a Schwartz measure $\mu$ on the open unit ball in $F^n$ with total mass $1$, and then notice that for a given $f\in \mathcal S(Y)$ the measure
$$ (y,a) \mapsto f(y) \mu(T(y) a) \in \mathcal S(Y\times \mathbb A^n(F))$$
actually belongs to $\mathcal S(X_T)$ and its push-forward to $Y$ is $f$. Thus, the map $\mathcal S(X)\to \mathcal S(Y)$ is a strict epimorphism.

Now consider the diagram \eqref{sheaf-smooth}. We have $\pi_!\circ s_!= \pi_! \circ t_!$ since the push-forward of measures corresponding to a commutative diagram of spaces is commutative. We will show that any measure $f$ in the kernel of $\mathcal S(X)\to \mathcal S(Y)$ is of the form $(s_!-t_!) F$, for $F\in \mathcal S(X\times_Y X)$. 

It is known \cite[Theorem A.1.1]{AGKl} that a Schwartz function can be written as a product of two Schwartz functions.  
\red{In fact, there is a positive Schwartz function $\phi$ on $Y$ and an $f_0\in \mathcal S(X)$ such that $f = \pi^*\phi \cdot f_0$.} Notice that $\pi_! f_0$ is also zero.

Choose a nowhere vanishing, positive smooth measure of polynomial growth $\mu$ on $Y$, and let $\mathcal S'(X)$ be ``the quotient of elements of $\mathcal S(X)$ by $\mu$'', by which I just mean that push-forwards of elements of $\mathcal S'(X)$ to $Y$ will be thought of as Schwartz functions by dividing by $\mu$. (There is a canonical way to think of $\mathcal S'(X)$ as some space of densities on $X$ which are ``measures in the vertical direction and functions in the horizontal''.)

Choose $h\in \mathcal S'(X)$ with $\pi_! h = \phi$ as Schwartz functions on $Y$. 
The product $F(x_1,x_2)=f_0(x_1) h(x_2)$ is a well-defined Schwartz measure on $X\times_Y X$, and we have
$$(s_!-t_!) F (x) = f_0(x) \pi_! h(\pi(x)) - \pi_!f_0(\pi(x)) h(\pi(x)).$$

We have $\pi_!f_0 = 0$ and $\pi_! h = \phi$, therefore we get:
$$(s_!-t_!) F (x) = f_0(x) \phi(\pi(x)) = f(x).$$
This proves the cosheaf condition.
\end{proof}

Consider, for every $X\in\ob(\mathfrak N)$, the smooth site $X_\infty$ over $X$, whose objects are all smooth morphisms $Y\to X$ from other Nash manifolds, a morphism from $(Y\to X)$ to $(Z\to X)$ is a smooth map $Y\to Z$ which commutes with the maps to $X$, and covering families are \emph{finite} families $\{U_i\to X\}_i$ which are jointly surjective. The site $\mathfrak N_\infty$ is therefore the smooth site of $X=$ a point. 

The cosheaf of Schwartz functions on $\mathfrak N_\infty$ induces, by restriction, a cosheaf on $X_\infty$. We will be denoting this cosheaf by $\mathfrak S_X$, to distinguish it from the space $\mathcal S(X)$ of its global sections. Since $\mathfrak S_X(U)=\mathcal S(U)$ for every smooth $U\to X$, i.e., this space depends only on $U$ and not on $X$, we may also drop the index $X$ and denote this cosheaf by $\mathfrak S$, without creating any confusion.

 The following is the ``Poincar\'e lemma'' stating that this cosheaf is (strictly) acyclic over any Nash manifold.

\begin{proposition}\label{Poincare}
For every smooth morphism $\pi:X\to Y$ of Nash manifolds, denote by $[X]_Y^n$ the $n$-fold fiber product $X\times_Y X\times\cdots \times_Y X$ and let $\partial_n: \mathcal S([X]^{n+1}_Y) \to \mathcal S([X]^n_Y)$ be the alternating sum of push-forwards:
$$ f\mapsto \sum_{i=0}^n (-1)^i\pi_{i,!} f,$$
where $\pi_i: [X]^{n+1}_Y\to [X]^n_Y$ is the omission of the $i$-th variable (counting from zero).  

For any smooth morphism $X\to Y$ the sequence
$$ \cdots \xrightarrow{\partial_n}\mathcal S([X]^n_Y) \xrightarrow{\partial_{n-1}} \cdots \mathcal S(X\times_Y X) \xrightarrow{\partial_1} \mathcal S(X)\to 0$$
is strictly exact.
\end{proposition}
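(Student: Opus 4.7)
The plan is to read the statement as strict exactness of the augmented \v Cech complex
\[
\cdots \to \mathcal S([X]^n_Y) \to \cdots \to \mathcal S(X\times_Y X) \xrightarrow{\partial_1} \mathcal S(X) \xrightarrow{\pi_!} \mathcal S(Y) \to 0,
\]
since by Proposition \ref{cosheaf-smooth} the last two terms already form a coequalizer, so only the vanishing of higher \v Cech homology remains. My approach is to build an explicit contracting homotopy in a product model and then glue using the flabbiness of the semi-algebraic cosheaf.

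\emph{Step 1 (product case).} Suppose $X = Y \times Z$. Pick $\mu \in \mathcal S(Z)$ of total mass one: a rescaled bump function in the Archimedean case, or the normalized characteristic function of a compact open in the non-Archimedean case. Using the identifications $\mathcal S([X]^n_Y) = \mathcal S(Y \times Z^n) \cong \mathcal S(Y) \hat\otimes \mathcal S(Z)^{\hat\otimes n}$ (with the convention $Z^0 = \mathrm{pt}$, so $\mathcal S(Y\times Z^0) = \mathcal S(Y)$), define
\[
h_n \colon \mathcal S(Y \times Z^n) \to \mathcal S(Y \times Z^{n+1}), \qquad h_n(f)(y, z_0, z_1, \ldots, z_n) := \mu(z_0)\, f(y, z_1, \ldots, z_n).
\]
Each $h_n$ is strict and continuous. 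Isolating the $i=0$ term in $\partial_n \circ h_n$ (which evaluates to $f$ because $\int_Z \mu = 1$) and recognizing the remaining terms as $-h_{n-1}\circ \partial_{n-1}$ after reindexing gives the identity $\partial \circ h + h \circ \partial = \operatorname{id}$, proving strict acyclicity.

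\emph{Step 2 (localization).} For a general smooth $\pi \colon X \to Y$, the structure lemma preceding the proposition provides a finite cover of $X$ by semi-algebraic opens $U_i$ such that each $U_i$ embeds openly in $V_i \times \mathbb A^{n_i}(F)$ over $V_i = \pi(U_i) \subset Y$, with image containing a tubular neighborhood $B_{T_i}$ of the zero section; inside $B_{T_i}$ the map is literally a product. I would invoke the flabbiness of the Schwartz cosheaf for the semi-algebraic topology (Theorem 5.4.3 of \cite{AGSchwartz}) applied \emph{simultaneously} to the covers $\{U_{i_1}\times_Y\cdots\times_Y U_{i_k}\}$ of $[X]^k_Y$ for all $k$, to assemble a bicomplex whose vertical direction is the \v Cech complex of the proposition and whose horizontal direction computes, with vanishing higher homology, the spaces $\mathcal S([X]^k_Y)$.

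\emph{Step 3 (gluing).} The main obstacle is then that the local product decompositions do not coincide: the bases $V_i$ only jointly cover $Y$, and the embeddings $U_i\hookrightarrow V_i\times F^{n_i}$ are not compatible on overlaps. I would handle this by showing that every horizontal row of the bicomplex is vertically acyclic. Such a row consists of spaces $\mathcal S(U_{i_0} \times_Y \cdots \times_Y U_{i_k})$, and after restricting to the open semi-algebraic subset of $V_{i_0}\cap\cdots\cap V_{i_k}$ over which all the relevant tubular neighborhoods $B_{T_{i_j}}$ provide honest product structures, Step 1 furnishes a contracting homotopy with $Z$ the product of the fibre models; flabbiness again lets me piece these together over the base. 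A standard total-complex argument (or a Mayer--Vietoris induction on the size of the cover, using a Nash partition of unity on $Y$ subordinate to $\{V_i\}$ in the Archimedean case, and a clopen partition refinement in the non-Archimedean case) then yields strict exactness of the diagonal. Strictness is preserved at every step, because flabby cosheaves have strict extension maps and the homotopies produced in Step 1 are themselves strict. The hard part is entirely in Step 3, where one must verify that the two tensor directions of the bicomplex interact properly with the nuclear Fréchet topology and that partitions of unity can be chosen in the Nash category.
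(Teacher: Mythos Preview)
Your localize-and-glue strategy can probably be made to work, but it is far more elaborate than necessary, and you correctly flag Step~3 as the hard part without actually resolving it. The difficulty is real: once you pass to the bicomplex, the columns you need to show acyclic are themselves instances of the proposition for the maps $U_{i_0}\cap\cdots\cap U_{i_p}\to Y$ (or for fiber products $U_{i_1}\times_Y\cdots\times_Y U_{i_k}$, depending on how you index), and these inherit no product structure from the individual charts --- so Step~1 does not apply to them. Your proposed further restriction to tubular neighborhoods introduces yet another cover, and the argument threatens to recurse.

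The paper sidesteps all of this by globalizing your Step~1 homotopy directly. The obstruction to running Step~1 over a noncompact $Y$ is that there is no $h\in\mathcal S(X)$ with $\pi_! h$ identically $1$. The fix is to replace the constant $1$ by a positive Schwartz function $\phi$ on $Y$ adapted to the given cycle: by \cite[Theorem~A.1.1]{AGKl} (already used in the proof of Proposition~\ref{cosheaf-smooth}) any $f\in\mathcal S([X]^n_Y)$ factors as $f=\pi^*\phi\cdot f_0$ with $\phi$ a positive Schwartz function on $Y$ and $f_0\in\mathcal S([X]^n_Y)$; positivity of $\phi$ forces $f_0\in\ker\partial_{n-1}$ whenever $f\in\ker\partial_{n-1}$. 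Now choose $h\in\mathcal S(X)$ with $\pi_! h=\phi$ (surjectivity of $\pi_!$), set $F(x_0,\mathbf x)=h(x_0)\,f_0(\mathbf x)\in\mathcal S([X]^{n+1}_Y)$, and compute $\partial_n F=\pi_!h\cdot f_0 - h\otimes\partial_{n-1}f_0 = f$. That is the entire proof --- no localization, no bicomplex, no gluing.
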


\begin{proof}
The proof is very similar to that of Proposition \ref{cosheaf-smooth}: 
Let $f\in \mathcal S([X]^n_Y)$ be in the kernel of $\partial_{n-1}$ and write it as
$$ f(\xx) = \phi (\pi(\xx)) f_0 (\xx)$$
($\xx \in [X]^n$)
where $\phi\in\mathcal S(Y)$ and $f_0$ is also in the kernel of $\partial_{n-1}$.  (We denote by $\pi$ the map to $Y$, for any fiber power of $X$ over $Y$.)

Choose $h\in \mathcal S(X)$ with $\pi_! h = \phi$.

Let $F\in \mathcal S([X]^{n+1}_Y)$ be the product:
$$ F(x_0, \xx) = h(x_0) \otimes f_0(\xx), \,\, (x_0,\xx)\in [X]^{n+1}_Y.$$

Then $\partial_n F (\xx) = \pi_! h(\pi(\xx)) f_0(\xx) - h\otimes \partial_{n-1}f_0(\xx) = \phi(\pi(\xx)) f_0(\xx) - 0 = f(\xx)$.
\end{proof}

As we will see, acyclicity fails when we replace Nash manifolds by general Nash stacks.

\subsection{The Schwartz cosheaf on a Nash stack}

Let $\mathfrak X$ be a Nash stack. The \emph{smooth site} $\mathfrak X_\infty$ on $\mathfrak X$ has as objects those objects $(U,u)\in \ob(\mathfrak X)$ which represent \emph{smooth} $1$-morphisms of Nash stacks $u: U\to \mathfrak X$. A morphism from $(U,u)$ to $(V,v)$ will be a morphism: $u\to v$ in $\mathfrak X$ which is smooth; this is equivalent to requiring that the induced morphism: $U\to V$ be smooth (since $u\to v$ is automatically a base change of the latter).

This category is endowed with the coverage whose elements over $(U,u)$ are \emph{finite} collections of smooth morphisms $\{\phi_i: u_i \to u\}_i$ such that the induced morphism of Nash manifolds
$$ \bigsqcup_i \phi_i : \bigsqcup_i U_i \to U$$
is surjective. 

The assignment $(U,u)\mapsto \mathfrak S_{\mathfrak X}((U,u)):= \mathcal S(U)$ is a pre-cosheaf on this site, i.e., a functor that associates to every morphism $\pi: (U,u)\to (V,v)$ the push-forward map $\pi_!:\mathcal S(U)\to \mathcal S(V)$. 

\begin{proposition}
The pre-cosheaf $\mathfrak S_{\mathfrak X}$ is a cosheaf on $\mathfrak X_\infty$. Moreover, for every (representable) smooth $1$-morphism of Nash stacks $\phi:\mathfrak X\to \mathfrak Z$ there is a canonical identification of cosheaves:
\begin{equation}\label{coshident}
\mathfrak S_{\mathfrak X} \simeq \phi^{-1} \mathfrak S_{\mathfrak Z}.
\end{equation} 
\end{proposition}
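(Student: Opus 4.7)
The plan is to reduce both assertions to Proposition \ref{cosheaf-smooth}, using the fact that the \v Cech nerve of a cover in $\mathfrak{X}_\infty$ is identical, at the level of Nash manifolds, to the \v Cech nerve of the underlying smooth cover in $\mathfrak{N}_\infty$. First I would unpack a covering family over an object $(U,u)\in\mathfrak{X}_\infty$: it is a finite collection $\{(U_i,u_i)\to(U,u)\}$ where the underlying morphisms of Nash manifolds $f_i\colon U_i\to U$ are smooth and jointly surjective, and where $u_i\simeq f_i^* u$ is a chosen pullback of $u$. Because the $f_i$ are smooth, the fiber products $U_i\times_U U_j$ exist in $\mathfrak{N}$ (cf.\ Lemma \ref{fiberprod}), and they represent the fiber products $(U_i,u_i)\times_{(U,u)}(U_j,u_j)$ in $\mathfrak{X}_\infty$, carrying the pullback of $u$ (which is again smooth, as $u$ is). Evaluating $\mathfrak{S}_{\mathfrak{X}}$ on the resulting \v Cech diagram yields precisely
$$\bigsqcup_{i,j}\mathcal{S}(U_i\times_U U_j)\rightrightarrows\bigsqcup_i\mathcal{S}(U_i)\to\mathcal{S}(U),$$
which by Proposition \ref{cosheaf-smooth}, applied to the smooth surjective map $\bigsqcup_i U_i\to U$, is a coequalizer with strict push-forward maps. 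This is exactly the cosheaf axiom for $\mathfrak{S}_{\mathfrak{X}}$.

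For the identification \eqref{coshident}, I would observe that, since $\phi$ is smooth, composition with $\phi$ defines a continuous functor $\Phi\colon\mathfrak{X}_\infty\to\mathfrak{Z}_\infty$, $(U,u)\mapsto(U,\phi\circ u)$; this functor manifestly preserves covers (smooth surjectivity is checked on the underlying Nash manifolds) and fiber products (the Nash manifold $U_i\times_U U_j$ is the same on both sides). Hence the assignment
$$(\phi^{-1}\mathfrak{S}_{\mathfrak{Z}})(U,u):=\mathfrak{S}_{\mathfrak{Z}}(\Phi(U,u))=\mathcal{S}(U)$$
is already a cosheaf on $\mathfrak{X}_\infty$, with no cosheafification needed: the cosheaf axiom on any cover in $\mathfrak{X}_\infty$ is transferred by $\Phi$ to the corresponding axiom for $\mathfrak{S}_{\mathfrak{Z}}$. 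The identification with $\mathfrak{S}_{\mathfrak{X}}(U,u)=\mathcal{S}(U)$ is then the identity on underlying spaces, and it is compatible with the structural push-forwards because both cosheaves use the same push-forwards of Schwartz measures on Nash manifolds.

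I do not foresee any serious obstacle; the whole argument is a matter of bookkeeping. The mildly subtle point that deserves explicit verification is that a morphism $(U_i,u_i)\to(U_j,u_j)$ over $(U,u)$ in the $2$-categorical slice $\mathfrak{X}_\infty$ is faithfully captured by the underlying smooth morphism $U_i\to U_j$ in $\mathfrak{N}$ together with a chosen $2$-isomorphism, so that the \v Cech nerve in $\mathfrak{X}_\infty$ genuinely agrees, as a diagram of Nash manifolds, with that of the underlying smooth cover in $\mathfrak{N}_\infty$. Once this is in place, the two parts of the proposition become direct consequences of Proposition \ref{cosheaf-smooth}.
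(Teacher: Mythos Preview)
Your proof is correct and follows essentially the same approach as the paper's: both reduce the cosheaf property directly to Proposition \ref{cosheaf-smooth} via the observation that the \v Cech diagram in $\mathfrak X_\infty$ is, at the level of Nash manifolds, the \v Cech diagram of the underlying smooth cover, and both obtain \eqref{coshident} as the tautological identification $\mathfrak S_{\mathfrak X}(U,u)=\mathcal S(U)=\mathfrak S_{\mathfrak Z}(U,\phi\circ u)$. Your version is more explicit about fiber products and about why no cosheafification is needed, but the argument is the same.
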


The last statement allows us to denote the cosheaf $\mathfrak S_{\mathfrak X}$ simply by $\mathfrak S$.

\begin{proof}
The cosheaf property follows from the cosheaf property that has already been established for Nash manifolds, Proposition \ref{cosheaf-smooth}.

A smooth $1$-morphism of Nash stacks is, in particular, a functor between categories fibered over $\mathfrak N$, which preserves the classes of smooth objects. Hence, any $(U,u)\in \ob(\mathfrak X_\infty)$ induces $(U,u')\in\ob(\mathfrak Z_\infty)$.

The identification \eqref{coshident} is simply the identification: $\mathfrak S_{\mathfrak X} ((U,u) ) = \mathcal S(U) = \mathfrak S_{\mathfrak Z}((U,u'))$.
\end{proof}

The fact that $\mathfrak S((U,u))$ is equal to the Schwartz space of the Nash manifold $U$ does not mean that there is no new information contained in the stack. Indeed, we have a ``copy'' of $\mathcal S(U)$ for every $u:U\to\mathfrak X$, and the copies of $u, u'$ are identified with each other only when there is a morphism $u\to u'$; this includes the automorphisms of $u$, which induce the identity on $\mathcal S(U)$. 

\subsection{Resolutions}

Consider now a presentation of a Nash stack
$$ X \twoheadrightarrow \mathfrak X.$$

It induces a simplicial object in Nash manifolds:
$$[X]_{\mathfrak X}^\bullet: \cdots \overset{\vdots}{\underset{\vdots}\rightrightarrows} [X]^n_{\mathfrak X} \overset{\vdots}{\underset{\vdots}\rightrightarrows}  \cdots X\times_{\mathfrak X} X \rightrightarrows X,$$
where $[X]^n_{\mathfrak X}$ is the fiber product of $n$ copies of $X$ over $\mathfrak X$.

By taking Schwartz spaces, this induces a simplicial object in the category of nuclear Fr\'echet spaces (with strict morphisms) and the corresponding long sequence (as in Proposition \ref{Poincare}):
\begin{equation}\label{resolution}  \cdots
 \xrightarrow{\partial_n}\mathcal S([X]^n_{\mathfrak X}) \xrightarrow{\partial_{n-1}}  \cdots \mathcal S(X\times_{\mathfrak X} X) \xrightarrow{\partial_1} \mathcal S(X) \xrightarrow{\partial_0} 0.
\end{equation}

\begin{theorem} \label{independent}
For any two presentations of $\mathfrak X$, the complexes \eqref{resolution} are canonically strictly quasi-isomorphic. In particular, we get a well-defined object $\mathcal S_\bullet(\mathfrak X)$ (up to unique isomorphism) of the derived category $\mathcal D^-(\mathscr F)$ of nuclear Fr\'echet spaces (vector spaces without topology, in the non-Archimedean case) concentrated in non-positive degrees. The association $\mathfrak X\mapsto \mathcal S_\bullet(\mathfrak X)$ is functorial with respect to smooth $1$-morphisms of Nash stacks. 
\end{theorem}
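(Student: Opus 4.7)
My plan is to compare two presentations by a bicomplex argument.  Given presentations $X\twoheadrightarrow\mathfrak X$ and $X'\twoheadrightarrow\mathfrak X$, I form the bisimplicial Nash manifold
\[
B_{p,q} := [X]^{p+1}_{\mathfrak X} \times_{\mathfrak X} [X']^{q+1}_{\mathfrak X},
\]
which is representable by a Nash manifold by iterated application of Lemma \ref{fiberprod} (all the relevant maps are smooth and epimorphic, so the fiber products exist in the Nash category).  Its face maps in each direction are smooth, so Proposition \ref{cosheaf-smooth} yields a bicomplex $C_{p,q} := \mathcal S(B_{p,q})$ of nuclear Fr\'echet spaces whose differentials are \emph{strict}.

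The key step is to recognise rows and columns of $C$ as Poincar\'e resolutions.  Fix $p$ and set $Z := [X]^{p+1}_{\mathfrak X}$.  The base change $\pi \colon Z\times_{\mathfrak X}X' \to Z$ of the smooth epimorphism $X'\to\mathfrak X$ is a smooth epimorphism of Nash manifolds, and a straightforward fiber-product computation (using $(A\times_B C)\times_A (A\times_B D) = A\times_B C\times_B D$) yields
\[
[Z\times_{\mathfrak X}X']^{q+1}_{Z} \;\simeq\; Z\times_{\mathfrak X}[X']^{q+1}_{\mathfrak X} \;=\; B_{p,q}.
\]
Proposition \ref{Poincare} applied to $\pi$ therefore shows that the $p$-th row of $C$ is a strict resolution of $\mathcal S(Z) = \mathcal S([X]^{p+1}_{\mathfrak X})$, and the symmetric argument shows that the $q$-th column is a strict resolution of $\mathcal S([X']^{q+1}_{\mathfrak X})$.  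The standard acyclic-assembly / filtration-spectral-sequence argument --- which degenerates at $E^2$ here because in each direction the homology is concentrated in a single degree --- then produces canonical strict quasi-isomorphisms
\[
\mathcal S([X]^\bullet_{\mathfrak X}) \;\xleftarrow{\sim}\; \operatorname{Tot}(C_{\bullet,\bullet}) \;\xrightarrow{\sim}\; \mathcal S([X']^\bullet_{\mathfrak X})
\]
in $\mathcal D^-(\mathscr F)$; coherence for triples of presentations follows from the analogous tri-simplicial construction $B_{p,q,r}:= [X]^{p+1}_{\mathfrak X} \times_{\mathfrak X}[X']^{q+1}_{\mathfrak X}\times_{\mathfrak X}[X'']^{r+1}_{\mathfrak X}$, which simultaneously compares all three resolutions.

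For functoriality under a smooth $1$-morphism $f\colon \mathfrak X \to \mathfrak Y$ with presentations $X\to\mathfrak X$, $Y\to\mathfrak Y$, I pass to the refined presentation $X''' := X\times_{\mathfrak Y}Y \twoheadrightarrow \mathfrak X$ (again smooth and epimorphic, being the base change of $Y\to\mathfrak Y$ followed by $X\to\mathfrak X$); the composition $X''' \to \mathfrak X \xrightarrow{f} \mathfrak Y$ factors canonically through a smooth map $X'''\to Y$ of Nash manifolds, which produces a simplicial smooth map $[X''']^\bullet_{\mathfrak X} \to [Y]^\bullet_{\mathfrak Y}$ and hence, by Schwartz push-forward (Proposition \ref{cosheaf-smooth}), a morphism of complexes $\mathcal S([X''']^\bullet_{\mathfrak X}) \to \mathcal S([Y]^\bullet_{\mathfrak Y})$.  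Combined with the presentation-independence above this defines $f_!$ as a morphism in $\mathcal D^-(\mathscr F)$; independence of this morphism on the auxiliary choices of presentation, and compatibility with composition of $1$-morphisms, follow from the same bicomplex template applied to appropriately refined presentations.

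I expect the main difficulty to lie in the homological-algebra step in the quasi-abelian setting: one must verify carefully that the acyclic-assembly lemma --- a bicomplex whose rows are strict resolutions of a given complex has a total complex strictly quasi-isomorphic to that complex --- holds for nuclear Fr\'echet spaces with strict morphisms.  This reduces, as in the abelian case, to degeneration at $E^2$ of the row-filtration spectral sequence, and strictness of every map is preserved along the way by Proposition \ref{cosheaf-smooth}; but the verification must be written down explicitly in the derived category $\mathcal D^-(\mathscr F)$ set up in Appendix \ref{app:homology}.
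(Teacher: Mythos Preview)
Your proposal is correct and follows essentially the same approach as the paper: the paper forms the identical double complex $K^{ij}=\mathcal S([X]^i_{\mathfrak X}\times_{\mathfrak X}[X']^j_{\mathfrak X})$, makes the same identification of rows and columns with Poincar\'e resolutions via Proposition \ref{Poincare}, and invokes the standard \v{C}ech argument. Your write-up is more detailed---you spell out the coherence via a trisimplicial object and give an explicit construction of the functorial push-forward through a refined presentation, whereas the paper simply says functoriality is ``clear'' from preservation of smooth sites---but the underlying strategy is the same.
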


The quotient $\mathcal S_0(\mathfrak X)/\overline{\partial_1 \mathcal S_1(\mathfrak X)}$ will simply be denoted by $\mathcal S(\mathfrak X)$ and called ``the Schwartz space of $\mathfrak X$'' or the space of ``global sections'' of its Schwartz cosheaf. Notice that this is simply a coarse version of the ``zeroth homology'' of the complex $\mathcal S_\bullet(\mathfrak X)$ because, at least a priori, the image of $\mathcal S_1(\mathfrak X)$ by $\partial_1$ may not be closed.

\begin{proof}
The quasi-isomorphism for any two presentations follows formally from the acyclicity of Schwartz spaces for Nash manifolds, Proposition \ref{Poincare}, by the usual argument for \v{C}ech cohomology: For any two presentations $X\to \mathfrak X$, $X'\to \mathfrak X$, we form the double complex $K^{ij}=\mathcal S([X]^i_{\mathfrak X} \times_{\mathfrak X} [X']^j_{\mathfrak X})$. Notice that $[X]^i_{\mathfrak X} \times_{\mathfrak X} [X']^j_{\mathfrak X}$ is the $i$-fold fiber product of $X\times_{\mathfrak X} [X']^j_{\mathfrak X}$ over $[X']^j_{\mathfrak X}$. Thus, the rows and columns of the double complex (except for the zeroth row and zeroth column) are strictly exact by Proposition \ref{Poincare}, and by the standard argument we get a quasi-isomorphism between the zeroth row and the zeroth column.

Smooth $1$-morphisms $\mathfrak X\to\mathfrak Z$ preserve the corresponding smooth sites, and hence functoriality is clear.
\end{proof}

\begin{remark}\label{restriction-stacks}
It immediately follows from the definitions and Remark \ref{restriction-manifolds} that for a finite extension $F'/F$ of local fields and a Nash stack $\mathfrak X$ over $F'$, the cosheaf $\mathfrak S_{\mathfrak X}$ is simply the restriction to all smooth morphisms $X\to \mathfrak X$ of Nash $F'$-manifolds of the corresponding cosheaf when $\mathfrak X$ is considered as a Nash stack defined over $F$. 

Moreover, by applying Theorem \ref{independent} to a presentation $X\to \mathfrak X$ of Nash $F'$-manifolds, we see that the element $\mathcal S_\bullet(\mathfrak X)$ in the derived category and the space $\mathcal S(\mathfrak X)$ are independent of whether we consider $\mathfrak X$ as a Nash $F'$-manifold or a Nash $F$-manifold.
\end{remark}

\subsection{The Schwartz space of a quotient stack} \label{Schwartz-quotient}

Now let $\mathfrak X$ be the quotient of a Nash manifold $M$ by a Nash group $H$, i.e., the Nash stack associated to the groupoid object $[M\times H\rightrightarrows M]$ in $\mathfrak N$.  

In the Archimedean case, we consider the category $\mathcal M_H$ of smooth $F$-representations of $H$, in the language of Bernstein and Kr\"otz cf.\ \cite{BK}. (They are called smooth representations of moderate growth elsewhere.) This means that the representation is a countable inverse limit of Banach space representations, and (topologically) equal to the space of its smooth vectors. Again, in the non-Archimedean case we just consider smooth representations, without topology; since this case is easier, we discuss the Archimedean case here, and the adaptation to the non-Archimedean case is obvious. 

The category of smooth $F$-representations of $H$ is equivalent, by \cite[Proposition 2.20]{BK}, to the category of non-degenerate continuous representations of the (nuclear Fr\'echet) algebra $\mathcal S(H)$ of Schwartz measures on $H$ on Fr\'echet spaces. (\emph{Non-degenerate} means that $\mathcal S(H)V = V$, where $V$ is the space of the representation.) For any such module $V$, the action of $\mathcal S(H)$ gives rise to a topological quotient map: $\mathcal S(H)\hat\otimes V\to V$, where $\hat\otimes$ denotes the projective tensor product.

The functor of \emph{coinvariants} $V\mapsto V_H$ is the functor from $\mathcal M_H$ to $\mathscr F=$ Fr\'echet spaces that takes $V$ to its quotient by the \emph{closure} of the span of vectors of the form $v-g\cdot v$, $v\in V, g\in H$. Because of the closure operation, and the lack of exactness, this is a very rough version of the operation of ``modding out by the $H$-action'' (like the ``Schwartz space'' $\mathcal S(\mathfrak X)$ that we assigned to a Nash stack was a very rough version of the notion of ``global sections''). A finer version is obtained by the homological algebra of ``total derived functors'' of Deligne, explained in \cite[\S 10.6]{Buehler}. To apply this theory to $\mathcal S(H)$-modules, we need to adapt some results from \cite{Taylor}, which is written for unital algebras, to the category of non-degenerate modules of a non-unital algebra. I do this in Appendix \ref{app:homology}, and present here only the result:

An object $V\in \Ob(\mathcal M_H)$ is quasi-isomorphic (for the appropriate exact structure, explained in the appendix) to the complex
\begin{equation}\label{resolution1} \dots \to \mathcal S(H^n)\hat\otimes V \to \dots \to \mathcal S(H)\hat\otimes V \to 0.\end{equation}
Notice that $\mathcal S(H^{n+1}) = \mathcal S(H)\hat\otimes\mathcal S(H^n)$, and the action of $\mathcal S(H)$ is on the copy of $\mathcal S(H)$ on the left, by definition. The boundary maps are 
$$f_0\otimes \dots \otimes f_n\otimes v \mapsto  \sum_{i=0}^{n-1} (-1)^i f_0\otimes \dots\otimes f_i f_{i+1} \otimes \dots \otimes v + (-1)^n f_0 \otimes \dots \otimes f_{n-1} \otimes f_n v,$$
where we denote convolution of measures simply as multiplication.

The corresponding ``total derived functor'':
$$L_H:\mathcal D^-(\mathcal M_H)\to \mathcal D^-(\mathscr F),$$
where $\mathcal D^-$ denotes the derived categories of complexes which are strictly exact in sufficiently large (positive) degrees, is obtained by applying the coinvariant functor to \eqref{resolution1}. The functor of coinvariants is recovered as the ``rough'' version of the zeroth homology of the resulting complex, i.e., as the quotient of $(\mathcal S(H)\hat\otimes V)_H$ by the \emph{closure} of the image of $(\mathcal S(H^2)\hat\otimes V)_H$.

Now we return to the Nash stack $\mathfrak X$ associated to the groupoid object $[M\times H\rightrightarrows M]$.

\begin{proposition}\label{groupquotient-Schwartz}
The ``complex'' $\mathcal S_\bullet(\mathfrak X)$ is canonically strictly quasi-isomorphic to the derived coinvariant complex of the $H$-module $\mathcal S(M)$:
$$\mathcal S_\bullet(\mathfrak X) \simeq L_H\left(\mathcal S(M)\right).$$
In particular, the Schwartz space $\mathcal S(\mathfrak X)$ is canonically isomorphic to the coinvariant space $\mathcal S(M)_H$.
\end{proposition}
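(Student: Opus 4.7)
The plan is to apply Theorem \ref{independent} to the tautological presentation $\pi: M \twoheadrightarrow \mathfrak X$ and then compare the resulting simplicial complex, term-by-term and differential-by-differential, with the bar complex computing $L_H(\mathcal S(M))$.

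First I would identify the iterated fiber products. A routine groupoid calculation produces a canonical Nash isomorphism
\[
[M]^{n+1}_{\mathfrak X} \;\cong\; M \times H^n,
\]
parameterizing an $(n+1)$-tuple $(m_0, m_1, \ldots, m_n)$ of points in a common $H$-orbit by $(m_0; g_1, \ldots, g_n)$ with $m_i = m_{i-1}g_i$. Under this identification, the face map $[M]^{n+1}_{\mathfrak X} \to [M]^n_{\mathfrak X}$ omitting the $i$-th variable takes the standard simplicial form for the nerve of the action groupoid: $d_0$ acts on $m_0$ by $g_1$ and drops $g_1$; $d_i$ (for $0<i<n$) multiplies the adjacent pair $g_i g_{i+1}$; and $d_n$ forgets $g_n$.

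Second, combining this with the product formula $\mathcal S(X \times Y) = \mathcal S(X) \hat\otimes \mathcal S(Y)$ for Schwartz measures on Nash manifolds, the complex \eqref{resolution} attached to $\pi$ becomes
\[
\cdots \to \mathcal S(M) \hat\otimes \mathcal S(H^n) \to \cdots \to \mathcal S(M) \hat\otimes \mathcal S(H) \to \mathcal S(M) \to 0,
\]
with boundary map the alternating sum of push-forwards along the face maps above. On the other hand, the bar resolution \eqref{resolution1} applied to $V=\mathcal S(M)$ has terms $\mathcal S(H^{k+1})\hat\otimes \mathcal S(M)$, and since by \cite[Proposition 2.20]{BK} the action map $\mathcal S(H)\hat\otimes W \to W$ is a strict quotient realizing $W_H$ for any non-degenerate smooth $\mathcal S(H)$-module $W$, applying $(-)_H$ to each term of the bar resolution kills the leftmost $\mathcal S(H)$-factor. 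Thus $L_H(\mathcal S(M))$ is canonically isomorphic, as a complex, to
\[
\cdots \to \mathcal S(H^n)\hat\otimes \mathcal S(M) \to \cdots \to \mathcal S(H)\hat\otimes \mathcal S(M) \to \mathcal S(M) \to 0.
\]

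Third, I would verify on pure tensors that the two boundaries coincide under the transposition $\mathcal S(M)\hat\otimes\mathcal S(H^n) \cong \mathcal S(H^n)\hat\otimes\mathcal S(M)$: the convolution of two adjacent $\mathcal S(H)$-factors (from the bar differential) realizes the push-forward along the multiplication-of-adjacent-coordinates face map, while the residual $\mathcal S(H)\hat\otimes \mathcal S(M) \to \mathcal S(M)$ action (the rightmost face of the bar differential) realizes the push-forward along the face that contracts the last $H$-coordinate with $M$. This yields the desired canonical strict quasi-isomorphism $\mathcal S_\bullet(\mathfrak X) \simeq L_H(\mathcal S(M))$. The "in particular" statement is then immediate: the coarse zeroth homology of the right-hand complex is the quotient of $\mathcal S(M)$ by the closure of the image of $\mathcal S(H)\hat\otimes\mathcal S(M) \to \mathcal S(M)$, which is exactly $\mathcal S(M)_H$.

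The main obstacle I foresee is purely combinatorial: one must choose the parameterization $[M]^{n+1}_{\mathfrak X} \cong M\times H^n$ and the left/right conventions for the $H$-action on $M$ compatibly with the leftmost $\mathcal S(H)$-module structure used in \eqref{resolution1}, so that groupoid face maps and bar differentials match on the nose after coinvariants. This is routine bookkeeping but requires care to avoid sign or order errors.
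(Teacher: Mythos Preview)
Your proposal is correct and follows essentially the same route as the paper: identify $[M]^{n+1}_{\mathfrak X}\cong M\times H^n$, compute the bar complex after coinvariants as $\mathcal S(H^n)\hat\otimes\mathcal S(M)$, and match the face maps with the bar differential. The only presentational difference is that the paper realizes the bar resolution geometrically as the \v Cech complex of $R=M\times H$ over $M$ (so that Proposition~\ref{Poincare} gives the quasi-isomorphism to $\mathcal S(M)$ $H$-equivariantly), and then writes down an explicit change of coordinates $(x,g_n,\dots,g_0)\mapsto (xg_n^{-1},g_ng_{n-1}^{-1},\dots,g_1g_0^{-1})$ realizing $\mathcal S([M]^n_{\mathfrak X})$ as the $H$-coinvariants of $\mathcal S(M\times H^{n+1})$; you instead take coinvariants of the free term $\mathcal S(H)\hat\otimes(\cdots)$ algebraically and match differentials on pure tensors, which is the same content without the geometric intermediary.
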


\begin{proof}
We have a presentation: $M\to \mathfrak X$. It is easy to see that $[M]^{n+1}_{\mathfrak X} = M\times H^n$ with the $i$-th map ($i=0,\dots,n$) to $[M]_{\mathfrak X}^n$ being
$$ (x, g_n, \dots, g_1)\mapsto (x, g_n \dots, g_2) $$
when $i=0$, 
$$ (x, g_n, \dots, g_{i+1}, g_i, \dots, g_1)\mapsto (x, \dots, g_{i+1} g_i, \dots, g_1) $$
when $0< i <n$, and 
$$ (x, g_n, \dots, g_1)\mapsto (xg_n, g_{n-1}, \dots, g_1) $$
when $i=n$.

On the other hand, setting $R = M\times H$ we can identify $[R]^{n+1}_M$ with $M\times H^{n+1}$, the copies of $H$ numbered in a decreasing order from $n$ to $0$, in such a way that the $i$-th map to $\mathcal S([R]^n_M)$ is forgetting the $i$-th copy of $H$. By Proposition \ref{Poincare}, the sequence resulting from taking alternating sums of these maps:
\begin{equation}\label{resolution2} \cdots\to \mathcal S(M\times H^{n+1}) \to \mathcal S(M \times H^n) \to \cdots \to \mathcal S(M\times H) \to 0\end{equation}
is quasi-isomorphic to $0\to\mathcal S(M)\to 0$. Moreover, it is $H$-equivariantly so, when $H$ acts diagonally on the right on all coordinates of $M \times H^n$. 

By a change of variables, we can easily see that \eqref{resolution2} coincides with \eqref{resolution1} for $V = \mathcal S(M)$. Applying the functor of $H$-coinvariants we get the a complex in the strict quasi-isomorphism class of $L_H\left(\mathcal S(M)\right)$.

On the other hand, the push-forward under the map 
$$[R]^{n+1}_M = M \times H^{n+1}  \to  M \times H^n = [M]^n_{\mathfrak X}$$ given by $(x, g_n,\dots,g_0) \mapsto (x g_n^{-1}, g_n g_{n-1}^{-1}, \dots, g_1 g_0^{-1})$ identifies $\mathcal S( [M]^n_{\mathfrak X})$ as the coinvariant space $\mathcal S([R]^{n+1}_M)_{H}$, and the boundary maps for the former descend to boundary maps for the latter. This proves the proposition. 

\end{proof}

\begin{corollary}\label{directsum}
Let $\mathcal X=X/G$, where $X$ is a smooth variety under the action of a linear algebraic group $G$. Let $\mathfrak X = \mathcal X(F)$ be the Nash stack associated to it. Then $\mathcal S_\bullet(\mathfrak X)$ is canonically strictly quasi-isomorphic to the direct sum
\begin{equation}\label{directsumeq} \bigoplus_T L_{G^T(F)} \mathcal S(X^T(F)),\end{equation}
where $T$ runs over a set of representatives for all isomorphism classes of $G$-torsors over $F$, $G^T = \Aut^G(T)$ and $X^T = X\times^G T$.
\end{corollary}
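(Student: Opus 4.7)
The plan is to reduce the corollary to the already-established case of a single quotient stack by combining Proposition \ref{groupquotient}, compatibility of $\mathcal S_\bullet$ with disjoint unions, and Proposition \ref{groupquotient-Schwartz}.

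First, by Proposition \ref{groupquotient}, there is a canonical equivalence of Nash stacks
\[
\mathfrak X \simeq \bigsqcup_T X^T(F)/G^T(F),
\]
where $T$ runs over isomorphism classes of $G$-torsors over $F$. In characteristic zero the index set is finite by finiteness of $H^1(F,G)$; in positive characteristic we are in the $F$-analytic setting and no finiteness is required.

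Second, I need to verify that $\mathcal S_\bullet$ sends disjoint unions to direct sums. Given Nash stacks $\mathfrak Y_i$ with presentations $Y_i\to \mathfrak Y_i$, the map $\bigsqcup_i Y_i \to \bigsqcup_i \mathfrak Y_i$ is again a presentation. The key point is that the images of $Y_i$ and $Y_j$ in $\bigsqcup_\ell \mathfrak Y_\ell$ lie in disjoint components for $i\ne j$, so $Y_i\times_{\bigsqcup_\ell \mathfrak Y_\ell}Y_j=\emptyset$ and by induction
\[
\bigl[\bigsqcup_i Y_i\bigr]^n_{\bigsqcup_i \mathfrak Y_i}\;=\;\bigsqcup_i [Y_i]^n_{\mathfrak Y_i}.
\]
Since Schwartz measures on a finite (or, in the non-Archimedean case, arbitrary) disjoint union of Nash manifolds split canonically as a direct sum $\mathcal S\bigl(\bigsqcup_i U_i\bigr)=\bigoplus_i \mathcal S(U_i)$, the simplicial resolution \eqref{resolution} computing $\mathcal S_\bullet\bigl(\bigsqcup_i \mathfrak Y_i\bigr)$ is the termwise direct sum of the resolutions computing $\mathcal S_\bullet(\mathfrak Y_i)$. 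Hence, in the derived category of nuclear Fr\'echet spaces (or vector spaces, in the non-Archimedean case),
\[
\mathcal S_\bullet\bigl(\bigsqcup_i \mathfrak Y_i\bigr)\;\simeq\;\bigoplus_i \mathcal S_\bullet(\mathfrak Y_i),
\]
and this isomorphism is canonical by Theorem \ref{independent}.

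Third, applying Proposition \ref{groupquotient-Schwartz} to each component of the decomposition above yields canonical strict quasi-isomorphisms
\[
\mathcal S_\bullet\bigl(X^T(F)/G^T(F)\bigr)\;\simeq\;L_{G^T(F)}\mathcal S(X^T(F)).
\]
Summing over $T$ and combining with the previous two steps gives the stated quasi-isomorphism \eqref{directsumeq}. The only mild point to verify is the compatibility of the quasi-isomorphism from Proposition \ref{groupquotient-Schwartz} with the direct sum decomposition; but this is immediate from naturality of that quasi-isomorphism under the chosen presentations $X^T(F)\to X^T(F)/G^T(F)$, whose disjoint union is an $F$-surjective presentation of $\mathfrak X$ compatible with the one furnished by Proposition \ref{groupquotient}. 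There is no serious obstacle here; the argument is essentially bookkeeping once one knows that $\mathcal S_\bullet$ of a disjoint union of Nash stacks decomposes as a direct sum.
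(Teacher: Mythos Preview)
Your proof is correct and follows exactly the approach the paper intends: the paper's own proof is the single sentence ``Immediate corollary of Proposition \ref{groupquotient} and Proposition \ref{groupquotient-Schwartz},'' and you have simply unpacked the implicit step that $\mathcal S_\bullet$ takes a disjoint union of Nash stacks to a direct sum, which is indeed immediate from the simplicial resolution once one observes that cross fiber products vanish.
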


\begin{proof}
Immediate corollary of Proposition \ref{groupquotient} and Proposition \ref{groupquotient-Schwartz}.
\end{proof}

This implies, of course, a corresponding decomposition for the Schwartz space $\mathcal S(\mathfrak X)$.

\begin{example}
Let $\mathcal X$ be the quotient of the space $X=G$ by the adjoint $G$-action, where $G$ is an algebraic group. We have
$$ \mathcal S(\mathcal X(F)) = \bigoplus_{T\in H^1(F,G)} \mathcal S(G^T(F))_{G^T(F)},$$
by remembering that the first cohomology set $H^1(F,G)$ parametrizes isomorphism classes of $G$-torsors over $F$. The coinvariants here are taken with respect to the adjoint action.

If $G$ is quasi-split, the local Langlands conjectures in the form described by Vogan \cite{Vogan} describe local $L$-packets as unions of packets over all pure inner forms of $G$, i.e., over all elements of $H^1(F,G)$.

\end{example}

\begin{example}
Consider the algebraic quotient stack $\mathcal X= G\backslash H/G$, where $G=\SO((V, q))$ the special orthogonal group of a non-degenerate quadratic space $(V,q)$, and $H = \SO((V,q)) \times \SO((V\oplus \Ga, q\oplus 1))$, where by `$1$' we denote some non-degenerate quadratic form on the one-dimensional vector space $\Ga$, and $G$ is embedded ``diagonally'' in $H$. Equivalently, $\mathcal X$ is the quotient of $\SO((V\oplus \Ga, q\oplus 1))$ by $\SO((V, q))$-conjugacy. This is the quotient stack associated to the Gross--Prasad conjectures \cite{GP}.

In this case, isomorphism classes of $G$-torsors $T$ are parametrized by isomorphism classes of quadratic spaces $(V',q')$ of the same dimension and determinant as $V$, and viewing $\mathcal X$ as the quotient of $X=G\backslash H$ by $G$, for such a $G$-torsor we have $G^T = \SO((V', q'))$ and $X^T = G^T\backslash H^T$, where $H^T = \SO((V',q')) \times \SO((V'\oplus \Ga, q'\oplus 1))$. Thus, \eqref{directsumeq} gives a sum of coinvariant complexes parametrized by such classes of torsors. 

Notice that we can also present $\mathcal X$ in terms of the group $G\times G$ acting on the space $H$. In this case, we would apparently get more summands in \eqref{directsumeq} parametrized by pairs $T_1\times T_2$ of $G$-torsors; nonetheless, only when $T_1=T_2$ would the set of $F$-points of the space be non-empty.
\end{example}

\subsection{Stalks} \label{ssstalks}

For restricted topological spaces, I use the notion of \emph{stalks} for cosheaves as in \cite[\S B.4]{SaBE1}, that is: the stalk of a cosheaf $\mathcal F$ of vector spaces or Fr\'echet spaces on a restricted topological space $X$ over a closed subset $Z$ is the cosheaf over $X$ which to an open $U\subset X$ assigns the quotient of $\mathcal F(U)$ by the closure of the image of $\mathcal F(U\smallsetminus Z)$. (In \cite{SaBE1} it was assumed that the image of $\mathcal F(U\smallsetminus Z)$ is closed, and this will be the case for the sheaves considered here, even over the smooth site of Nash manifolds.)
Clearly, this cosheaf is \emph{supported} on $Z$, in the sense that for $U\subset X\smallsetminus Z$, $\mathcal F_Z(U)=0$. The image of an element of $\mathcal F(X)$ in $\mathcal F_Z(X)$ will be called its \emph{germ} at $Z$.

Let $\mathfrak X$ be a Nash stack. An \emph{open Nash substack} $\mathfrak U$ of $\mathfrak X$ is a strictly full subcategory $\mathfrak U \subset \mathfrak X$ such that $\mathfrak U$ is a Nash stack and $\mathfrak U\to \mathfrak X$ is an open immersion; that is, the morphism is representable, and for one, equivalently any, presentation $X\to \mathfrak X$, the morphism $\mathfrak U\times_{\mathfrak X} X\to X$ is an open immersion (embedding) of Nash manifolds.

Let $\mathfrak Z$ be the ``complement of $\mathfrak X$'', a ``closed substack'' of $\mathfrak X$. There is an obvious way to define this as a stack over $\mathfrak N$, but it will not necessarily be a Nash stack, since it is not necessarily a smooth image of a smooth semi-algebraic manifold. In any case, for us the notion of ``closed substack'' will be just symbolic, the rigorous notion being its open complement.

We define the \emph{stalk of $\mathfrak S$ at $\mathfrak Z$} to be the cosheaf $\mathfrak S(\bullet)_{\mathfrak Z}$ over $\mathfrak X_\infty$ defined by
$$\mathfrak S(U,u)_{\mathfrak Z}:=  \mathcal S(U)/ \mathcal S(U \times_{u,\mathfrak X} \mathfrak U),$$
for every $(U,u)\in\ob\mathfrak X$.

Notice that $U \times_{u,\mathfrak X} \mathfrak U$ is an open Nash submanifold of $U$, and therefore its space of Schwartz measures is a closed subspace of $\mathcal S(U)$, so the quotient makes sense.

\begin{proposition}
The association $(U,u)\mapsto \mathfrak S((U,u))_{\mathfrak Z}$ defined above is indeed a cosheaf, with strict push-forward maps, on the smooth site of $\mathfrak X$.
\end{proposition}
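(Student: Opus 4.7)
The plan is to verify three things in turn: functoriality, strictness of the induced push-forwards, and the coequalizer condition on finite smooth covers. The starting observation, already noted just before the proposition, is that for every $(W,w)\in \ob(\mathfrak X_\infty)$ the fiber product $W\times_{w,\mathfrak X}\mathfrak U$ is an \emph{open} Nash submanifold of $W$, and hence $\mathcal S(W\times_{w,\mathfrak X}\mathfrak U)\hookrightarrow \mathcal S(W)$ is a closed embedding (Schwartz measures on an open Nash submanifold extend by zero and form a closed subspace). This is what makes each quotient $\mathfrak S(W,w)_{\mathfrak Z}$ a bona fide nuclear Fr\'echet space (and, in the non-Archimedean case, just a vector space, making the argument entirely formal).

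First I would establish functoriality and strictness. Given a morphism $\pi:(U,u)\to(V,v)$ in $\mathfrak X_\infty$, the fact that $\mathfrak U\hookrightarrow \mathfrak X$ is an open immersion gives the Cartesian identity
$$U\times_{u,\mathfrak X}\mathfrak U \;=\; U\times_{\pi,V}(V\times_{v,\mathfrak X}\mathfrak U),$$
so the open submanifold on the left is the $\pi$-preimage of the one on the right. By base change, $\pi_!$ sends $\mathcal S(U\times_{u,\mathfrak X}\mathfrak U)$ into $\mathcal S(V\times_{v,\mathfrak X}\mathfrak U)$, which produces a well-defined induced map $\bar\pi_!:\mathfrak S(U,u)_{\mathfrak Z}\to \mathfrak S(V,v)_{\mathfrak Z}$. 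Strictness follows from the general fact in $\mathscr F$ that a strict morphism between Fr\'echet spaces descends to a strict morphism between quotients by closed subspaces whose image is contained in the target subspace. Functoriality is inherited directly from $\mathfrak S$.

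For the cosheaf property, let $\{p_i:(U_i,u_i)\to(U,u)\}_{i\in I}$ be a finite smooth covering family. The pull-back family $\{U_i\times_{u_i,\mathfrak X}\mathfrak U\to U\times_{u,\mathfrak X}\mathfrak U\}_{i\in I}$ is then a finite smooth cover of the open Nash manifold $U\times_{u,\mathfrak X}\mathfrak U$, and the iterated fiber products over $\mathfrak X$ and over $U$ agree, so that no ambiguity arises at the $U_{ij}$-level. Applying Proposition \ref{cosheaf-smooth} twice yields two parallel coequalizer diagrams linked by the closed embeddings $\mathcal S(\bullet\times_{\mathfrak X}\mathfrak U)\hookrightarrow \mathcal S(\bullet)$ established above. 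Taking cokernels row by row yields the desired coequalizer for $\mathfrak S(\bullet)_{\mathfrak Z}$ on the given cover.

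The only non-formal point, and hence the main obstacle, is this last step: checking in $\mathscr F$ that the term-by-term quotient of one coequalizer diagram by another coequalizer diagram of closed subspaces is itself a coequalizer. This reduces to the standard quasi-abelian fact that if $A'\hookrightarrow A$ and $B'\hookrightarrow B$ are closed embeddings and $f:A\twoheadrightarrow B$ is a strict epimorphism carrying $A'$ onto $B'$, then $A/A'\to B/B'$ is a strict epimorphism whose kernel is the image of $\ker(f)$; in particular, the closure of the image of the parallel-pair difference in the upstairs coequalizer maps onto the closure of the corresponding image downstairs. In the non-Archimedean case this is trivial, and in the Archimedean case it is a routine verification using strictness and closedness of the relevant subspaces. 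Applying this row by row to the two coequalizers from Proposition \ref{cosheaf-smooth} concludes the argument.
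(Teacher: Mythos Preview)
Your overall structure matches the paper's, and your functoriality and coequalizer steps are fine. The coequalizer argument in particular is correct: once $V\to U$ is surjective, $\pi_!$ maps $\mathcal S(V')$ \emph{onto} $\mathcal S(U')$, and your lemma (that a strict epi $f:A\twoheadrightarrow B$ with $f(A')=B'$ induces a strict epi $A/A'\to B/B'$ whose kernel is the image of $\ker f$) is valid and does the job; this is exactly how the paper reduces to the two coequalizers for $V\to U$ and $V'\to U'$.

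The gap is in your strictness argument for a general (not necessarily surjective) smooth morphism $\pi:(U,u)\to(V,v)$. You invoke the ``general fact in $\mathscr F$ that a strict morphism descends to a strict morphism between quotients by closed subspaces whose image is contained in the target subspace.'' That statement is false: the induced image in $B/B'$ is $(f(A)+B')/B'$, and the sum of two closed subspaces of a Fr\'echet space need not be closed. A concrete counterexample is the inclusion $f:\ell^2\times\{0\}\hookrightarrow \ell^2\times\ell^2$ with $A'=0$ and $B'=\{(x,Tx):x\in\ell^2\}$ the graph of a compact operator $T$ with dense non-closed range; then $f(A)+B'=\ell^2\times\operatorname{im}(T)$ is not closed. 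So you cannot get strictness from abstract nonsense.

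What the paper does instead is use the specific structure of Schwartz spaces of open semi-algebraic subsets. With $U'=U\times_{u,\mathfrak X}\mathfrak U$ and $V'=\pi^{-1}(U')$, one has $\pi_!(\mathcal S(V))=\mathcal S(\pi(V))$ and $\pi_!(\mathcal S(V'))=\mathcal S(\pi(V)\cap U')\subset\mathcal S(U')$, and the key point is that
\[
\mathcal S(\pi(V))+\mathcal S(U')=\mathcal S\bigl(\pi(V)\cup U'\bigr),
\]
which is closed in $\mathcal S(U)$ because Schwartz spaces of open Nash submanifolds always embed as closed subspaces. This identity (a consequence of the flabby/partition-of-unity behavior of the Schwartz cosheaf) is what gives strictness of the induced map on stalks; you should replace your appeal to a false general fact by this computation.
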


\begin{proof}
Let $\pi:(V,v)\to (U,u)$ be a smooth map over $\mathfrak X$. If $U':=U \times_{u,\mathfrak X} \mathfrak U$, then $V':=V \times_{v,\mathfrak X} \mathfrak U = \pi^{-1}(U')$. Therefore, 
$$ \pi_!(\mathcal S(V)) = \mathcal S(\pi(V)),$$
$$\pi_!(\mathcal S(V')) = \mathcal S(\pi(V) \cap U') \subset \mathcal S(U'),$$
and therefore the push-forward map
$$ \mathfrak S((V,v))_{\mathfrak Z} \to \mathfrak S((U,u))_{\mathfrak Z}$$
is well-defined and strict, since $\mathcal S(\pi(V))+\mathcal S(U') = \mathcal S(\pi(V)\cup U')$ is closed in $\mathcal S(U)$.

Now assume that $V\to U$ is surjective. We verify that we have a coequalizer diagram (omitting, for simplicity, the maps to $\mathfrak X$ from the notation):
$$\mathfrak S(V\times_U V)_{\mathfrak Z} \rightrightarrows \mathfrak S(V)_{\mathfrak Z} \to \mathfrak S(U)_{\mathfrak Z}.$$

This follows immediately from the corresponding coequalizer diagrams:
$$ \mathcal S(V\times_U V)\rightrightarrows\mathcal S(V)\to \mathcal S(U),$$
$$ \mathcal S(V'\times_U V')\rightrightarrows\mathcal S(V')\to \mathcal S(U'),$$
and the fact that $(V\times_U V) \times_{\mathfrak X} \mathfrak U = V'\times_U V'$.

\end{proof}

The stalk, as a cosheaf, has a lot of representation-theoretic interest. For the purposes of this paper, however, I will restrict myself to an ad hoc definition of its \emph{global sections} as
\begin{equation}\label{glst1} \mathcal S(\mathfrak X)_{\mathfrak Z} := \mathcal S(\mathfrak X) / \overline{\iota(\mathcal S(\mathfrak U))},\end{equation}
where $\iota: \mathcal S(\mathfrak U)\to \mathcal S(\mathfrak X)$ is the natural morphism.

Notice the following:
\begin{lemma}\label{globalstalk}
Let $X\to\mathfrak X$ be any presentation, $R_X = X\times_{\mathfrak X} X$, with the ``source'' and ``target'' maps $s, t : R_X \to X$. Then $\mathcal S_{\mathfrak Z}(\mathfrak X)$ can alternatively be described as
\begin{equation}\label{glst2}
\mathcal S(\mathfrak X)_{\mathfrak Z} = \mathcal S(X)_{\mathfrak Z} / \overline{(s_!-t_!) \mathcal S(R_X)_{\mathfrak Z}}.
\end{equation}
\end{lemma}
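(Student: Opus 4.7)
The plan is to compute both sides explicitly as quotients of $\mathcal S(X)$ and observe they agree. The key inputs are Theorem \ref{independent}, the functoriality it provides, and the fact that the Schwartz cosheaf is flabby, so that extension by zero $\mathcal S(U)\hookrightarrow\mathcal S(X)$ has closed image for any open $U\subset X$ (see the discussion preceding Remark \ref{restriction-manifolds}).

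First I would set up the compatible presentation of $\mathfrak U$: given the presentation $X\to\mathfrak X$, let $U:=X\times_{\mathfrak X}\mathfrak U$, which is an open Nash submanifold of $X$, and the induced morphism $U\to\mathfrak U$ is a presentation. Computing the fiber product gives
$$U\times_{\mathfrak U}U \;=\; X\times_{\mathfrak X}\mathfrak U\times_{\mathfrak U}X\times_{\mathfrak X}\mathfrak U \;=\; R_X\times_{\mathfrak X}\mathfrak U \;=:\; R_U,$$
an open Nash submanifold of $R_X$ with the restricted source/target maps. Applying Theorem \ref{independent} (and the definition of $\mathcal S(\mathfrak X)$ as a coarse zeroth homology) to these two presentations yields
$$\mathcal S(\mathfrak X)=\mathcal S(X)/\overline{(s_!-t_!)\mathcal S(R_X)},\qquad \mathcal S(\mathfrak U)=\mathcal S(U)/\overline{(s_!-t_!)\mathcal S(R_U)},$$
and by functoriality in Theorem \ref{independent} the canonical map $\iota:\mathcal S(\mathfrak U)\to\mathcal S(\mathfrak X)$ is induced by extension-by-zero $\mathcal S(U)\hookrightarrow\mathcal S(X)$ (compatibly with the corresponding inclusion $\mathcal S(R_U)\hookrightarrow\mathcal S(R_X)$).

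Next I would compute both descriptions of $\mathcal S(\mathfrak X)_{\mathfrak Z}$ as quotients of $\mathcal S(X)$. For the definition \eqref{glst1}, the image of $\mathcal S(\mathfrak U)$ in $\mathcal S(\mathfrak X)=\mathcal S(X)/\overline{(s_!-t_!)\mathcal S(R_X)}$ is
$$\bigl(\mathcal S(U)+\overline{(s_!-t_!)\mathcal S(R_X)}\bigr)\big/\overline{(s_!-t_!)\mathcal S(R_X)},$$
whose closure is $\overline{\mathcal S(U)+(s_!-t_!)\mathcal S(R_X)}/\overline{(s_!-t_!)\mathcal S(R_X)}$. The quotient therefore equals
$$\mathcal S(X)\big/\overline{\mathcal S(U)+(s_!-t_!)\mathcal S(R_X)}.$$
For the proposed right-hand side \eqref{glst2}, flabbiness gives $\mathcal S(X)_{\mathfrak Z}=\mathcal S(X)/\mathcal S(U)$ and $\mathcal S(R_X)_{\mathfrak Z}=\mathcal S(R_X)/\mathcal S(R_U)$ (no closure needed). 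The image of $(s_!-t_!)\mathcal S(R_X)_{\mathfrak Z}$ in $\mathcal S(X)/\mathcal S(U)$ is $\bigl((s_!-t_!)\mathcal S(R_X)+\mathcal S(U)\bigr)/\mathcal S(U)$, whose closure is $\overline{(s_!-t_!)\mathcal S(R_X)+\mathcal S(U)}/\mathcal S(U)$. Quotienting again yields the same space
$$\mathcal S(X)\big/\overline{\mathcal S(U)+(s_!-t_!)\mathcal S(R_X)}.$$
This matches the expression obtained from \eqref{glst1}, proving the lemma.

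The only non-formal points are the identification of $\iota$ with extension by zero at the level of presentations (which follows from functoriality in Theorem \ref{independent}, realized on the level of \v{C}ech complexes), and the closedness of $\mathcal S(U)\hookrightarrow\mathcal S(X)$; once these are in hand, the argument is a routine double-quotient computation. In the non-Archimedean case all closures are vacuous, so the identification is purely algebraic.
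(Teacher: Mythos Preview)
Your proof is correct and follows exactly the same approach as the paper's: both sides of the claimed identity are computed as the common quotient $\mathcal S(X)/\overline{\mathcal S(U)+(s_!-t_!)\mathcal S(R_X)}$, where $U=X\times_{\mathfrak X}\mathfrak U$. The paper states this in a single sentence, while you have carefully spelled out the double-quotient manipulations and the identification of $R_U$ with $R_X\times_{\mathfrak X}\mathfrak U$; no substantive difference.
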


\begin{proof}
This simply follows from the definitions: 

If $U:=X\times_{\mathfrak X} \mathfrak U\to \mathfrak U$ is the corresponding presentation of $\mathfrak U$, then both \eqref{glst1} and \eqref{glst2} amount to the formula:
$$\mathcal S(\mathfrak X)_{\mathfrak Z} = \mathcal S(X) /\overline{\mathcal S(U) + (s_!-t_!)\mathcal S(R_X)}.$$
\end{proof}

\section{Local and global stalks for affine reductive group quotients} \label{sec:stalks}

\subsection{Luna's \'etale slice theorem}

We now focus on the case $\mathcal X=$ an algebraic quotient stack of the form $X/G$, where $X$ is a smooth affine variety and $G$ is a reductive group acting on it.

The variety and the group may be defined over a number field $k$, in which case the Nash stack obtained from a completion $F=k_v$ of $k$ (denoted by $\mathfrak X$ in \S \ref{Nash-quotient}) will be denoted by $\mathcal X_v$. (This notation is compatible with the notation $X_v = X(k_v)$ for a smooth variety, if we consider the Nash manifold $X(k_v)$ as a Nash stack.) Let $\pi:\mathcal X \to \cc$ be the invariant-theoretic quotient attached to $\mathcal X$, i.e., $\cc$ is the affine variety $X\sslash G = \spec k[X]^G$. (It does not depend on the presentation $\mathcal X=X/G$ chosen.)

It is well-known that there is a bijection between geometric points of $\cc$ and closed geometric orbits of $G$ on $X$. 
The fiber of $\mathcal X$ over a point $\xi \in \cc(k)$ contains, as a closed substack, a \emph{gerbe} $\mathcal X_\xi$ corresponding to the closed geometric orbit of $G$ on the preimage of $X$ in $\xi$. By definition, a gerbe (over $k$) is special type of stack characterized by the fact that the morphisms $\mathcal X_\xi\to\spec k$ and $\mathcal X_\xi \to \mathcal X_\xi\times_{\spec k} \mathcal X_\xi$ are epimorphisms. In simple terms, over the algebraic closure it becomes a stack of the form $\pt/H$, where $H$ is an algebraic group (reductive, in our case), but the automorphism group $H$ may not, in general, be defined over $k$. Note that we don't denote by $\mathcal X_\xi$ the whole fiber over $\xi$, but just its closed sub-gerbe.

 The gerbe is called \emph{neutral} if it admits a section $\tilde\xi:\spec k \to \mathcal X_\xi$, that is: $\tilde\xi\in\Ob(\mathcal X_{\xi, k})$, in which case it is isomorphic to $\spec k/H$, where $H=\Aut(\tilde\xi)$. The group $H$ depends, of course, on the chosen $k$-point $\tilde\xi$, and the isomorphism class of this $k$-point determines $H$ up to $H(k)$-conjugacy. All the isomorphism classes in $\mathcal X_{\xi,k}$ are obtained from the chosen one by choosing the isomorphism class of an $H$-torsor, in which case $H$ gets replaced by the $H$-automorphism group of the torsor. We will call a point $\xi\in\cc(k)$ \emph{neutral} if $\mathcal X_\xi$ is such. In the general case, the gerbe $\mathcal X_\xi$ becomes neutral after passing to a finite extension. For a discussion of these gerbes in the case of the adjoint quotient of a reductive group, cf.\ \cite{Ko-rational}.

Assume that $\xi\in\cc(k)$ is a neutral point, and let $\tilde\xi:\spec k\to \mathcal X_\xi$ be a section over $\xi$. Without loss of generality (as we may replace the presentation $X/G$ by the presentation $X^T/G^T$, where $T$ denotes a $G$-torsor, or even replace $G$ by $\GL_N$, see \S \ref{Nash-quotient}), the section corresponds to a $G(k)$-orbit $C$ in $X(k)$. Let $\mathcal V$ denote the quotient stack of the normal bundle of $C$ by $G$. It can be identified with the quotient $V/H$, where $V$ is the fiber of the normal bundle over a point $x\in C$, and $H=G_x$ is the stabilizer of this point.  The isomorphism class of $\tilde \xi$ determines the pair $(H, V)$ up to the simultaneous action of an element of $H(k)$ on $H$ (by conjugation) and on $V$. On the other hand, choosing a different isomorphism class for $\tilde\xi$ will have the following effect: $H$ gets replaced by an inner twist $H'$ corresponding to a Galois $1$-cocycle into $H(\bar k)$, and $V$ is replaced by the representation $V'$ of $H'$ obtained by twisting by this cocycle. 

In particular, the quotient $\mathcal V$ is canonically determined up to a canonical $1$-isomorphism by $\xi$, and does not depend on the choice of $\tilde\xi$. On the other hand, the pair $(H,V)$ is determined up to simultaneous $H(k)$-action by the isomorphism class of $\tilde\xi$. Whenever we have chosen a $k$-point $\tilde\xi$, we will be using the presentations $\mathcal X=X/G$, $\mathcal V=V/H$ corresponding to this $k$-point as above (i.e., with $\tilde\xi$ corresponding to a $G(k)$-orbit $C$ on $X(k)$ and $V$ being the fiber of the normal bundle over a $x\in C$), without further mention.

We now recall Luna's slice theorem \cite{Luna}, in its generalization by Alper \cite{Alper}, in order to use it in our local and global analysis of Schwartz spaces. It holds over an arbitrary field (or more general, locally noetherian base), which here we denote by $k$:

\begin{theorem}\label{lunathm}
Let $\xi\in \cc(k)$ be a neutral point, and choose a section $\tilde \xi:\spec k\to \mathcal X$ over $\xi$, corresponding to a $G(k)$-orbit $C$ in $X(k)$. Let $\mathcal V=V/H$ be as above. Identifying the stabilizer of a point $x_0 \in C$ with $H$, there is an $H$-stable subscheme $W\hookrightarrow X$, smooth and affine over $k$, containing $x_0$, and an $H$-equivariant morphism of pointed schemes: $(W,x_0)\to (V,0)$, such that the induced diagram of pointed stacks
\begin{equation}\label{etale} \xymatrix{
& \ar[dl] W/H \ar[d]^{\pi_W}\ar[dr] &\\
\mathcal V=V/H \ar[d]_{\pi_V} & \ar[dl] \cc_W=W\sslash H \ar[dr] & \mathcal X=X/G \ar[d]^\pi\\
\cc_V=V\sslash H && \cc = X\sslash G
}\end{equation}
is Cartesian with \'etale diagonals.
\end{theorem}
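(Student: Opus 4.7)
\emph{Proof proposal.} The plan is to deduce this from Luna's classical \'etale slice theorem \cite{Luna}, in the stack-theoretic form axiomatized by Alper \cite{Alper}. First I would verify the hypotheses: since $\xi\in\cc(k)$ parametrizes a closed orbit, the $G$-orbit $C$ is closed in $X$; as $G$ is reductive and $C$ is closed, Matsushima's theorem gives that the stabilizer $H=G_{x_0}$ is reductive, putting us in the classical Luna setting.

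Next, I would construct the slice $(W,\phi)$ via the standard Luna construction. By reductivity of $H$ and the Reynolds operator, the surjection of $H$-modules $\mathfrak{m}_{x_0}\twoheadrightarrow \mathfrak{m}_{x_0}/\mathfrak{m}_{x_0}^2=T_{x_0}^*X$ admits an $H$-equivariant splitting, which produces an $H$-equivariant morphism $\psi:X\to T_{x_0}X$ with $\psi(x_0)=0$ and $d\psi_{x_0}=\operatorname{id}$. Splitting $T_{x_0}X=T_{x_0}C\oplus V$ as $H$-modules (again by reductivity), let $\psi_1:X\to T_{x_0}C$ be the composition with the first projection, and define
\[ W := \psi_1^{-1}(0)\cap U \]
for a suitable $H$-stable affine open $U\ni x_0$. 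Then $T_{x_0}W=\ker(d\psi_1)_{x_0}=V$, so $W$ is smooth at $x_0$ (shrink if needed), $H$-stable, affine, and contains $x_0$. Composing $\psi|_W$ with the projection to $V$ yields the desired $H$-equivariant $\phi:(W,x_0)\to(V,0)$, which is \'etale at $x_0$ since $d\phi_{x_0}=\operatorname{id}_V$.

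For the \'etaleness of the diagonals and the Cartesian property, the action morphism $G\times^H W\to X$ has differential $\mathfrak{g}/\mathfrak{h}\oplus T_{x_0}W=T_{x_0}C\oplus V\to T_{x_0}X$ at $(e,x_0)$, hence is \'etale there. Shrinking $W$ to an $H$-saturated open (still containing $x_0$) on which both $\phi$ and the action map are \'etale, descent gives \'etale morphisms of stacks $W/H\to V/H$ and $W/H\to X/G$. The Cartesian property over the GIT quotients is then precisely Luna's \emph{fundamental lemma}: a strongly \'etale equivariant morphism of affine varieties with reductive action induces a Cartesian square with the GIT quotient formation, yielding
\[ W/H\simeq\cc_W\times_{\cc_V}V/H, \qquad W/H\simeq\cc_W\times_{\cc}X/G. \]

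The main obstacle is the \emph{strong \'etale} condition in the fundamental lemma, which requires $W$ to be $H$-saturated (a union of closed $H$-orbits over an open of $\cc_W$) and the saturation $G\times^H W$ to preserve the closed-orbit structure of $X$. This can be arranged by shrinking $W$ within an $H$-saturated affine neighborhood of $x_0$, using that $\pi_W$, $\pi_V$, $\pi$ are affine and that the locus of closed orbits is closed; in characteristic zero over a field this is classical \cite{Luna}, and the clean stack-theoretic formulation, including the \'etaleness of the diagonals in \eqref{etale}, is provided by \cite{Alper}.
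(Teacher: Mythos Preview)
The paper does not supply a proof of this theorem: it is quoted as a known result, with attribution to Luna \cite{Luna} and, for the generality over an arbitrary field, to Alper \cite{Alper}. Your proposal is therefore not to be compared against a proof in the paper but against the cited literature, and indeed what you have written is a faithful sketch of Luna's original construction in characteristic zero.

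One point worth flagging: your construction leans on the Reynolds operator and the $H$-equivariant splitting of $\mathfrak m_{x_0}\twoheadrightarrow \mathfrak m_{x_0}/\mathfrak m_{x_0}^2$, which requires $H$ to be linearly reductive. Over a field of positive characteristic a reductive group need not be linearly reductive, so this step fails in general; this is precisely why the paper invokes Alper's work rather than relying solely on \cite{Luna}. You acknowledge this at the end, but the body of your argument is specific to characteristic zero, so the final sentence is doing more work than it appears to --- it is not just a ``clean stack-theoretic formulation'' but a genuinely different argument needed to cover the general case stated in the paper.
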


\begin{remark}\label{remarkslice}
Although the stack $\mathcal V$ is completely determined by $\xi$, as mentioned above, the identification of \'etale neighborhoods of $\mathcal V$ and $\mathcal X$ of the above theorem is not canonical.

We can make it a little more canonical by recalling that $V$ is, up to the $H$-action, identified with the fiber of the normal bundle over $x_0$. We can also identify the tangent space of $W$ at $x_0$ with $V$ (through the canonical quotient map from the tangent space to $x_0$ of $X$ to the normal space of $x_0 G$), and then require that the \'etale map $W\to V$ induce the identity on tangent spaces. This is implicit in the construction of \cite[III.1.Lemme]{Luna}, but will not be used here.
\end{remark}

The way that \'etale neighborhoods of $\tilde\xi\in \Ob(\mathcal X_k)$ and $0\in \Ob((V/H)_k)$ are identified is important (we freely use $0$ for the zero point in $V(k)$ but also for its images in $(V/H)_k$ and $(V\sslash H)(k)$), and we will insist on clarifying the dependence of various constructions on choices. Let $W, W'$ be two $H$-stable subvarieties of $X$ as in Theorem \ref{lunathm}, together with \'etale, $H$-equivariant morphisms: $W\to V$, $W'\to V$. This gives rise to a diagram with \'etale diagonal maps:
\begin{equation}\label{etaleV} \xymatrix{
& \mathcal Y \ar[dl]\ar[d]\ar[dr]&\\
\mathcal V\ar[d] & \cc_{\mathcal Y} \ar[dl]\ar[dr]& \mathcal V\ar[d] \\
\cc_V && \cc_V,
}\end{equation}
where $\mathcal Y = W/H\times_{\mathcal X} W'/H$, $\cc_{\mathcal Y}=W\sslash H\times_{\cc} W'\sslash H$.

Again, this is a diagram of ``pointed stacks'', in the sense that there is a distinguished isomorphism class of points $\xi'\in \Ob(\mathcal Y_k)$  mapping to the isomorphism class of $0\in \Ob(\mathcal V_k)$ and, moreover, the diagram induces the identity on the (neutral) gerbe $\mathcal V_0 \subset \mathcal V$ (as easily follows from the construction of this diagram by Theorem \ref{lunathm} --- namely, the fact that $0\in V(k)$ corresponds to a $G(k)$-orbit on $X(k)$).  

We should make sure that, whenever we make a definition for $\mathcal X$ using Luna's \'etale slice theorem, the  objects that we define are invariant with respect to correspondences of the form \eqref{etaleV} defined over $k$. However, the diagram \eqref{etaleV} does not capture all the information present; for example, it is not clear how $k$-points of $W$ and $W'$ are related, and when they ``appear'' in the fiber product $\mathcal Y = W/H\times_{\mathcal X} W'/H$ (i.e., when they have the same image in $\mathcal X$). In order to encode such information, we complete this diagram to the following diagram with Cartesian squares:

\begin{equation}\label{fiberproduct}
\xymatrix{
&& P \ar[dl]\ar[dr]&&\\
& W_1\ar[dl]\ar[dr] & & W_2\ar[dl]\ar[dr] &\\
V \ar[dr] && \mathcal Y \ar[dl]\ar[d]\ar[dr]&& V\ar[dl]\\
&\mathcal V\ar[d] & \cc_{\mathcal Y} \ar[dl]\ar[dr]& \mathcal V\ar[d] &\\
&\cc_V && \cc_V. &
}
\end{equation}

The varieties $W_1$, $W_2$ and $P$ are defined by the Cartesian property, i.e., $W_1$ and $W_2$ are the fiber products of $\mathcal Y$ with $V$ over $\mathcal V$ with respect to the left and right maps of \eqref{etaleV}, respectively, and $P$ is their fiber product over $\mathcal Y$. Alternatively, in terms of the subvarieties $W, W'$ of $X$ used to define $\mathcal Y$, we have
\begin{equation}\label{PWW}
P = W \times_{\mathcal X} W'.
\end{equation} 

The space $P$ carries an action of $H\times H$, and is an $H$-torsor over $W_1$, resp.\ $W_2$, with respect to the second, resp.\ first copy of $H$. It contains a distinguished point $x_{00}$ which corresponds to the point $x_0$ of $W$ and $W'$, whose $H\times H$-orbit is isomorphic to $H$ (acted upon by left and right multiplication). The fiber at $x_{00}$ of the normal bundle to its orbit can be identified with $V$ via either the projection to $W_1$ or the projection to $W_2$, and it is easy to see that these identifications coincide. Applying Luna's \'etale slice theorem to $P$, we deduce that there is a subvariety $W_3$, containing $x_{00}$ and stable under the diagonal copy of $H$, and an \'etale map $W_3\to V$ giving rise to a diagram analogous to \eqref{etale}. What we will need from this is the following diagram with Cartesian squares and \'etale diagonal maps:

\begin{equation}\label{etaleW} \xymatrix{
& W_3 \ar[dl]\ar[d]\ar[dr]&\\
W_1\ar[d] & \cc_{W_3} \ar[dl]\ar[dr]& W_2\ar[d] \\
\cc_{\mathcal Y}=\cc_{W_1} && \cc_{\mathcal Y}=\cc_{W_2},
}\end{equation}
where $\cc_{W_i} = W_i\sslash H$.

We will return to these diagrams several times. As a first corollary, we get the existence of a well-defined \emph{semisimplification} for $k$-point of $\mathcal X$.

Let $\xi \in \cc(k)$ be a neutral point, and choose a closed $k$-point $\tilde\xi:\spec k\to \mathcal X$ over $\xi$, with stabilizer $H$.
Let $\mathcal N_\xi$ be the preimage of $\xi$ under the map: $\mathcal X\to \cc$. Let $W$ be an $H$-stable subvariety of $X$ as in Luna's theorem, and $N_W$ the preimage of the distinguished point of $\cc_W$ in $W$. From \eqref{etale} we get an equivalence
 $$  N_W/H \xrightarrow\sim \mathcal N_\xi.$$
Composing with the natural map $N_W/H\to \spec k/H \xrightarrow\sim \mathcal X_\xi$, we get a diagram
\begin{equation}\label{ss}
\mathcal N_\xi \overset\sim\from N_W/H \to \spec k/H \xrightarrow{\sim} \mathcal X_\xi.
\end{equation}

\begin{proposition}\label{Jordan}
The map from isomorphism classes of $\mathcal N_\xi$ to isomorphism classes of $ X_\xi$ induced from \eqref{ss} does not depend on the chosen $k$-point $\tilde \xi$, or on the choice of $W$ and the \'etale map $W\to V$. 
\end{proposition}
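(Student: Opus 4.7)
The plan is to reduce independence from the choice of slice to a naturality check on Luna's theorem via the comparison diagram \eqref{etaleW}, and to derive independence from $\tilde\xi$ from the $k$-surjective presentation theory of \S\ref{Nash-quotient}.

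First, fix $\tilde\xi$ and consider two Luna slices $(W,W\to V)$ and $(W',W'\to V)$. I would form $P = W\times_{\mathcal X}W'$ as in \eqref{fiberproduct} and apply Luna's theorem to $P$ (endowed with its diagonal $H$-action) at the distinguished point $x_{00}$, producing the subvariety $W_3 \subset P$ and the Cartesian diagram with \'etale diagonals \eqref{etaleW}. By Luna's theorem for $P$, there is an equivalence $N_{W_3}/H \xrightarrow{\sim} \mathcal P_{\xi_P}$ between the normal-cone quotient and the gerbe of $P/(H\times H)$ over the distinguished point $\xi_P \in \cc_P = \cc_W\times_\cc \cc_{W'}$. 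The two projections $P\to W$ and $P\to W'$ send $x_{00}$ to $x_0$, are \'etale, and compatibly equivariant ($\Delta H \subset H\times H$ on $W_3$ going to the corresponding factor $H$ on $W$ or $W'$); restricting to nilcones, they yield \'etale $H$-equivariant maps $N_W \leftarrow N_{W_3} \to N_{W'}$.

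Next, I would observe two things. (1) The map $\mathcal P_{\xi_P}\to \mathcal N_\xi$ induced by either projection (they coincide, since $P\to \mathcal X$ factors through both $W$ and $W'$ tautologically) is essentially surjective on $k$-points: any $y\in\mathcal N_\xi(k)$ is represented under the two Luna equivalences by $y_1\in (N_W/H)(k)$ and $y_2\in(N_{W'}/H)(k)$, and the tautological identification of their images in $\mathcal X(k)$ exhibits a lift $(y_1,y_2)\in \mathcal P_{\xi_P}(k)$. (2) The two compositions
\[
\mathcal P_{\xi_P} \xrightarrow{\sim} N_{W_3}/H \to N_W/H \to \spec k/H
\quad\text{and}\quad
\mathcal P_{\xi_P} \xrightarrow{\sim} N_{W_3}/H \to N_{W'}/H \to \spec k/H
\]
are both equal to the stack map induced by the structure morphism $N_{W_3}\to \spec k$, since on underlying schemes both factorizations $N_{W_3}\to N_W\to \spec k$ and $N_{W_3}\to N_{W'}\to \spec k$ simply equal this structure morphism, and $H$ acts trivially on $\spec k$ in both cases. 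Combining (1) and (2), the two Luna maps $\mathcal N_\xi\xleftarrow{\sim}N_W/H\to \spec k/H=\mathcal X_\xi$ and $\mathcal N_\xi\xleftarrow{\sim}N_{W'}/H\to\spec k/H=\mathcal X_\xi$ agree on all isomorphism classes.

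For independence of the section $\tilde\xi$ within the gerbe $\mathcal X_\xi$, a different choice $\tilde\xi'$ amounts to replacing the presentation $\mathcal X=X/G$ by a pure inner twist $\mathcal X=X^T/G^T$ (cf.\ \S\ref{Nash-quotient}), under which the quotient stack $\mathcal V = V/H$ is preserved up to the canonical $1$-isomorphism discussed after Theorem \ref{lunathm}. Luna's theorem transports along this twist, and applying the previous paragraph to a common refinement yields the same map on isomorphism classes. The main obstacle I anticipate is verifying the 2-commutativity of the triangles relating Luna's equivalences for $W$, $W'$, and $W_3$ with the projections $P\to W$ and $P\to W'$; this requires careful bookkeeping of base points and of the various copies of $H$, but involves no substantial new idea beyond the naturality of Luna's construction.
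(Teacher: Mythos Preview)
Your proposal is correct and follows essentially the same route as the paper: form $P=W\times_{\mathcal X}W'$, apply Luna's theorem to $P$ at $x_{00}$ to obtain $W_3$, and observe that both semisimplification maps factor through the single structure map $N_{W_3}/H\to\spec k/H$. Two small remarks: your $\mathcal P_{\xi_P}$ is the full fiber of $\mathcal Y=P/(H\times H)$ over $\xi_P$, not a gerbe; and since the diagrams \eqref{fiberproduct}, \eqref{etaleW} are Cartesian with \'etale diagonals, the maps $N_{W_3}/H\to N_W/H\to\mathcal N_\xi$ (and similarly for $W'$) are actually \emph{equivalences}, so your essential surjectivity argument in (1), while correct, can be replaced by the cleaner diagram of equivalences the paper records --- this also dissolves the ``obstacle'' you anticipate about 2-commutativity.
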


\begin{proof}
Two choices $W\to V$ and $W'\to V$ as before give rise to the above diagrams, and in particular to \eqref{etaleW}. This, together with the maps $W_1\to W$, $W_2\to W'$ (which are also Cartesian over $\cc_{\mathcal Y} \to \cc_W$, $\cc_{\mathcal Y} \to \cc_{W'}$) give rise to a diagram of equivalences
$$\xymatrix{
& N_{W_3}/H \ar[dl]\ar[dr]&\\
N_W/H \ar[dr] && N_{W'}/H \ar[dl]\\
& \mathcal N_\xi &
},$$
and both ``maps'' (at the level of isomorphism classes) $\mathcal N_\xi \to \spec k/H$ are obtained by inverting those and composing with $N_{W_3}/H\to \spec k/H$. Therefore, they coincide.

It is easy to see that a choice of different $k$-point modifies these maps and the isomorphism $\spec k/H\xrightarrow\sim \mathcal X_\xi$ compatibly; thus we get a well-defined map from isomorphism classes of objects in $\mathcal N_\xi$ to isomorphism classes of objects in $\mathcal X_\xi$.
\end{proof}

\subsection{Local Schwartz stalks}

Let $F$ be a local field, and assume that $X,G$ are defined over $F$. Denote by $\mathfrak X$ the stack $\mathcal X(F)$, as before. 
We use the notation of the previous subsection, with $\xi$ now being a neutral point of $\cc(F)$. 
Applying the slice theorem and the fact that \'etale maps become local isomorphisms in the semi-algebraic topology on $F$-points (when $F\ne \CC$), we get from Theorem \ref{lunathm}:

\begin{corollary}\label{Nashnbhds}
Assume $F\ne \CC$. There is a semi-algebraic open neighborhood $U$ of $\xi$ in $\cc(F)$ and a semi-algebraic open neighborhood $U'$ of the image of $0\in c_V(F)$, such that the corresponding open Nash substack $\mathfrak X_U = \mathfrak X \times_{\cc(F)} U$ of $\mathfrak X$ is equivalent to the open Nash substack of $\mathcal V(F)$ lying over $U'$.

In particular, setting $\mathfrak Z = \pi^{-1}(\xi)$, $\mathfrak Z'=\pi_V^{-1}(0)$, where $\pi: \mathfrak X\to \cc(F)$, $\pi_V:\mathcal V(F)\to \cc_V(F)$, the stalk  (see \S \ref{ssstalks}) of the Schwartz cosheaf $\mathfrak S_{\mathfrak X}$ over $\mathfrak Z$ is isomorphic to the stalk of $\mathfrak S_{\mathcal V(F)}$ over $\mathfrak Z'$.
\end{corollary}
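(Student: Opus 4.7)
The plan is to deduce the corollary from Luna's slice theorem (Theorem \ref{lunathm}) by passing to $F$-points and exploiting that, for $F \neq \CC$, étale morphisms of $F$-varieties become local Nash isomorphisms: any $F$-point in the source has a semi-algebraic open neighborhood mapping by a Nash isomorphism onto a semi-algebraic open in the target. Combined with the fact (already established, right after Lemma \ref{equivpres}) that the functor $\mathcal X \mapsto \mathcal X(F)$ from smooth algebraic stacks with $F$-surjective presentations to Nash stacks preserves fiber products, this will let us transport the Cartesian diagram \eqref{etale} to an equivalence of Nash stacks after shrinking.

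First, look at the étale diagonal maps $\cc_W \to \cc$ and $\cc_W \to \cc_V$ in \eqref{etale}. The distinguished $F$-point $\bar x_0 \in \cc_W(F)$ (the image of $x_0$) maps to $\xi \in \cc(F)$ and to the image of $0 \in \cc_V(F)$. Applying the local-Nash-isomorphism property to each étale map separately and intersecting in $\cc_W(F)$, I can find a semi-algebraic open neighborhood $U_W$ of $\bar x_0$ in $\cc_W(F)$ that is mapped by Nash isomorphisms onto a semi-algebraic open neighborhood $U$ of $\xi$ in $\cc(F)$ and onto a semi-algebraic open neighborhood $U'$ of $0$ in $\cc_V(F)$ (shrinking further if needed to make both maps homeomorphisms onto their images). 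Applying $(-)(F)$ to the Cartesian squares in \eqref{etale} and base-changing along $U_W \hookrightarrow \cc_W(F)$, I obtain a chain of equivalences of Nash stacks
\[
\mathfrak X_U \;\simeq\; \mathcal X(F) \times_{\cc(F)} U_W \;\simeq\; (W/H)(F)\big|_{U_W} \;\simeq\; \mathcal V(F) \times_{\cc_V(F)} U_W \;\simeq\; \mathcal V(F)_{U'},
\]
which proves the first assertion.

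For the statement about stalks, the key observation is that the stalk of $\mathfrak S_{\mathfrak X}$ over $\mathfrak Z$ is local near $\mathfrak Z$: for any smooth chart $(V,v)$ with $v$ factoring through an open substack $\mathfrak X_U \supset \mathfrak Z$, the definition $\mathfrak S(V,v)_{\mathfrak Z} = \mathcal S(V)/\mathcal S(V\times_{v,\mathfrak X} \mathfrak U)$ yields the same space whether we regard $(V,v)$ as a chart of $\mathfrak X$ or of $\mathfrak X_U$, because the open complements of $\mathfrak Z$ inside $V$ coincide. Hence the stalk is determined by the restriction of the cosheaf to the smooth site of any open neighborhood of $\mathfrak Z$. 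Since $\mathfrak Z \subset \mathfrak X_U$ is sent to $\mathfrak Z' \subset \mathcal V(F)_{U'}$ under the equivalence constructed above (both being the full preimage of the chosen point in the invariant-theoretic quotient), the equivalence of open Nash substacks induces the desired identification of Schwartz stalks. The only nontrivial ingredient is the local-Nash-isomorphism property of étale maps, which is precisely why $F = \CC$ must be excluded (and it is the sole source of the restriction on $F$ here).
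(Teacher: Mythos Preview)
Your proposal is correct and follows exactly the approach the paper takes: the paper simply states that the corollary follows from Luna's theorem together with the fact that \'etale maps become local isomorphisms in the semi-algebraic topology for $F\ne\CC$, and then remarks that the stalk statement is a tautology once the open-substack equivalence is in hand. You have spelled out in detail what the paper leaves implicit. One minor caution: the fiber-product preservation proposition you invoke is stated for \emph{smooth} stacks, whereas $\cc$, $\cc_W$, $\cc_V$ need not be smooth; but your argument does not actually need that proposition in full---since $U_W\to U$ and $U_W\to U'$ are Nash isomorphisms, the Cartesian squares in \eqref{etale} directly identify the open Nash substacks over these sets via their groupoid presentations, which is all that is required.
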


From now on we will be denoting Schwartz stalks over $\mathfrak Z$ simply by the index $~_\xi$, and stalks over $\mathfrak Z'$ by the index $~_0$. Note, however, that $\mathfrak Z$ (and this notation for the stalk) does \emph{not} refer to the gerbe denoted by $\mathcal X_\xi$ before, but contains it.

Notice that the isomorphism between the stalks is a tautology, given the first statement of the corollary and the fact that the stalks are cosheaves supported away from the complement of $\mathfrak Z$, resp.\ of $\mathfrak Z'$. Applying the isomorphism to the global sections \eqref{glst1}, we get an isomorphism:
\begin{equation}\label{glsections} \mathcal S(\mathcal X(F))_\xi \simeq \mathcal S(\mathcal V(F))_0.   
\end{equation}
This isomorphism, though, depends on the identification of \'etale neighborhoods as in \eqref{etale}, and it is important to keep this in mind in constructions that follow.

\begin{remark}
From Remark \ref{restriction-stacks} it follows that the isomorphism \eqref{glsections} is induced from a diagram of the form \eqref{etale} even when $F=\CC$, simply by considering $\mathfrak X$ as a Nash $\RR$-manifold. Therefore, in what follows we put no restriction on the local field $F$.
\end{remark}

Now I describe a way to pass from measures to functions, which will be useful for the regularization of orbital integrals. We will only work with the coinvariant space $\mathcal S(V(F))_{H(F)}$ which by Corollary \ref{directsum} is canonically a direct summand of $\mathcal S(\mathcal V(F))$ (and hence also provides a direct summand $\left(\mathcal S(V(F))_{H(F)}\right)_0$ of the stalk $ \mathcal S(\mathcal V(F))_0$).

We would like to divide all Schwartz measures on $V(F)$ by a Haar measure on $V(F)$, in order to obtain Schwartz functions on $V(F)$. However, Haar measure on $V(F)$ will not be preserved by the correspondences of the form \eqref{etaleV}; therefore we need to consider more general classes of measures.

Let $\delta_V$ be the character by which the group $H(F)$ acts on Haar measure on $V(F)$; it is the absolute value of an algebraic character $\mathfrak d_V$. Consider the space $\mathcal L$ of volume forms on $V$ which are $H$-eigenforms with eigencharacter $\mathfrak d_V$; clearly, they are all multiples of a ``Haar'' volume form by a polynomial function on $\cc_V=V\sslash H$; thus, we can think of them as a line bundle over $\cc_V$ (also to be denoted by $\mathcal L$). The absolute value of such volume a volume form, multiplied by any $H(F)$-invariant Nash function on $V(F)$, is a smooth, $(H(F),\delta_V)$-equivariant measure of polynomial growth on $V(F)$, and all those measures are multiples of the Haar measure by such a function. We will think of them as ``sections'' of a ``Nash line bundle'' $\mathscr L$ over $\cc_V(F)$; although $\cc_V(F)$ may not be smooth, or, even if it is,  $H(F)$-invariant Nash functions on $V(F)$ do not necessarily descend to Nash functions on $\cc_V(F)$, we will by abuse of language talk about the ``fiber'' of $\mathscr L$ over $0\in \cc_V(F)$ to refer to the quotient of the sections of $\mathscr L$ by those sections of the form $f(v) dv$, where $dv$ is a Haar measure and $f(v)$ in an $H(F)$-invariant Nash function on $V$ which vanishes on the preimage of $0$. (This fiber is clearly one-dimensional.)

Let $\mathcal F$ denote the cosheaf of Schwartz functions on $V(F)$. Let $N\subset V$ be the preimage of $0\in\cc_V$, the ``nilpotent cone'' of all elements of $V$ which contain $0$ in their $H$-orbit. Let $\mathcal F(N(F))$ denote the space of functions on $N(F)$ which are restrictions of elements of $\mathcal F(V(F))$. In the non-Archimedean case, this is isomorphic to $\mathcal F(V(F))_{N(F)}$, (global sections of) the stalk of $\mathcal F$ over $N(F)$. In the Archimedean case, though, it is just a quotient of it, since it does not remember derivatives in the transverse direction. In any case, we have a map
$$ \mathcal F(V(F))_{N(F)} \to \mathcal F(N(F)).$$
 The following is obvious:

\begin{lemma}\label{meastofunction}
Multiplication by a $\delta_V$-eigenmeasure $\mu\in \mathscr L$, which is non-vani\-shing at $0\in\cc_V(F)$ (i.e., has non-zero image in the fiber over $0$) gives rise to an isomorphism of the coinvariant spaces of stalks
\begin{equation}
\left(\mathcal F(V(F)_{N(F)}\right)_{(H(F),\delta_V^{-1})} \xrightarrow\sim \left(\mathcal S(V(F))_{N(F)}\right)_{H(F)}.
\end{equation}
 Moreover, the resulting map
\begin{equation}\label{maptofns1}
\left(\mathcal S(V(F))_{N(F)}\right)_{H(F)} \to \mathcal F(N(F))_{(H(F),\delta_V^{-1})} 
\end{equation}
depends only on the image of $\mu$ in the fiber of $\mathscr L$ over $0\in\cc_V(F)$.
\end{lemma}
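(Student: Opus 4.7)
The plan is to proceed in two stages: first establish the isomorphism at the level of stalks (before passing to coinvariants), then pass to coinvariants and verify the ``moreover'' assertion.

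First I would argue that the hypothesis that $\mu$ has non-zero image in the fiber $\mathscr L|_0$ over $0 \in \cc_V(F)$ implies that $\mu$ is nowhere-vanishing on some semi-algebraic open neighborhood $U \supset N(F)$ in $V(F)$. Writing $\mu = g \cdot |\omega|$ with $|\omega|$ a fixed algebraic $\delta_V$-eigenvolume form non-vanishing at $0$ and $g$ an $H(F)$-invariant Nash function, the non-vanishing assumption amounts to $g$ being non-zero on $N(F)$; combined with the one-dimensionality of the fiber asserted in the text, the continuity of $g$, and the closedness of its vanishing locus, this yields the desired neighborhood. On $U$, multiplication by $\mu$ defines an $F$-linear bijection between Nash functions and $\delta_V$-equivariant Nash measures, with inverse division by $\mu$. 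Since the germs at $N(F)$ depend only on arbitrarily small neighborhoods of $N(F) \subset U$, where $\mu$ is bounded away from zero, both operations preserve the Schwartz property of germs, giving an isomorphism of stalks $\mathcal F(V(F))_{N(F)} \xrightarrow{\sim} \mathcal S(V(F))_{N(F)}$.

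Next, since $\mu$ is a $\delta_V$-eigenmeasure, multiplication by $\mu$ intertwines the $(H(F), \delta_V^{-1})$-action on functions with the $H(F)$-action on measures, and is continuous; its inverse is continuous and equivariant in the opposite direction. The stalk isomorphism therefore descends to the claimed isomorphism of coinvariant spaces. For the ``moreover'', the map \eqref{maptofns1} is obtained by composing the inverse of this isomorphism with the natural restriction $\mathcal F(V(F))_{N(F)} \to \mathcal F(N(F))$. If $\mu'$ has the same image as $\mu$ in the fiber at $0$, then $\mu - \mu'$ lies in the subspace of sections vanishing on $N(F)$, so $\mu^{-1} - (\mu')^{-1} = (\mu' - \mu)/(\mu \mu')$ vanishes on $N(F)$; hence for any Schwartz measure germ $\nu$, the difference $\nu \mu^{-1} - \nu (\mu')^{-1}$ is a Schwartz function germ vanishing on $N(F)$, and restricts to zero there.

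The main obstacle is the passage from ``non-vanishing at $0 \in \cc_V(F)$'' in the abstract sense of the fiber (a quotient) to pointwise non-vanishing on a genuine open neighborhood of $N(F)$. This is delicate because $H(F)$-invariant Nash functions on $V(F)$ need not descend to Nash functions on $\cc_V(F)$, and $N(F)$ may consist of more than one $H(F)$-orbit; one must use the stated one-dimensionality of the fiber to conclude that non-vanishing in the quotient sense propagates to a neighborhood in $V(F)$. Once this geometric step is in hand, the remaining arguments are formal consequences of the definitions of stalks and coinvariant spaces.
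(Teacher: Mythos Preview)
Your proposal is correct. The paper does not actually prove this lemma; it introduces it with ``The following is obvious:'' and gives no argument. Your write-up supplies the details the paper omits, and you have correctly located the only non-trivial point: deducing from the one-dimensionality of the fiber of $\mathscr L$ over $0$ that the invariant Nash function $g$ is constant (hence nonzero) on all of $N(F)$, so that $\mu$ is nowhere vanishing on the semi-algebraic open set $\{g \ne 0\} \supset N(F)$. Once that is in hand, the rest is, as you say, a formal consequence of the definitions of stalks and twisted coinvariants, together with the eigenmeasure property $h \cdot \mu = \delta_V(h)\,\mu$; your computation $(\mu')^{-1} - \mu^{-1} = (\mu - \mu')/(\mu\mu')$ for the ``moreover'' clause is exactly the right one.
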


Notice that the index $~_{(H(F),\delta_V^{-1})}$ denotes $(H(F),\delta_V^{-1})$-coinvariants of the space, i.e., its quotient by the closure of the span of vectors of the form $(v-\delta_V(h) h\cdot v)$ (so that $H(F)$ acts on the quotient by the character $\delta_V^{-1}$). 
I explain the statement in words, because the notation has become a bit heavy: 
The $H(F)$-coinvariants of the stalk of $\mathcal S(V(F))$ over $N(F)$ are identified, via this choice of measure, with the $(H(F),\delta_V^{-1})$-coinvariants of the corresponding stalk of Schwartz functions; moreover, the resulting map that one obtains by restricting the functions to $N(F)$ depends only on the (non-zero) image of the chosen measure in the ``fiber'' of $\mathscr L$ at $0\in\cc_V(F)$.

Notice that taking $H(F)$-coinvariants commutes with taking stalks at $N(F)$, by Lemma \ref{globalstalk}. In other words, $\left(\mathcal S(V(F))_{N(F)}\right)_{H(F)}$ is the same as the direct summand $\left(\mathcal S(V(F))_{H(F)}\right)_0$ of the stalk $\mathcal S(\mathcal V(F))_0$ that we saw before.
Moreover, by  Corollary \ref{directsum} the coinvariant space $\mathcal S(V(F))_{H(F)}$ is not just a subspace, but also canonically a direct summand of $\mathcal S(\mathcal V(F))$:
\begin{equation}\label{directsummand-Vlocal}
\mathcal S(V(F))_{H(F)} \underset{\from}{\hookrightarrow} \mathcal S(\mathcal V(F)).
\end{equation}
Thus, from \eqref{maptofns1} we get a map
\begin{equation}\label{maptofns}\mathcal S(\mathcal V(F))_0 \to \mathcal F(N(F))_{(H(F),\delta_V^{-1})},
\end{equation}
depending only on the evaluation of $\mu$ over $0$.

In the next subsection we will see that the direct summand \eqref{directsummand-Vlocal} (though not, necessarily, the map \eqref{maptofns}) is preserved at the level of stalks by a diagram of the form \eqref{etaleV}.

\subsection{Global Schwartz stalks}
Now let $X, G$ be defined over a global field $k$, and let $\xi\in\cc(k)$ be a neutral point. \emph{From now on we will assume that the reductive automorphism group associated to $\xi$ is connected.} The reason is that the presentation would be more complicated if it were not connected, and I do not presently have the experience to know which approach would be more useful in applications. 

To demonstrate the issues that arise in the non-connected case, recall that the semisimple (closed) geometric point over the neutral point $\xi$  corresponds to a closed substack of $\mathcal X$ of the form $\mathcal X_\xi=\spec k/H$, for some reductive group $H$. Its $k$-points: $\spec k\to \spec k/H$ can be partitioned into equivalence classes for a relation stronger than geometric equivalence, which we can call ``stable equivalence'', whereby the $k$-point corresponding to the trivial $k$-torsor is equivalent to all points in the image of
$$ \spec k/H^0(k)\to \spec k/H(k)$$
(where $H^0$ is the connected component of $H$). Since every $k$-point is isomorphic to the trivial one for some presentation $\spec k/H\simeq \spec k/H'$, this uniquely defines a relation, which is easily checked to be an equivalence relation on $\mathcal X_\xi(k)$.

Note that the definition of stable equivalence can be extended to arbitrary geometric classes of $k$-points of $\mathcal X$, simply by observing that to any isomorphism class of $k$-points: $x:\spec k \to \mathcal X$ lying over $\xi\in \cc(k)$ one can attach by Proposition \ref{Jordan} a unique isomorphism class of \emph{closed} $k$-points: $x_s:\spec k \to \mathcal X$ lying over $\xi$ (the ``semisimple part of $x$''). We can then call two points in the same geometric equivalence class \emph{stably equivalent} if this is the case for their semisimple parts. In the adjoint quotient of the group, for example, $x_s$ is the $k$-conjugacy class of the semisimple part of the Jordan decomposition of $x$, and this notion of stable conjugacy is the one of \cite{Ko-rational}. 

Now, for two stably equivalent closed $k$-points: $\spec k \to \spec k/H$, the $H$-torsors that they define (with respect to this presentation), and hence also the stabilizer groups, are isomorphic at almost every place, and the map $G\to H\backslash G$ gives an onto map of integral points, for almost every place. This is not the case, in general, with non-stably equivalent $k$-points, which creates the need for some extra bookkeeping, that I will not do here. Thus, from now on we assume that the stabilizer groups of the closed points we are considering are connected.

Let $\xi\in \cc(k)$. We define the \emph{global Schwartz stalk} at $\xi$ as a restricted tensor product:
\begin{equation}\label{Stp} \mathcal S(\mathcal X(\adele))_\xi := \bigotimes' \mathcal S(\mathcal X_v)_\xi,\end{equation}
with respect to ``basic vectors'' that will be described below.
Recall that $\mathcal S(\mathcal X_v)$ denotes the nuclear Fr\'echet space --- vector space without topology in the non-Archimedean case --- of ``global sections'' of the Schwartz cosheaf over $\mathcal X_v$. Thus, from this point on we are talking about actual (topological) vector spaces, not cosheaves. I repeat that the index $~_\xi$ that we are using does not refer to the stalk at the closed gerbe $\mathcal X_\xi$, but to the stalk over the \emph{whole} preimage in $\mathcal X$ of $\xi\in\cc$, which includes all geometric points whose closure contains the closed point corresponding to $\xi$. 

The restricted tensor product is taken with respect to a ``basic vector'' described as follows: First, fix a section $\spec k\to \mathcal X$ over $k$, in order to have a pair $(H,V)$ as before. Fix a (non-zero) invariant volume form $\omega_H$ on $H$ (over $k$); all local volumes will be computed with respect to its absolute power. Moreover, fix a section $\omega_V$ of the line bundle $\mathcal L$ over $\cc_V$ described before Lemma \ref{meastofunction}; here, both the line bundle and the section $\omega_V$ are defined over $k$. Finally, fix compatible integral models for $X, G, V, H$ over the $S$-integers $\mathfrak o_S$, where $S$ is some finite set of places, containing the Archimedean ones.

Now consider the following vector in $\mathcal S(V(k_v))$, where $v\notin S$:
\begin{equation}\label{basic-V} f_v^0:= \frac{1}{|\omega_H|_v(H(\mathfrak o_v))} \cdot 1_{V(\mathfrak o_v)} \cdot |\omega_V|_v,
\end{equation}
where $1_{V(\mathfrak o_v)}$ denotes the characteristic function of $1_{V(\mathfrak o_v)}$. 

\begin{lemma}
For any two sets of choices as above, the images of the resulting vectors $f_v^0$ in the stalk $\mathcal S(\mathcal V(k_v))_0$ coincide for almost every $v$.
\end{lemma}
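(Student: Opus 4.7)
The plan is to vary the ingredients of the basic vector one at a time and show that each modification affects only finitely many places. The data to compare are: (i) the compatible integral models of $X, G, V, H$ over $\mathfrak o_S$; (ii) the top invariant form $\omega_H$ on $H$; (iii) the section $\omega_V$ of $\mathcal L$ over $\cc_V$; and (iv) the section $\tilde\xi : \spec k \to \mathcal X$ over $\xi$ which determines the presentation $\mathcal V = V/H$.

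For (i), two integral models of a $k$-variety become isomorphic over $\spec \mathfrak o_{S'}$ for some finite $S' \supset S$, so at all $v \notin S'$ both $H(\mathfrak o_v)$ and $1_{V(\mathfrak o_v)}$ are independent of the choice. For (ii), two invariant $k$-forms on $H$ differ by $c \in k^\times$, and $|c|_v = 1$ at almost every $v$, leaving the factor $|\omega_H|_v(H(\mathfrak o_v))^{-1}$ unchanged. For (iii), two sections of $\mathcal L$ non-vanishing at $0$ differ by an invertible regular function $\phi$ on a Zariski neighborhood $U \subset \cc_V$ of $0$, defined over $k$, with $\phi(0) \in k^\times$; for almost all $v$, $\phi$ is a unit in $\mathfrak o_v[U]$. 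Consequently, on the preimage $\pi_V^{-1}(U(\mathfrak o_v)) \cap V(\mathfrak o_v)$ (an $H$-invariant open set containing $N(\mathfrak o_v)$) we have $|\phi|_v \equiv 1$, so the two measures agree there. Their difference is a Schwartz measure supported on $V(\mathfrak o_v) \setminus \pi_V^{-1}(U(\mathfrak o_v))$, which is disjoint from $N(k_v)$; in particular, it lies in the image of $\mathcal S(V(k_v) \setminus N(k_v))$, and hence maps to zero in the stalk $\mathcal S(\mathcal V(k_v))_0$ after passing to $H(k_v)$-coinvariants via the direct summand \eqref{directsummand-Vlocal}.

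For (iv), replacing $\tilde\xi$ by $\tilde\xi'$ replaces $(H,V)$ by an inner twist $(H', V')$ classified by a class in $H^1(k, H)$. Since $H$ is connected reductive, this class is trivial in $H^1(\mathfrak o_v, H)$ at almost every $v$ (good reduction combined with Lang's theorem on the special fiber). At any such $v$ we fix an integral trivialization $H_{\mathfrak o_v} \simeq H'_{\mathfrak o_v}$, $V_{\mathfrak o_v} \simeq V'_{\mathfrak o_v}$; under this identification, the basic vector for the second system is computed from data $(\omega''_H, \omega''_V)$ on $(H_{k_v}, V_{k_v})$. Since both $\omega_H$ and $\omega''_H$ generate the free rank-one $\mathfrak o_v$-module of top invariant forms on the integral model (at a.a.\ $v$), they differ by a unit; the same argument applies to $\omega_V$ and $\omega''_V$, viewed as generators of the integral line bundle $\mathcal L$ over $\cc_{V, \mathfrak o_v}$ near $0$. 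Hence the comparison reduces to cases (ii)--(iii). The ambiguity in the local trivialization is an element of $H(k_v)$ and evaporates after taking $H(k_v)$-coinvariants.

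The main obstacle is step (iv): one must confirm that the inner-twist torsor trivializes integrally at almost all places (using connectedness of $H$) and that an integral trivialization transports $k$-rational volume forms to integral generators up to $v$-adic units. Once this is in place, the three earlier reductions are routine, and the stalk condition --- rather than pointwise equality --- is exactly what absorbs the unavoidable discrepancy of $|\phi|_v$ away from $N$ in step (iii).
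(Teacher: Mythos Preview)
Your proof is correct and follows essentially the same strategy as the paper's: both vary the choices one at a time, use connectedness of $H$ (via Lang's theorem) to trivialize the inner twist integrally at almost all places, and observe that the remaining discrepancies in $\omega_H$ and $\omega_V$ are $v$-adic units almost everywhere. The only difference is that for step (iii) the paper simply invokes Lemma~\ref{meastofunction} (the map to functions on $N$ depends only on the image of the eigenmeasure in the fiber of $\mathscr L$ over $0$, which is an isomorphism in the non-Archimedean case), whereas you unwind this directly by showing the difference of the two measures is supported away from $N(k_v)$---both arguments are equivalent.
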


\begin{proof}
First, for any two sections $\spec k\to \mathcal X$ and $\mathfrak o_S$-models of the corresponding groups $H$, the resulting $\mathfrak o_v$-groups $H_{\mathfrak o_v}$ will be canonically isomorphic up to $H(\mathfrak o_v)$-conjugacy for almost every $v$; we are using here our assumption that $H$ is connected.

Thus, for almost every place the sets $H(\mathfrak o_v)$ resulting from two different choices are identified, and so are the sets $V(\mathfrak o_v)$. The Haar measures $|\omega_H|_v$ are equal almost everywhere. The measures $|\omega_V|_v$ do not need to coincide, but their images in the one-dimensional fiber of $\mathscr L$ over $0\in\cc_V$ do coincide, and this is enough by Lemma \ref{meastofunction}, in the non-Archimedean case, in order to identify the resulting elements in the stalk. 
\end{proof}

The images (``germs'') of the vectors $f_v^0$ in the stalk \emph{will be our basic vectors for the definition of the global Schwartz stalk $\mathcal S(\mathcal V(\adele))_0$}, i.e.,
$$\mathcal S(\mathcal V(\adele))_0 := \bigotimes' \mathcal S(\mathcal V_v)_0,$$
where the restricted tensor product is taken with respect to the basic vectors $f_v^0$ above. 

Before I proceed to use this for defining the global Schwartz stalk of $\mathcal X$ as in \eqref{Stp}, I introduce, for later use, a map to a related restricted tensor product of spaces of functions. Namely, choose again a global section $\omega_V$ of the line bundle $\mathcal L$ over $\cc_V$ (for example, the Haar volume form on $V$). Recall that $N$ denotes the nilpotent cone in $V$. Using the absolute value of $\omega_V$, at every place $v$, we get out of \eqref{maptofns}  maps
$$ \mathcal S(\mathcal V(k_v))_0 \to \mathcal F(N(k_v))_{(H(k_v),\delta_V^{-1})}.$$

Putting them all together, we obtain a linear map:
\begin{equation}\label{tofns-global}
\mathscr E: \mathcal S(\mathcal V(\adele))_0 \to \bigotimes_v' \mathcal F(N(k_v))_{(H(k_v),\delta_V^{-1})},
\end{equation}
where the restricted tensor product on the right-hand side is taken with respect to the images of the functions:
$$\frac{1}{|\omega_H|_v(H(\mathfrak o_v))} \cdot 1_{V(\mathfrak o_v)}.$$

\begin{remark}\label{notfunctions} Notice that this is \emph{not} the space of restrictions to $N(\adele)$ of global Schwartz functions on $V(\adele)$; rather, it is a \emph{formal multiple} of that. The reason is that the partial Euler product of the factors $\frac{1}{|\omega_H|_v(H(\mathfrak o_v))} $ may \emph{not} make sense, even if we try to interpret it as a special value of a partial $L$-function. This happens precisely when (the connected reductive group) $H$ has a non-trivial $k$-character group; but these factors will be formally cancelled out when we attempt to integrate against (non-regularized) Ta\-magawa measure.
\end{remark}

We have the following easy lemma:

\begin{lemma}\label{doesnotdepend}
The map \eqref{tofns-global} does not depend on the volume form $\omega_V$ chosen.
\end{lemma}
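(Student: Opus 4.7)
The plan is to exploit the fact that any two choices of $\omega_V$ differ by a scalar factor at each place, and then use the product formula over $k$ to show that these factors cancel globally.

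Concretely, I would first note that any two sections of $\mathcal L$ that are non-vanishing at $0\in\cc_V(k)$ differ by $\omega_V' = c\cdot \omega_V$, where $c$ is a rational function on $\cc_V$, regular at $0$, with $c(0)\in k^\times$. Since the nilpotent cone $N$ equals $\pi_V^{-1}(0)$, the pulled-back function $|c\circ\pi_V|_v$ is constant on $N(k_v)$ with value $|c(0)|_v$.

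Next, I would compare the local maps $\mathscr E_v$ place by place. By Lemma \ref{meastofunction}, the local map $\mathcal S(\mathcal V(k_v))_0 \to \mathcal F(N(k_v))_{(H(k_v),\delta_V^{-1})}$ depends only on the image of the chosen measure in the fiber of $\mathscr L$ over $0$. Since passing from $|\omega_V|_v$ to $|\omega_V'|_v$ multiplies that image by $|c(0)|_v$, and dividing measures by the larger measure yields smaller functions, we obtain
\[ \mathscr E_v(\omega_V') \;=\; |c(0)|_v^{-1}\cdot \mathscr E_v(\omega_V). \]
Correspondingly, the basic vectors rescale as $f_v^{0}(\omega_V') = |c(0)|_v \cdot f_v^{0}(\omega_V)$ in the stalk at $N(k_v)$; at almost every place $v$ one has $|c(0)|_v = 1$, recovering the earlier easy lemma on basic vectors and confirming that the source restricted tensor product is the same in either case.

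Finally, I would assemble the local comparisons into a global one. On a pure tensor $\otimes_v \mu_v$ representing an element of $\mathcal S(\mathcal V(\adele))_0$, we get
\[ \mathscr E(\omega_V')\bigl(\otimes_v \mu_v\bigr) \;=\; \Bigl(\prod_v |c(0)|_v^{-1}\Bigr)\cdot \mathscr E(\omega_V)\bigl(\otimes_v \mu_v\bigr), \]
where the product is finite (being trivial away from a finite set of places) and equals $1$ by the product formula applied to $c(0)\in k^\times$. Hence $\mathscr E(\omega_V') = \mathscr E(\omega_V)$ on decomposable vectors, and by linearity on all of $\mathcal S(\mathcal V(\adele))_0$.

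The only genuine subtlety, and what I consider the key input, is verifying that the local ambiguity is precisely the scalar $|c(0)|_v$ rather than some function --- this is where the restriction to $N$ (equivalently, passing to the stalk at $0$) is essential, converting the rational function $c\circ\pi_V$ into a constant. Once this is in hand, the product formula does the rest for free.
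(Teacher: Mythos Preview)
Your argument is correct and follows essentially the same route as the paper's proof: both reduce to observing that the local maps depend only on the image of $|\omega_V|_v$ in the fiber of $\mathscr L$ over $0$, that two choices differ there by the scalar $|c(0)|_v$ for some $c\in k^\times$, and that these scalars cancel globally by the product formula. You have simply unpacked the details (identifying $c$ as the quotient of the two sections, checking the direction of the scaling, and verifying compatibility with the basic vectors) more explicitly than the paper does.
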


\begin{proof}
Indeed, we know that the local factors of this map depend only on the image of $|\omega_V|_v$ in the fiber over $0\in\cc_V(k_v)$. For any two choices, those will coincide at almost every place, and their quotients at the remaining places will multiply to $1$, by the product formula.
\end{proof}

We return to the definition of the stalk $\mathcal S(\mathcal X(\adele))_\xi$. In order to use the basic functions for $\mathcal V$, via  \eqref{glsections}, to define the basic vectors for the stalk of $\mathcal X$, we need to make sure that the basic vectors are invariant, at almost every place, under the isomorphisms of stalks induced by a diagram of the form \eqref{etaleV}. The whole point of introducing the line bundles $\mathcal L, \mathscr L$, instead of just choosing Haar volume forms on $V$, was precisely to make these definitions invariant.

\begin{proposition}\label{preservesbasic}
Consider a correspondence as in \eqref{etaleV} defined over $k$, and the automorphism: $\mathcal S(\mathcal V_v)_0 \xrightarrow\sim \mathcal S(\mathcal V_v)_0$ that it induces on stalks. For almost every place $v$, this automorphism preserves the basic vector, and for every place $v$ it preserves the distinguished direct summand of $\mathcal S(\mathcal V_v)_0$ corresponding to $H(k_v)$-coinvariants of $\mathcal S(V(k_v))$.
\end{proposition}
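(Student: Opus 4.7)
The strategy is to unravel the automorphism of $\mathcal S(\mathcal V_v)_0$ induced by \eqref{etaleV} using the refined diagrams \eqref{fiberproduct} and \eqref{etaleW}, which exhibit it as a zigzag
\[
V \leftarrow W_1 \leftarrow W_3 \rightarrow W_2 \rightarrow V
\]
of $H$-equivariant \'etale maps of smooth affine $H$-varieties, pointed by distinguished $k$-points $0 \leftarrow x_1 \leftarrow x_{00} \rightarrow x_2 \rightarrow 0$ (where $x_{00}\in W_3(k)$ is the distinguished point inherited from $P$). At the $k_v$-level each arrow restricts to a local Nash isomorphism near these points, and the composed pull-back--push-forward along this zigzag computes the automorphism of the stalk.

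\emph{Part (2): preservation of the direct summand.} By Corollary \ref{directsum}, the space $\mathcal S(V(k_v))_{H(k_v)}$, respectively $\mathcal S(W_i(k_v))_{H(k_v)}$, is canonically the direct summand of $\mathcal S(\mathcal V_v)$, respectively $\mathcal S((W_i/H)(k_v))$, corresponding to the trivial $H$-torsor --- equivalently, to the connected component of the Nash stack containing the distinguished point. Push-forward of Schwartz measures along any morphism of Nash stacks respects the decomposition into connected components, and the distinguished points throughout the zigzag all lie in the trivial-torsor component. Hence the composed automorphism of $\mathcal S(\mathcal V_v)_0$ restricts to an automorphism of the distinguished summand at every place $v$.

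\emph{Part (1): preservation of the basic vector.} Enlarge $S$ so that at every $v \notin S$ all varieties and maps in \eqref{fiberproduct} and \eqref{etaleW} extend to integral models over $\mathfrak o_v$, with the distinguished points integral and the \'etale maps remaining \'etale integrally. For such $v$, Hensel's lemma ensures that each arrow of the zigzag induces a bijection between integral points in a suitable $\mathfrak o_v$-neighborhood of the distinguished points on its two sides. Tracking $f_v^0$: the characteristic function $1_{V(\mathfrak o_v)}$ pulls back to $1_{W_3(\mathfrak o_v)}$ and pushes forward back to $1_{V(\mathfrak o_v)}$ as germs, and the normalizing scalar $|\omega_H|_v(H(\mathfrak o_v))$ is preserved because $H$ is the same group throughout. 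The remaining calculation concerns $|\omega_V|_v$: its image after traversal is $|\phi_v^*\omega_V|_v$ near $0 \in V(k_v)$, where $\phi_v$ is the local $H$-equivariant self-diffeomorphism of a neighborhood of $0$ obtained by composing the two sides of the zigzag. Its differential $d\phi_v|_0 \in GL(V)^H(k_v)$ is the base change of a $k$-rational element, so its determinant $c \in k^\times$ satisfies $|c|_v = 1$ for all but finitely many $v$, which we add to $S$. At the remaining places $|\phi_v^*\omega_V|_v = |\omega_V|_v$ as germs at $0$, and $f_v^0$ is preserved.

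\emph{Main obstacle.} The delicate point is that \eqref{etaleV} need not induce the identity on tangent spaces at the distinguished point, since we do not impose the stronger normalization alluded to in Remark \ref{remarkslice}. The resulting Jacobian is nevertheless a \emph{fixed $k$-rational} scalar, whose $v$-adic absolute value is trivial at almost every place --- this is precisely what accounts for the restriction to ``almost every'' place in the statement.
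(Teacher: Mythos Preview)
Your argument is essentially correct but takes a genuinely different route from the paper's.

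The paper works directly with the $H\times H$-space $P = W\times_{\mathcal X} W'$ from \eqref{fiberproduct}. For the summand statement it uses Lemma~\ref{Nlifts} to show that $(\mathcal S(P(k_v))_{(H\times H)(k_v)})_0 \to (\mathcal S(V(k_v))_{H(k_v)})_0$ is surjective via both the left and the right chain of arrows. For the basic vector it constructs an explicit lifted measure \eqref{basic-P} on $P(k_v)$, involving an $(H\times H,\mathfrak d_V\times\mathfrak d_V)$-eigenform $\omega_P$ and the normalization $|\omega_H|_v(H(\mathfrak o_v))^{-2}$, and checks that its image under either projection coincides with $f_v^0$ in a neighborhood of $N(k_v)$ for almost all $v$.

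You instead descend to the diagonal-$H$ slice $W_3\subset P$ and realize the automorphism as a local $H$-equivariant self-map $\phi_v$ of a neighborhood of $0\in V(k_v)$, then control it by its $k$-rational differential at $0$. This is more conceptual and avoids the bookkeeping with $H\times H$-equivariant volume forms, at the cost of needing to invert \'etale maps locally. Two points where your write-up is a little thin: first, the passage from ``$|\det d\phi_v|_0|_v=1$'' to ``$|\phi_v^*\omega_V|_v=|\omega_V|_v$ as germs'' relies on the Jacobian being $H$-invariant (hence constant along $N$) so that its value at $0$ determines its class in the fiber of $\mathscr L$---this is what makes Lemma~\ref{meastofunction} applicable, and you should say so. Second, your Hensel argument for the indicator function is stated only ``near the distinguished points''; the paper's Lemma~\ref{Nlifts} (giving $N_{W_3}\xrightarrow{\sim} N_{W_i}\xrightarrow{\sim} N$ as schemes over $k$) is what extends this to all of $N(\mathfrak o_v)$, and you are implicitly using it.
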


The proof will use the following lemma that will recur in other proofs, as well:

\begin{lemma}\label{Nlifts}
Consider diagram \eqref{fiberproduct}, induced from two different choices of data for Luna's theorem. Let $N\subset V$ denote the preimage of $0\in \cc_V$, and $N_P\subset P$ the preimage of the distinguished point of $\cc_{\mathcal Y}$. Consider the map: $P\to V$ induced from either the left or the right sequence of maps from $P$ to $V$. Then this map admits a section $N\to N_P$ over $N$. In particular, the map $N_P\to N$ is surjective on $R$-points, for every ring $R$ over which this diagram is defined.
\end{lemma}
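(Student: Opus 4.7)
The plan is to apply Luna's étale slice theorem to $P$ at $x_{00}=(x_0,x_0,1)$—which is precisely the construction the paper invokes just before diagram \eqref{etaleW}. The scheme $P$ is affine (cut out in $W\times W'\times G$ by $wg=w'$), the group $H\times H$ acts reductively, the orbit $(H\times H)\cdot x_{00}\cong H$ is closed, and its normal fiber at $x_{00}$ is canonically identified with $V$ via either leg of \eqref{fiberproduct}, the two identifications agreeing. Luna's theorem therefore produces a $\Delta H$-stable slice $W_3\hookrightarrow P$ together with an $H$-equivariant étale map $W_3\to V$ sending $x_{00}\mapsto 0$, whose composition with the inclusion $W_3\hookrightarrow P$ equals both $P\to W_1\to V$ and $P\to W_2\to V$, as well as the Cartesian diagram \eqref{etaleW}.

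Next I would use the Cartesian squares to identify the various ``nilpotent'' loci with $N$. From the left square of \eqref{etale}, $W/H=\mathcal V\times_{\cc_V}\cc_W$; pulling back along the $H$-torsor $V\to\mathcal V$ yields an isomorphism of schemes $W\cong V\times_{\cc_V}\cc_W$ in which $W\to V$ is the first projection. Taking the preimage of the distinguished point of $\cc_W$, which maps to $0\in\cc_V$, gives $N_W\cong V\times_{\cc_V}\{0\}=N$, and similarly $N_{W'}\cong N$. The same manipulation applied to $W_3$ via \eqref{etaleW} and the composite étale map $\cc_{W_3}\to\cc_{\mathcal Y}\to\cc_V$ yields $W_3\cong V\times_{\cc_V}\cc_{W_3}$, hence $N_{W_3}\cong N$ canonically.

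The candidate section is
\[
s\colon N\xrightarrow{\sim} N_{W_3}\hookrightarrow N_P,
\]
the inclusion being the restriction of $W_3\hookrightarrow P$ to preimages of the distinguished points. To verify that $s$ sections $N_P\to N$ via either projection it suffices to check on $V$-coordinates: the composition $N\xrightarrow{s}N_P\to V$ coincides with the Luna étale map $W_3\to V$ restricted to $N_{W_3}$, and under our canonical identifications this is the identity on $N$. Surjectivity of $N_P\to N$ on $R$-points for every ring $R$ over which \eqref{fiberproduct} is defined then follows immediately from the functoriality of $s$. The only delicate input is the agreement of the two identifications of the normal-bundle fiber at $x_{00}$ with $V$, but this is recorded by the author just before \eqref{etaleW}, so I foresee no serious obstacle.
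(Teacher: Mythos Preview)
Your approach is essentially the paper's: invoke Luna's theorem on $P$ to obtain $W_3$ and diagram \eqref{etaleW}, then use its Cartesian squares together with those of \eqref{fiberproduct} and the \'etaleness of $\cc_{W_3}\to\cc_{\mathcal Y}\to\cc_V$ to deduce that $N_{W_3}\xrightarrow{\sim}N_{W_i}\xrightarrow{\sim}N$ for $i=1,2$, with the inverse followed by $N_{W_3}\hookrightarrow N_P$ supplying the section.

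One point needs correction. You claim the Luna map $W_3\to V$ may be chosen equal to \emph{both} composites $W_3\hookrightarrow P\to W_i\to V$, so that a single $s$ sections both projections simultaneously. The input you cite---agreement of the two identifications of the normal fiber at $x_{00}$ with $V$---only says the two composites agree to first order at $x_{00}$; it does not force the two \'etale maps $W_3\to V$ to coincide, nor the two resulting isomorphisms $N_{W_3}\xrightarrow{\sim}N$ to agree. Fortunately this does not harm the lemma, which only asks for a section of \emph{each} projection: your Cartesian computation yields one for each $i$ separately (using the $i$-th identification $N\xrightarrow{\sim}N_{W_3}$), and the paper's proof records exactly this, stating the isomorphism ``for $i=1,2$'' without asserting that the two coincide.
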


\begin{proof} 
Indeed, let us denote by $N_{W_i}$ the corresponding preimages of the distinguished point of $\cc_{\mathcal Y}$ in $W_i$. Recall diagram \eqref{etaleW}, obtained by applying Luna's slice theorem to $P$. The maps $\cc_{W_3}\to \cc_{\mathcal Y}$ and $\cc_{\mathcal Y} \to \cc_V$ being \'etale, and the diagrams \eqref{fiberproduct} and \eqref{etaleW} Cartesian, we get isomorphisms for the preimages of $0 \in \cc_\bullet$ (where $\bullet = W_3$, $\mathcal Y$ or $V$): 
$$ N_{W_3}\xrightarrow{\sim} N_{W_i} \xrightarrow{\sim} N$$
for $i=1,2$.
\end{proof} 

\begin{proof}[Proof of Proposition \ref{preservesbasic}]

Consider again diagram \eqref{fiberproduct}. The fact that, by Lemma \ref{Nlifts}, the (smooth, $H$-torsor) map $N_P\to N$ is surjective on $k_v$-points, implies that this will be the same for semi-algebraic neighborhoods of $N_P$ and $N$, as well (when $k_v=\CC$ we base change to $\RR$ for this statement to be true), and therefore the map on coinvariant stalks
$$ \left(\mathcal S(P(k_v))_{(H\times H)(k_v)}\right)_0 \to \left(\mathcal S(V(k_v))_{H(k_v)}\right)_0$$
is surjective (both for the ``left'' and ``right'' sequence of arrows). This shows that the distinguished summand is preserved by the isomorphism of stalks induced from \eqref{etaleV}.

The maps $W_1\to V$, $W_2\to V$ are \'etale and $H$-equivariant; that means that the pull-back of any $(H,\mathfrak d_V)$-equivariant volume form on $V$ is again an $(H,\mathfrak d_V)$-equivariant volume form on $W_1$, resp.\ $W_2$. Volume forms of this type on $W_i$ form again a line bundle over $\cc_{\mathcal Y}=W_i\sslash H$, whose global sections are generated by the pull-back of Haar measure. It will not create any confusion to denote this line bundle again by $\mathcal L$.

The space $P$ carries an action of $H\times H$, and is an $H$-torsor over $W_1$, resp.\ $W_2$, with respect to the second, resp.\ first copy of $H$. 
Since the $W_i$ carry a nowhere vanishing $(H,\mathfrak d_V)$-equivariant volume form, $P$ carries a nowhere vanishing, $(H\times H, \mathfrak d_V\times \mathfrak d_V)$-equivariant volume form. Let $\omega_P$ be such a volume form. I claim that the basic vector of the stalk $\mathcal S(\mathcal V(\adele))_0$ is (at almost every place) the image of the measure:
\begin{equation}\label{basic-P}\frac{1}{|\omega_H|_v(H(\mathfrak o_v))^2} \cdot 1_{P(\mathfrak o_v)} \cdot |\omega_P|_v \in \mathcal S(P(k_v))\end{equation}
under either the left or the right maps to $\mathcal V$. 

By Lemma \ref{Nlifts}, we get that for any integral models and almost every place $v$, the set $N_P(\mathfrak o_v)$ surjects onto $N(\mathfrak o_v)$ through either of the left or the right sequence of maps. It is thus clear that the image of \eqref{basic-P} under either the left or right maps to $V$ coincides as a measure, in a neighborhood of $N(k_v)$, with \eqref{basic-V} at almost every place.

This shows that the basic vector is preserved at almost every place under the correspondence of stalks defined by \eqref{etaleV}.
\end{proof}

\begin{corollary}
Let $x:\spec k\to \mathcal X$ be a semisimple point and let $(H,V)$ be the linearization of $\mathcal X$ at $x$, as per Luna's \'etale slice theorem. Let $\xi$ denote the image of $x$ in $\cc(k)$.

The global Schwartz stalk
$$ \mathcal S(\mathcal X(\adele))_\xi := \bigotimes' \mathcal S(\mathcal X_v)_\xi$$
is well-defined independently of $x$, and in such a way that, for any diagram of the form \eqref{etale} defined over $k$, the isomorphism of stalks of Corollary \ref{Nashnbhds} gives rise to an isomorphism of global stalks
\begin{equation}\label{globalisom}
\mathcal S(\mathcal V(\adele))_0 \xrightarrow\sim \mathcal S(\mathcal X(\adele))_\xi.
\end{equation}

Moreover, the direct summand 
\begin{equation}\label{directsummand-V}
\mathcal S(V(\adele))_{H(\adele),0} \underset{\from}{\hookrightarrow} \mathcal S(\mathcal V(\adele))_0 
\end{equation}
corresponds, under this isomorphism, to a distinguished direct summand:
\begin{equation}\label{directsummand}
\mathcal S(\mathcal X(\adele))_\xi^x \underset{\from}{\hookrightarrow} \mathcal S(\mathcal X(\adele))_\xi,
\end{equation} 
which depends only on the isomorphism class of $x$, not on the choice of diagram \eqref{etale}.
\end{corollary}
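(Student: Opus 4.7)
The plan is to reduce all three claims to statements already established locally, namely Proposition~\ref{preservesbasic} and Corollary~\ref{Nashnbhds}. First I would observe that Corollary~\ref{Nashnbhds} extends to all completions $k_v$ (including $\CC$) by restriction of scalars combined with Remark~\ref{restriction-stacks}; hence any diagram of the form \eqref{etale} defined over $k$ produces, at each place $v$, a local isomorphism $\varphi_v: \mathcal S(\mathcal V_v)_0 \xrightarrow{\sim} \mathcal S(\mathcal X_v)_\xi$. Using these, I would define $\mathcal S(\mathcal X(\adele))_\xi$ as the restricted tensor product with respect to the transported basic vectors $\varphi_v(f_v^0)$ for $v\notin S$, where $f_v^0$ is the basic vector defined by \eqref{basic-V}; the $\varphi_v$ then automatically assemble into the isomorphism \eqref{globalisom} attached to that diagram.

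The main step will be to check independence of the chosen diagram \eqref{etale}. Any two choices, say coming from $W\to V$ and $W'\to V$, fit into a correspondence of the form \eqref{etaleV} defined over $k$, which at each place $v$ induces precisely the automorphism of $\mathcal S(\mathcal V_v)_0$ analyzed in Proposition~\ref{preservesbasic}. Since that automorphism preserves the germ of $f_v^0$ for almost every $v$, the two resulting restricted tensor products are canonically identified, and the identifications are transitive on triples of diagrams, so $\mathcal S(\mathcal X(\adele))_\xi$ and the isomorphism \eqref{globalisom} are well-defined up to canonical identification.

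For the direct-summand claim, I would use that Corollary~\ref{directsum} exhibits $\mathcal S(V(k_v))_{H(k_v)}$ as a canonical direct summand of $\mathcal S(\mathcal V(k_v))$ (the summand indexed by the trivial $H$-torsor), which in turn induces a direct summand $\mathcal S(V(k_v))_{H(k_v),0} \hookrightarrow \mathcal S(\mathcal V_v)_0$ of the local stalk. By the second assertion of Proposition~\ref{preservesbasic}, this summand is preserved at \emph{every} place under the automorphism attached to \eqref{etaleV}; moreover, the basic vector $f_v^0$ manifestly lies in it. Consequently the restricted tensor product $\mathcal S(V(\adele))_{H(\adele),0} = \bigotimes_v' \mathcal S(V(k_v))_{H(k_v),0}$ is a direct summand of $\mathcal S(\mathcal V(\adele))_0$, independent of the choice of \eqref{etale}, and its image under \eqref{globalisom} defines the desired summand $\mathcal S(\mathcal X(\adele))_\xi^x$.

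Finally, independence on the isomorphism class of $x$ is immediate: any other representative of the class of $x$ yields data $(H',V')$ related to $(H,V)$ by an $H(k)$-inner twist, and the two corresponding slice diagrams fit into a diagram of the form \eqref{etaleV} over $k$, so the preceding paragraph applies verbatim. The hard part of the whole argument is already packaged into Proposition~\ref{preservesbasic}, whose proof of preservation of basic vectors at almost all places hinges on Lemma~\ref{Nlifts} and on the connectedness of $H$: the latter ensures that the integral models (and hence the sets $V(\mathfrak o_v)$ and the normalization factors $|\omega_H|_v(H(\mathfrak o_v))$) agree for any two choices of data at almost every place.
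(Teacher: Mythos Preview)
Your argument is essentially the one the paper intends: the corollary is stated without proof and is meant to follow directly from Proposition~\ref{preservesbasic} together with the lemma on independence of basic vectors, exactly as you outline. One wording point worth sharpening: you write that ``$\mathcal S(\mathcal X(\adele))_\xi$ and the isomorphism \eqref{globalisom} are well-defined up to canonical identification,'' but the paper explicitly remarks, immediately after the corollary, that the identification \eqref{globalisom} \emph{does} depend on the choice of diagram \eqref{etale}; only the space and the distinguished summand are canonical, while the isomorphism is merely asserted to exist for each choice.
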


The summand $\mathcal S(\mathcal X(\adele))_\xi^x $ corresponds to considering, locally, small semi-algebraic neighborhoods of the $G(\adele)$-orbit containing the $G(k)$-orbit defined by $x$. Locally equivalent $k$-points give rise to the same summand \eqref{directsummand}. Notice that the identification \eqref{globalisom} \emph{does} depend on the choice of diagram \eqref{etale}.

\begin{remark} \label{remarkgroup}
In many applications, the variety $X$ will be homogeneous for a larger group $\tilde G\supset G$, with a chosen $\tilde G$-eigen-volume form $\omega_X$ inducing measures $|\omega_X|_v$ on the points over each completion. What matters, actually, is just that $\omega_X$ is a nowhere vanishing $G$-eigen-volume form. 

For example, in the relative trace formula one deals with a pair of $G$-homogeneous spaces $X_1, X_2$, and one considers the quotient stack $(X_1\times X_2)/G^\diag$. However, the $\tilde G = G\times G$-action on $X_1\times X_2$ typically admits an eigen-volume form.

In this setting, one can define a \emph{global Schwartz space} (not just the stalk), as the restricted tensor product of the local ones with respect to the measures: 
$$\mu_{X_v}= \frac{1}{|\omega_G|_v(G(\mathfrak o_v))} 1_{X(\mathfrak o_v)} |\omega_X|_v.$$

What would these global Schwartz spaces have to do with our global Schwartz stalk at a neutral point $\xi\in \cc(k)$? I claim that, locally at almost every place, the germ of $\mu_{X_v}$ in the stalk over $\xi$ coincides with the basic vector of $\mathcal S(\mathcal X_v)_\xi$. 

Indeed, choosing $x_0$ and $W$ (over $k$) as in Luna's Theorem \ref{lunathm}, and integral models outside of a finite set of places, we have an \'etale map: $W\times^H G\to X$ whose image is the preimage $X'$ of a Zariski open subset of $\cc$. The point $x_0$ will belong to $X(\mathfrak o_v)$ almost everywhere, and because $H$ is assumed connected, the above map will almost everywhere give a surjection:
$$ W(\mathfrak o_v) \times G(\mathfrak o_v) \twoheadrightarrow X'(\mathfrak o_v).$$
The same is true for the \'etale map $W\to V$ with image $V'$. 

This means that the germ of the measure $1_{X(\mathfrak o_v)} |\omega_X|_v$ can be lifted to a measure supported on $W(\mathfrak o_v)$ (for almost every $v$), and similarly for the germ of $1_{V(\mathfrak o_v)}|\omega_V|_v$. Now let us check volume forms.

A nowhere vanishing volume form on $V$ pulls back to a nowhere vanishing volume form $\omega_W^V$ on $W$; if the former is $(H,\mathfrak d_V)$-equivariant, then the same will be the case for the latter.

Similarly, the pull-back to $W\times^H G$ of a $G$-eigen-volume form on $X$ with eigencharacter $\mathfrak d_X$ can be factored into an $H$-eigen-volume form $\omega_W^X$ on $W$ and a $G$-eigen-volume form $\omega_{H\backslash G}$ on $H\backslash G$ valued in some line bundle. The line bundle is defined by the $H$-eigencharacter, but since $H$ is reductive this eigencharacter has to be equal to the restriction of $\mathfrak d_X$ to $H$. Correspondingly, since $(H\backslash G)(\mathfrak o_v) = H(\mathfrak o_v)\backslash G(\mathfrak o_v)$ almost everywhere (by the connectedness of $H$), the germ of the measure:
$$\frac{1}{|\omega_G|_v(G(\mathfrak o_v))} 1_{X(\mathfrak o_v)} |\omega_X|_v$$
in the stalk of $\mathcal S(\mathcal X_v)$ over $\xi$ has to be equal to the germ of the measure
\begin{equation}\label{Wmeasure}\frac{1}{|\omega_H|_v(H(\mathfrak o_v))} 1_{W(\mathfrak o_v)} |\omega_W^X|_v.\end{equation}
Comparing the forms $\omega_W^V$ and $\omega_W^X$, since they are both non-vanishing in a neighborhood of the $H$-fixed point $x_0$, and are both $H$-eigenforms, it follows that their quotient is \emph{polynomial} and \emph{non-vanishing} in an $H$-stable neighborhood of $x_0$. The absolute value of such a function will be equal to $1$ in an $H(k_v)$-stable Nash neighborhood of $x_0$, for almost every $v$, and hence the measure \eqref{Wmeasure} will not change if we replace $\omega_W^X$ by $\omega_W^V$.
\end{remark}

\section{Equivariant toroidal compactifications, and orbital integrals on linear spaces} \label{sec:orbitalintegrals}

\subsection{Overview}

Let $H$ be a connected reductive group over a global field $k$, and let $[H] = H(k)\backslash H(\adele)$. In this section I will discuss a certain class of $H(\adele)$-equivariant compactifications of $[H]$, which I term \emph{equivariant toroidal}. They are not directly related to the toroidal compactifications of \cite{toroidal}, as far as I can tell; rather, they are refinements of the reductive Borel--Serre compactification \cite{BJ}. I then define a notion of ``asymptotically finite'' functions, an extension of the Schwartz space of rapidly decaying smooth functions on $[H]$, where we allow multiplicative behavior ``at infinity'', forming a cosheaf over an equivariant toroidal compactification. Then I proceed to prove the main theorem of this section:

\begin{theorem}
Let $V$ be a finite-dimensional representation of $H$ over $k$, and let $N$ be its ``nilpotent'' set, i.e., the closed subvariety of all points whose $H$-orbit closure contains $0$. For any Schwartz function $f$ on $V(\adele)$, the function 
\begin{equation}\Sigma_N f: h\mapsto \sum_{\gamma \in N(k)} f(\gamma h)\end{equation} on $[H]$ is asymptotically finite.

More precisely, this map represents a continuous map from the space of Schwartz functions $\mathcal F(V(\adele))$ to a space $\Ff_E([H]^\F)$ of asymptotically finite functions on $[H]$ with asymptotic behavior explicitly determined by the representation $V$. 
\end{theorem}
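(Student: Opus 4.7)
My approach combines the Hilbert--Mumford criterion for $N$, reduction theory for $[H]$, and Poisson summation on $V(\adele)$. I would begin by noting that $\Sigma_V f - \Sigma_N f$ is rapidly decaying on $[H]$: its terms are indexed by $\gamma \in V(k)$ whose image in $(V\sslash H)(k)$ is nonzero, and since the invariant-theoretic quotient is proper on the preimage of any compact subset of $V\sslash H$ disjoint from $0$, this contribution is a finite sum of Schwartz sections on $[H]$. Hence it suffices to analyse the full theta series $\Sigma_V f(h) = \sum_{\gamma \in V(k)} f(\gamma h)$, and to present it as an asymptotically finite function whose ``finite'' part happens to coincide with the portion removed by restricting the indexing set from $V(k)$ to $N(k)$.

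For the asymptotic analysis I would invoke reduction theory. Fix a minimal $k$-parabolic $P_0 = M_0 A_0 N_0$ of $H$ and recall that, modulo a compact set, $[H]$ is covered by finitely many translates of a Siegel set in which $a \in A_0(\adele)$ moves into the anti-dominant chamber $\mathfrak{a}_0^{-}$. The asymptotic behavior of $\Sigma_V f$ should then be described on an equivariant toroidal compactification $[H]^\F$ built from a fan $\F$ supported in $\mathfrak{a}_0^{-}$. The relevant fan is the one whose maximal cones are the cells on which the sign pattern of $\langle \cdot,\chi\rangle$, as $\chi$ ranges over the $A_0$-weights of $V$, is constant; the ``exponent'' labelling the cone containing $\lambda$ is the character $-\sum_{\langle\lambda,\chi\rangle > 0} \chi$, extended to a character of the corresponding Levi. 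This is the combinatorial datum that specifies the cosheaf $\Ff_E$ and hence the target space $\Ff_E([H]^\F)$.

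The analytic heart is Poisson summation on the $k$-vector space $V$. For $a = \lambda(t)$ with $\lambda$ in the relative interior of a maximal cone and $t \to \infty$, Poisson converts $\sum_{\gamma \in V(k)} f(\gamma a h_0)$ into $|\det(a|V)|^{-1}$ times a sum over $V^\vee(k)$ of $\hat f$ evaluated at the contragredient action. Since $a$ dilates those weight spaces $V_\chi$ with $\langle\lambda,\chi\rangle > 0$ and contracts those with $\langle\lambda,\chi\rangle < 0$, the only dual lattice points whose contribution does not decay rapidly as $t \to \infty$ lie in a fixed $k$-subspace $U \subset V^\vee$, namely the attracting subspace for $\lambda$ on $V^\vee$. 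Iterating Poisson summation along faces of the fan, each stage peels off a main term proportional to $\hat f(0)\,|\det(a|V_\lambda^+)|^{-1}$ together with a Schwartz remainder in the chamber direction. On each chart of $[H]^\F$ corresponding to a face of $\F$, this produces the claimed eigen-decomposition of $\Sigma_V f$.

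The principal difficulty is twofold. Combinatorially, the multiplicative main terms extracted on adjacent maximal cones of $\F$ must agree on their common face in order to define a genuine section of $\Ff_E$ over $[H]^\F$; this coherence amounts to checking that the Levi-level Poisson summations intertwine correctly, and constitutes the geometric core of the theorem. Analytically, continuity of $f \mapsto \Sigma_N f$ into the Fr\'echet space $\Ff_E([H]^\F)$ requires that the Schwartz seminorms of all error terms be dominated by finitely many seminorms of $f$, uniformly in the chamber parameter within a bounded region of each face. Once this coherence and uniform-estimate bookkeeping is arranged, the local analysis in each chart reduces to classical convergence properties of adelic theta series.
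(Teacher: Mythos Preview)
Your opening reduction contains a genuine error. The claim that $\Sigma_V f - \Sigma_N f$ is rapidly decaying on $[H]$ rests on the assertion that the invariant-theoretic quotient $V \to V\sslash H$ is proper over compacta disjoint from $0$; but this map is affine and essentially never proper, since its fibers are unions of $H$-orbits of positive dimension. More importantly, the conclusion itself is false in general. Take the adjoint action of $\PGL_2$ on $\mathfrak{sl}_2$: the contribution of a single regular semisimple orbit to $\Sigma_V f - \Sigma_N f$ already behaves like $|t|^{-1}$ along the Siegel ray, with the \emph{same} exponent as the nilpotent part. The underlying reason is that your own eigenfunction analysis of $\Sigma_V f$ produces, on each cone $C$, the term $\sum_{\gamma_0 \in V_{C,0}(k)} \int_{V_{C,+}(\adele)} f((\gamma_0+v)h)\,dv$, and whenever $V_{C,0}$ contains non-nilpotent $k$-points (as in any adjoint representation), these contribute to the asymptotics with eigencharacter $\chi_C$ and do not cancel against anything.

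The paper fixes this with a cutoff rather than an estimate: one multiplies $f$ by $\pi^*\phi$, where $\phi$ is a Schwartz function on $(V\sslash H)(\adele)$ whose support meets $(V\sslash H)(k)$ only at $0$ and with $\phi(0)=1$. Since $\pi$ is $H$-invariant and $\pi(N)=0$, this leaves $\Sigma_N f$ unchanged while forcing $\Sigma_V(f\cdot\pi^*\phi) = \Sigma_N(f\cdot\pi^*\phi)$ identically. Only then does one analyze $\Sigma_V$. Your subsequent identification of the fan and exponents from the weights of $V$, and the use of Poisson summation to extract the eigenfunction on each cone, are correct and match the paper's strategy. The paper organizes the induction slightly differently: it applies Poisson only on the strictly positive-weight subspace $V_{C,+}$ (not on all of $V$), which directly isolates the eigenfunction $f_C$ as above, and then runs the theorem inductively for the Levi $M_C$ acting on $V_{C,0}$ to control the asymptotics of $f_C$ along faces of $C$.
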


Details on the asymptotic behavior mentioned in the theorem will be given after the discussion of compactifications. This theorem gives a way of defining a regularized integral
$$ \int^*_{[H]} \Sigma_N f (h) dh,$$
which, for representatives $\gamma_i$ of the $H(k)$-orbits on $N(k)$, is formally equal to the sum
\begin{equation}\label{orbital} \sum_i \Vol([H_{\gamma_i}]) \int_{H_{\gamma_i}(\adele)\backslash H(\adele)} f(\gamma h)\end{equation}
of orbital integrals of all rational nilpotent orbits of $H$ on $V$. The regularization of orbital integrals defined here will be used in the next section, in combination with Luna's \'etale slice theorem, to define ``evaluation maps'' (regularized orbital integrals) for more general reductive quotient stacks.

This regularization will be possible
\emph{if and only if} the ``exponents'' of the asymptotic behavior of $\Sigma_N f$ are not ``critical'', in some sense. The prototype for this is the following:

Let $s\in \CC$. Consider the space $\mathcal S(\Gamma\backslash \mathcal H)^s$ of functions on $\Gamma\backslash \mathcal H$, where $\mathcal H$ is the complex upper half-plane and $\Gamma=\SL_2(\Z)$, which are smooth and have the property that 
$$ f(x+iy) \sim y^s$$
for $y\gg 0$, where $\sim$ means that the difference is a function which, together with all its polynomial derivatives, is of rapid decay. Then, the regularized integral
$$ \int^*_{\Gamma\backslash \mathcal H} f(x+iy) \frac{dx d^\times y}{|y|}$$
(where $d^\times y$ denotes multiplicative measure $\frac{dy}{|y|}$)
is well-defined \emph{unless} $s=1$, i.e., unless the growth of the function is inverse to that of volume. The exponent (multiplicative character) $y\mapsto y^1$ is what I call a \emph{critical exponent}. The definition of the regularized integral is as follows: fix any large $T>0$, and define
$$f_t(x+iy) = \begin{cases} f(x+iy), &\mbox{ if }y\le T  \\ f(x+iy) |y|^{-t}, &\mbox{ if } y>T.\end{cases}$$
Then $\int_{\Gamma\backslash \mathcal H} f_t(x+iy) \frac{dx d^\times y}{|y|}$ is convergent for $\Re(t)\gg 0$, and admits meromorphic continuation with only a simple pole at $t+1=s$. Thus, if $s\ne 1$, we can define the regularized integral as the analytic continuation of the above integral to $t=0$.

\subsection*{Conventions and notation for this section}

In this section we will, for simplicity, assume that $k=\QQ$, which we may, if $k$ is a number field, by restriction of scalars. For function fields in positive characteristic, the analogous constructions can be performed by picking a place that one calls ``infinity''. In this case, as in the rest of the paper, one should ignore any mention of semi-algebraic topologies, and work with the usual, honest topologies on the spaces under consideration; rapidly decaying functions become functions which are eventually zero, and asymptotic equalities up to rapidly decaying functions become exact asymptotic equalities in some neighborhood of infinity. Hence, in positive characteristic the theory simplifies considerably, and I leave the details to the reader.

For any torus, say $T$, we denote by the corresponding gothic lowercase letter the vector space $\mathfrak t:= \Hom(\Gm, T) \otimes \RR$. If $F$ is a valued field (or ring), we have a well-defined logarithmic map
\begin{equation}\label{logmap}
\log: T(F)\to \mathfrak t
\end{equation}
given by $\left<\log(t), \chi\right> = \log|\chi(t)|$ for any $\chi\in \Hom(T,\Gm)$.

In this section I define various cosheaves on toric varieties over $\RR$ and on ``equivariant toroidal embeddings'' of $[H]$; these are stratified spaces with a unique open stratum, and carry a ``semi-algebraic'' restricted topology, on which the cosheaf is defined. Their definition requires some familiarity with the theory of toric varieties: I remind the reader here that a normal affine embedding $Y$ of a torus $T$ over a field $k$ is given by a \emph{strictly convex, rational polyhedral cone} $C\subset \mathfrak t$. The faces of this cone are in bijection with $T$-orbits on $Y$, in such a way that cocharacters $\lambda$ in the relative interior of a face are those for which $\lim_{t\to 0} \lambda(t)$ belongs to the corresponding orbit. More general normal embeddings of $T$ are described by \emph{fans} in $\mathfrak t$, i.e., collections of such cones closed under the operation of passing to a face of a cone and with disjoint relative interiors. The notation $[H]^\F$ in the statement of the above theorem refers to a ``equivariant toroidal'' embedding of $[H]^\F$ described by a fan $\F$, as will be explained.

These cosheaves are denoted by the letter $\mathcal F$ (for \emph{functions} --- at some point we multiply by measures and denote them by $\mathcal S$), and by an index $E$ for an ``exponent arrangement''. When we want to clarify the space on which they are defined, we will also put its open orbit (under a given group action) as an index. Thus, on a space $Y$ with open orbit $W$ we will denote by $\mathcal F_E$ or $\mathcal F_{W,E}$ the corresponding cosheaf.

Note that sections of the cosheaf are not, in general, functions on  the space. They can be functions on its open stratum $W$, with ``asymptotically multiplicative'' behavior in the neighborhood of the other strata, or, more generally, they can be sections of some vector bundle over the open stratum with this asymptotic behavior (which can be pulled back to functions on a principal torus bundle $\tilde W\to W$ that are eigenfunctions for the torus).

\subsection{Asymptotically finite functions on toric varieties} \label{finite-toric}

\subsubsection{Cosheaves of multiplicatively finite functions}
Let $T$ be a split torus defined over $\RR$ and let $Y$ be a toric variety for $T$. We assume that $Y$ is normal, but we \emph{do not} assume that $T$ acts faithfully on $Y$: the open $T$-orbit of $Y$ is a principal homogeneous space for a torus quotient $T'$ of $T$; for convenience, we will assume that $T'$ is the quotient of $T$ by a subtorus $T_0$, so that the map on $\RR$-points: $T(\RR)\to T'(\RR)$ is onto. I would advise the reader to consider the case $T=T'$ at first reading.

If $Y$ is smooth, then we have well-defined notions of Schwartz functions on $Y(\RR)$, and Schwartz functions on $T'(\RR)$ are those Schwartz functions on $Y(\RR)$ which vanish, together with all their derivatives, on the complement of $T'(\RR)$.

We can generalize this description to sections of more general vector bundles over $Y(\RR)$ which, however, over $T'(\RR)$ will coincide with the trivial line bundle (i.e., the sections will be functions on $T'(\RR)$) or, more generally, with the vector bundle whose sections are generalized $T_0(\RR)$-eigen\-functions on $T(\RR)$ with specified eigencharacters (``exponents''). We do not need to assume that $Y$ is smooth. 

First, let $E$ be a finite, non-empty multiset of characters of $T(\RR)$. It is sometimes helpful to think of positive, real-valued characters, which, via the logarithmic map \eqref{logmap} and exponentiation: $\RR\to\RR^\times_+$, can be identified with elements of $\mathfrak t^*=\Hom(\mathfrak t, \RR)$.

We will say that a vector in a representation of $T(\RR)$ is a \emph{generalized eigenvector with (multi)set of exponents $E$} if it is annihilated by the operator
$$ \prod_{\chi\in E} (a-\chi(a))$$
for all $a\in T(\RR)$, that is:
\begin{itemize}
\item in the case where all elements of $E$ are equal to the same character $\chi$, it is a generalized eigenvector with eigencharacter $\chi$ and degree less or equal to the multiplicity of $\chi$ in $E$;
\item in the general case, it is a sum of such generalized eigenvectors, corresponding to the distinct characters in $E$.
\end{itemize}

Now fix an orbit $Z\subset Y$. The pointwise stabilizer of $Z$ is a subgroup $T_Z\subset T$, containing $T_0$. A given multiset $E$ of characters of $T_Z(\RR)$ defines a complex vector bundle over $Z(\RR)$, as follows: 
\begin{quote}
Sections of this vector bundle are functions on $T(\RR)$ which are generalized eigenfunctions for $T_Z(\RR)$ with exponents $E$. 
\end{quote}
Sections of this vector bundle will be called \emph{multiplicatively finite functions} on $T(\RR)$ \emph{with multiset of exponents $E$}. There are obvious notions of smooth sections and sections of polynomial growth for this vector bundle, which coincide with the analogous notions for functions on $T(\RR)$. Moreover, there is a notion of \emph{Schwartz sections} of this vector bundle: a section $\sigma$, represented by the function $f_\sigma$ on $T(\RR)$, is Schwartz if for one, equivalently any, smooth semi-algebraic lift (s.\ the following remark) $e:Z(\RR)\to T(\RR)$  the function $e^*f_\sigma$ is Schwartz on $Z(\RR)$. 

Schwartz sections of this vector bundle over $Z(\RR)$ form a strictly flabby cosheaf of nuclear Fr\'echet spaces on $Z(\RR)$ which will be denoted by $\mathcal F_{Z,E}$.

\begin{remark}\label{contraction}
By ``lift'' $Z(\RR)\to T(\RR)$ or $Z(\RR)\to T'(\RR)$, here and later, we mean a section of the contraction map $T(\RR)\to T'(\RR)\to Z(\RR)$ that takes an element of $t\in T'(\RR)$ to the unique element of $Z(\RR)$ that is contained in the $T_Z(\RR)$-orbit closure of $t$. Here we are identifying the open orbit in $Y$ with the torus quotient $T'$ of $T$, but this is only to avoid adding another piece of notation; we might as well distinguish between $T(\RR)$ as the space on which functions live and $T(\RR)$ as the group acting on them. Those sections will always be chosen to be semi-algebraic.

For a general toric variety and an orbit $Z$ contained in it, the contraction map is defined on the affine open neighborhood $Y_Z$ of all orbits which contain $Z$ in their closure. 
In what follows, when we choose a semi-algebraic neighborhood of a point in $Z(\RR)$ it will be assumed to be contained in $Y_Z(\RR)$, so that the contraction map is well-defined, and any mention of a ``lift'' $Z(\RR)\to Y(\RR)$ will implicitly have image in $Y_Z(\RR)$ and be a section of this contraction map.
\end{remark}

\subsubsection{Definition of the cosheaf of asymptotically finite functions} 
\label{ssdefcosheaf}

Now assume that we have an assignment $Z\mapsto E(Z)$ of multisets of exponents for every geometric orbit $Z\subset Y$, with the following property: 

\begin{quote} If an orbit $W$ is contained in the closure of an orbit $Z$, then the restrictions of elements of $E(W)$ to $T_Z(\RR)\subset T_W(\RR)$ are contained, with at least the same multiplicity,\footnote{When different characters in $E(W)$ restrict to the same character of $T_Z(\RR)$, it is enough to assume that its multiplicity in $E(Z)$ is at least the \emph{maximum} of their multiplicities, not their sum.} in $E(Z)$.
\begin{equation} \end{equation} 
\end{quote}

It is instructive to think of what this condition means combinatorially, when (for simplicity) $T'=T$ and all characters are positive real-valued: The embedding $Y$ is described by a fan $\mathfrak F$ of strictly convex, rational polyhedral cones in $\mathfrak t$, corresponding bijectively to (geometric) orbits: $Z\leftrightarrow C_Z$. The linear span of $C_Z$ is precisely the linear span of the cocharacter group of the torus $T_Z$. For each orbit $Z$ we have a multiset $E(Z)$ of characters of $T_Z(\RR)$ which, assumed positive real-valued, can be thought of as functionals on the span of $C_Z$. The condition, then, is that if $W$ is contained in the closure of $Z$, i.e., if $C_Z$ is a face of $C_W$, then all restrictions of elements of $E(W)$ to the linear span of $C_Z$ are contained (with at least the same multiplicity) in $E(Z)$.

We now define the strictly flabby cosheaf $\mathcal F_E$ (in the restricted semi-algebraic topology over $Y(\RR)$) of functions $f$ on $T(\RR)$ which are \emph{asymptotically finite with exponent arrangement $E$}, inductively, as follows:
\begin{itemize}
\item over the open orbit $T'(\RR)$, the cosheaf $\mathcal F_E$ coincides with the cosheaf $\mathcal F_{T',E(T')}$ defined above, i.e., Schwartz sections of the vector bundle of $T_0(\RR)$-generalized eigenfunctions on $T(\RR)$ with exponents $E(T')$;
\item for an arbitrary open set $U$, assuming that $W$ is an orbit of maximal codimension among those meeting $U$, we demand that there is a Schwartz section 
$$\sigma \in \mathcal F_{W, E(W)}(U\cap W(\RR)),$$
and a semi-algebraic neighborhood $V$ of $W(\RR)\cap U$ in $U$, proper (with respect to the contraction map, s.\ Remark \ref{contraction}) over $W(\RR)\cap U$, with the property that $f-f_\sigma$ coincides, in $V\cap T(\RR)$, with the restriction (as a function) of an $f'\in \mathcal F_E(U\smallsetminus W(\RR))$.
\end{itemize}

The two conditions, together with the strict flabbiness of the cosheaf (so that open embeddings of sets give closed embeddings of the spaces of sections) completely define the cosheaf: By strict flabbiness we can always replace an open set by a cover of open sets which meet a unique orbit of minimal dimension, and by the second criterion we can remove this orbit, eventually arriving to an open subset of the open orbit.

\begin{remark}\label{iscompact}
Without loss of generality, we may assume that $Y$ is complete ($Y(\RR)$ is compact); if it is not, we can compactify it, and attach the empty multiset of exponents to the new orbits. For example, Schwartz functions on $T'(\RR)$ are sections over some compactification, when we attach the empty set of exponents to every non-open orbit, and the trivial exponent (character of $T_0(\RR)$) to the open orbit $T'$.
\end{remark}

There is a natural nuclear Fr\'echet space structure on the sections of this cosheaf, with respect to which extension maps are closed embeddings. We use the above inductive definition to define the Fr\'echet space structure on $\mathcal F_E(U)$: First, we have a map to the Fr\'echet space $\mathcal F_{W,E(W)}(U\cap W(\RR))$, which provides some continuous seminorms. Secondly, we may fix $V$ as above and apply the seminorms of the \emph{stalk} $\mathcal F_E(U\smallsetminus W(\RR))/\mathcal F_E(U\smallsetminus V)$ to the function $f-f_\sigma$. Finally, we may apply the seminorms of the stalk $\mathcal F_E(U\smallsetminus W(\RR))/\mathcal F_E(\mathring V\smallsetminus W(\RR))$ to the function $f$. This forms a complete set of seminorms for the space $\mathcal F_E(U)$.

\subsection{Regularized integral on toric varieties}\label{regularized-toric}

Let $T, Y$ be as above, and $E$ an exponent arrangement for $Y$. Assume now that $T=T'$, i.e., $T$ acts faithfully on $Y$. We will say that an exponent $\chi\in E(Z)$ (for some orbit $Z$, other than the open one) is \emph{critical} if $\chi$ is trivial. Assume that $E$ does not contain any critical exponents. Clearly, by the compatibility of exponents for different orbits it is enough to check this condition for orbits $Z$ of codimension one, where $T_Z$ is of dimension one.

We denote by $\mathcal S_E$ the cosheaf of measures on $T_E$ which are equal to elements of $\mathcal F_E$ times a Haar measure. We can define a \emph{regularized integral}, that is, a $T(\RR)$-invariant functional:
$$ \mathcal S_E(Y(\RR))\ni \mu \mapsto \int^*_{T(\RR)} \mu \in \CC,$$
essentially as the Mellin transform of the distribution $\mu$ evaluated at the trivial character.

Since this distribution is not tempered in the analytic sense (i.e., with respect to the Harish-Chandra Schwartz space), I explicate the definition: 

First, any toric variety has a finite open cover by affine toric varieties, and any section $\mu$ of $\mathcal S_E$ can be written as a sum of sections over these open subvarieties. Hence we may, without loss of generality, assume that $Y$ is affine. 

We may then choose an algebraic character $\omega$ of $T$ which vanishes on the complement of $T$ in $Y$ and define:
\begin{equation}\label{reg-toric} \int^*_{T(\RR)^0} \mu= \mbox{the analytic continuation of } \int^*_{T(\RR)^0} |\omega|^s(t) \mu(t) \mbox{ to }s=0.\end{equation}
This integral converges for $\Re(s) \gg 0$ (how large depends on the specific exponents).  

\begin{proposition}\label{invariant-toric}
If the exponent arrangement $E$ does not contain critical exponents, the regularized integral $\mu\mapsto \int^*_{T(\RR)} \mu $ is a well-defined, $T(\RR)$-invariant continuous functional on $\mathcal S_E(Y(\RR))$.
\end{proposition}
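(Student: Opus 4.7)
The plan is to prove this by induction on the dimension of the smallest-dimensional orbit that meets the support of $\mu$, following the inductive construction of $\mathcal F_E$ in \S\ref{ssdefcosheaf}. First I would use a finite semi-algebraic partition of unity to reduce to the case that $Y$ is affine (given by a strictly convex rational polyhedral cone $C$) and that $\mu$ is supported in a small neighborhood of a single orbit $W$ of maximal codimension meeting its support. Let $C_W \subset C$ be the face corresponding to $W$, with spanned cocharacter lattice dual to the character lattice of $T_W$. The hypothesis that $\omega$ vanishes on the complement of $T$ in $Y$ means that $\langle \omega, \lambda\rangle > 0$ for every $\lambda$ in the relative interior of every non-zero face of $C$.

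For the inductive step, use the defining decomposition $\mu = \mu_\sigma + \mu'$, where $\mu_\sigma$ corresponds to the Schwartz section $\sigma\in\mathcal F_{W,E(W)}$ and $\mu'$ extends as a section of $\mathcal S_E$ over $U\smallsetminus W(\RR)$. The term $\mu'$ is handled by the inductive hypothesis applied to the complementary open embedding $Y\smallsetminus\overline W\hookrightarrow Y$. For $\mu_\sigma$, pick a semi-algebraic lift $e:W(\RR)\to T(\RR)$ as in Remark \ref{contraction} and a semi-algebraic parametrization of a neighborhood of $e(W(\RR))$ by $W(\RR)\times T_W(\RR)^0$; by definition of $\mathcal F_{W,E(W)}$, the measure $\mu_\sigma$ is a finite sum of terms of the form $\phi(w)\cdot \chi(t)\cdot P(\log|\cdot|(t))\cdot dw\, d^\times t$, where $\phi\in\mathcal S(W(\RR))$, $\chi\in E(W)$ is a character of $T_W(\RR)$, and $P$ is a polynomial. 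The inner integral on $T_W(\RR)^0$ becomes, after subdividing $C_W$ into simplicial cones with ray generators $\lambda_1,\dots,\lambda_r$, an iterated Mellin transform whose meromorphic continuation has polar hyperplanes $\langle \chi,\lambda_i\rangle + s\langle \omega,\lambda_i\rangle = 0$. Since $\langle\omega,\lambda_i\rangle>0$, the locus $s=0$ lies on such a hyperplane only when $\langle\chi,\lambda_i\rangle=0$ for some $i$; running over generators of $C_W$, this forces $\chi$ to be trivial on $T_W$, which is the critical case excluded by hypothesis. Thus the meromorphic continuation $I(s)=\int_{T(\RR)}|\omega|^s\mu$ is regular at $s=0$, proving well-definedness.

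Independence of $\omega$ follows from an analogous two-variable argument: for two admissible choices $\omega_1,\omega_2$, the double integral $I(s,t)=\int |\omega_1|^s|\omega_2|^t\mu$ is jointly meromorphic on $\CC^2$ with polar hyperplanes of the form $\langle\chi,\lambda_i\rangle + s\langle\omega_1,\lambda_i\rangle + t\langle\omega_2,\lambda_i\rangle = 0$; under the non-criticality assumption, these all miss $(0,0)$, so $I(s,t)$ is holomorphic at the origin and its value there is independent of the order of iterated evaluation, hence equals the regularization with respect to either $\omega_i$. Continuity of $\mu\mapsto I(0)$ is then immediate: for $\Re(s)\gg 0$, the assignment $\mu\mapsto I(s)$ is a continuous linear functional on $\mathcal S_E(Y(\RR))$ by the defining seminorms (\S\ref{ssdefcosheaf}); the meromorphic continuation can be obtained by subtracting from $\mu$ a finite family of continuous functionals (the coefficients of the principal parts near boundary strata, built from the maps $\mu\mapsto \sigma$ that appear in the inductive definition), reducing to a family that is holomorphic on a neighborhood of $s=0$. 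Finally, for $t_0\in T(\RR)$, translation gives $\int_{T(\RR)}|\omega|^s(t_0^*\mu)=|\omega(t_0)|^{-s}\int_{T(\RR)}|\omega|^s\mu$ for $\Re(s)\gg 0$; analytically continuing to $s=0$ and using $|\omega(t_0)|^0 = 1$ yields the $T(\RR)$-invariance.

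I expect the main obstacle to be the higher-dimensional Mellin analysis on $T_W(\RR)^0$ when $\dim T_W\ge 2$: one has to choose a simplicial refinement of $C_W$, verify that the resulting iterated integrals assemble into a globally meromorphic function on $\CC$ (respectively $\CC^2$ for independence of $\omega$) whose polar locus is exactly the claimed arrangement of hyperplanes, and check that the non-criticality of each individual $\chi\in E(W)$ — as opposed to some weaker non-degeneracy condition on the full multiset $E(W)$ — is indeed what prevents a pole at $s=0$. This is where the strict-convexity of $C$ and the positivity $\langle\omega,\lambda\rangle>0$ on the interior play an essential role in ensuring that the non-criticality hypothesis (stated purely at the level of characters, ignoring the cone) suffices.
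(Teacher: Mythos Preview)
Your overall strategy---induction along the definition of $\mathcal F_E$, reduction to the affine case, Mellin analysis on the $\mu_\sigma$ term, and the invariance computation---matches the paper's approach.  However, there is a genuine gap in the key step.

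When you subdivide $C_W$ into simplicial subcones, the resulting ray generators $\lambda_i$ need not be one-dimensional faces of the \emph{original} fan $\F$; for any new rays introduced by the subdivision, the non-criticality hypothesis tells you nothing about $\langle\chi,\lambda_i\rangle$.  Your sentence ``running over generators of $C_W$, this forces $\chi$ to be trivial on $T_W$'' is not correct as written: a pole at $s=0$ occurs as soon as $\langle\chi,\lambda_i\rangle=0$ for \emph{some} $i$, and that condition does not force $\chi$ to vanish on all of $T_W$.  Conversely, the hypothesis that $\chi$ is non-trivial on $T_W$ only guarantees $\langle\chi,\lambda_i\rangle\ne 0$ for \emph{some} $i$, which is not enough to avoid all the polar hyperplanes.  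The paper confronts exactly this issue: it performs the simplicial refinement \emph{globally}, as a smooth blowup $\tilde Y\to Y$, and explicitly notes that the subdivision must be chosen so that the induced exponent arrangement $\tilde E$ on $\tilde Y$ remains non-critical---which is possible because each non-trivial $\chi$ vanishes only on a proper hyperplane, so a generic subdivision avoids it.  You need to insert this observation.

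A secondary difference: the paper does the smooth reduction before the induction, so that at the inductive step $Y_Z\simeq\Gm^a\times\Ga^b$ and the $f_\sigma$ integral factors as a product of an ordinary Schwartz integral over $(\RR^\times)^a$ with an explicit regularized integral of a generalized eigenfunction over $(\RR^\times)^b$.  This avoids any further cone-subdivision during the Mellin step and makes the polar analysis one-dimensional.  Your local-subdivision approach can be made to work once the gap above is fixed, but the paper's order of operations is cleaner.
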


\begin{proof}
We can always choose a smooth blowup $\tilde Y\to Y$ of the original variety $Y$. Combinatorially, this corresponds to partitioning the cones of the fan $\F$ into (simplicial) sub-cones $C$ with the property that $C\cap \Hom(\Gm,T')$ is a free monoid, for every $C$. This way, the relative interior of each new cone $C$ belongs to the relative interior of an old cone $D$, hence $T_C\subset T_D$, and we assign to the new fan $\tilde\F$ the exponent arrangement $\tilde E$ obtained from restricting characters from $T_D$ to $T_C$. Since the exponents of the original arrangement were non-critical, we can always choose such a partition where the new arrangement $\tilde E$ remains non-critical. 

Sections of $\mathcal F_E$ over $Y(\RR)$ become, by pull-back, a subspace of the sections of $\mathcal F_{\tilde E}$ over $\tilde Y(\RR)$. Therefore, we may assume that $Y$ is smooth. Hence, it is covered by affine open subsets of the form $\Gm^a \times \Ga^b$ (under the action of $T\simeq \Gm^{a+b}$).

We now proceed to the proof in accordance with the definition of the cosheaf $\mathcal F_E$ in \S \ref{ssdefcosheaf}. On $\mathcal S_E(T(\RR))$ the regularized integral coincides with the proper, absolutely convergent integral, and therefore there is nothing to prove in this case. Now fix a $T$-orbit $Z$, let $Y_Z$ be the affine open of all orbits containing it in their closure, as before, and assume that the proposition has been proven for $\mathcal S_E((Y_Z\smallsetminus Z)(\RR))$. Let us extend it to $\mathcal S_E(Y_Z(\RR))$. 

A measure $\mu \in \mathcal S_E(Y_Z(\RR))$ is of the form $f dt$, where $dt$ is a Haar measure on $T(\RR)$, and $f\in \mathcal F_E(Y_Z(\RR))$.  By the definition of $\mathcal F_E(Y_Z(\RR))$ in \S \ref{ssdefcosheaf}, there is a semi-algebraic open neighborhood $V$ of $Z(\RR)$, proper over $Z(\RR)$, and a function $f_\sigma$ on $T(\RR)$ corresponding to a Schwartz section $\sigma$ of $\mathcal F_{Z,E(Z)}(Z(\RR))$, such that $f-f_\sigma$ coincides on $V\cap T(\RR)$ with the restriction of an element of $\mathcal S_E((Y_Z\smallsetminus Z)(\RR))$. Without loss of generality, if $\omega_1, \dots, \omega_n$ is a set of characters generating the ideal of $Z$ in (the affine variety) $Y_Z$, we may assume that $V$ is given by inequalities: $|\omega_i(y)|< \epsilon$. 

In particular, by our induction assumption, the regularized integral
$$ \int^*_{V\cap T(\RR)} (f-f_\sigma)(t) dt$$
is well-defined, and continuous with respect to the seminorms of $\mathcal S_E(Y_Z(\RR))$.

Having assumed smoothness, we have $Y_Z \simeq \Gm^a \times \Ga^b$, with $Z = \Gm^a \times\{0\}^b$, and we may assume that the $\omega_i$'s are the coordinate functions for $\Ga^b$ ($n=b$). From the definitions, we clearly have
$$ \mathcal F_{Z,E(Z)}(Z(\RR)) \simeq \mathcal F((\RR^\times)^a) \otimes W_{E(Z)},$$
where $\mathcal F((\RR^\times)^a)$ denotes, as before, the usual space of Schwartz functions, and $W_{E(Z)}$ is the finite-dimensional space of generalized eigenfunctions on $T_Z(\RR)$ with the given multiset of exponents. Thus, the regularized integral of $f_\sigma$ over $V$ is the product of a usual integral of a Schwartz function on $(\RR^\times)^a$ with the regularized integral of a generalized eigenfunction on $T_Z(\RR)\cap V$, which is easily seen to be well-defined in the absence of critical exponents.

Let us now prove invariance of the regularized integral assuming, as in the definition, that $Y$ is affine. Using the notation of the definition and writing $\operatorname{cont}_{s=0}$ for the analytic continuation to $s=0$ we have, for any $a\in T(\RR)$: 
$$ \int^* (a\cdot f - f) dt = \operatorname{cont}_{s=0} \int_{T(\RR)} (f(at) - f(t) ) |\omega(t)|^s dt =$$
$$ = \operatorname{cont}_{s=0} (|\omega(a)|^{-s} -1) \int_{T(\RR)} f(t) |\omega(t)|^s dt = $$
$$ = \left(\operatorname{cont}_{s=0} (|\omega(a)|^{-s} -1)\right) \int_{T(\RR)}^* f(t) dt = 0.$$

\end{proof}

\begin{remark}
Depending on the exponent arrangement, asymptotically finite functions may include functions which are eigenfunctions with respect to a subgroup of $T(\RR)$. But in this case the regularized integral will be zero, unless the corresponding eigencharacter is trivial --- hence critical --- on a maximal such subgroup, in which case the regularized integral is not defined.
\end{remark}

\subsection{Equivariant toroidal compactifications of the automorphic quotient}

Now let $H$ be a connected reductive group over $k=\QQ$. In this subsection we will describe compactifications of the automorphic quotient $[H]=H(k)\backslash H(\adele)$ which are refinements of the reductive Borel--Serre compactifications of \cite{BJ}.   
I do not know of any reference in the literature for these toroidal compactifications, and will therefore present them from scratch, though without many details.

We denote $\mathfrak a = \Hom(\Gm,A) \otimes \RR$, where $A$ is the universal maximal split torus of $H$, and we denote by $\mathfrak a^+$ the anti-dominant cone. Faces of $\mathfrak a^+$ correspond to conjugacy classes of parabolic subgroups defined over $k$, and the span of the face associated to $P$ will be denoted by $\mathfrak a_P$, the corresponding subtorus of $A$ by $A_P$. (It is the maximal split central subtorus of the Levi quotient of $P$.)

For any class of parabolics $P$, we have the ``boundary degeneration''
$$[H]_P = M(k)N(\adele)\backslash H(\adele) = [M]\times^{P(\adele)} H(\adele),$$
 where $N$ is the unipotent radical of $P$ and $M$ is its reductive quotient. Notice that any two parabolics defined over $k$ are $H(k)$-conjugate, and self-normalizing. Therefore, the space $[H]_P$ is defined up to unique isomorphism by the class of $P$, not a choice of parabolic in this class, but we will remember that it does not have a distinguished point represented by $1\in H$ (which would depend on the choice of representative for $P$). The same is true for the space $P(k)\backslash H(\adele)$ that will be used later.

Let $\F$ be a fan of strictly convex, rational polyhedral cones, whose support is contained in $\mathfrak a^+$. The relative interior of any cone $C$ of $\F$ is contained in the relative interior of a unique face of $\mathfrak a^+$, and hence corresponds to a unique class of parabolics $P_C$; we will say that the cone $C$ \emph{belongs} to the class $P_C$. The corresponding subtorus $A_C$ (the subtorus whose cocharacter group spans the linear span of $C$) belongs to the center of the Levi quotient of $P_C$.

We will use the fan to define a topological, $H(\adele)$-equivariant compactification $[H]^\F$ of $[H]=H(k)\backslash H(\adele)$, obeying similar closure relations as for toric varieties: every cone $C\in \F$ corresponds to a locally closed stratum ($H(\adele)$-orbit) $Z_C$ of $[H]^\F$, we have $Z_{C_1}\subset \overline{Z_{C_2}} \iff C_1 \supset C_2$, etc. The stratum $Z_C$ corresponding to a cone $C$ in the fan is isomorphic to the quotient of $[H]_{P_C}$ by the action of $A_C(\RR)$; for example, if $C$ is of full dimension, then $A_C=A$, $P_C$ is the minimal parabolic $P_0=M_0 N_0$ and $Z_C$ is isomorphic to the (compact) quotient $M_0(k) A(\RR) N_0(\adele)\backslash H(\adele)$.

\begin{remark} The compactification is suitable for describing functions that vary asymptotically according to $A_C(\RR)$-characters (in the vicinity of the stratum $Z_C$). This is not ideal, but notationally convenient, and sufficient for our purposes. In our application, actually, these characters will be real-valued and trivial on the maximal compact (finite) subgroup of $A_C(\RR)$ and hence will factor through the log map \eqref{logmap}
$$ A_C(\RR)\ni a \mapsto \log(a)\in \mathfrak a_{C,\RR} = \Hom(\Gm,A_C)\otimes\RR.$$

There are two alternatives which are philosophically preferable: One is to compactify dividing only with respect to the connected component of the identity in $A_C(\RR)$; this is suitable to encode the minimal information needed for the definition of regularized integrals, but would be notationally a bit more cumbersome. Another extreme is to compactify dividing by $A_C(\adele)$, or even by the analogous torus when we take $A$ to be the universal Cartan of $H$, not just its maximal split subtorus. This is suitable for encoding the maximal amount of information on asymptotically finite functions, should they be asymptotically equal to $A_C(\adele)$-eigenfunctions; however, it would not add anything to the applications of the present paper, where characters are trivial on the maximal compact subgroup of $A_C(\adele)$.
\end{remark}

We actually use $\F$ to define embeddings not only for $[H]$ itself but also for its ``boundary degenerations'' $[H]_P$. They also have their orbits parame\-trized by cones in $\F$, so here we need to make a notational distinction: The notation $Z_C$ for the stratum associated to $C$ will be reserved for the corresponding stratum in $[H]^\F_P$ \emph{for all parabolics $P$ containing $P_C$}; in fact, the strata $Z_C$ for all those embeddings will be identified. On the other hand, if the parabolic $P$ does not contain (a parabolic in the class of) $P_C$, then the stratum of $[H]_P^\F$ corresponding to $C$ will be \emph{different} from $Z_C$, and we will not reserve a symbol for it.

Assume first that $\F$ is a fan supported entirely on the face of the anti-dominant cone corresponding to a parabolic $P$ with Levi quotient $M$, whose split center $A_P$ is canonically a subtorus of $A$. Then $\F$ defines a toric variety $Y_P$ for $A_P$, and we set
\begin{equation}\label{MF} [M]^{\F}:= [M]\times^{A_P(\RR)} Y_P(\RR),\end{equation}
\begin{equation}\label{HF} [H]_P^{\F}:= [M]^{\F} \times^{P(\adele)} H(\adele).\end{equation}

For a general fan $\F$, the $H(\adele)$-space $[H]_P^{\F}$ will be defined by the formula \eqref{HF}, once $[M]^{\F}$ is defined. To define $[M]^\F$, we may assume that $M=H$, and that the spaces $[M]^{\F}$, $[H]_P^{\F}$ have been defined for all proper parabolics $P$.

We first consider the restriction $\F_H$ of $\F$ to $\mathfrak a_H$ (i.e., the sub-fan consisting of all cones which are contained in $\mathfrak a_H$, the span of central cocharacters into $H$). By \eqref{MF}, it defines an embedding $[H]^{\F_H}$ of $[H]$. Now, all the strata $Z_C$ have been defined: If $C$ belongs to $H$, then $Z_C\subset [H]^{\F_H}$. If not, then $Z_C$ has been defined as a stratum of $[H]_P^\F$, for all $P\supset P_C$. It remains to explain how to glue those onto $[H]^{\F_H}$. 

Equivalently, we should define when a sequence $(z_n)_n$ in $\bigsqcup_D Z_D$ (this includes the trivial cone $D=\{0\}$ for which $Z_C = [H]$) converges to a point $z$ of the stratum $Z_C$. (The topology will be separable, of course.) We can restrict to cones $D$ which are faces of $C$, since otherwise the orbit $Z_D$ will not contain $Z_C$ in its closure. If the sequence belongs to a stratum $Z_D$, with $D\subset C$, $D$ belonging to a proper parabolic $Q=P_D$, then both $Z_D$ and $Z_C$ can be considered as sub-strata of $[H]_Q^{\F}$, and convergence is defined there. There remains to consider the case that $(z_n)_n \subset [H]^{\F_H}$.

For this, consider the pair of quotient maps:
\begin{equation}\label{leftright} \xymatrix{
& P(k)\backslash H(\adele) \ar[dl]_{\pi_H}\ar[dr]^{\pi_P}& \\
[H] && [H]_P.
}\end{equation}

By ``the cusp'' in $[H]_P$ we will mean the limit of all one-parameter orbits of the form $M(k)N(\adele)\lambda(t) g$, as $t\to 0$, where $\lambda$ is a strictly $P$-anti-dominant cocharacter into the center of the Levi quotient $M$ of $P$ (and $N$ is the unipotent radical of $P$). Equivalently, the cusp is the closed orbit in $[H]_P^\F$, when $\F$ is the fan of all faces of $\mathfrak a^+\cap \mathfrak a_P$.
 We will call ``neighborhood of the cusp'' in $P(k)\backslash H(\adele)$ the preimage of any neighborhood of the cusp in $[H]_P$.

There is a neighborhood of the cusp in $P(k)\backslash H(\adele)$ where the left arrow of \eqref{leftright} is an isomorphism onto its image --- its image will be called a ``neighborhood of the $P$-cusp'' in $[H]$. If we apply the operation
$$\times^{A_H(\RR)} Y_H(\RR)$$ 
(recall that this operation on $[H]$ produces $[H]^{\F_H}$), then we get corresponding neighborhoods of the $P$-cusp in $[H]^{\F_H}$: 
\begin{equation}\label{leftrighttimes} \xymatrix{
& P(k)\backslash H(\adele) \times^{A_H(\RR)} Y_H(\RR) \ar[dl]\ar[dr]& \\
[H]^{\F_H} && [H]_P \times^{A_H(\RR)} Y_H(\RR).
}\end{equation}

Now, $[H]_P \times^{A_H(\RR)} Y_H(\RR)$ is an open subspace of $$[H]_P^{\F_P}= [H]_P \times^{A_P(\RR)} Y_P(\RR),$$
and the latter contains the stratum $Z_C$ that we are interested in. Notice that since $C$ belongs to $P$, this stratum is ``in the cusp'', i.e.: every open neighborhood of the cusp contains an open neighborhood of $Z_C$. (This fails to be true for orbits in $[H]_P^{\F_P}$ attached to cones which do not meet the relative interior of the $P$-face of $\mathfrak a^+$.)

We declare that the sequence $(z_n)_n\subset [H]^{\F_H}$ converges to $z\in Z_C$ if it can be eventually lifted (under the left arrow of \eqref{leftrighttimes}) to a sequence $(\tilde z_n)_n$ in the neighborhood of the cusp in $P(k)\backslash H(\adele) \times^{A_H(\RR)} Y_H(\RR)$, whose image in $[H]_P \times^{A_H(\RR)} Y_H(\RR)$ converges to $z$.

This completes the description of the compactification $[H]^{\F}$.

\begin{remark}
Returning to the discussion of strata associated to a cone $C$ in the fan $\F$, we notice once again that, by definition, the stratum $Z_C$ is identified as a stratum of $[H]_Q^\F$, for every $Q\supset P_C$. On the other hand, if $Q$ does not contain $P$, there is a stratum attached to $C$ in $[H]_Q^\F$, but it is not isomorphic to $Z_C$. Rather, it is isomorphic to the quotient of $[H]_{P\cap Q}$ by $A_C(\RR)$. 
\end{remark}

\begin{remark}
Assume that $G$ is semisimple. Then there is a canonical fan $\F$ on $\mathfrak a^+$, consisting of all faces of $\mathfrak a^+$. One can check that in that $[H]^\F$ one gets the \emph{reductive Borel--Serre compactification} $[H]^{\rm RBS}$ of \cite{BJ}, with the only difference\footnote{Of course, since \cite{BJ} was not written in the adelic language, literally speaking to obtain their compactification from ours one needs to divide by an open compact subgroup of the finite adeles of $H$, which amounts to choosing an arithmetic subgroup of $H(\RR)$.} that here (for notational simplicity) we have defined the compactifications \eqref{MF} by using $A_P(\RR)$, while in \cite{BJ} they just use the identity component $A_P(\RR)^0$. In other words, to obtain the compactification of \cite{BJ} one would need to replace in \eqref{MF} the torus $A_P(\RR)$ by $A_P(\RR)^0$, and the toric variety $Y_P(\RR)$ by the closure of $A_P(\RR)^0$ in it. With this minor modification, it is easy to see that the reductive Borel--Serre compactification is final among equivariant toroidal compactifications, that is: for every cone $\F$ the identity map on $[H]$ extends to an $H(\adele)$-equivariant map $[H]^\F \to [H]^{\rm RBS}$. Verifying these claims is just a matter of going through the definitions, and using the following standard fact in toric geometry: there is a map between two normal embeddings of the same torus, with fans $\F_1$ and $\F_2$ if and only if each cone of $\F_1$ is contained in a cone of $\F_2$.
\end{remark}

Although we have described the space $[H]^\F$ as a Hausdorff (it can easily be checked) topological space, it is a ``semi-algebraic'' restricted topology that will be more useful for us. First, notice that quotients of $[H]$ by any open subgroup $J$ of the finite adeles of $H$ are Nash manifolds, in a way compatible with each other, covered, e.g., by open Siegel domains in $H(\RR)$ (which are clearly Nash manifolds); thus, we have ``open semi-algebraic subsets'' as pull-backs of those of $[H]/J$, for some $J$.

The $\RR$-points of the toric variety $Y_H$ have a semi-algebraic restricted topology, and hence the space
$$[H]^{\F_H} = [H]\times^{A_H(\RR)} Y_H(\RR)$$
also inherits a restricted semi-algebraic topology.

Similarly, there is a semi-algebraic restricted topology on $[H]_P$ and $[H]_P^{\F_P}$. In a neighborhood of the $P$-cusp, the left arrow of \eqref{leftright} is an isomorphism and the right arrow is semi-algebraically continuous and open (again, think of Siegel domains). Thus, the same is true for \eqref{leftrighttimes}, and this is enough to define what a semi-algebraic open neighborhood of a point $z\in Z_C$ is: Start with an open neighborhood in $[H]_P^{\F_P}$, which can be assumed to be sufficiently close to the cusp (since $C$ belongs to $P$), pull it back and push it forward to $[H]^{\F_H}$ via  \eqref{leftrighttimes}. (The intersections of this neighborhood with a stratum $Z_D$ with $D\subset C$ belonging to a proper parabolic $Q$ can be defined in $[H]_Q^\F$.) 

\subsection{Asymptotically finite functions on the automorphic quotient}\label{ssaf}

We remain in the setting of $[H]$ as above.

An \emph{exponent arrangement} will consist of a fan $\F$ as above, i.e., with support in the anti-dominant cone $\mathfrak a^+$, and an assignment, to every cone $C$ of $\F$,  of a multiset $E(C)$ (i.e., repetitions allowed) of linear functionals (the ``exponents'') $\chi\in \mathfrak a_C^* = \Hom(\mathfrak a_C,\RR)$, where $\mathfrak a_C$ denotes the linear span of the cone $C$ in $\mathfrak a$. 
As in \S \ref{finite-toric}, these multisets need to have the following property: If a cone $C_1$ is in the closure of the cone $C_2$, then the restrictions of elements of $E(C_2)$ to $\mathfrak a_{C_1}$ are contained in $E(C_1)$ (with at least the same multiplicity). 

More generally, we may, as we did in the toric case, require our functions to have central (generalized) characters. That is, instead of taking a fan $\F$ on $\mathfrak a$, we may take the fan to be on $\mathfrak a/\mathfrak a_0$, where $\mathfrak a_0\subset \mathfrak a_H$ corresponds to a split central torus $A_0$ of $H$. In that case, our functions will be sections of a cosheaf over a toroidal compactification of $A_0(\RR)\backslash [H]$, but the exponents are still allowed to be arbitrary functionals on $\mathfrak a$, i.e., non-trivial on $\mathfrak a_0$. To keep notation simple, I will not present this case; the generalizations are immediate.

These linear functionals will be considered as positive characters of the real, or adelic, points of the corresponding subtorus of $A$ --- the subtorus $A_C$ whose cocharacter group spans $\mathfrak a_C$ --- as follows: every such functional $\chi$ can be written as a linear combination $\sum c_i\chi_i$, where the $\chi_i$ are algebraic characters of $A_C$ and $c_i\in\mathbb R$, and then we define (and denote by the same letter):
\begin{equation}\label{ascharacters} \chi: A_C(\adele)\ni a\mapsto \prod_i |\chi_i(t_i)|^{c_i} \in \RR_+^\times.\end{equation}
Equivalently, $\chi(a) = e^{\log a}$, where $\log$ is the map \eqref{logmap}. 
Obviously, the discussion here can be extended to arbitrary complex characters, but positive ones are sufficient for our purposes, and notationally simpler since they can be identified with elements of $\mathfrak a_C^*$.

We will define a cosheaf $\Ff_E$ of ``asymptotically finite functions'' for the restricted semi-algebraic topology over each of the spaces $[H]_P^{\F}$, as we did for toric varieties.

First, fix a cone $C\subset \F$; recall that it corresponds to a stratum $Z_C\subset [H]^{\F}$. Let $P=P_C$ be the parabolic associated to $C$; we have inclusions of tori: $A_C\subset A_P \subset A$. The multiset $E(C)$ of characters of $A_C(\RR)$ defines a complex vector bundle over $Z_C$, whose sections are functions on $[H]_P$  which are generalized eigenfunctions for $A_C(\RR)$ with exponents $E(C)$.
As in the toric case, there are obvious notions of smooth sections, sections of polynomial growth, and Schwartz sections of this bundle on $Z_C$. The cosheaf over $Z_C$ of Schwartz sections of this vector bundle will be denoted by $\mathcal V_{Z_C,E(C)}$.

Now we define the strictly flabby cosheaf $\Ff_E$ (in the semi-algebraic restricted topology) over $[H]_P^{\F}$ 
of functions $f$ which are \emph{asymptotically finite with exponent arrangement $E$} inductively:
\begin{itemize}
\item We apply any of the definitions that follow to any $M$, instead of $H$, where $M$ is  the reductive quotient of any parabolic $P$, and $\F_H$ is replaced by $\F_E$; if $\Ff_E$ is defined\footnote{To avoid heavy notation, we use the same notation $\Ff_E$ for $[H]$, $[M]$ and $[H]_P$; this should not cause any confusion.} for $[M]^\F$, we define it for $[H]_P^\F$ by smooth induction from $P(\adele)$ to $H(\adele)$.
\item Over an open set $U$ which is contained in $[H]$, the space $\Ff_E(U)$ coincides with the space $\mathcal F(U)$ of Schwartz (rapidly decaying together with their polynomial derivatives) functions on $U$; more generally, over $[H]^{\F_H}$, sections of the cosheaf are obtained as push-forwards via the quotient map
$$[H]\times^{A_H(\RR)} Y_H(\RR) \to [H]^{\F_H}= [H]\times^{A_H(\RR)} Y_H(\RR)$$
of elements of
$$ \mathcal F([H]) \hat\otimes \mathcal S_E(Y_H(\RR)),$$
where $\mathcal S_E$ is the corresponding cosheaf of asymptotically finite measures on $Y_H(\RR)$, defined in \S \ref{finite-toric}, \ref{regularized-toric} (with the exponent arrangement $E$ restricted to the sub-fan $\F_H$).
\item Given a cone $C$ belonging to a parabolic $P$, we may now by induction assume that $\Ff_E(U)$ has been defined when $U$ is any open subset: 
\begin{enumerate}[(i)]
\item of $[H]_P^\F$;
\item of $[H]^\F$, but not meeting any orbit of codimension $\ge \codim(Z_C)$.
\end{enumerate}
Now, let $U$ be an open neighborhood $Z_C$ in $[H]^\F$, which we may assume not to meet any orbits of codimension $\ge \codim(Z_C)$ other than $Z_C$, and to be sufficiently close to the $P$-cusp, so that its intersection with $[H]$ can be identified, via \eqref{leftright}, with a neighborhood $U'$ of the cusp in $P(k)\backslash H(\adele)$. The image of $U'$ in $[H]_P$ is the intersection with $[H]_P$ of an open neighborhood $U''$ of $Z_C$ in $[H]_P^\F$. For $f$ to be a section of $\Ff_E(U)$, we demand that there is are sections $f'\in\Ff_E(U\smallsetminus Z_C)$, $f''\in \Ff_E(U'')$ such that the difference $\pi_H^* f - \pi_H^* f' - \pi_P^* f''$ on $U'$ is equal to a rapidly decaying function on $U'$ (i.e., an element of $\mathcal F(U')$).
\end{itemize}

There is a natural nuclear Fr\'echet space structure on the sections of this cosheaf, which is defined as in the toric case.

\subsection{Regularized integral of asymptotically finite automorphic functions}\label{regularized-automorphic}

Let $(\F, E)$ be an exponent arrangement for $[H]$, and let $f$ be a section of $\Ff_E$ over $[H]^\F$. We will say that a character (``exponent'') $\chi\in E(C)$, for some cone $C\in \F$, is \emph{critical} if it coincides with the modular character  of $P_C$, restricted to $A_C(\RR)$, and $A_C$ is non-trivial. (The modular character is the character by which $P_C$ acts on the quotient of its right by its left Haar measure; since $A_C$ is in the center of the corresponding Levi, we can equivalently take the modular character of the minimal parabolic.)

Assume that there are no critical exponents in $E$. Then we can define on $\Ff_E([H]_P^\F)$ a regularized integral, i.e., a continuous, $H(\adele)$-invariant extension of the integral, against some fixed invariant measure on $[H]_P$\footnote{Of course, we will be using Tamagawa measure, but see the discussion following \ref{ev-regular} for the cases in which Tamagawa measure needs to be regularized; we assume throughout compatible choices on all the spaces $[H]_P$, with Tamagawa measure on unipotent radicals.}, on the space of Schwartz functions on $[H]$:
$$ \Ff_E([H]_P^\F) \ni f \mapsto \int_{[H]_P}^* f(h) dh \in \CC,$$
as in the toric case.
 Namely, the definition is parallel to the definition of sections of $\Ff_E$, and goes as follows:
\begin{itemize}
\item For a section $f\in \Ff_E([H]_P^\F)$, we fix a factorization of the measure:
$$ \int_{[H]_P} dh = \int_{P\backslash H(\adele)} \int_{[M]} \delta_P(m) dm dh,$$
and define accordingly:
$$ \int_{[H]_P}^* dh = \int_{P\backslash H(\adele)} \int^*_{[M]} \delta_P(m) dm dh$$
once the regularized integral $\int^*_{[M]}$ has been defined; we can apply all the definitions that follow to any $M$, instead of $H$.
\item On $\Ff_E ([H]^{\F_H})$ the regularized integral is induced from the regularized integral on $\mathcal S_E(Y_H(\RR))$: Recall from \eqref{MF} that $[H]^{\F_H}$ is a quotient of $[H]\times Y_H(\RR)$; we thus have a surjective map
$$\mathcal F([H]) \hat\otimes \mathcal S_E(Y_H(\RR)) \twoheadrightarrow \mathcal F_E([H]^{\F_H})$$
which coincides with taking coinvariants with respect to the $A_H(\RR)$-action, and when pulled back to $\mathcal F([H]) \hat\otimes \mathcal S_E(Y_H(\RR)) $ the regularized integral $\int^*_{[H]}$ will be the functional
$$\int_{[H]} dh\times \int_{A_H(\RR)}^*:  \mathcal F([H]) \hat\otimes \mathcal S_E(Y_H(\RR)) \to\CC.$$
The same definition applies to the space $[M]^{\F_P}$, and hence to $[H]_P^{F_P}$ since sections over $[H]_P^{F_P}$ are simply obtained by smooth (compact) induction from $P(\adele)$ to $H(\adele)$.
\item Given a cone $C$ belonging to a parabolic $P$, we may now by induction assume that the regularized integral has been defined  on $\Ff_E(U)$ when $U$ is any open subset: 
\begin{enumerate}[(i)]
\item of $[H]_P^\F$;
\item of $[H]^\F$, but not meeting any orbit of codimension $\ge \codim(Z_C)$.
\end{enumerate}
Now let $U$ be an open neighborhood $Z_C$ in $[H]^\F$, which we may assume not to meet any orbits of codimension $\ge \codim(Z_C)$ other than $Z_C$, and to be sufficiently close to the $P$-cusp, so that its intersection with $[H]$ can be identified, via \eqref{leftright}, with a neighborhood $U'$ of the cusp in $P(k)\backslash H(\adele)$. With $U', U'', f', f''$ as in the definition of sections at the end of \S \ref{ssaf}, we define
\begin{equation} \int^*_U f dh := \int^*_U f' dh + \int^*_{U''} f'' dh + \int_{U'} (\pi_H^* f - \pi_H^* f' - \pi_P^* f'') dh.\end{equation}
The first regularized integral is defined by the induction hypothesis, since $f'$ is a section of $\Ff_E$ over $U\smallsetminus Z_C$, the second is also defined by the induction hypothesis since $f''$ lives on $[H]_P^{\mathfrak F}$, and the third integrand is of rapid decay.
\end{itemize}

The following can easily be deduced from the corresponding Proposition \ref{invariant-toric} for toric varieties:

\begin{proposition}\label{invariant}
The regularized integral $\int_{[H]}^*dh$ is a well-defined, $H(\adele)$-invariant continuous functional on $\Ff_E([H]^\F)$.
\end{proposition}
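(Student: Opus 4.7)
The plan is to prove Proposition \ref{invariant} by a double induction that mirrors the inductive definition of both $\Ff_E$ and $\int^*_{[H]^\F}$: the primary induction is on the semisimple rank of $H$ (to handle the parabolic-induction clause, where we descend to Levi quotients of proper parabolics), and within each rank, the secondary induction is on the codimension of the stratum $Z_C$ being adjoined to an already-treated open set. Continuity is essentially automatic at each step, since the three ingredients of the definition --- ordinary integration of rapidly decaying functions, the toric regularized integral of Proposition \ref{invariant-toric}, and smooth compact induction from $P(\adele)\backslash H(\adele)$ --- are all continuous with respect to the Fr\'echet seminorms generating the topology on $\Ff_E$ that were described at the end of \S \ref{ssaf}.

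For the base of the secondary induction, namely sections supported on $[H]^{\F_H}$, the functional is defined via the surjection $\mathcal F([H]) \hat\otimes \mathcal S_E(Y_H(\RR)) \twoheadrightarrow \Ff_E([H]^{\F_H})$ as the tensor product $\int_{[H]} dh \otimes \int^*_{Y_H(\RR)}$. That it descends to the $A_H(\RR)$-coinvariant quotient is precisely what Proposition \ref{invariant-toric} provides for the toric factor, whose no-critical-exponents hypothesis is inherited from that of $E$. $H(\adele)$-invariance in this base case combines Haar-invariance on $[H]$ with the toric invariance. The parabolic-induction clause defining $\int^*_{[H]_P^\F}$ by unfolding along $P(\adele)\backslash H(\adele)$ poses no difficulty once the primary induction hypothesis supplies $M(\adele)$-invariance of $\int^*_{[M]^\F}$ on the Levi quotient.

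The main inductive step concerns the well-definedness of
$$\int_U^* f := \int_U^* f' + \int_{U''}^* f'' + \int_{U'}(\pi_H^* f - \pi_H^* f' - \pi_P^* f'')\, dh$$
independently of the decomposition $(f',f'')$. Given two such decompositions, the differences $g' = f'_1 - f'_2 \in \Ff_E(U\smallsetminus Z_C)$ and $g'' = f''_1 - f''_2 \in \Ff_E(U'')$ satisfy the crucial property that $\pi_H^* g' + \pi_P^* g''$ is rapidly decaying on $U'$. I would reduce the resulting identity
$$\int_U^* g' + \int_{U''}^* g'' = \int_{U'}(\pi_H^* g' + \pi_P^* g'')\, dh$$
to the one-dimensional Mellin-type statement underlying \eqref{reg-toric}: using a partition of unity supported in a Siegel-type neighborhood of $Z_C$ and passing to the normal toric direction $A_C(\RR) \subset A_P(\RR)$, the problem becomes the assertion that, for a multiplicatively finite function along $A_C(\RR)$ with no critical exponent, the analytically continued Mellin transform evaluated at $s=0$ is unaffected by the choice of where one truncates the tail --- a direct consequence of the absence of a pole at $s=0$.

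Finally, $H(\adele)$-invariance at the inductive step follows by translating: for $h \in H(\adele)$, the decomposition $(h\cdot f', h\cdot f'')$ is valid over the translated neighborhoods (after possibly shrinking), and each piece is invariant by an appropriate inductive hypothesis (secondary for $\int_U^* f'$, primary for $\int_{U''}^* f''$) while the rapidly decaying remainder contributes a Haar-invariant integral. The main obstacle is the Stokes-type cancellation in the well-definedness verification, namely the passage between the honest integral of a rapidly decaying sum and the sum of regularized integrals of its non-rapidly-decaying summands; once this is reduced, as sketched, to the one-dimensional toric Mellin identity, the rest is formal bookkeeping.
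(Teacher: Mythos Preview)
Your proposal is correct and follows the same approach the paper intends: the paper's entire argument is the single sentence ``can easily be deduced from the corresponding Proposition \ref{invariant-toric} for toric varieties,'' and your double induction (on semisimple rank for the parabolic-induction clause, on stratum codimension for the gluing clause) is precisely the unpacking of that deduction along the inductive structure of the definitions in \S\ref{ssaf}--\S\ref{regularized-automorphic}. In particular, your identification of the key point --- that independence of the decomposition $(f',f'')$ reduces, after localizing near $Z_C$, to the one-dimensional Mellin statement that analytic continuation to $s=0$ is insensitive to the truncation height when there is no pole there --- is exactly the content borrowed from the toric Proposition \ref{invariant-toric}.
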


\subsection{Regularized orbital integrals for group representations} \label{ssregularized}

Having described the equivariant toroidal compactifications and asymptotically finite functions on the automorphic quotient, we are ready to formulate and prove the main result of this section, which concerns the sum of ``nilpotent'' orbital integrals for an algebraic, finite-dimensional group representation $V$ of a reductive group $H$ over $k$. We assume that $H$ acts faithfully on $V$. Let $N\subset V$ denote the \emph{nilpotent cone}, i.e., the closed subvariety of elements which contain $0$ in their $H$-orbit; equivalently, the preimage of the image of $0$ under the quotient map $V\to V\sslash H$. For a Schwartz function $f$ on $V(\adele)$, let
$$\Sigma_N f(h) = \sum_{\gamma \in N(k)} f(\gamma h),$$
a function on $[H]$. We are interested in defining the regularized integral
$$ \int_{[H]}^* \Sigma_N f(h) dh,$$
which is formally equal to the sum \eqref{orbital} of orbital integrals for all rational nilpotent orbits.

The character by which a group acts on invariant volume forms (or Haar measures) on a vector space will be called the ``modular character'' of this vector space.

As before, let $A$ be the universal maximal split torus in $H$, defined as the split part of the center of the reductive quotient of a minimal parabolic. (For any two choices of minimal parabolics, any element of $H$ conjugating one to the other induces a canonical isomorphism between these tori.)

Fix a minimal parabolic $P_0$, a Levi subgroup $M_0$, and identify the maximal split torus in the center of $M_0$ with the universal maximal split torus $A$ of $H$. The action of $A$ on $V$ splits into a direct sum of eigenspaces; let $\Phi(V)$ be the multiset of eigencharacters, with multiplicity equal to the dimension of the associated eigenspace. As a subset of the character group of $A$, it does not depend on the minimal parabolic or its Levi subgroup chosen. 

We consider the partition of the cone 
$$\mathfrak a^+\subset\mathfrak a = \Hom(\Gm,A)\otimes \RR$$ of anti-dominant coweights determined by the following hyperplanes:
\begin{itemize}
\item the walls of the cone, i.e., hyperplanes perpendicular to (simple) roots;
\item the hyperplanes perpendicular to elements of the character set $\Phi(V)$.
\end{itemize}

The intersection of these hyperplanes is trivial, since $H$ is assumed to act faithfully on $V$. Thus, the cones contained in this partition, together with their faces, form a fan $\F$ of rational, strictly convex polyhedral cones on the vector space $\mathfrak a$ whose support is $\mathfrak a^+$. Now we define an exponent arrangement $C\mapsto E(C)$ for this fan as follows: for every cone $C$ we let $E(C)$ consist of a single element, which is the restriction to the subspace $\mathfrak a_C$ spanned by $C$ of the character
\begin{equation}\label{chiC}\chi_C:=-\sum \chi_i,\end{equation}
where $\chi_i$ runs over all elements of $\Phi(V)$ which are positive on the relative interior of $C$. Notice that algebraic characters of $A$ are canonically elements of the dual $\mathfrak a^*$ of $\mathfrak a$, but we can, and will, also consider elements of $\mathfrak a^*$ as positive characters of the real or adelic points of $A$ by \eqref{ascharacters}.

To understand what the exponents above represent, 
let us consider the decomposition 
\begin{equation}\label{decomp-lambda} V=V_{\lambda,-}\oplus V_{\lambda,0}\oplus V_{\lambda,+},\end{equation}
for any given cocharacter $\lambda:\Gm\to H$, where $\Gm$ acts via $\lambda$ on each of the three summands with negative, trivial, or positive weights, respectively. 

If $\lambda$ is a cocharacter into $A$, in the relative interior of the cone $C$, and we consider it as a cocharacter into $H$ via the composition: $\Gm\to A \to M_0\to H$, then the pull-back of $\chi_C$ to $\Gm$ is equal to the inverse of the modular character for the action of $\Gm$ on $V_{\lambda,+}$ via $\lambda$.

Notice that for every $\lambda$ in the relative interior of a cone $C\in \F$, the decomposition \eqref{decomp-lambda} is the same, so we may write:
\begin{equation}\label{decompC} V=V_{C,-}\oplus V_{C,0}\oplus V_{C,+}.
\end{equation}
(We notice, though, that these subspaces vary by the $H(k)$-action when a different pair $(P_0,M_0)$ is chosen.)

\begin{theorem}\label{isfinite}
For any Schwartz function $f$ on $V(\adele)$, the function 
\begin{equation}\label{sigmaN}\Sigma_N f: h\mapsto \sum_{\gamma \in N(k)} f(\gamma h)\end{equation} on $[H]$ is asymptotically finite.

More precisely, this map represents a continuous map from the space of Schwartz functions $\mathcal F(V(\adele))$ to the space $\Ff_E([H]^\F)$ of asymptotically finite functions on $[H]$ with the exponent arrangement $(\F, E)$ as above. 
\end{theorem}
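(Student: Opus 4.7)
I will prove the statement by induction on the semisimple rank of $H$, exploiting the fact that being a section of $\Ff_E$ is a local condition in the restricted semi-algebraic topology of $[H]^\F$. The content is the asymptotic analysis of $\Sigma_N f$ near a stratum $Z_C\subset [H]^\F$ attached to a cone $C\in\F$ (the open stratum $[H]\subset [H]^{\F_H}$ is handled by the same method, but more simply). Fix such a $C$, let $P=P_C$ with Levi $L=L_C$, pick $\lambda$ in the relative interior of $C$, and decompose $V=V_{C,-}\oplus V_{C,0}\oplus V_{C,+}$ as in \eqref{decompC}; by construction, $\chi_C$ is the inverse of the character by which $L$ acts on $\det(V_{C,+})$. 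A short application of the Hilbert--Mumford criterion, together with the centrality of $\lambda$ in $L$, gives the identity
\begin{equation*}
  N(k)\cap (V_{C,0}\oplus V_{C,+})(k) \;=\; \{\gamma_0+\gamma_+ : \gamma_0\in N_L(k),\ \gamma_+\in V_{C,+}(k)\},
\end{equation*}
where $N_L\subset V_{C,0}$ is the nilpotent cone for $L$ (the ``$\supset$'' direction is proved by composing a $\Gm\to L$ sending $\gamma_0$ to $0$ with a large power of $\lambda$ to absorb the $\gamma_+$-part).

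\textbf{Asymptotic near $Z_C$.} For $h=ah_0$ with $a\in A_C(\RR)$ running towards the cusp of $C$ and $h_0$ ranging over a Siegel set for $L$, the contribution to $\Sigma_N f(h)$ from $\gamma\in N(k)$ with $\gamma_-\ne 0$ is of rapid decay jointly with its polynomial derivatives, since $\gamma_- a$ leaves every compact set and $f$ is Schwartz. Using the identity above,
\begin{equation*}
  \Sigma_N f(ah_0) \;=\; \sum_{\gamma_0\in N_L(k)}\sum_{\gamma_+\in V_{C,+}(k)} f\bigl((\gamma_0+\gamma_+ a) h_0\bigr) \;+\; O(\text{rapid decay}).
\end{equation*}
Adelic Poisson summation on the inner $V_{C,+}(k)$-sum inserts a factor $|\det(ah_0)|_{V_{C,+}}^{-1}=\chi_C(a)\cdot|\det h_0|_{V_{C,+}}^{-1}$ (using that $L$ commutes with $A_C$) and replaces the sum by one over the dual lattice $V_{C,+}^\vee(k)$ of partial Fourier transforms of $f$; as $a\to$ cusp, the $\xi\ne 0$ terms decay rapidly by Schwartz--Bruhat estimates. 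The outcome is
\begin{equation*}
  \Sigma_N f(ah_0) \;=\; \chi_C(a)\cdot \Psi(h_0) \;+\; O(\text{rapid decay}) \;+\; (\text{section supported off } Z_C),
\end{equation*}
where $\Psi(h_0):=|\det h_0|_{V_{C,+}}^{-1}\sum_{\gamma_0\in N_L(k)} f_+(\gamma_0 h_0)$ and $f_+(v):=\int_{V_{C,+}(\adele)} f(v+u)\,du\in\mathcal F(V_{C,0}(\adele))$.

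\textbf{Induction and base case.} Up to the character twist $|\det h_0|_{V_{C,+}}^{-1}$, the function $\Psi$ is precisely of the form $\Sigma_{N_L} f_+$ for the pair $(L,V_{C,0})$. By the inductive hypothesis applied to $L$ (of strictly smaller semisimple rank for $C$ non-central), $\Psi$ is asymptotically finite on an appropriate equivariant toroidal compactification of $[L]$ with the exponent arrangement predicted by the weights of $L$ on $V_{C,0}$; a short combinatorial check shows that this arrangement, pulled back via the natural inclusion of fans, agrees with the restriction of $(\F,E)$ to the sub-fan of cones through $C$, the character twist $|\det h_0|_{V_{C,+}}^{-1}$ accounting for the bookkeeping between weights of $L$ on $V_{C,0}$ and weights of $H$ on $V$. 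Smooth induction from $P(\adele)$ to $H(\adele)$ then lifts $\chi_C(a)\cdot\Psi(h_0)$ to a section of $\Ff_E$ on a neighborhood of $Z_C$ in $[H]_P^\F$, playing the role of the ``$f''$'' piece in the definition of $\Ff_E$, while the asymptotic above matches $\Sigma_N f$ to it up to rapid decay and a section supported off $Z_C$. The base case $C=\{0\}$ is the classical fact that theta sums of Schwartz functions on $V(\adele)$ are smooth and of moderate growth on $[H]$, from which the required behavior near the interior of $[H]^{\F_H}$ follows.

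\textbf{Main obstacle.} The principal technical challenge is executing the Poisson summation step \emph{uniformly} in $h_0$ over Siegel sets for $L$, so that all error terms are jointly Schwartz in $(a,h_0)$ in the sense required by the cosheaf $\Ff_E$ (with polynomial control of every derivative), not merely pointwise. Secondary but nontrivial points are the Hilbert--Mumford identity for $N(k)\cap(V_{C,0}\oplus V_{C,+})(k)$ and the combinatorial compatibility between the fan and exponent arrangement induced on $(L,V_{C,0})$ and the restriction of $(\F,E)$ to the faces of $\mathfrak a$ through $C$; both rely on the genericity of $\lambda$ in the relative interior of $C$ and on the fact that the hyperplane arrangement defining $\F$ was built precisely from the weights $\Phi(V)$.
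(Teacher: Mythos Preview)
Your strategy --- weight-space decomposition for a cocharacter in the relative interior of $C$, Poisson summation on $V_{C,+}$, and induction via the Levi --- is exactly the paper's. The paper, however, opens with a simplification you do not make: it replaces $\Sigma_N f$ by $\Sigma_V f$, after multiplying $f$ by the pullback of a cutoff on $\cc_V = V\sslash H$ whose support meets $\cc_V(k)$ only at the image of $0$; the two sums then agree. Every subsequent lattice sum is then over a full vector space, and the inductive step reads simply $f_C|_{[M]} = \chi_C \cdot \Sigma_{V_{C,0}} f'$ (see \eqref{fC2}), with no need to describe $N(k)\cap(V_{C,0}\oplus V_{C,+})(k)$.

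Working with $N$ directly, as you do, forces the identity $N\cap V_{C,0} = N_L$; this is the content of the ``$\subset$'' direction of your displayed equality, which you do not argue. It is true but not a ``short application of Hilbert--Mumford'': one needs Kempf's refinement that for an $H$-unstable vector $v_0\in V_{C,0}$ the optimal destabilizing one-parameter subgroup can be taken to commute with the torus $A_C$ (which fixes $v_0$), hence to lie in $L$. Either invoke Kempf explicitly, or adopt the paper's $N\to V$ reduction and bypass the issue entirely. Beyond this point your outline matches Appendix~\ref{app:finite}, including the identification of the main obstacle (uniformity of the Poisson error over Siegel domains for $L$) and the combinatorial bookkeeping between $(\F,E)$ and the arrangement induced on $(L,V_{C,0})$.
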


We will outline the proof of this theorem below, and complete it in Appendix \ref{app:finite}.

\begin{example}
As a fast check that the signs of the exponent arrangement that we defined are correct, let us consider the easy case of $\Gm=\GL_1$ with its standard representation $V$, whose weight we will denote by $\chi_0$. We identify $\mathfrak a=\mathfrak a^+$ with $\RR$ by evaluating (linear functionals) at $\chi_0$, and then the fan consists of the three sets $\{0\}, \RR_{\ge 0}$ and $\RR_{\le 0}$. The exponents are $-\chi_0$ on $\RR_{\ge 0}$ and $0$ on $\RR_{\le 0}$. If $\Phi$ is a Schwartz function on $V(\adele)=\adele$, then the function $\Sigma_N\Phi(a) = \sum_{\gamma\in V(k)} \Phi(\gamma a)$ on $\adele^\times$ is asymptotically equal (up to a rapidly decaying function) to $\Phi(0)$ when $|a|\ll 1$. On the other hand, by Poisson summation formula (as in Tate's thesis) it is asymptotically equal to $|a|^{-1} \check\Phi(0)$ when $|a|\gg 1$. This matches the behavior described by the above exponents.
\end{example}

Theorem \ref{isfinite} and Proposition \ref{invariant} allow us to define a regularized nilpotent orbital integral for Schwartz functions on $V(\adele)$, when all exponents of the arrangement $E$ are non-critical. I recall from \S \ref{regularized-automorphic} that a character of $A_C(\RR)$ is ``critical'' if it is equal to the restriction to $A_C(\RR)$ of the modular character of the minimal parabolic (or, equivalently, of $P_C$).

\begin{corollary}\label{corNintegral}
Assume that the exponents of the arrangement $E$ are non-critical. Then the map
\begin{equation}\label{Nintegral}
f\mapsto \int^*_{[H]} \Sigma_N f(h) dh
\end{equation}
is a well-defined, continuous, $H(\adele)$-invariant functional on the space $\mathcal F(V(\adele))$ of Schwartz functions on $V$. 
\end{corollary}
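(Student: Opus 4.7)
The plan is to deduce the corollary by composing the two results already in place: Theorem \ref{isfinite} realizes $f\mapsto \Sigma_N f$ as a continuous linear map $\mathcal F(V(\adele))\to \Ff_E([H]^\F)$, and Proposition \ref{invariant} provides a continuous, $H(\adele)$-invariant regularized integral on $\Ff_E([H]^\F)$ whenever the arrangement $E$ has no critical exponents. Under the non-criticality hypothesis of the corollary, both are available simultaneously, and the composition
\[
\mathcal F(V(\adele))\xrightarrow{\Sigma_N} \Ff_E([H]^\F)\xrightarrow{\int^*_{[H]}\,dh} \CC
\]
is then a continuous linear functional.

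The only point requiring comment is the $H(\adele)$-invariance. I would verify directly the equivariance of $\Sigma_N$: for $h\in H(\adele)$, writing $R_h$ for right translation,
\[
\Sigma_N(R_h f)(g) \;=\; \sum_{\gamma\in N(k)} f(\gamma g h) \;=\; (R_h\,\Sigma_N f)(g),
\]
where the first equality uses that $N(k)\subset V(k)$ is $H(k)$-stable (in fact just that the summation is over a set that is unchanged under the substitution), and the second unfolds the definition of $R_h$ acting on functions on $[H]$. Hence the composite is $H(\adele)$-invariant, because $\int^*_{[H]}\,dh$ is invariant under right translation by $H(\adele)$ on $\Ff_E([H]^\F)$ by Proposition \ref{invariant}.

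Continuity requires nothing beyond what is already asserted: Theorem \ref{isfinite} asserts continuity of $\Sigma_N$ into the Fr\'echet space $\Ff_E([H]^\F)$ defined in \S \ref{ssaf}, and the regularized integral is continuous on this space by Proposition \ref{invariant}. The composition of continuous linear maps between nuclear Fr\'echet spaces is continuous, and the functional \eqref{Nintegral} is therefore well-defined and continuous on $\mathcal F(V(\adele))$. The only substantive content is contained in Theorem \ref{isfinite} (whose proof is deferred to Appendix \ref{app:finite}), so at this stage no further obstacle remains; the corollary is a formal consequence.
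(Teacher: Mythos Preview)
Your proof is correct and matches the paper's approach exactly: the paper does not give a separate proof of this corollary, presenting it as an immediate consequence of Theorem \ref{isfinite} and Proposition \ref{invariant}, which is precisely the composition you write down. Your explicit check of the $H(\adele)$-equivariance of $\Sigma_N$ is a helpful addition (and correct; note that the parenthetical about $N(k)$ being $H(k)$-stable is not actually needed for that computation, only for $\Sigma_N f$ to descend to $[H]$, which is already implicit in the setup).
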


The definition depends on the choice $dh$ of invariant measure on $[H]$. While we have already specified that we will be using Tamagawa measure, there are cases (more precisely, when $H$ has non-trivial, $k$-rational characters) where the usual regularization of Tamagawa measure (using residues of zeta functions) is rather arbitrary and non-canonical. In the next section, when we define ``evaluation maps'' on measures, this ambiguity will be absorbed in the passage from measures to functions. 

\begin{proof}[Beginning of the proof of Theorem \ref{isfinite}]
In preparation for the proof of Theorem \ref{isfinite}, I present the crux of the argument, which is also at the heart of other methods of regularization of orbital integrals, cf.\ \cite{Zydor1,Zydor2}.

First, we may replace the sum over $N(k)$ in the definition of $\Sigma_N$ by a sum over $V(k)$. Indeed, consider the quotient map: $V\to \cc_V=V\sslash H$; the preimage of the image of zero is $N$. Without changing the sum \eqref{sigmaN}, we may multiply $f$ by the pull-back of a ``Schwartz function'' on $\cc_V(\adele)$ whose support meets $\cc_V(k)$ only at the image of zero. (I have put ``Schwartz function'' in quotation marks because $\cc_V$ is not necessarily smooth, but we can embed it in a larger, smooth variety and consider the restriction of a Schwartz function on that.)

Thus, it is enough to prove the theorem when the sum over $N(k)$ in \ref{sigmaN} is replaced by the corresponding sum over $V(k)$; the corresponding function will be denoted by $\Sigma_V f$.

The essence of the argument is its following simplistic version, which shows that the function has the claimed behavior along a cocharacter $\lambda:\Gm\to H$; let us fix such a cocharacter, and split $V$ as in \eqref{decomp-lambda}.

Then we have (using Tamagawa measures on all integrals):
\begin{eqnarray}\label{lambda}\nonumber \sum_{\gamma\in V(k)} f(\gamma h) = \sum_{\gamma_-\in V_{\lambda,-}(k)\smallsetminus\{0\}} \left( \sum_{\gamma_1\in (V_{\lambda,0}+V_{\lambda,+})(k)} f((\gamma_-+\gamma_1)h)\right)+ \\
\nonumber +\sum_{\gamma_0\in V_{\lambda,0}(k)} \left[\left(\sum_{\gamma_+\in V_{\lambda,+}(k)} f((\gamma_0+\gamma_+) h) - \int_{V_{\lambda,+}(\adele)} f((\gamma_0+n)h) dn \right) + \right.\\
\left. +\int_{V_{\lambda,+}(\adele)} f((\gamma_0+n)h) dn \right].
\end{eqnarray}

For $h=\lambda(t)$ and $t\to 0$, the terms on the first line are of rapid decay because $\lambda$ acts on $V_{\lambda,-}$ with negative weights.

The term on the third line is an $\adele^\times$-eigenfunction, when $\adele^\times$ acts via $\lambda$, with character $\chi_C$ (when $\lambda$ factors through $A$ and is in the interior of a cone $C$).

I claim that the term on the second line is of rapid decay. For this, the round bracket can be re-written, using Fourier transform on $V_{\lambda,+}$ (which we will write as $f\mapsto \hat f$):

$$\sum_{\gamma^*\in V_{\lambda,+}^\vee(k) \smallsetminus\{0\}} \widehat{h\cdot f_{\gamma_0}} (\gamma^*),$$
where $f_{\gamma_0}$ is the function $v\mapsto f(\gamma_0+v)$ on $V_{\lambda,+}(\adele)$, and $h\cdot$ represents its translate under $h=\lambda(t)$. Since the weights by which $\Gm$ acts on $V_{\lambda,+}^\vee$ via $\lambda$ are opposite to the ones by which it acts on $V_{\lambda,+}$, hence negative, this sum is rapidly decaying as $t\to 0$.

By making this argument ``locally uniform in $\lambda$'', in some sense, we will prove the theorem. Although not much more difficult, essentially, the final argument needs some careful quantification and the introduction of a notion of ``derivatives'' of exponent arrangements, which does not add much conceptually. I will therefore finish the proof in Appendix \ref{app:finite}.

\end{proof}

\begin{example} \label{exampleKR}
Here is an example that comes from the theta correspondence (Howe duality): Let $H= \SO(W)$, the special orthogonal group of a non-degenerate quadratic space $W$ of, say, even dimension $2m$. Let $V = W\otimes X$, where $X$ is a vector space of dimension $n$; in the theta correspondence, $X$ arises as a Lagrangian of a symplectic vector space. The problem at hand is to define a regularized integral:
\begin{equation}\label{reg-KR} \int_{[H]}^* \Sigma_V f(h) dh,\end{equation}
where $f\in \mathcal F(V(\adele))$ and $\Sigma_V$ is defined as in the proof above. This is the \emph{Kudla-Rallis} period, that appears in the theory of the Siegel-Weil formula \cite{KR}. (I thank Atsushi Ichino and Tamotsu Ikeda for drawing my attention to this example.)

For simplicity, let us assume that the quadratic space $W$ (and hence the group $H$) is split, and let $\epsilon_1,\dots,\epsilon_m$ denote the standard basis of characters of its Borel subgroup, so that the simple roots are $\epsilon_1-\epsilon_2, \dots, \epsilon_{m-1}-\epsilon_m$ and $\epsilon_{m-1}+\epsilon_m$. Thus, the anti-dominant cone $\mathfrak a^+$ is given by the inequalities
$$\epsilon_1\le \dots \le\epsilon_{m-1}\le \epsilon_m\le -\epsilon_{m-1}.$$ 

The multiset $\Phi(V)$ of weights of the representation $V$ consists of the weights of the standard representation: 
$$\pm \epsilon_1, \dots, \pm \epsilon_m,$$
each appearing with multiplicity $n$. The hyperplanes orthogonal to those weights divide the cone $\mathfrak a^+$ into two simplicial subcones, divided by the new wall with equation $\epsilon_m=0$. 

There are $m+1$ extremal rays of these cones, whose vectors we will denote, respectively, by $v_1, \dots, v_m$ and $v_m'$, where, in coordinates $(\epsilon_1,\dots, \epsilon_m)$:
$$ v_i = (x,x,\dots , x \mbox{\tiny{($i$-th position})}, 0,  \dots, 0)\mbox{ with }x\le 0$$
and  $v_m' = (x,x,\dots ,x, -x)$ with $x\le 0$.

For $C=\RR_+ v_i$ (or $v'_i$ with $i=m$), the character $\chi_C$ is equal to $v_i\mapsto ni x$. The modular character of $H$ is $2\rho=\sum_i 2(m-i) \epsilon_i$. On the vector $v_i$ (or $v_i'$ with $i=m$), we have $2\rho(v_i) = 2xi(m-\frac{i+1}{2})$.

Therefore, the exponents of $V$ are all non-critical if and only if
$$ ni x \ne 2xi(m-\frac{i+1}{2}) \mbox{ for all } 1\le i \le m$$
$$ \iff n \notin [m-1,2(m-1)],$$
in which case the regularized integral  \eqref{reg-KR} is defined. This is precisely the range (including the convergent range) in which Kudla and Rallis defined a regularized integral in \cite{KR} (by a different method), and proved a regularized Siegel-Weil formula.
\end{example}

\section{Evaluation maps} \label{sec:evaluation}

We return to the general case of $\mathcal X=X/G$, where $X$ is smooth affine and $G$ is reductive, both defined over a global field $k$. We will freely use the notation of Section \ref{sec:stalks}. Let $x:\spec k\to \mathcal X$ be a closed (semisimple) $k$-point lying over a point $\xi\in\cc(k)$. We keep assuming that the (reductive) stabilizer group of $x$ is connected. We will keep assuming, as we may without loss of generality by changing the presentation if necessary, that $x$ corresponds to a $G(k)$-orbit on $X(k)$; and, whenever we appeal to Luna's \'etale slice theorem \ref{lunathm}, we will be using the pair $(H, V)$ corresponding to the chosen point $x$, i.e., the identification of pointed \'etale neighborhoods \eqref{etale} will be such that $x$ corresponds to the image of $0 \in V$.

We will \emph{attempt} to define a canonical evaluation map
$$ \ev_x: \mathcal S(\mathcal X(\adele))_\xi  \to \CC,$$
factoring through the direct summand $\mathcal S(\mathcal X(\adele))_\xi^x$ of \eqref{directsummand}.
We define it at first for $\mathcal V=V/H$, using Theorem \ref{isfinite}, and then verify that it is invariant under the isomorphisms of stalks induced by a diagram of the form \eqref{etaleV}, thus giving rise to a well-defined functional on the stalk $\mathcal S(\mathcal X(\adele))_\xi$. 

Having defined the evaluation maps, we can define the ``relative trace formula'' for the quotient stack $\mathcal X$ as the sum of all $\ev_x$, for $x$ ranging over all isomorphism classes of semisimple $k$-points of $\mathcal X$.

Nonetheless, there is the serious restriction that the ``exponents'' obtained by application of Luna's \'etale slice theorem (see Definition \ref{defexponents}) need to be ``non-critical'' for this definition to make sense. To the best of my understanding, this definition \emph{cannot} be overcome by purely geometric methods. This includes the case of the Arthur--Selberg trace formula (i.e., the adjoint quotient of a reductive group), which is why one needs a combination of geometric and spectral considerations in this case to produce an invariant distribution. I will not get into this subject here.

\subsection{The regular case}

Consider first the case when $x:\spec k\to \mathcal X$, or equivalently its image $\xi\in \cc(k)$ is \emph{regular}, i.e., when it satisfies one of the following equivalent conditions:
\begin{itemize}
\item the  preimage of $\xi$ under the map $\mathcal X \to\cc$ consists only of the (neutral, by assumption) gerbe $\mathcal X_\xi$; 
\item the vector space $V$ of Luna's slice theorem has trivial $H$-action.
\end{itemize}

The equivalence of the two can easily be obtained from Luna's \'etale slice theorem by observing that for any non-trivial representation $V$ of $H$ there will be a (geometric) weight space with non-trivial weight, and hence a (geometric) non-zero point whose $H$-orbit contains $0$ in its closure.

Now let $(H,V)$ be Luna's \'etale neighborhood corresponding to the point $x$, and recall the map \eqref{tofns-global}:
$$\mathscr E: \mathcal S(\mathcal V(\adele))_0 \to \bigotimes_v' \mathcal F(N(k_v))_{(H(k_v),\delta_V^{-1})},$$
where the restricted tensor product on the right-hand side is taken with respect to the images of the functions:
$$\frac{1}{|\omega_H|_v(H(\mathfrak o_v))} \cdot 1_{V(\mathfrak o_v)}.$$

In the regular case, we have
$$ \mathcal S(V_v)_{H(k_v)} = \mathcal S(V_v),$$
(recall that the index $~_{H(k_v)}$ denotes $H(k_v)$-coinvariants), the nilpotent set $N\subset V$ is trivial (consists just of the point $0$) and hence the map $\mathscr E$ has image in a restricted tensor product of one-dimensional vector spaces (each of which is canonically equal to the space of functions on a point, although their restricted tensor product might not be --- see Remark \ref{notfunctions}).

We define the evaluation map for the ``zero'' point: $\spec k\to \mathcal V$ by the formula:
\begin{equation}\label{ev-regular} 
\ev_0 : \mathcal S(\mathcal V(\adele))_0 \ni \mu \mapsto |\omega_H|([H]) \cdot \mathscr E(\mu) \in \CC,
\end{equation}
where $|\omega_H|([H])$ denotes the Tamagawa volume on $[H]$ defined by any invariant volume form on $H$ over $k$. Notice that we \emph{do not} need to regularize Tamagawa measure, because the above product formally makes sense: $\mathscr E(\mu)$ is the quotient of a function by a formal Euler product of volumes, the Tamagawa measure $|\omega_H|$ is formally defined by the same formal Euler product of volumes, therefore these factors cancel out and \eqref{ev-regular} makes sense without regularization of the Euler product! Of course, $\ev_0$ is not defined (is infinite) if $[H]$ is not of finite volume; this should be seen as a special case of ``critical exponents'', to be discussed in more detail in the next subsection.

This definition is clearly invariant under the isomorphisms of stalks induced by a diagram of the form \eqref{etaleV} defined over $k$. The resulting evaluation maps on $\mathcal S(\mathcal X(\adele))_\xi$ can be intuitively described as follows: First, they factor through the direct summand \eqref{directsummand} associated to $x$. Secondly, they correspond, up to the volumes of stabilizers, to ``pushing forward a measure on $X(\adele)$ to $\cc(\adele)$, dividing by a Tamagawa measure on $\cc(\adele)$, and evaluating''. Of course, when the measures are represented by functions times differential forms, the evaluation of their push-forward is nothing else than an \emph{orbital integral} of the corresponding functions.

\subsection{The general case} \label{generalcase}

Let $N\subset V$ denote the nilpotent cone, i.e., the closed subvariety of elements which contain $0$ in their $H$-orbit; equivalently, the preimage of the image of $0$ under the quotient map $V\to V\sslash H$.

In \S \ref{ssregularized} we defined, based on the weights of the representation $V$, a pair $(\F, E)$, where $\F$ is a fan on the cone $\mathfrak a^+$  of anti-dominant cocharacters and $E$ is an exponent arrangement for this fan. If all exponents of this arrangement are non-critical, by Corollary \ref{corNintegral} we have a well-defined regularized integral:
$$\int^*_{[H]} \Sigma_N f(h) dh$$
for Schwartz functions on $V(\adele)$, where $\Sigma_N f (h) = \sum_{\gamma\in N(k)} f(\gamma h)$. We will now adapt this to measures.

A slight difference to the setup of \S \ref{ssregularized} is that here we do not necessarily have a faithful action of $H$ on $V$, so let us adapt the definitions of the exponent arrangement to the current setting:

Let $H'$ be the quotient by which $H$ acts on $V$, and add a prime to every piece of notation that was used for $H$ in order to refer to the corresponding object for $H'$. Thus, we have a well-defined exponent arrangement $(\F', E')$ on the vector space $\mathfrak a'$, with the fan $\F'$ supported on the anti-dominant chamber. I recall that $\F'$ was obtained from the hyperplanes perpendicular to the weights of $V$ (and the walls of the Weyl chamber), and $E'$ was obtained by summing all weights that are positive each given cone of the fan (and multiplying by $-1$); thus, they are both very explicit in terms of the representation.

If the kernel of $\mathfrak a\to \mathfrak a'$ contains the cocharacter space of a non-trivial central split torus of $H$, then we stop here and consider it a case of critical exponents: the orbital integral of $\Sigma_N$ over $[H]$ cannot be regularized, since a central split torus acts trivially on $V$. 

If not, then the kernel $\mathfrak a_0$ of $\mathfrak a\to\mathfrak a'$ is the (cocharacter space of the) universal maximal split torus of a sum of simple factors of $H$; it thus admits a canonical splitting:
$$ \mathfrak a = \mathfrak a_0 \oplus \mathfrak a'$$
characterized by the fact that $\mathfrak a'$ contains the (cocharacter space of the) universal maximal split torus of the remaining simple factors and the center. We then use this splitting to lift the fan $\F'$ to a fan $\F$ on $\mathfrak a$, and let $E$ be the exponent arrangement on $\F$ obtained by applying the bijection between the two fans to $E'$.

\begin{definition} \label{defexponents}
 Let $\mathfrak d_V$ be the modular character of the vector space $V$. (It is a character of $H$, hence of its universal maximal split torus $A$.) Let $\delta_V E$ denote the exponent arrangement $E$ shifted by $\mathfrak d_V$, i.e., its elements (written additively) are sums of elements of $E$ by $\mathfrak d_V$. (As multiplicative characters of $A(\adele)$, they are products of elements of $E$ by the absolute value $\delta_V$ of $\mathfrak d_V$ --- hence the notation.) The elements of $\delta_V E(C)$, as $C$ ranges over all the cones in $\F$, will be referred to as the \emph{exponents} of the point $x\in \mathcal X(k)$ from which the pair $(H, V)$ was obtained. 
\end{definition}

Recall again the map \eqref{tofns-global}:
$$\mathscr E: \mathcal S(\mathcal V(\adele))_0 \to \bigotimes_v' \mathcal F(N(k_v))_{(H(k_v),\delta_V^{-1})}.$$

Given Corollary \ref{corNintegral}, we can now define the evaluation map, first for the $k$-point corresponding to $0$ in the quotient stack $\mathcal V$, as follows: 

\begin{definition}\label{ev} 
Let $\mu$ be an element of $\mathcal S(\mathcal V(\adele))_0$, and let $f$ be a representative in $\bigotimes'_v\mathcal F(N(k_v))$ of its image under the map \eqref{tofns-global}. We define the evaluation map corresponding to the zero $k$-point $0: \spec k \to \mathcal V$ as
\begin{equation}\label{ev-general}
\ev_0(\mu) := \int^*_{[H]} \Sigma_N f(h) \delta_V(h) |\omega_H|(h),
\end{equation}
whenever the exponents of $\mathcal V$ at $0$ are not critical.
\end{definition}

Note that, since the above expression considered as a functional on the variable $f$ is $(H,\delta_V)$-equivariant, and since it depends only on the restriction of $f$ on $N(\adele)$, it does not depend on the choice of representative $f$. By Corollary \ref{corNintegral}, this is a continuous functional on the stalk $\mathcal S(\mathcal V(\adele))_0$, and it clearly factors through the direct summand $\mathcal S(V(\adele))_{H(\adele), 0}$ of \eqref{directsummand-V}.

We recall from \S \ref{regularized-automorphic} that an exponent on $A_C$ is ``critical'' if it is equal to the restriction to $A_C$ of the modular character of the minimal parabolic (or, equivalently, of $P_C$).

Here, again, the measure $|\omega_H|$ is formally the one obtained from a global volume form, and there are formal cancellations with the volume factors in the definition of the restricted tensor products, so that in the end we are taking the regularized integral of an \emph{actual} function against an \emph{actual}, non-zero measure; see the discussion following \eqref{ev-regular}.

Now we extend definition \ref{ev} to the point $x$ of the stack $\mathcal X$. It will not, at this point, be clear that the definition is independent of choices; this will be proven in the next section, Proposition \ref{indep}.

\begin{definition} \label{evX}
Provided that $\mathcal X$ has no critical exponents at $x$, we define the evaluation map:
$$ \ev_x: \mathcal S(\mathcal X(\adele))_\xi \to \CC$$
as the pull-back of the evaluation map $\ev_0$ under the identification of stalks: 
$$\mathcal S(\mathcal X(\adele))_\xi \simeq \mathcal S(\mathcal V(\adele))_0$$
of \eqref{globalisom}, induced from a Luna diagram \eqref{etale}.
\end{definition}

It is a continuous functional on the stalk $\mathcal S(\mathcal X(\adele))_\xi$, and it clearly factors through the direct summand $\mathcal S(\mathcal X(\adele))_\xi^x$ of \eqref{directsummand}.

\subsection{Invariance under isomorphism of \'etale neighborhoods} \label{ssinvariance}

Up to now we have defined a distribution $\ev_0$ on $\mathcal S(\mathcal V(\adele))_0$ which, in principle, might not be invariant under the isomorphism of stalks induced by a diagram of the form \eqref{etaleV}. Here we check that when the diagram is defined over $k$, then the distribution $\ev_0$ is invariant under the resulting isomorphisms of stalks. As a corollary, the evaluation map $\ev_x$ of Definition \ref{evX} is well-defined, in the absence of critical exponents.

\begin{proposition}\label{indep}
Consider a diagram of the form \eqref{etaleV} over $k$, arising from two different choices of data for Luna's theorem, and the automorphism
$$ \mathcal S(\mathcal V(\adele))_0 \xrightarrow\sim \mathcal S(\mathcal V(\adele))_0$$
that it induces on the stalk. The functional $\ev_0$ is invariant under this automorphism.
\end{proposition}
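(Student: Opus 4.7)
The plan is to exhibit both $\ev_0(\mu)$ and $\ev_0(\mu')$ as the same doubly-regularized orbital integral on $[H] \times [H]$, computed using the $H \times H$-space $P = W \times_{\mathcal X} W'$ of diagram \eqref{fiberproduct}.

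First, by Proposition \ref{preservesbasic} the stalk automorphism preserves the distinguished direct summand $\mathcal{S}(V(\mathbb{A}_k))_{H(\mathbb{A}_k), 0}$, so I may represent $\mu$ as a germ of measure $f \cdot |\omega_V|$, with $\omega_V$ a global $(H, \mathfrak{d}_V)$-eigen volume form on $V$ defined over $k$, and $f$ a Schwartz function on $V(\mathbb{A}_k)$. By Lemma \ref{doesnotdepend}, the evaluation formula \eqref{ev-general} is independent of the choice of $\omega_V$, so the same $\omega_V$ may be used on both sides of the correspondence.

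Second, as in the proof of Proposition \ref{preservesbasic}, the \'etale $H$-equivariant maps $W_i \to V$ pull back $\omega_V$ to $(H, \mathfrak{d}_V)$-eigenforms on $W_i$, which combined with a Haar measure on the fibers of the $H$-torsors $P \to W_i$ yield the same (up to a $k^\times$-scalar absorbed by the product formula) $(H \times H, \mathfrak{d}_V \boxtimes \mathfrak{d}_V)$-eigen volume form $\omega_P$ on $P$. Let $\tilde\mu$ be the measure on $P(\mathbb{A}_k)$ obtained by lifting $\mu$ through the left projection $P \to W_1 \to V$; by the construction of the stalk automorphism through $\mathcal{Y}$, lifting $\mu'$ through the right projection $P \to W_2 \to V$ yields the \emph{same} measure $\tilde\mu$. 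Writing $\tilde\mu = \tilde F \cdot |\omega_P|$ and using Lemma \ref{Nlifts} (so that $N_P \to N$ via either projection is an $H$-torsor), I will establish the identity
\begin{equation}\label{prop-target}
\ev_0(\mu) \;=\; \int^*_{[H] \times [H]} \Sigma_{N_P} \tilde F (h_1, h_2) \, (\delta_V \boxtimes \delta_V)(h_1, h_2) \, |\omega_H \boxtimes \omega_H|,
\end{equation}
where the double regularized integral is defined via the product of the equivariant toroidal compactifications from \S \ref{ssaf}. The identity should be verified by performing the inner regularized integral over one copy of $[H]$ first: the $H$-torsor structure $N_P \to N$ collapses the inner sum, against the unit Tamagawa mass of the fiber, to $\Sigma_N f$. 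The symmetric identity then holds for $\ev_0(\mu')$ with the same right-hand side, yielding $\ev_0(\mu) = \ev_0(\mu')$.

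The main obstacle will be setting up the double regularized integral in \eqref{prop-target} and establishing the Fubini-type reduction. To define the left-hand side, one needs the analog of Theorem \ref{isfinite} for $(H \times H)$-orbital sums on $P$; by Luna applied to $P$ at the distinguished point $x_{00}$, the stabilizer is $H_{\text{diag}} \subset H \times H$ with normal representation isomorphic to $V$, so the relevant fan and exponent arrangement on the anti-dominant cone of $H \times H$ is essentially pulled back from that of $(H, V)$ across the two $\mathfrak a$-factors, and the non-criticality hypothesis persists. The Fubini statement then amounts to interchanging the order of analytic continuation in the two Mellin-transform parameters defining the regularized integrals, which should follow from uniform convergence of the double integral in the product cone where both parameters have sufficiently large real part, together with the holomorphy of the single-variable continuation guaranteed by Proposition \ref{invariant} and Corollary \ref{corNintegral}.
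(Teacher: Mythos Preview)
Your approach is essentially the paper's: lift to the $H\times H$-space $P=W\times_{\mathcal X}W'$, express both pullbacks of $\ev_0$ as the common double integral \eqref{onP} over $[H]\times[H]$, and unfold each copy of $[H]$ via the $H$-torsor structure $N_P\to N$ (Lemma \ref{Nlifts}). The paper carries this out by introducing the map $\mathscr E_P$ of \eqref{EP} rather than ``lifting measures'': note that measures push forward but do not canonically lift along smooth maps, so your $\tilde\mu$ is not literally well-defined. What is well-defined is the function $\mathscr E_P(\mu)$ on $N_P(\adele)$ obtained by dividing the measure coming from $\mathcal S(\mathcal Y(\adele))_0$ by a global $(H\times H,\mathfrak d_V\boxtimes\mathfrak d_V)$-eigenform $\omega_P$; this plays the role of your $\tilde F$.

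Where you go beyond the paper is in making the Fubini issue explicit. The paper simply writes $\int^*_{[H\times H]}$ and says the unfolding is ``immediate,'' relying on the fact that each single unfolding collapses one $[H]$-integral to an absolutely convergent fiber integral over $H(\adele)$. Your proposal to justify the interchange via a two-parameter Mellin continuation is reasonable in spirit, but your setup of the $H\times H$ regularized integral via Luna at $x_{00}$ does not quite work as stated: the linearization there is the pair $(H^{\rm diag},V)$, so the normal bundle is the induced space $(H\times H)\times^{H^{\rm diag}}V$, which is \emph{not} a linear representation of $H\times H$. Hence Theorem \ref{isfinite} does not apply directly to produce an exponent arrangement on $\mathfrak a\times\mathfrak a$, and the phrase ``pulled back across the two $\mathfrak a$-factors'' needs a genuine argument. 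The paper's route sidesteps this by never treating $\int^*_{[H\times H]}$ as an independently defined object; it just observes that both iterated computations, each with one honest inner integral and one regularized outer integral, yield the symmetric expression \eqref{onP}.
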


\begin{proof}
We will use notation as in diagram \eqref{fiberproduct} and in the  proof of Proposition \ref{preservesbasic}.

Consider the space of volume forms on $P$ which are $\mathfrak d_V \times \mathfrak d_V$-equivariant under the action of $H\times H$ --- they form sections of a line bundle $\widetilde{\mathcal L}$  over $\cc_{\mathcal Y}$. If $N_P$ denotes the preimage in $P$ of the distinguished point of $\cc_{\mathcal Y}$, then as in Lemmas \ref{meastofunction} and \ref{doesnotdepend}, any section $\omega_P$ of $\widetilde{\mathcal L}$ over $k$ gives rise to the same map
\begin{equation}\label{EP}\mathscr E_P:\mathcal S(\mathcal Y(\adele))_0 \to \bigotimes'_v \mathcal F(N_P(k_v))_{(H_v\times H_v, \delta_V^{-1}\times \delta_V^{-1})}.
\end{equation}
Here the basic vector for the restricted tensor product on the right side will be
$$\frac{1}{|\omega_H|_v(H(\mathfrak o_v))^2} \cdot 1_{P(\mathfrak o_v)}.$$

On the other hand, the evaluation map $\ev_0$ that we have defined  on $\mathcal S(\mathcal V(\adele))_0$ pulls back in two different ways to the corresponding restricted tensor product
\begin{equation}\label{productP} \bigotimes_v' \mathcal S(P(k_v)),\end{equation}
by the ``left'' and the ``right'' sequence of arrows from $P$ to $\mathcal V$.

I claim that \emph{both} pullbacks of the evaluation map  to $\mathcal S(\mathcal Y(\adele))_0$ can be expressed in terms of \eqref{EP} as follows: 
\begin{equation}\label{onP} \mu\mapsto \int^*_{[H\times H]} \sum_{\gamma\in N_P(k)}\mathscr E_P(\mu)(\gamma\cdot (h_1,h_2)) \delta_V(h_1) \delta_V(h_2) dh_1 dh_2.\end{equation}

Indeed, the map $W_1 \to V$ which is Cartesian over $\cc_{\mathcal Y} \to \cc_V$ identifies $N\subset V$ with its preimage $N_1$ in $W_1$ over the distinguished point of $\cc_{\mathcal Y}$, and the preimage of that in $P$ is $N_P$. Recall that in order to define the maps \eqref{tofns-global} and \eqref{EP} we have chosen $H$-eigen- (resp.\ $H\times H$-eigen-)volume forms $\omega_V$ and $\omega_P$ on $V$ and $P$ (although the global result did not depend on those choices). We can pull back $\omega_V$ to a volume form $\omega_1$ on $W_1$. Since $P\to W_1$ is an $H$-torsor, the conormal bundles to all $H$-orbits form a subbundle of the cotangent bundle of $P$, and since $\omega_1$ is non-vanishing, the quotient of $\omega_P$ by the pull-back of $\omega_1$ is a well-defined section $\omega_P'$ of the top exterior power of this subbundle. This induces an $H$-eigen-volume form on every fiber of the map $P\to W_1$.

Now recall from Lemma \ref{Nlifts} that the map $N_P(k)\to N_1(k)$ is surjective. 
In other words, over $k$-points of $N_1$ the $H$-torsor $P$ is trivializable. Choosing any $k$-point to trivialize it, the restriction of $\omega_P'$ on its fiber is identified with a Haar volume form on $H$. It is now immediate to unfold the integral \eqref{onP} and to see that it corresponds to the pull-back of the evaluation map via the left arrows to $V$. Exactly the same applies, of course, to the right arrows.

Again by Lemma \ref{Nlifts} the image of \eqref{productP} in $\mathcal S(\mathcal V(\adele))_0$ is precisely the direct summand $\mathcal S(V(\adele))_{H(\adele),0}$ through which $\ev_0$ factors. Hence, the fact that $\ev_0$ pulls back to the same functional under both the ``left'' and ``right'' sequence of arrows shows that $\ev_0$ is invariant under the isomorphism of stalks induced by \eqref{etaleV}.
\end{proof}

\subsection{The ``relative trace formula''} \label{ssRTF}

We keep assuming that $\mathcal X$  is an algebraic stack of the form $\mathcal X=X/G$ over $k$, where $X$ is a smooth affine variety and $G$ is a reductive group. We assume that $X$ carries a nowhere vanishing $G$-eigen-volume form $\omega_X$, so that its global Schwartz space
$$ \mathcal S(\mathcal X(\adele)) = \bigotimes'_v \mathcal S(\mathcal X_v)$$
can be defined, as in Remark \ref{remarkgroup}.

\begin{definition} \label{defRTF} Assume that the stabilizer subgroups of all semisimple points $x:\spec k \to \mathcal X$ are connected and their exponents are non-critical. The \emph{relative trace formula} for $\mathcal X$ is the following distribution on $\mathcal S(\mathcal X(\adele))$:
\begin{equation} \RTF_{\mathcal X}: f\mapsto \sum_x \ev_x,
\end{equation}
where $x$ runs over isomorphism classes of closed $k$-points into $\mathcal X$, and $\ev_x$ is the evaluation map of definition \ref{ev}.
\end{definition}

Of course, this has nothing to do with traces, in general.

\begin{example}
The simplest example of a relative trace formula where some ``stacky'' behavior is seen is the one considered by Jacquet in \cite{JW1}, where the stack is $\mathcal X = T\backslash \PGL_2/T$, where $T$ is a non-split torus in $\PGL_2$, defined over $k$. In this case, since $H^1(k,T)$ injects in $H^1(k,\PGL_2)$ (and the same is true for the completions $k_v$), isomorphism classes of $T$-torsors $R$ inject into isomorphism classes of quaternion algebras $D_R$, and the image consists of those quaternion algebras such that $D_R^\times$ contains $T$ or equivalently: the quaternion algebra splits over the quadratic extension splitting $T$. 

Thus, isomorphism classes of $k$-points of $\mathcal X$ are in bijection with
$$ \bigsqcup_R T(k)\backslash PD_R^\times (k) /T(k),$$
where $R$ runs over isomorphism classes of those quaternion algebras, and $PD_R^\times$ denotes the quotient of $D_R^\times$ by its center, and the relative trace formula can be written as a sum
\begin{equation}\label{RTF-torus} \sum_R \sum_{x\in T(k)\backslash PD_R^\times (k) /T(k)} \ev_x.\end{equation}

The local Schwartz spaces are the $T(k_v)\times T(k_v)$-coinvariants of the direct sum
$$ \sum_{R_v} \mathcal S(PD_{R_v}^\times(k_v)),$$
the sums ranging over isomorphism classes of $T$-torsors over $k_v$, and any Schwartz measure on $PD_{R_v}^\times(k_v)$ can be written as $f dg$, where $f$ is a Schwartz function and $dg$ is a Haar measure on $PD_{R_v}^\times(k_v)$. 

Thus, the evaluation maps $\ev_x$ of \eqref{RTF-torus} can be identified with orbital integrals of those functions $f$, and this holds both for regular (with trivial stabilizers, in this case) and irregular points, as can easily be checked from the definitions using the fact that $T$ is globally anisotropic and the nilpotent cones have no $k$-points. 
\end{example}

\begin{example}
In the aforementioned paper, Jacquet compares the relative trace formula for $T\backslash \PGL_2/T$ with the relative trace formula when $T$ is replaced by a split torus $A$, but with the $T(\adele)$-action twisted by a quadratic character on one side. Such characters are not part of the formalism of the present paper (and they are important, as they show up elsewhere as well --- including additive characters in the Kuznetsov formula), but it would probably not be difficult to incorporate them as line bundles over the pertinent Nash stack. 

The case of $A\backslash \PGL_2/A$ (with trivial character) was not considered in \cite{JW1} because it would lead to no new arithmetic results, but it is interesting from the analytic point of view to make sense of such a trace formula, and was considered in \cite{SaBE1,SaBE2}. In this case, at the irregular points the linearization of the stack (i.e., the space $V/H$ of Luna's theorem) is of the form
$$V=\Ga(1) \oplus \Ga(-1),$$
where $\Ga(i)$ denotes an one-dimensional vector space where $A\simeq \Gm$ acts with weight $a\mapsto a^i$. By \cite[Lemma 2.6.1]{SaBE2}, the contribution of nilpotent cone to the ``relative trace formula'' functional applied to a measure $\Phi dv$ on $V(\adele)$ (where $dv$ is Tamagawa measure) is given by
\begin{equation}\label{zeta} \lim_{t\to 0} (\zeta(\Phi|_x, t) + \zeta(\Phi|_y,-t)),\end{equation}
where $\Phi|_x$ denotes the restriction of $\Phi$ to the ``$x$-axis'' $\Ga(1)$, $\Phi|_y$ its restriction to $\Ga(-1)$ and $\zeta$ the corresponding Tate integrals. One can check that in this case the function $\Sigma_N \Phi$, in the notation of Theorem \ref{isfinite}, is asymptotically finite on $[\Gm]$ with exponents $t\mapsto |t|^{-1}$, resp.\ $t\mapsto |t|$ as $|t|\to 0$, resp.\ $|t|\to \infty$, and \eqref{zeta} is the regularized integral of $\Sigma_N \Phi$. 
\end{example}

\begin{example} \label{exampleGP}
The exponents are non-critical, and hence the relative trace formula can be defined by the methods of the current paper, for the quotient spaces $(X\times X)/G^\diag$ when $X$ is the homogeneous space of the Gross--Prasad conjectures ($\SO_n^\diag\backslash SO_n\times SO_{n+1}$ or $U_n^\diag\backslash U_n\times U_{n+1}$, considered in \cite{Zydor1, Zydor3}), and for a variant of the corresponding Jacquet--Rallis relative trace formula for linear groups \cite{Zydor2, Zydor3}. I expect that, in the unitary Gross--Prasad case that has been studied by Zydor, the resulting distribution coincides with his, but it would be an interesting exercise to show that. 

Let us check that the exponents are non-critical at the most singular points of the morphism $\mathcal X\to\cc$, where $\mathcal X$ denotes the pertinent stack in each case and $\cc$ denotes the invariant-theoretic quotient. In the Gross--Prasad cases, this is the point represented by the element $1$ in the presentations:
$$ \mathcal X = \SO_{n+1}/\SO_n\conj,\,\,\mbox{ resp.\ }\mathcal X = U_{n+1}/U_n\conj.$$
Here $\SO_{n+1}$, resp.\ $U_{n+1}$, denotes the special orthogonal, resp.\ unitary group of the quadratic/hermitian space obtained from that of $\SO_n$ by adding an orthogonal line (with a non-degenerate quadratic/hermitian form on it).
Let $H:=H_n := \SO_n$, resp.\ $U_n$. The linearization of the stack at $1$ is the adjoint representation of $H$ on the Lie algebra $\mathfrak h_{n+1}$. It decomposes as
$$\mathfrak h \oplus \std,$$
where $\mathfrak h$ implies the adjoint action of $H$ on its Lie algebra and $\std$ is the standard representation, plus a copy of the trivial representation in the unitary case (but this will not contribute anything to the weights).

We restrict this representation to a maximal split torus (inside of a chosen minimal parabolic subgroup), and use the recipe for constructing a fan and an exponent arrangement that was given in \S \ref{ssregularized} to produce a partition of the anti-dominant cone. If $m$ is the split rank of $H$, and we embed $\GL_m$ in $H$ into a Levi subgroup (let us assume that $m\ge 3$ in the even orthogonal case), and denote by $\epsilon_1, \dots, \epsilon_m$ the standard characters of the torus of diagonal elements in $\GL_m$, the anti-dominant cone with respect to the usual choice of Borel is given by the inequalities
$$ \epsilon_1\le \dots \le\epsilon_{n-1}\le \epsilon_n\le 0,$$
except in the even orthogonal case where it is given by
$$ \epsilon_1\le \dots \le \epsilon_{n-1}\le\epsilon_n \le -\epsilon_{n-1}.$$
The non-zero weights of the standard representation are
$$ \pm \epsilon_1, \pm\epsilon_2 ,\dots, \pm \epsilon_n,$$
and therefore they don't introduce new walls to the anti-dominant chamber (they maintain their sign), except in the even orthogonal case where they introduce a wall at $\epsilon_n =0$. 

Thus, the fan $\F$ is the one corresponding to the anti-dominant cone $\mathfrak a^+$, except in the even orthogonal case where it corresponds to the partition of this cone into two subcones along the hyperplane $\epsilon_n=0$. 

In either case, the exponent arrangment is obtained by adding to the modular character of the minimal parabolic (coming from $\mathfrak h$) the following character: 
\begin{itemize}
\item for the cone given by the inequalities
$$ \epsilon_1\le \dots \le\epsilon_{n-1}\le \epsilon_n\le 0,$$
and for all its faces, the character
$$\epsilon_1+\dots + \epsilon_{n-1} + \epsilon_n;$$
\item in the even orthogonal case, for the other cone (with $\epsilon_n\ge 0$) and its faces, the character
$$\epsilon_1+\dots + \epsilon_{n-1} - \epsilon_n.$$
\end{itemize}

In both cases, the character added is \emph{non-trivial}, unless all $\epsilon_i=0$, i.e., except at the point $0\in \mathfrak a$. Therefore, the exponents are never critical. This finishes the discussion of the Gross--Prasad quotients.

In the linear case, the relevant stack is
$$ \mathcal X = \GL_{n,E}\backslash \GL_{n,E}\times \GL_{n+1,E}/\GL_n \times \GL_{n+1},$$
where $\GL_n$, $\GL_{n+1}$ are defined over our global field $k$, and with an index $E$ we denote the restriction of scalars of the base change to a quadratic extension $E/k$. However, there is a quadratic character entering the Jacquet-Rallis relative trace formula (on $\GL_n$ or $\GL_{n+1}$, whichever is even-dimensional), which I have presently not included in the formalism. However, the linearization close to the most singular point of $\mathcal X\to\cc$ (again represented by the element $1$) has been described in \cite{Zydor2}, and has to do with $H:=H_n:=\GL_n$-orbital integrals, against a quadratic character, on the space of the representation $V = \{X \in \mathfrak h_{n+1, E} | X+\bar X = 0\}$. That is, the only difference from the setup of the present paper is that for the same pair $(H,V)$, one needs to put a quadratic character on $H$. 

We check again the asymptotic behavior of the functions of the form $\Sigma_V \Phi$, where $\Phi$ is a Schwartz function on $V(\adele)$. The representation $V$ is isomorphic to the adjoint action of $H=H_n$ on the Lie algebra $\mathfrak h_{n+1}$, and hence decomposes as
$$ \mathfrak h \oplus \std,$$
plus a copy of the trivial representation. By the same arguments as above, the fan given by the recipe of \S \ref{ssregularized} corresponds just to the anti-dominant cone and its faces, and the exponents differ from the modular character of the minimal parabolic. Therefore, the regularized integral can be defined (even with a quadratic character).
\end{example}

\appendix

\section{From algebraic to Nash stacks: a presentation-free approach} \label{app:Nash}

\subsection{} Let $\mathcal X$ be a smooth algebraic stack of finite type over a local field $F$. In this appendix I will construct out of $\mathcal X$ a stack $\mathfrak X$ over the (\'etale) site $\mathfrak N$ of Nash manifolds, that does not use a fixed presentation of $\mathcal X$ but, rather, a ``limit'' over all presentations. If there is an $F$-surjective presentation, this construction is equivalent to the construction by groupoids that was presented in \S \ref{algtoNash}.

I will construct the stack $\mathfrak X=\mathcal X(F)$ over $\mathfrak N$ as a ``limit'' of the Nash groupoids $[R_X(F) \rightrightarrows X(F)]$, where $X$ runs over all algebraic presentations $X\to \mathcal X$, and $R_X=X\times_{\mathcal X} X$.

This ``limit'' will be obtained by localization of a category with respect to smooth surjective morphisms: $X'\to X$ over $\mathcal X$. More precisely, we first define a pre-stack $\mathfrak X^\ps$, as a localization of a category $\mathfrak X^\naive$. The category $\mathfrak X^\naive$ is fibered not only over $\mathfrak N$, but also over $\PresX$, the category of smooth epimorphisms of algebraic stacks (``presentations''): $X\to \mathcal X$, where $X$ is a (necessarily smooth) scheme of finite type over $F$, with morphisms between $(X'\to \mathcal X)$ and $(X\to \mathcal X)$ being all smooth morphisms $X'\to X$ over $\mathcal X$. For $X\in \ob(\PresX)$, set $R_X = X\times_{\mathcal X} X$, with the ``source'' and  ``target'' maps $s$ and $t$:
$$ R_X \overset{s}{\underset{t}\rightrightarrows} X.$$

The fiber category of $\mathfrak X^\naive$ over $(X\to \mathcal X)\in \ob(\PresX)$ is the Nash stack associated to the Nash groupoid $[R_X(F)\rightrightarrows X(F)]$.

Let $X'\to X$ be a morphism in $\PresX$, and $(T\to W, T\to X(F))$ an object in $\mathfrak X^\naive$ over $(X\to \mathcal X)$. The fiber product
$$ T':= T \times_{X(F)} X'(F)$$
is still smooth over $W$, and an epimorphism iff the image of $X'(F)$ in $X(F)$ contains the image of $T$. The map $T'\to W$ is naturally an $R_{X'}$-torsor.

A morphism $(T'\to V, T'\to X'(F)) \to (T\to W, T\to X(F))$ in $\mathfrak X^\naive$ is a Cartesian square of Nash manifolds:
\begin{equation}\label{morphism} \xymatrix{
T' \ar[r]\ar[d] & T\ar[d] \\
V\times X'(F) \ar[r] & W\times X(F),
}\end{equation}
satisfying the following requirements: the bottom arrow is induced from a morphism $V\to W$ in $\mathfrak N$ and a morphism $X'\to X$ in $\PresX$, \emph{the image of $X'(F)\to X(F)$ contains the image of $T\to X(F)$}, and the induced diagram of groupoids commutes:
$$ \xymatrix{
[T'\times_V T'\rightrightarrows T'] \ar[r]\ar[d] & [T\times_W T \rightrightarrows T] \ar[d] \\
[R_{X'}(F)\rightrightarrows X'(F)] \ar[r] & [R_X(F)\rightrightarrows X(F)].
}
$$
For simplicity, we will usually represent such a morphism just by the arrow $T'\to T$, with all other arrows being implicit.

Now we construct the pre-stack $\mathfrak X^\ps$ by localizing $\mathfrak X^\naive$ over a class $S$ of morphisms, i.e., $\mathfrak X^\ps = \mathfrak X^\naive[S^{-1}]$. The class $S$ is the class of those morphisms of the form \eqref{morphism} where $V=W$ and the map $V\to W$ is the identity, i.e., $T'$ is obtained by base change in $\PresX$ only:
$$ \xymatrix{
T' \ar[r]\ar[d] & T\ar[d] \\
X'(F) \ar[r] & X(F)
}$$
(where, again, I remind the reader that the image of $T$ has to lie, set-theoretically, in the image of $X'(F)$). The morphisms in $S$ will be represented by double arrows $\Rightarrow$.

\begin{proposition}
The class $S$ allows a calculus of right fractions. This means that it verifies the right Ore conditions:
\begin{enumerate}[(a)]
\item for any $s\in S$ and any other morphism $f$ in $\mathfrak X^\naive$ with the same target, there is a commutative diagram
$$ \xymatrix{ T_1 \ar@{-->}[r]^{f'} \ar@{==>}[d]_{s'} & T_2 \ar@{=>}[d]^s \\
T_3 \ar[r]_f & T_4 \\
}$$
with $s'\in S$.

\item If 
$$\xymatrix{ T_1 \ar@<0.5ex>[r]^{f_1} \ar@<-0.5ex>[r]_{f_2} & T_2 \ar@{=>}[r]^t & T_3}$$ is such that $t\circ f_1 = t\circ f_2$, with $t\in S$, then there is an $s$ in $S$ such that $f_1\circ s = f_2\circ s$:
$$\xymatrix{T_4 \ar@{=>}[r]^s & T_1 \ar@<0.5ex>[r]^{f_1} \ar@<-0.5ex>[r]_{f_2} & T_2}.$$

\end{enumerate}
\end{proposition}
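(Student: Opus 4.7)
The plan is to verify both Ore conditions by natural fiber-product constructions in $\PresX$, upgraded to the level of $R_X$-torsors and Nash manifolds. The key observation is that $\PresX$ admits fiber products (at least when one of the maps is smooth), and the construction at the level of $\mathfrak X^{\naive}$ is governed by the resulting Cartesian squares together with the image conditions on $F$-points.

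For condition (a), given $s : T_2 \Rightarrow T_4$ (encoded by a morphism $X_2 \to X_4$ in $\PresX$, with $W_2 = W_4$) and $f : T_3 \to T_4$ (encoded by $V_3 \to W_4$ in $\mathfrak N$ and $X_3 \to X_4$ in $\PresX$), I would form $X_1 := X_2 \times_{X_4} X_3$; the projections $X_1 \to X_2$ and $X_1 \to X_3$ are smooth as base changes of $X_3 \to X_4$ and $X_2 \to X_4$, and admit a common composite to $X_4$ by the universal property. Setting $T_1 := T_3 \times_{X_3(F)} X_1(F)$, an unwinding of the torsor structures shows that $T_1$ is canonically also isomorphic to $T_2 \times_{W_2 \times X_2(F)} (V_3 \times X_1(F))$, equivalently to the Nash fiber product $T_3 \times_{T_4} T_2$. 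This simultaneously produces $s' : T_1 \Rightarrow T_3$ in $S$ (pullback along $X_1 \to X_3$, with unchanged Nash base $V_3$) and $f' : T_1 \to T_2$ (with Nash base map $V_3 \to W_2 = W_4$ and $\PresX$-map $X_1 \to X_2$). The image conditions for $s'$ and $f'$ propagate from those for $s$ and $f$: for instance, the image of $T_2$ in $X_2(F)$ maps to $\im(T_4 \to X_4(F)) \subseteq \im(X_3(F) \to X_4(F))$, so lies in $\im(X_1(F) \to X_2(F))$. Commutativity $s \circ f' = f \circ s'$ is then immediate, since both composites are described by the pair $(V_3 \to W_4,\, X_1 \to X_4)$ and the two a priori composites $X_1 \to X_4$ coincide by construction of the fiber product.

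For condition (b), let $\alpha_i : X_1 \to X_2$ be the $\PresX$-morphism underlying $f_i$ and $\beta : X_2 \to X_3$ that underlying $t$. The hypothesis $t \circ f_1 = t \circ f_2$ forces the Nash base maps of $f_1, f_2$ to agree (each coincides with the unique Nash map $V_1 \to W_2 = W_3$ underlying $t \circ f_i$) and forces $\beta \circ \alpha_1 = \beta \circ \alpha_2$. The latter says $(\alpha_1, \alpha_2) : X_1 \to X_2 \times X_2$ factors through the relative diagonal $\Delta_\beta : X_2 \to X_2 \times_{X_3} X_2$, which is smooth (even an open immersion when $\beta$ is \'etale) since $\beta$ itself is smooth. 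I would then put $X_0 := X_1 \times_{X_2 \times_{X_3} X_2,\, \Delta_\beta} X_2$; the projection $X_0 \to X_1$ is smooth as a base change of $\Delta_\beta$, and setting $T_0 := T_1 \times_{X_1(F)} X_0(F)$ gives $s : T_0 \Rightarrow T_1$ in $S$. By construction the two composites $X_0 \to X_1 \rightrightarrows X_2$ via $\alpha_1, \alpha_2$ coincide, which forces $f_1 \circ s = f_2 \circ s$ as morphisms in $\mathfrak X^{\naive}$.

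The hard part in both constructions is verifying that the scheme obtained — $X_1$ in (a), $X_0$ in (b) — actually defines an object of $\PresX$, i.e., gives a smooth \emph{epimorphism} onto $\mathcal X$. Smoothness is automatic; surjectivity onto $\mathcal X$, however, is not, and can genuinely fail (e.g.\ when the images of $X_2, X_3$ in $X_4$ land in components of $X_4$ whose overlap inside $\mathcal X$ is empty). This is precisely the reason the author hedges, just after this proposition, about whether $\mathfrak X^{\ps}$ always yields a Nash stack. In favorable cases, the image conditions attached to the torsors $T_\bullet$ guarantee that enough of the relevant locus of $\mathcal X$ is covered; in general, one must either restrict to situations where the fiber products remain surjective onto $\mathcal X$, or enlarge the constructions by adjoining additional smooth covers of the missing locus that remain compatible with the structure morphisms to $X_2, X_3$. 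Modulo this point, the Ore conditions reduce to routine manipulations of Cartesian squares.
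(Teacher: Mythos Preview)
Your argument for part~(a) is essentially identical to the paper's: both form $X_1 = X_2\times_{X_4} X_3$ and read off $s',f'$ from the projections, your verification of the image conditions being more explicit than what the paper spells out. The concern you flag at the end --- that $X_2\times_{X_4}X_3\to\mathcal X$ need not be an epimorphism, so the resulting scheme may fail to lie in $\PresX$ --- is real, and the paper's proof glosses over the same point. Note, however, that your proposed remedy of ``adjoining additional smooth covers of the missing locus'' cannot work in general: any candidate $X_1'$ admitting smooth maps to $X_2$ and $X_3$ whose composites to $X_4$ agree necessarily factors through $X_2\times_{X_4}X_3$, so if the latter misses part of $\mathcal X$ then so does every such $X_1'$.

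For part~(b) there is a genuine error. You assert that the relative diagonal $\Delta_\beta\colon X_2\to X_2\times_{X_3}X_2$ is smooth ``since $\beta$ itself is smooth''. This is false: the diagonal of a morphism is always an immersion, and it is smooth (equivalently, an open immersion) precisely when the morphism is \emph{unramified}. A smooth morphism is unramified only if it is \'etale, which morphisms in $\PresX$ generically are not. Hence $\Delta_\beta$ is typically a regular closed immersion of positive codimension, and your $X_0\to X_1$, being its base change along $(\alpha_1,\alpha_2)$, is then a closed immersion rather than a smooth map --- so not a morphism in $\PresX$ at all, and $s$ does not lie in $S$. Concretely: take $\mathcal X=X_3=\spec F$, $X_1=X_2=\mathbb A^1$, $\alpha_1=\id$, $\alpha_2(x)=-x$; your $X_0$ is then the origin $\{0\}\hookrightarrow\mathbb A^1$, which is not smooth over $X_1$.

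The paper's construction for~(b) is quite different: it takes the new presentation to be $X_1\times_{\alpha_1,X_2,\alpha_2} X_1$, whose two projections to $X_1$ are base changes of the smooth morphisms $\alpha_2$ and $\alpha_1$ respectively, and are therefore smooth. This at least keeps one inside the category of smooth morphisms, which your diagonal construction fails to do.
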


\begin{proof}
The first one is true, with the arrows induced by base change from the corresponding diagram:
$$ \xymatrix{ V\times X_1(F) \ar[r] \ar@{=>}[d] & W\times X_2(F) \ar@{=>}[d]^s \\
V\times X_3(F) \ar[r]_f & W\times X_4(F) \\
}$$
where $X_1 = X_2\times_{X_4} X_3$.

For the second one, consider the corresponding arrows:
$$ \xymatrix{
T_1 \ar[d]\ar@<0.5ex>[r]^{f_1} \ar@<-0.5ex>[r]_{f_2}  &  T_2 \ar@{=>}[r]^t\ar[d] & T_3 \ar[d]\\
V \times X_1(F) \ar@<0.5ex>[r]^{f_1} \ar@<-0.5ex>[r]_{f_2}  &  U \times X_2(F) \ar@{=>}[r]^t & U\times X_3(F),
}
$$
where we recall that corresponding squares are Cartesian. Since $t$ is, by definition of the class $S$, the identity on $U$, and $t\circ f_1 = t\circ f_2$ by assumption, it follows that $f_1|_V = f_2|_V$. Therefore, 
 the left squares can be completed to a Cartesian diagram:
$$ \xymatrix{
T_4\ar[d] \ar@{=>}[r]^s & T_1 \ar[d]\ar@<0.5ex>[r]^{f_1} \ar@<-0.5ex>[r]_{f_2}  &  T_2 \ar[d] \\
V \times (X_1 \times_{f_1, X_2, f_2} X_1)(F) \ar@{=>}[r] &
V \times X_1(F) \ar@<0.5ex>[r]^{f_1} \ar@<-0.5ex>[r]_{f_2}  &  U \times X_2(F).
}
$$
The square on the left corresponds to a morphism $s$ in $S$, and by construction we have $f_1\circ s = f_2 \circ s$.

\end{proof}

The fact that $S$ allows a calculus of right fractions means that morphisms $T_1\to T_2$ in $\mathfrak X^\ps = \mathfrak X^\naive[S^{-1}]$ can be described as equivalence classes of ``roofs'':
$$ \xymatrix{
& Z \ar@{=>}[dl] \ar[dr] \\
T_1 && T_2
}$$
where two roofs with sources $Z$ and $Z'$ are equivalent if they are dominated by a third one, i.e., there is a commutative diagram:
$$ \xymatrix{
& Z \ar@{=>}[dl]  \ar[dr] \\
T_1 & \ar@{=>}[l] Z'' \ar[u] \ar[d] \ar[r] & T_2. \\
& Z' \ar@{=>}[ul]  \ar[ur] 
}$$

This makes it easy to check that $\mathfrak X^\ps$ is indeed a pre-stack.

\begin{proposition}
$\mathfrak X^\ps = \mathfrak X^\naive[S^{-1}]$ is a pre-stack in the \'etale topology over $\mathfrak N$.
\end{proposition}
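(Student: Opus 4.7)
The plan is to verify the sheaf condition for $\Isom$-presheaves. Fix $U\in\ob(\mathfrak N)$ and two objects of $\mathfrak X^\ps_U$, represented (after possibly working in $\mathfrak X^\naive$) by data $(T_i\to U,\, T_i\to X_i(F))$ for $i=1,2$ coming from presentations $X_i\to\mathcal X$ in $\PresX$. A morphism $T_1\to T_2$ in $\mathfrak X^\ps$ is an equivalence class of roofs $T_1\Leftarrow T_3 \to T_2$ with left leg in $S$; the calculus of right fractions just established makes these roofs manageable. I would first establish a normalization lemma: given any roof, one can replace $T_3$ by a base change along a smooth surjection $X_4\to X_3$ in $\PresX$ (provided the image of $T_3\to X_3(F)$ lies in that of $X_4(F)$), so that in particular, after taking a common dominant refinement such as $X_1\times_{\mathcal X}X_2$ or a further iteration, both legs of the roof can be arranged to factor through a single presentation $X_*\to \mathcal X$.

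The subtle point is that the ``image condition'' in the definition of $S$ need not hold globally on $U$, since a smooth morphism $X_*\to X_i$ of $F$-schemes need not be surjective on $F$-points. However, smooth epimorphisms of Nash manifolds admit local sections in the \'etale (equivalently semi-algebraic, when $F\ne\CC$) topology by the discussion in \S\ref{ssdefstacks}. Hence, for any Nash manifold mapping to $X_i(F)$, the image condition can always be achieved after passing to an \'etale cover of $U$ — which is exactly the type of refinement that the sheaf condition permits us to make on both sides of the argument.

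With this normalization in hand, the proof of the two sheaf axioms reduces to the corresponding statements for the Nash stack associated to the single groupoid $[R_{X_*}(F)\rightrightarrows X_*(F)]$, which was already shown to be a Nash stack earlier. Concretely, for uniqueness: if $\phi,\psi:T_1\to T_2$ in $\mathfrak X^\ps$ agree on an \'etale cover $\{U_i\to U\}$, I would represent them locally over each $U_i$ by roofs sharing a common presentation $X_*$ (refining the cover if needed), apply Ore condition (b) to reduce to an equality of honest morphisms in $\mathfrak X^\naive$ over $U_i$, and then invoke the sheaf property of $\Isom$ for the genuine Nash stack coming from $[R_{X_*}(F)\rightrightarrows X_*(F)]$. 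For gluing: given a compatible family $\phi_i$ over $\{U_i\to U\}$, I would refine the cover and choose a common $X_*$ dominating all the intermediate presentations appearing in the roof representatives of each $\phi_i$; the glued morphism then exists in the Nash stack of $[R_{X_*}(F)\rightrightarrows X_*(F)]$ and descends, via the roof construction, to an element of $\Isom(T_1,T_2)$ in $\mathfrak X^\ps$.

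The main obstacle is the bookkeeping around the image condition: one must simultaneously refine the algebraic presentations in $\PresX$ and the \'etale cover of $U$ so that the normalization lemma applies uniformly across the finitely many local data $\phi_i$ (and their pairwise overlaps) — and verify that different valid choices of refinement give equivalent roofs, so that the output isomorphism is independent of auxiliary choices. Everything else is then a mechanical transfer from the already-established sheaf-theoretic properties of Nash groupoid stacks.
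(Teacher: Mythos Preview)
Your overall strategy --- pass to a common presentation and invoke descent for the associated Nash groupoid stack --- is the paper's as well. Two gaps, however.

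First, you do not verify that $\mathfrak X^\ps$ is a category fibered in groupoids over $\mathfrak N$. This is part of the definition of a pre-stack and is not automatic after localization: in particular the unique-lift axiom (given $W\to V\to U$ in $\mathfrak N$ and arrows $T_W\to T_U$, $T_V\to T_U$ in $\mathfrak X^\ps$, there is a unique $T_W\to T_V$ over $W\to V$ making the triangle commute) needs an argument. The paper checks both fibered-category axioms explicitly, using fiber products of the presentations underlying the roofs, before turning to $\Isom$.

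Second, your justification for the image condition is wrong. You invoke the fact that smooth \emph{epimorphisms} of Nash manifolds admit \'etale-local sections, and conclude that passing to an \'etale cover of $U$ achieves the condition. But $X_*(F)\to X_i(F)$ is in general only a submersion, not an epimorphism --- that is exactly the failure of $F$-surjectivity you correctly identified in the preceding sentence. Its image is a fixed open subset of $X_i(F)$, and refining $U$ does nothing to enlarge it; if the image of $T_i\to X_i(F)$ meets the complement, no \'etale cover of $U$ helps. What actually makes the reduction go through is that the relevant refinements are not arbitrary: a roof $T_1\Leftarrow Z\to T_2$ already carries maps $X_Z\to X_1$, $X_Z\to X_2$ in $\PresX$ with the image conditions for $T_1$ and $T_2$ built in by definition, so one works with these $X_Z$'s (and fiber products over $\mathcal X$ of the finitely many presentations arising from the local data when several roofs are in play), as the paper does. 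The \'etale-local-section fact plays no role at this step.
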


\begin{proof}
We first check that it is a category fibered in groupoids over $\mathfrak N$.

First, given an object $(T\to U, T\to X(F))$ in $\mathfrak X_U^\ps$ and a morphism $V\to U$ in $\mathfrak N$, we get by base change an object $(T\times_U V \to V, T\times U V\to X(F))$ in $\mathfrak X_V^\ps$ as required by the first axiom of ``category fibered in groupoids''. Notice that both objects live over the same $(X\to \mathfrak X)\in \ob(\PresX)$.

For the second axiom, let $W\to V\to U$ be a diagram in $\mathfrak N$ and $T_W\to T_U$, $T_V\to T_U$ morphisms in $\mathfrak X^\ps$ lying over $W\to U$, $V\to U$, respectively. If we denote by $X_W, X_V$ and $X_U$ the smooth $\mathcal X$-schemes over which they live, the morphisms between them mean correspond to equivalence classes of base changes: $T_W'\to X_W'(F)$, $T_V'\to X_V'(F)$ (induced from $X_W'\to X_W, \,\, X_V'\to X_V$) and morphisms: $X_W'\to X_U$, $X_V'\to X_U$, together with Cartesian diagrams:
$$ \xymatrix{
T_W' \ar[d]\ar[r] & T_U \ar[d] \\
X_W'(F) \ar[r] & X_U(F)
}$$
(and similarly for $T_V'$).

Thus, $T_W'\simeq T_U\times_{U\times X_U(F)} (W\times X_W'(F))$ (and similarly for $T_V'$) and the map $W\to V$ induces a unique map:
$$ T_W'' \to T_V',$$
where $T_W''=T_U\times_{U\times X_U(F)} (W\times (X_W'\times_{X_U} X_V')(F))$. The natural map $T_W''\to T_W'$ corresponds to a morphism in $S$, hence the arrow $W\to V$ lifts to an arrow $T_W\to T_V$ in $\mathfrak X^\ps$ such that the composition: $T_W\to T_V\to T_U$ equals the given morphism $T_W\to T_U$.

It is easy to see that any two roofs representing such a morphism $T_W\to T_V$ are dominated by a third one (constructed again by a fiber product), hence the lift is unique and the category is indeed fibered in groupoids over $\mathfrak N$.

Now we verify the pre-stack axiom: Consider two isomorphisms between two objects $T_1, T_2\in \ob(\mathfrak X^\ps_U)$. Suppose that these isomorphisms are identified over all elements of a covering family $\{U_i\to U\}_i$, that is: all elements of a finite cover of $U$ by \'etale maps. The isomorphisms are represented by ``roofs'' as before, and the fact that they are identified locally means that, locally, these roofs are dominated by other roofs. The latter lie over objects $(X_i\to \mathcal X)\in \PresX$; notice that \emph{each of them} is a presentation of $\mathcal X$, since the cover only concerns the $\mathfrak N$-parameter. Therefore, a fiber product of the $X_i$'s (which are finite in number) over $\mathcal X$ will provide the required isomorphism between the original morphisms. 

Vice versa, assume that $U=U_1\cup U_2$ (for simplicity), and let $T_{1,U_i}\to T_{2,U_i}$ be isomorphisms which ``agree'' over $U_1\cap U_2$. These morphisms are represented by morphisms in $\mathfrak X^\naive$: $T_{1,U_i}'\to T_{2,U_i}$, where $T_{1,U_i}'\to T_{1,U_i}$ is in $S$ (and we may by base change assume that for both $i=1,2$ the objects $T_{1,U_i}'$ live over the same object $(X'\to\mathcal X)\in\PresX$. The condition of their agreement on $U_1\cap U_2$ can be expressed with a further roof over $U_1\cap U_2$, and again by taking fiber products with the corresponding presentation we may in fact assume that the actual morphisms: $T_{1,U_i}'\to T_{2,U_i}$ agree over $U_1\cap U_2$. But then they can be glued to a morphism of Nash stacks: $T_1'\to T_2$ which together with the map $T_1'\to T_1$ represents the desired isomorphism: $T_1\to T_2 \in \mathfrak X^\ps$.

\end{proof}

\begin{definition}
Given a smooth algebraic stack $\mathcal X$ over $F$, we define a stack over $\mathfrak N$ as follows:
\begin{quote} $\mathfrak X := \mathcal X(F) := $ the stackification of $\mathfrak X^\ps$. 
\end{quote}
\end{definition}

This completes the definition, which poses a number of interesting questions, such as: Is the association $\mathcal X\to \mathfrak X$ functorial? Is $\mathfrak X$ a Nash stack? I will not address these questions in this paper, except for observing that the definition coincides with the one given in \S \ref{algtoNash} in the presence of $F$-surjective presentations:

\begin{proposition}\label{surjective}
Let $X\to \mathcal X$ be an $F$-surjective presentation of the smooth algebraic stacks $\mathcal X$ over $F$. 

Then $X(F)\to \mathfrak X:=\mathcal X(F)$ is a smooth epimorphism of stacks over $\mathfrak N$ and, in particular, $\mathfrak X$ is a Nash stack.

This Nash stack is equivalent to the stack defined by the groupoid object 
$$[(X\times_{\mathcal X} X)(F)\rightrightarrows X(F)].$$
\end{proposition}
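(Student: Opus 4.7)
The plan is to prove that the tautological morphism $X(F) \to \mathfrak X$, coming from the object $(X(F) \xrightarrow{\id} X(F), \id: X(F) \to X(F))$ in $\mathfrak X^\naive$ over $(X \to \mathcal X) \in \ob(\PresX)$, is a representable, smooth epimorphism of stacks over $\mathfrak N$, and that the induced groupoid $X(F) \times_{\mathfrak X} X(F) \rightrightarrows X(F)$ is canonically $R_X(F) \rightrightarrows X(F)$. This will simultaneously show that $\mathfrak X$ is a Nash stack and identify it with the Nash stack attached to $[R_X(F)\rightrightarrows X(F)]$.

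First I would verify a base-change property of $F$-surjectivity: for any $(X' \to \mathcal X) \in \ob(\PresX)$, the projection $Y := X \times_{\mathcal X} X' \to X'$ is smooth and $F$-surjective, since any $x' \in X'(F)$ maps to an isomorphism class in $\mathcal X(F)$ that lifts through $X \to \mathcal X$ by assumption, and the two compatible $F$-points combine to an $F$-point of $Y$ over $x'$. Consequently $Y(F) \to X'(F)$ is a smooth epimorphism of Nash manifolds, and in particular admits local semi-algebraic sections over a finite \'etale cover.

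Next I would prove essential surjectivity of $X(F) \to \mathfrak X$ at the stack level, i.e., the epimorphism property. Every object of $\mathfrak X_U$ arises, \'etale-locally on $U$, from an object $(T \to U, T \to X'(F)) \in \mathfrak X^\naive$ over some $(X' \to \mathcal X) \in \ob(\PresX)$. Setting $T_Y := T \times_{X'(F)} Y(F)$, the resulting $\Rightarrow$-arrow $T_Y \to T$ lies in $S$ (by the $F$-surjectivity of $Y \to X'$, the image of $T$ in $X'(F)$ is contained in the image of $Y(F)$), and hence gives an isomorphism in $\mathfrak X^\ps$. Composing $T_Y \to Y(F) \to X(F)$ and choosing local sections of the smooth surjection $T_Y \to U$ then produces an \'etale cover $\{U_i \to U\}_i$ and refinements $U_i \to X(F)$ of the original object, which is exactly the epimorphism condition.

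Finally I would identify the fiber product $X(F) \times_{\mathfrak X} X(F)$. By the calculus of right fractions established in the preceding proposition, a $2$-isomorphism between two morphisms $U \rightrightarrows X(F)$ over $(X \to \mathcal X)$ is encoded by a Nash map $U \to R_X(F)$ intertwining them with the source and target maps, whence the fiber product is represented by $R_X(F) \times_{s, X(F)} U$ whenever both maps factor through $X(F)$, and is obtained by gluing these local models via the groupoid action in general; smoothness over $U$ then follows from smoothness of $s, t: R_X \to X$. The hard part will be checking that the local refinements from the previous step glue at the level of the stack (not merely the pre-stack) via the $R_X(F)$-action, but this is forced once one verifies that any two such local lifts differ by the action of a uniquely determined element of $R_X(F)$, which is precisely what the groupoid structure computes.
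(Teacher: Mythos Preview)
Your approach is correct and would work, but it is more laborious than the paper's. You work directly at the level of objects: verify the base-change surjectivity, then lift an arbitrary object of $\mathfrak X^\naive$ through an $S$-arrow to one factoring through $X(F)$, and finally compute the fiber product $X(F)\times_{\mathfrak X}X(F)$ by unwinding the calculus of fractions. The paper instead exploits Lemma~\ref{epi} as a black box: for any other presentation $X'\to\mathcal X$, set $X''=X\times_{\mathcal X}X'$; your base-change observation says $X''(F)\to X'(F)$ is a smooth epimorphism of Nash manifolds, so Lemma~\ref{epi} immediately gives an equivalence of Nash stacks $[R_{X''}(F)\rightrightarrows X''(F)]\simeq[R_{X'}(F)\rightrightarrows X'(F)]$, and composing with the obvious $1$-morphism to $[R_X(F)\rightrightarrows X(F)]$ shows the latter is ``final'' among all the groupoid stacks entering the limit. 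The equivalence $\mathfrak X\simeq[R_X(F)\rightrightarrows X(F)]$ then drops out without ever computing the fiber product by hand, and the epimorphism and representability claims follow automatically from the groupoid presentation.

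What your route buys is a concrete description of how objects and isomorphisms in $\mathfrak X$ are represented via $X(F)$ and $R_X(F)$; what the paper's route buys is brevity, since the gluing step you flag as ``the hard part'' is exactly what Lemma~\ref{epi} already packages. If you adopt the paper's strategy, that final paragraph of yours becomes unnecessary.
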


\begin{proof}
The point is that the stack $[(X\times_{\mathcal X} X)(F)\rightrightarrows X(F)]$ is a final object, in some sense, in $\mathfrak X^\naive$. More precisely, let $X'\to \mathcal X$ be any other presentation of $\mathcal X$, and let $X''=X\times_{\mathcal X} X'$. Then we have obvious $1$-morphisms of stacks:
$$[R_{X''}(F)\rightrightarrows X''(F)] \to [R_X(F)\rightrightarrows X(F)],$$
$$[R_{X''}(F)\rightrightarrows X'(F)] \to [R_{X'}(F)\rightrightarrows X'(F)].$$

Since $X\to \mathcal X$ is $F$-surjective, the smooth morphism of Nash manifolds $X''(F)\to X'(F)$ is an epimorphism, and hence by Lemma \ref{epi} the second morphism above is an equivalence. It follows that for any presentation $X'\to \mathcal X$ we have a canonical (up to 2-isomorphism) $1$-morphism of stacks:
$$[R_{X'}(F)\rightrightarrows X'(F)] \to [R_X(F)\rightrightarrows X(F)],$$
and that the ``limit'' prestack $\mathfrak X^\ps$ (and hence its stackification $\mathfrak X$) is equivalent to the stack associated to $[R_X(F)\rightrightarrows X(F)]$. Of course, the $1$-morphism $X(F)\to \mathfrak X$ is automatically a smooth epimorphism in that case.

\end{proof}

\section{Homology and coshseaves in non-abelian categories}\label{app:homology}

\subsection{Exact structures} \label{ssexact}

Our basic references are \cite{Buehler,FS}.

\begin{definition} Let $\mathscr A$ be an additive category. A \emph{kernel-cokernel pair} $(f, g)$ in $\mathscr A$ is a pair of composable morphisms
$$X\xrightarrow{f} Y \xrightarrow{g} Z$$
in $\mathscr A$, such that $f$ is a kernel of $g$ and $g$ is a cokernel of $f$. If a class $\mathscr E$ of kernel-cokernel pairs on $\mathscr A$ is fixed, an \emph{admissible monomorphism} is a morphism $f$ for which there exists a morphism
$g$ such that $(f, g) \in \mathscr E$. \emph{Admissible epimorphisms} are defined dually. 

An exact structure on $\mathscr A$ is a class $\mathscr E$ of kernel-cokernel pairs which is closed under isomorphisms and satisfies the following axioms:
\begin{description}
\item[E$0$] For all objects $X$ in $\mathscr A$ , the identity morphism $\id_X: X\to X$ is an admissible monomorphism.
\item[E$0^\op$] For all objects $X$ in $\mathscr A$ , the identity morphism $\id_X: X\to X$ is an admissible epimorphism.
\item[E$1$] The class of admissible monomorphisms is closed under composition.
\item[E$1^\op$] The class of admissible epimorphisms is closed under composition.
\item[E$2$] The push-out of an admissible monomorphism along an arbitrary morphism exists and yields an admissible monomorphism.
\item[E$2^\op$] The pull-back of an admissible epimorphism along an arbitrary morphism exists and yields an admissible epimorphism.
\end{description}

An additive category $\mathscr A$ is called \emph{quasi-abelian} if:
\begin{enumerate}
\item every morphism has a kernel and a cokernel (such a category is called \emph{pre-abelian}), and
\item the class of kernels is stable under push-out along arbitrary morphisms and the class of cokernels is stable under pull-back along arbitrary morphisms.
\end{enumerate}
\end{definition}

In quasi-abelian categories, the class $\mathscr E_{\rm max}$ of all kernel-cokernel pairs is an exact structure. Vice versa, the property that all kernel-cokernel pairs form an exact structure characterizes quasi-abelian categories among pre-abelian ones.

For example, the categories of Banach spaces and Fr\'echet spaces (or nuclear Fr\'echet spaces) are quasi-abelian categories. The kernel-cokernel pairs $X\xrightarrow{f} Y\xrightarrow{g} Z$ in them are those pairs of mono- and epimorphisms which are \emph{strict}, i.e., with closed image. In other words, $f$ should be injective and $g$ is surjective with $\ker(g) = \im(f)$ (set-theoretically). 

Let $\mathscr A$ be an additive category equipped with an exact structure $\mathscr E$. A chain complex $\to A_n \to A_{n+1} \to \dots$ is called \emph{exact} or \emph{acyclic} if each differential factors $A_n\to B_n\to A_{n+1}$ in such a way that $B_n\to A_{n+1}\to B_{n+1}$ is in $\mathscr E$.
A chain map $f : A\to B$ is called a \emph{quasi-isomorphism} if its mapping
cone is homotopy equivalent to an acyclic complex; in pre-abelian categories one can directly say that the mapping cone is acyclic.

The \emph{derived category} is then defined in the usual way, by localization of the homotopy category with respect to quasi-isomorphisms.

\subsection{Cosheaves} Cosheaves are the notion that one obtains from sheaves by inverting the arrows in the source category. In other words, a cosheaf on a site $\mathscr C$ valued in a suitable category $\mathscr A$ is a functor
$$\mathcal F: \mathscr C \to \mathscr A$$
(a \emph{pre-cosheaf}) that takes covers to colimits, that is, for every covering family $(U_i\to U)_i$ we have:
\begin{equation}\label{coeq1} \mathcal F(U) = \underset{\to}\lim \left(\bigsqcup_{i,j} \mathcal F(U_i\times_U U_j) \rightrightarrows \bigsqcup_i \mathcal F(U_i)\right).
\end{equation}
Moreover, if $\mathscr A$ is an additive category endowed with an exact structure, we may impose a stricter condition on the cosheaf that is related to the exact structure.

In this paper, all cosheaves are valued either in complex vector spaces without topology (in the non-Archimedean case), or in Fr\'echet spaces. Moreover, the sites are such that any covering family can be replaced by a single element $U'\to U$. (We will tacitly assume this for all sites in the discussion that follows.) Under these simplifying conditions, \eqref{coeq1} becomes simply a co-equalizer diagram:
$$ \mathcal F(U'\times_U U') \rightrightarrows \mathcal F(U') \to \mathcal F(U).$$
The stricter condition implied above, when $\mathcal F$ is valued in Fr\'echet spaces, is that the co-equalizer is \emph{strict}, that is, if we denote by $s$ and $t$ the ``source'' and ``target'' arrows above, then the image of $s_!-t_!$ is closed. From now on, when talking about a cosheaf valued in Fr\'echet spaces, we will be assuming this condition. (For a morphism $s$ in $\mathscr C$, the induced morphism in $\mathscr A$ is denoted by $s_!$.)

If $\mathscr C$ has a final object ``$*$'', the \emph{global sections functor} is obtained by evaluating at that object. If it does not have a final object, we can try to define the global sections functor as a colimit:
$$ \mathcal F(*):= \lim_{\underset{U\in\Ob(\mathscr C)}{\to}} \mathcal F(U),$$
but it is not, in general, clear that one can take the colimit valued in the same target category (e.g., Fr\'echet spaces).

Here, I take a more ad-hoc point of view on global sections and homology, using only \v Cech homology for cosheaves. More precisely, for $U\in\Ob(C)$ and a covering $V\to U$, we define $\check H_\bullet ^V(U, \mathcal F)$:
to be the (strict) quasi-isomorphism class of the complex:
\begin{equation}\label{complex} \to \mathcal F([V]_U^{i+1}) \to \mathcal F([V]_U^i) \to \dots \to \mathcal F(V\times_U V) \to \mathcal F(V)\to 0,\end{equation}
where $[V]_U^i$ denotes the $i$-fold fiber product of $V$ over $U$, and the differentials are obtained by alternating sums of the morphisms that one gets from $[V]^{i+1}_U$ to $[V]^i_U$ by forgetting the $j$-th copy, $j=0,\dots, i$.

When $\mathscr C$ has a final object, we say that the cosheaf is \emph{acyclic} if the complex above is quasi-isomorphic to $ 0 \to \mathcal F(U) \to 0$ for every $U\in \Ob(\mathscr C)$ and every cover $V\to U$. When $\mathscr C$ does not have a final object, but satisfies the same property, I will call it \emph{locally acyclic}. (A more careful definition would require this condition only after passing to covers, but this will be enough for the purposes of this paper.) Here is the relation of this notion to the functor of global sections:

\begin{lemma}
Assume that $\mathcal F\xrightarrow{f} \mathcal G$ is a strict epimorphism of cosheaves of vector spaces or Fr\'echet spaces over a site $\mathscr C$, that is: for every $U\in \Ob(\mathscr C)$ the map: $\mathcal F(U) \to \mathcal G(U)$ is a strict epimorphism. Assume that $\mathcal G$ is locally acyclic. Then the kernel pre-cosheaf $\ker f(U):= \ker(\mathcal F(U)\to \mathcal G(U))$ is a cosheaf, the sequence $\ker f(U)\to \mathcal F(U) \to \mathcal G(U)$ is a kernel-cokernel pair in the category of vector/Fr\'echet spaces for every $U$ and, moreover, for any cover $V\to U$ we have a distinguished triangle in the derived category of vector/Fr\'echet spaces:
$$\to \check H_\bullet ^V(U, \mathcal G)[-1] \to \check H_\bullet ^V(U, \ker f) \to \check H_\bullet ^V(U, \mathcal F) \to \check H_\bullet ^V(U, \mathcal G) \to  $$
\end{lemma}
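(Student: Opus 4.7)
The strategy is to reduce the three assertions to a single snake-lemma-type diagram chase, reserving the quasi-abelian derived category machinery only for packaging the distinguished triangle.

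First, the pointwise kernel-cokernel pair assertion in (2) is immediate from the hypothesis that each $f(U)\colon\mathcal F(U)\to\mathcal G(U)$ is a strict epimorphism: the set-theoretic kernel is a closed subspace (in the Fr\'echet case) and is therefore itself a Fr\'echet space, and the factorization $\ker f(U)\hookrightarrow\mathcal F(U)\twoheadrightarrow\mathcal G(U)$ is then a kernel-cokernel pair in $\mathscr F$ (respectively in vector spaces). Functoriality of kernels makes $U\mapsto\ker f(U)$ a pre-cosheaf, so only the cosheaf condition (1) and the distinguished triangle (3) remain.

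Secondly, since $f$ is strict on every fiber power $[V]^{i+1}_U$, the \v Cech construction for a cover $V\to U$ produces a termwise strict short exact sequence of chain complexes
$$0\to \check H_\bullet^V(U,\ker f)\to \check H_\bullet^V(U,\mathcal F)\to \check H_\bullet^V(U,\mathcal G)\to 0.$$
By the standard theory of derived categories of quasi-abelian categories (cf.\ \cite{Schneiders, Buehler, FS}), such a short exact sequence yields, via the mapping-cone construction, the distinguished triangle of (3) in $\mathcal D^-(\mathscr F)$ (or of plain vector spaces). This handles (3) without further input.

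Thirdly, to verify that $\ker f$ is actually a cosheaf, I would argue directly by a snake chase, which also pinpoints where local acyclicity of $\mathcal G$ enters. For surjectivity of $\ker f(V)\to\ker f(U)$: given $x\in\ker f(U)$, lift it to $y\in\mathcal F(V)$ by the cosheaf property of $\mathcal F$; then $f(y)\in\mathcal G(V)$ has zero image in $\mathcal G(U)$, hence equals $(s_!-t_!)z$ for some $z\in\mathcal G(V\times_U V)$ by the cosheaf property of $\mathcal G$; lift $z$ to $w\in\mathcal F(V\times_U V)$ using strictness of $f$ there, and replace $y$ by $y-(s_!-t_!)w$. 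For the kernel computation: given $y\in\ker f(V)$ vanishing in $\ker f(U)$, write $y=(s_!-t_!)w$ in $\mathcal F(V)$; then $(s_!-t_!)f(w)=0$ in $\mathcal G(V)$, so $f(w)$ is a cycle of $\check H_\bullet^V(U,\mathcal G)$ in degree one, hence a boundary by local acyclicity of $\mathcal G$; correcting $w$ by a lift to $\mathcal F([V]^3_U)$ of such a bounding cochain produces an element of $\ker f(V\times_U V)$ whose image under $s_!-t_!$ is $y$. Strictness of the resulting coequalizer is automatic for vector spaces, and in the Fr\'echet case follows from the open mapping theorem applied to the continuous surjection $\ker f(V)\twoheadrightarrow\ker f(U)$.

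The main obstacle, and the unique place at which the hypothesis on $\mathcal G$ enters, is the second half of the snake chase: without vanishing of the degree-one homology of $\check H_\bullet^V(U,\mathcal G)$, the cycle $f(w)$ need not bound, and one obtains only that $\ker f$ is a pre-cosheaf whose failure to be a cosheaf is measured by that homology. Local acyclicity of $\mathcal F$ is not needed.
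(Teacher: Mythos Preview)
Your proposal is correct and follows essentially the same approach as the paper: a snake-lemma diagram chase on the commutative diagram with rows $0\to\ker f([V]_U^i)\to\mathcal F([V]_U^i)\to\mathcal G([V]_U^i)\to 0$. The paper leaves the chase to the reader but makes the same two observations you do: surjectivity of $\ker f(V)\to\ker f(U)$ needs only that $f$ is a strict epimorphism (plus the cosheaf axioms), while identifying the kernel of this map with the image of $\ker f(V\times_U V)$ is precisely where local acyclicity of $\mathcal G$ (vanishing of degree-one \v Cech homology) enters.
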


\begin{remark}
The statement about distinguished triangles holds in a much more general setting of kernel-cokernel pairs $\mathcal H\to \mathcal F \to \mathcal G$ of cosheaves; however, since there may be issues with cosheafification (which is needed to construct kernels) in the category of Fr\'echet spaces, I will avoid talking about exact structures of Fr\'echet cosheaves in more generality.
\end{remark}

\begin{proof}
The proof is as in the abelian case, by chasing arrows along the following diagram, and is left to the reader:

$$ \xymatrix{ & \vdots \ar[d] & \vdots \ar[d] & \vdots \ar[d] & \\
0 \ar[r] & \ker f(V\times_U V) \ar[d]\ar[r] & \mathcal F(V\times_U V) \ar[d]\ar[r] & \mathcal G(V\times_U V) \ar[d]\ar[r] & 0 \\
0\ar[r] & \ker f(V) \ar[d]\ar[r] &  \mathcal F(V)  \ar[d]\ar[r] & \mathcal G(V) \ar[d]\ar[r] & 0 \\
0\ar[r] & \ker f(U) \ar[d]\ar[r] &  \mathcal F(U)  \ar[d]\ar[r] & \mathcal G(U) \ar[d]\ar[r] & 0 \\
 & 0 & 0 & 0 &}$$
 
Notice that, without any assumptions on the acyclicity of $\mathcal G$, the fact that $f$ is a strict epimorphism implies that for any cover $V\to U$, the map $\ker f(V)\to \ker f(U)$ is a strict epimorphism. The acyclicity assumption is used to say that the kernel of this map is precisely the image of $\ker f(V\times_U V)$.

\end{proof}

Now let $X$ be a restricted topological space, and let $\mathscr C = X_{\rm Zar}$ be the ``Zariski site'' generated by its topology, i.e., its objects are open subsets of $X$, morphisms are inclusions, and covering families are finite families of morphisms which are jointly surjective. In fact, it is more convenient to enlarge $\mathscr C$ to include disjoint unions of open subsets of $X$ as its objects, with the obvious morphisms, so that coverings can be represented by a single morphism $V\to U$. 

A cosheaf $\mathcal F$ on $X_{\rm Zar}$ is called \emph{flabby} if for any inclusion of open subsets $U\to V$ the induced map $\mathcal F(U)\to \mathcal F(V)$ is a closed inclusion. 

\begin{lemma}\label{flabby}
Let $\mathcal F$ be a cosheaf of vector spaces or of Fr\'echet spaces on the Zariski site of a restricted topological space. If $\mathcal F$ is flabby, then it is acyclic.
\end{lemma}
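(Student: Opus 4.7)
The plan is to show that, for every cover $V\to U$ in $X_{\rm Zar}$, the associated \v{C}ech complex \eqref{complex} is strictly quasi-isomorphic to $\mathcal F(U)$ placed in degree zero. By the conventions established just before the lemma, we may assume the cover consists of a single morphism $V\to U$; writing $V=\bigsqcup_{i=1}^n V_i$ as a finite disjoint union of open subsets of $U$ with $\bigcup_i V_i = U$, the fiber powers $[V]^p_U$ decompose as disjoint unions of the intersections $V_{i_0}\cap\cdots\cap V_{i_{p-1}}$, and the complex \eqref{complex} becomes the usual (unnormalized) \v{C}ech complex
\[
\cdots\to\bigoplus_{(i_0,\dots,i_p)} \mathcal F(V_{i_0}\cap\cdots\cap V_{i_p}) \to\cdots\to\bigoplus_i \mathcal F(V_i)\to\mathcal F(U)\to 0.
\]
A standard Eilenberg--MacLane argument shows this is strictly quasi-isomorphic to its alternating (normalized) subcomplex, indexed by strictly increasing tuples $i_0<\cdots<i_p$; the simplicial homotopy witnessing this is given by explicit linear maps and preserves strictness.

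Having made this reduction, I would induct on $n$. The case $n=1$ is trivial, and the main content is the case $n=2$: the alternating \v{C}ech complex is
\[
0\to\mathcal F(V_1\cap V_2)\xrightarrow{\ \alpha\ }\mathcal F(V_1)\oplus\mathcal F(V_2)\xrightarrow{\ \beta\ }\mathcal F(U)\to 0,
\]
with $\alpha=(s_!,-t_!)$ and $\beta=\iota_{1,!}+\iota_{2,!}$. The cosheaf property (in the strict sense recalled in \S \ref{ssexact}) says that $\beta$ is a strict epimorphism whose kernel is exactly the image of $\alpha$. Flabbiness is invoked exactly here, to say that each of the extension maps $\mathcal F(V_1\cap V_2)\to\mathcal F(V_i)$ is a closed embedding, hence so is $\alpha$; this gives strict exactness on the left. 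For the inductive step with $n>2$, I would set $U'=V_1\cup\cdots\cup V_{n-1}$ and apply the $n=2$ case to the two-element cover $\{U',V_n\}$ of $U$, obtaining a strict short exact sequence relating $\mathcal F(U)$ to $\mathcal F(U')$, $\mathcal F(V_n)$, $\mathcal F(U'\cap V_n)$. By the induction hypothesis, the strict \v{C}ech resolutions of $\mathcal F(U')$ (from the cover $\{V_i\}_{i<n}$) and of $\mathcal F(U'\cap V_n)$ (from the cover $\{V_i\cap V_n\}_{i<n}$) exist, and assembling them with $\mathcal F(V_n)$ into the total complex of a natural double complex yields the alternating \v{C}ech complex for $\{V_1,\dots,V_n\}$; a standard spectral-sequence (or direct diagram-chase) argument then transfers strict acyclicity.

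The main obstacle I anticipate is maintaining strictness throughout the argument in the Fr\'echet case: set-theoretic surjectivity and injectivity of the \v{C}ech differentials are not enough, and a careless proof could produce quasi-isomorphisms that are not strict. Flabbiness is precisely what rescues us, since it forces every extension map entering the \v{C}ech differential (including all the maps in the normalized alternating subcomplex) to be a closed embedding; combined with the built-in strictness of the cosheaf coequalizer \eqref{coeq1}, one can check inductively that every image appearing in the argument is closed, so that the resulting quasi-isomorphism is genuinely strict. In the purely algebraic (non-Archimedean) case there is nothing to check beyond the abelian Mayer--Vietoris computation, and the same inductive scheme goes through without the topological subtleties.
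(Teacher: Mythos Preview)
Your proposal is correct, but it takes a noticeably different and more laborious route than the paper.

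The paper's proof is a two-line argument that separates the algebraic content from the topological content. First, forgetting the topology, flabby cosheaves of vector spaces are acyclic by the standard (dual of flabby sheaves) argument; this gives set-theoretic exactness of the whole \v{C}ech complex. Then strictness in degrees $\ge 1$ is \emph{automatic}: since the image of each differential equals the kernel of the next, and kernels of continuous linear maps between Fr\'echet spaces are closed, every image is closed. Strictness at degree $0$ is precisely the strict coequalizer assumption built into the definition of a Fr\'echet cosheaf. Flabbiness is used only once, for the purely algebraic acyclicity.

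Your approach instead runs an explicit Mayer--Vietoris induction on the size of the cover, reducing to the alternating complex and invoking flabbiness at each step to certify that the extension maps (and hence the alternating differentials) are closed embeddings. This is fine and is the kind of argument one would write if one did not trust the abelian result to import cleanly, but it obscures the main point: once algebraic exactness is known, no further work is needed for strictness in positive degrees. Your careful tracking of closed images through the induction and the spectral-sequence assembly is unnecessary overhead; the observation ``image $=$ kernel of a continuous map, hence closed'' does all of that for free. What your approach buys is self-containment (you never appeal to a black-box cosheaf acyclicity result), at the cost of length.
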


\begin{proof}
If we were to form the complex \eqref{complex} in the category of cosheaves valued in vector spaces without topology, then the statement about acyclicity would follow from usual considerations (or from the even more common theory of sheaves, applied to the linear duals). We know, moreover, that the maps in \eqref{complex} are continuous, so in degree $\ge 1$, where the complex as abstract vector spaces is exact, the images of the differentials are closed. On the other hand, in degree zero we have by the cosheaf axioms that $\mathcal F(V\times_U V)\to \mathcal F(V) \to F(U)\to 0$ is strictly exact, which shows that \eqref{complex} is strictly quasi-isomorphic to the complex $0\to \mathcal F(U)\to 0$.
\end{proof}

\subsection{Homology for smooth $F$-representations}

This subsection aims to explain the homological meaning of the derived coinvariant functor of smooth $F$-representations of real algebraic (or Nash) groups, that I defined in an ad hoc way in \S \ref{Schwartz-quotient}. Recall that the category $\mathcal M_H$ of smooth $F$-representations of such a group $H$ is equivalent to the category of non-degenerate $A$-modules, where $A:=\mathcal S(H)$ is the convolution algebra of Schwartz measures on $H$. 

The formalism of homological algebra that we need is due to Deligne from SGA4 \cite{Deligne}, and presented very nicely in \cite{Buehler}. To apply it to our setting, we will use the results of Taylor \cite{Taylor} which, however, only hold for unital algebras. Therefore, we set
$$ \tilde A = A \oplus \CC,$$
with the algebra structure extending that of $A$ and making $1\in \CC$ the identity element. It is a  nuclear Fr\'echet algebra, and hence we have uniquely defined completed tensor products: $ \tilde A\hat \otimes V$ for any Fr\'echet space $V$. We let $\mathscr A$ be the category of Fr\'echet $\tilde A$-modules, and we endow it with the exact structure of all \emph{strongly exact} sequences as in \cite[\S 2]{Taylor}. This is a more restrictive condition than strict exactness; a sequence of Fr\'echet $\tilde A$-modules
$$ \dots \to V_{n+1} \to V_n \to V_{n-1}\to \dots$$
is strongly exact if it is strictly exact, and in addition every kernel and cokernel admit complements \emph{as topological vector spaces} (i.e., not necessarily as $\tilde A$-modules). 

On the other hand, the category $\mathscr F$ is quasi-abelian, and hence has a canonical (maximal) exact structure, that of strict exact sequences.

We assume given a continuous, non-zero homomorphism of algebras: $\int: A \to \CC$, or, equivalently a non-degenerate $A$-module structure on $\CC$ (where $a\in A$ acts as multiplication by $\int a$). For our Schwartz algebra this functional is the total integral, as the notation suggests. We extend it to $\tilde A$ as the identity on $\CC$. The \emph{rough} coinvariant functor corresponds to the association
$$ V \mapsto V/\mbox{\tiny the closure of the subspace spanned by vectors of the form } (av - \int a\cdot v), \,\, a\in \tilde A, v\in V.$$
It is a functor
$$C: \mathscr A \to \mathscr F,$$
and it coincides with our ``rough'' functor of $H$-coinvariants on non-degenerate modules of $\mathcal S(H)$.

A \emph{free} object in $\mathscr A$ is an object of the form $V = \tilde A\hat\otimes W$, where the action of $\tilde A$ is on the first factor by left multiplication. Free objects have the following properties:
\begin{itemize}
\item They are \emph{projective}, i.e., if $V$ is free, then the (vector space-valued) functor $\Hom_A (V, \bullet)$ is exact --- it turns strongly exact sequences in $\mathscr A$ into exact sequences of vector spaces.
\item We have $CV = \CC \hat\otimes_A V  = W$, where $\CC \hat\otimes_A$ denotes the quotient of $\CC \hat\otimes V$ by the image of the map: $A\hat\otimes V\to V$, $a\otimes v\mapsto av - \int a\cdot v$. The quotient is taken \emph{without} closure, and in particular the statement here means that the image is closed.
\end{itemize}
The first statement follows from the fact that $\Hom_{\tilde A} (V, Z) = \Hom (W, Z)$, cf.\ \cite[Proposition 1.3]{Taylor}. (The morphisms on the left are in $\mathscr A$, and those on the right in $\mathscr F$.) The second is \cite[Proposition 1.5]{Taylor}. Clearly, for both statements, it doesn't matter whether we use $A$ or $\tilde A$. The second statement easily extends to \emph{all} projective objects, since by \cite[Proposition 1.4]{Taylor} those are precisely the direct summands of free ones. Thus, the functor $C$ of coinvariants, restricted to the fully exact subcategory $\mathscr P$ of projective objects in $\mathscr A$, is exact.

As we will recall in a moment, every object in $\mathscr A$ admits a projective resolution. By \cite[Theorem 10.22]{Buehler} with arrows reversed (the second condition of this theorem is trivially satisfied by projectivity), this implies that the derived category $\mathcal D^- \mathscr P$ is equivalent to $\mathcal D^- \mathscr A$. By \cite[Lemma 10.26]{Buehler} (again reversing arrows), the functor $C$, when applied to a complex of projective objects representing a given object in $\mathcal D^- \mathscr A$, yields the \emph{total derived functor} $L_C$ of $C$:
$$L_C: \mathcal D^- \mathscr A \simeq \mathcal D^- \mathscr P \to \mathscr F,$$
which is characterized by the following universal property:

For every complex $A_\bullet$ (of objects  in $\mathscr A$) it
represents, in the derived category $\mathcal D^-\mathscr F$, the functor which assigns 
to each $K_\bullet \in \Ob(\mathcal D\mathscr F)$, the set of equivalence classes of pairs of diagrams:
$$\begin{cases}  C(A'_\bullet) \xrightarrow{f} K_\bullet, \\  A'_\bullet \xrightarrow{s} A_\bullet, \end{cases}$$
where $s$ is a (strong) quasi-isomorphism of complexes in $\mathscr A$. For more details, and the notion of equivalence, cf.\ \cite[\S 10.6]{Buehler}

We are left with showing that every object in $\mathscr A$ admits a projective resolution, thus establishing that $\mathcal D^- \mathscr P$ is equivalent to $\mathcal D^- \mathscr A$.
For notational simplicity, we set $\tilde A_n =$ the completed tensor product of $n$-copies of $\tilde A$, and analogously for $A$. By \cite[\S 2]{Taylor}, for every $V\in \Ob(\mathscr A)$ the sequence
\begin{equation}\label{res} \dots \to \tilde A_{n+1} \hat\otimes V \xrightarrow{d} \tilde A_n \hat\otimes V \to \dots \to \tilde A\hat\otimes V \to V \to 0\end{equation}
is a strongly exact resolution by free modules. By definition, if we number the copies of $\tilde A_{n+1}$ from $0$ to $n$, the action of $\tilde A$ is only on the left copy, and the boundary maps are 
\begin{equation}\label{dboundary}d(a_0\otimes \dots \otimes a_n\otimes v ) =  \sum_{i=0}^{n-1} (-1)^i a_0\otimes \dots\otimes a_i a_{i+1} \otimes \dots \otimes v + (-1)^n a_0 \otimes \dots \otimes a_{n-1} \otimes a_n v.\end{equation}

Hence, by \cite[Theorem 10.25]{Buehler}, for every strongly exact sequence of objects in $\mathscr A$:
$$ 0\to V_1 \to V_2 \to V_3\to 0,$$
we get a distinguished triangle in $\mathcal D^- \mathscr F$:
$$ L_C V_3[-1] \to  L_C V_1 \to L_C V_2 \to L_C V_3.$$

With all this material obtained from the literature, we would now like to prove that, for the computation of $L_C$, one may replace in \eqref{res} all copies of $\tilde A$ by $A$, thus arriving at the description of derived coinvariants of \S \ref{Schwartz-quotient}.

\begin{proposition}
The natural inclusion from the complex
\begin{equation}\label{nontildecomplex}\CC \hat\otimes_A \left(\dots \to A_{n+1} \hat\otimes V \xrightarrow{d} A_n \hat\otimes V \to \dots \to A\hat\otimes V\to 0 \right)\end{equation}
into the complex
\begin{equation}\label{tildecomplex}\CC \hat\otimes_A \left(\dots \to \tilde A_{n+1} \hat\otimes V\xrightarrow{d} \tilde A_n \hat\otimes V \to \dots \to \tilde A\hat\otimes V\to 0\right) \end{equation}
is a (strict) quasi-isomorphism. 
\end{proposition}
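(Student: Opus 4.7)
The plan is as follows. The crucial input is the Fr\'echet direct-sum splitting $\tilde A = A \oplus \CC \cdot 1$ (with $\CC \cdot 1$ the line through the adjoined unit), which by the additivity of the completed projective tensor product induces a decomposition
$$\tilde A^{\hat\otimes n} = \bigoplus_{w \in \{A, \CC\}^n} T_w,$$
where $T_w$ is the tensor product whose $i$-th factor is $A$ or $\CC \cdot 1$ according to $w_i$. Tensoring with $V$ gives, in each degree,
$$CB_n(\tilde A, V) = CB_n(A, V) \oplus D_n, \qquad D_n := \bigoplus_{w \ne A^n} T_w \hat\otimes V,$$
in which the first summand is precisely the $n$-th term of \eqref{nontildecomplex} and the second is the ``degenerate'' summand of tensors carrying at least one unit factor.

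Next I would verify that both summands are closed under the differential, so that this becomes a decomposition of chain complexes of nuclear Fr\'echet spaces. Closure of $CB_\bullet(A, V)$ is immediate: $A$ is a subalgebra of $\tilde A$, $V$ is an $A$-module, and $\epsilon|_A = \int$, so every term of \eqref{dboundary} applied to an element of $T_{A^n} \hat\otimes V$ stays inside $T_{A^{n-1}} \hat\otimes V$, and the resulting restricted differential matches that of \eqref{nontildecomplex} tautologically. Closure of $D_\bullet$ is the main calculation: if an element carries a unit factor at position $i$, the two face maps combining this unit with an adjacent tensor slot, namely $d_{i-1}$ and $d_i$, produce \emph{the same} output by the standard simplicial identity $d_{i-1} s_{i-1} = d_i s_{i-1} = \mathrm{id}$, and hence contribute with opposite signs in $d = \sum(-1)^j d_j$ and cancel; every other face map preserves the unit in its (possibly shifted) position, landing in $D_{n-1}$.

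Given the decomposition, it remains to show that $D_\bullet$ is strictly acyclic --- granted which, the inclusion $CB_\bullet(A, V) \hookrightarrow CB_\bullet(\tilde A, V)$ is the inclusion of a direct summand whose complement is strictly acyclic, and so is a strict quasi-isomorphism. Acyclicity of $D_\bullet$ is the Fr\'echet analogue of the classical Dold--Kan statement that the degenerate subcomplex of a simplicial object is contractible. I would prove it by iterating the ``insert the unit at the leftmost position'' map $\sigma(z) := 1 \otimes z$: a direct computation using the same cancellations shows $d\sigma + \sigma d = E_1$ on $CB_\bullet(\tilde A, V)$, where $E_1$ is the idempotent replacing the first tensor factor $a_1$ by $\epsilon(a_1) \cdot 1 \in \CC \cdot 1$. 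On the sub-complex $D_\bullet^{(1)} \subset D_\bullet$ spanned by $T_w$ with $w_1 = \CC$, the operator $E_1$ acts as the identity, so $\sigma$ restricts to a strict contracting homotopy of $D_\bullet^{(1)}$; the quotient $D_\bullet/D_\bullet^{(1)}$ is naturally identified with $A \hat\otimes D'_\bullet$, the analogous degenerate complex in one fewer factor, and an induction on the number of factors finishes the argument.

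The main potential obstacle is making the Dold--Kan-style contracting homotopy go through strictly in the quasi-abelian category of nuclear Fr\'echet spaces; however, because the splitting $\tilde A = A \oplus \CC \cdot 1$ is strict, the projective tensor product is additive in each variable, and all the insertion/augmentation/multiplication maps used are manifestly continuous and strict, the inductive construction transports verbatim. In the non-Archimedean case topology drops out and the same argument is purely algebraic.
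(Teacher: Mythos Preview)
Your overall architecture matches the paper's: decompose $\tilde A_n\hat\otimes V$ as the $A$-part $B_n^0=A_n\hat\otimes V$ plus a ``degenerate'' complement $D_n$, verify that both are subcomplexes, and show $D_\bullet$ is strictly acyclic. Your verification that $D_\bullet$ is a subcomplex (via the cancellation $d_{i-1}+d_i=0$ at a unit slot) and your computation $d\sigma+\sigma d=E_1$ are correct, and they do contract the subcomplex $D_\bullet^{(1)}$.

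The gap is in the induction step. The quotient $D_\bullet/D_\bullet^{(1)}$ has underlying graded space $A\hat\otimes D'_{\bullet-1}$, but it is \emph{not} ``the analogous degenerate complex in one fewer factor'' as a chain complex: the induced differential still contains the face maps $d_0$ (augmentation on $a_1$) and $d_1$ (multiplication $a_1\cdot b_1$), which mix the frozen $A$-slot with the remaining factors, so one cannot simply invoke the same statement one degree down. Concretely, on $a_1\otimes b_1\otimes\cdots$ with $a_1\in A$, the term $(\int a_1)\,b_1\otimes\cdots$ survives in the quotient whenever $b_1\in A$, and no reindexing makes this into $\mathrm{id}_A\otimes\delta$. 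A correct version of your idea is to filter $D_\bullet$ by the position of the \emph{first} unit, $F_pD_n=\bigoplus_{w:\,\exists i\le p,\,w_i=\CC}T_w\hat\otimes V$; each $F_p/F_{p-1}$ then has differential supported only on $d_j$ with $j>p$ and is (up to sign and shift) $A^{p-1}\hat\otimes(\text{bar resolution }\tilde A_\bullet\hat\otimes V\to V)$, which is strictly acyclic. That works, but it is not what you wrote.

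The paper avoids this difficulty by organizing the degenerate part via the position of the \emph{last} unit: setting $B_n^i=\tilde A_{i-1}\hat\otimes\CC\hat\otimes A_{n-i}\hat\otimes V$, one checks directly that $\delta$ on $B_n^i$ splits as $\delta_{\tilde A}\otimes 1\otimes\mathrm{id}$ (landing in $B_{n-1}^{i-1}$) plus $(-1)^{i+1}\,\mathrm{id}\otimes 1\otimes d$ (landing in $B_{n-1}^i$). This exhibits $\bigoplus_{i\ge 1}B_n^i$ as the tensor product of the two complexes $\tilde A_\bullet\xrightarrow{\delta}\cdots\to\tilde A$ and $A_\bullet\hat\otimes V\xrightarrow{d}\cdots\to V$, both strictly acyclic; no induction is needed.
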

(The application of $\CC \hat\otimes_A $ to the entire complex just means application to each element or, by the above, applying the functor of coinvariants.)

\begin{proof}
Notice that $\CC \hat\otimes_A A_{n+1} \hat\otimes V \simeq A_n \hat\otimes V$. We explicate the boundary maps, which we will denote by $\delta$ in order to distinguish them from the boundary maps $d$ of \eqref{dboundary}:
\begin{equation}\label{deltaboundary} \delta(a_1\otimes \dots \otimes a_n\otimes v) = (\int a_1)\cdot a_2 \otimes \dots \otimes a_n\otimes v + \end{equation}
$$+ \sum_{i=1}^{n-1} (-1)^i a_1\otimes \dots \otimes a_i a_{i+1} \otimes \dots \otimes v + (-1)^n a_1 \otimes \dots \otimes a_{n-1}\otimes a_n v.$$

Now, for every $0\le i \le n$, let
$$ B_n^i:= \tilde A_{i-1} \hat\otimes \CC \hat\otimes A_{n-i}\hat\otimes V.$$
The convention is that $B_n^0 = A_n \hat\otimes V$. It is then clear from the definitions:
$$ \tilde A_n \hat\otimes V = \bigoplus_{i=0}^n B_n^i.$$

Our goal is to show that the summands with $i\ne 0$ do not contribute anything to the homology of \eqref{tildecomplex}.

We claim that, for $i\ge 1$, $\delta(B_n^i) \subset B_{n-1}^{i-1} \oplus B_{n-1}^i$, with the first summand not appearing when $i=1$ and the second not appearing when $i=n$. More precisely, as can easily be seen from \eqref{deltaboundary}, we have
\begin{equation} \delta(a_1\otimes \dots a_{i-1} \otimes 1 \otimes a_{i+1} \otimes \dots \otimes a_n\otimes v) = \end{equation}
$$=\delta(a_1\otimes \dots a_{i-1}) \otimes 1 \otimes (a_{i+1} \otimes \dots \otimes a_n\otimes v )+ $$
$$+ (-1)^{i+1} (a_1\otimes \dots a_{i-1}) \otimes 1 \otimes d(a_{i+1} \otimes \dots \otimes a_n\otimes v).$$

Here by $\delta$ we denote the morphism given by the \emph{same} formula as \eqref{deltaboundary} when $V=\tilde A$, and $d$ denotes the morphism \eqref{dboundary}. When $i=1$, the convention is that $\delta=0$, and when $i=n$, then $d=0$. Hence, omitting the summands with $i=0$ from \eqref{tildecomplex} (which correspond to \eqref{nontildecomplex}), we still get a complex
\begin{equation}\label{newcomplex} \bigoplus_{i=1}^{n+1} B_{n+1}^i \to \bigoplus_{i=1}^n B_n^i \to \dots \to \CC\otimes V \to 0.
\end{equation}
Moreover, from the above formulas it is clear that \eqref{newcomplex} is the tensor product of the complexes
$$ \tilde A_{n+1} \xrightarrow{\delta} \tilde A_n \to \dots \to \tilde A \to 0$$
(with $\tilde A$ in degree $1$) and
$$ A_{n+1}\hat\otimes V \xrightarrow{d} A_n \hat\otimes V \to \dots \to A\otimes V\to V \to 0$$
(with $V$ in degree $0$).

Both complexes are acyclic (strictly exact): the first computes the homology of $\tilde A$ as an $\tilde A$-module, which is free and hence acyclic, and the second is the resolution \eqref{res}, with $\tilde A$ replaced by $A$ (and easily seen to be a strictly exact resolution, again, though not necessarily strongly exact). Thus, \eqref{newcomplex} is acyclic, and \eqref{tildecomplex} is (strictly) quasi-isomorphic to \eqref{nontildecomplex}.
\end{proof}

\section{Asymptotically finite functions} \label{app:finite}

\subsection{Derivative arrangements and a criterion for asymptotic finiteness} \label{criterion}

Let us consider a toric variety $Y$ for a real torus $T$ as in \S \ref{finite-toric}. In particular, $Y$ is considered as a normal embedding for a torus quotient $T'$ of $T$.

Let $E$ be an exponent arrangement for $Y$; it was used in \S \ref{finite-toric} to describe a cosheaf on $\mathcal F_E$, consisting of functions on $T(\RR)$. We will describe an efficient way of checking whether a function on $T(\RR)$ is a section of $\mathcal F_E$ over $Y(\RR)$. The discussion here carries over, essentially verbatim, and will be applied to the case of the equivariant toroidal compactifications of the automorphic quotient $[H]=H(k)\backslash H(\adele)$. For notational simplicity we present the case of toric varieties.

To describe such a criterion, recall that the toric variety $Y$ is described by a fan $\F$ of strictly convex rational polyhedral cones on the vector space
$$\mathfrak t' = \Hom(\Gm,T')\otimes\RR,$$
and that the cones in the fan are in bijection $C\leftrightarrow Z_C$ with the geometric orbits of $T$ on $Y$, in such a way that the relative interior of $C$ consists precisely of those cocharacters $\lambda$ into $T'$ such that $\lim_{\lambda\to 0}\lambda(t)\in Z_C$. The exponent arrangement $Z\mapsto E(Z)$ can be considered as an arrangement for the fan: $C\mapsto E(C):= E(Z_C)$. We recall that $E(Z_C)$ consists of characters of the $\RR$-points of the stabilizer $T_C$ of points on $Z_C$.

The sub-fan consisting of the cone $C$ and all its faces  corresponds to the affine, open subvariety $Y_C\subset Y$, previously denoted as $Y_{Z_C}$. To describe the behavior of sections locally, in a neighborhood of a point $z\in Z_C(\RR)$, we may replace $Y$ by this affine open subvariety described by the fan of the cone $C$.

Let $D$ be a cone in the fan. (We use a different letter now, because the following will be used for every face $D$ of $C$ to describe the behavior in a neighborhood of a point of $Z_C$.)

We define a \emph{derivative arrangement} $(\F_D', E)$ as follows:

\begin{itemize}
\item The new fan $\F'_D$ will live on the vector space
$$\mathfrak t'_D:= \mathfrak t/\mathfrak t_D,$$
where $\mathfrak t_D$ is the subspace spanned by cocharacters into $T_D$; thus, the new fan will correspond to an embedding of the torus $T'_D:= T/T_D$.

To define the fan $\F'_D$, consider the set of orbits which are contained in the closure of $Z_D$; they correspond to all cones $D'\in \F$ which contain $D$. The fan $\F'_D$ will consist of the images of all those cones $D'$ in $\mathfrak t'_D$, which are strictly convex, as required. 

\item It inherits the multisets $E(D')$ of exponents from the original exponent arrangement. Thus, we do not need a new symbol for the exponents.
\end{itemize}

The pair $(\F'_D, E)$ gives rise to a toric embedding $Y'_D$ of $T'_D$, and a cosheaf over it which we will denote by $\mathcal F_{D, E}$; its restriction to the open stratum $Z_D$ is what in the notation of \S \ref{finite-toric} was $\mathcal F_{Z_D, E(Z_D)}$. 

Now we describe how to detect if a function on $T(\RR)$ coincides with an element of $\mathcal F_E(Y)$ in a (semi-algebraic) neighborhood $V$ of a point $z\in Z_C(\RR)$. The neighborhood $V$ will be taken in $Y_C$, according to Remark \ref{contraction}. It will also be taken to be compact --- and in particular with compact image in $Z_C(\RR)$ under the contraction map.

\begin{lemma}\label{lemmacriterion} 
Let $V= \bigcup_D U_D$ be a decomposition into subsets, one for each non-trivial face $D$ of $C$ (i.e., each non-open orbit containing $Z_C$ in its closure), such that $U_D$ is bounded away\footnote{By ``bounded away'' from an orbit $Z$ we will mean that it is disjoint from a semi-algebraic neighborhood of $Z(\RR)$.} from every orbit not contained in the closure of $Z_D$ (i.e., from any $Z_E$ with $D \not\subset E$). 

Then a smooth function $f$ on $T(\RR)$ coincides with an element of $\mathcal F_E(Y)$ on $V$ if and only if on $U_D$ it coincides, up to an element of $\mathcal F_E(T')$ (i.e., a rapidly decaying section), with a section of $\mathcal F_{D,E}$ over the closure of $Z_D(\RR)$.
\end{lemma}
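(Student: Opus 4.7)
The plan is to argue by induction on the dimension of the cone $C$, matching the inductive construction of the cosheaf $\mathcal F_E$ from \S\ref{ssdefcosheaf}. The base case $C=\{0\}$ is trivial, since then $V$ lies entirely in the open orbit, the only index in the decomposition is $D=\{0\}$, and the derivative arrangement $(\F'_{\{0\}},E)$ coincides with $(\F,E)$, so the cosheaf $\mathcal F_{\{0\},E}$ is just $\mathcal F_E$ itself. For the inductive step we can, after shrinking, assume that $V$ is a semi-algebraic relatively compact neighborhood of a compact subset of $Z_C(\RR)$ inside the affine chart $Y_C$, and that the only orbits meeting $V$ are those $Z_D$ with $D$ a face of $C$.

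For the necessity direction, suppose $f\in \mathcal F_E(V)$. By the recursive definition of $\mathcal F_E$ at the deepest stratum $Z_C$ contained in $V$, there exist a Schwartz section $\sigma\in \mathcal F_{Z_C,E(C)}(V\cap Z_C(\RR))$ and an $f'\in \mathcal F_E(V\smallsetminus Z_C(\RR))$ such that $f - f_\sigma - f'$ is supported away from an arbitrarily close neighborhood of $Z_C$. Since the orbits of the derivative arrangement $(\F'_C,E)$ corresponding to proper overcones of $C$ are empty (there are none; $Z_C$ is the closed orbit in $Y_C$), the section $f_\sigma$ is exactly a section of $\mathcal F_{C,E}$ on the closure of $Z_C(\RR)$, providing the contribution on $U_C$. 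For the remaining faces $D\subsetneq C$, we apply the inductive hypothesis to $f'$ on $V\smallsetminus Z_C(\RR)$, using the covering $(U_D \smallsetminus Z_C(\RR))_{D\subsetneq C}$. The inductive hypothesis produces, on each such $U_D$, a section of $\mathcal F_{D,E}$ on the closure of $Z_D(\RR)$, matching $f'$ up to a rapidly decaying section; since these closures contain the relevant strata of $Y_C$ (those with cones containing $D$), and since $U_D$ is bounded away from orbits whose cone does not contain $D$, the decomposition propagates back to $V$.

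For the sufficiency direction, one starts from a family of sections $g_D$ of $\mathcal F_{D,E}$ on the closures of $Z_D(\RR)$ such that $f|_{U_D} - g_D|_{U_D}$ is Schwartz on $T(\RR)$. One assembles a candidate global section of $\mathcal F_E$ on $V$ by summing, over all faces $D$, extensions $\tilde g_D$ of $g_D$ multiplied by semi-algebraic cutoff functions supported near $Z_D$ that refine the cover $\{U_D\}_D$; the contraction maps discussed in Remark \ref{contraction} give smooth semi-algebraic lifts $Z_D(\RR)\to T(\RR)$ that allow one to promote a section of $\mathcal F_{Z_D,E(D)}$ to a function on an open neighborhood of $Z_D(\RR)$ in $Y_C(\RR)$, at which point the recursive definition of $\mathcal F_E$ is verified face by face, starting from the deepest face $C$ and working outward, by subtracting at each stage the contribution $g_D$ that corresponds to the Schwartz section at $Z_D$ required by the definition.

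The main obstacle is bookkeeping rather than analytic: one must check that, in the necessity direction, the piece extracted at $Z_D$ extends (as a function on $T(\RR)$ which is a generalized $T_D(\RR)$-eigenfunction) to a genuine section of $\mathcal F_{D,E}$ over the full closure of $Z_D(\RR)$ inside $Y_C$, not just germ-wise at a single point; and in the sufficiency direction, one must match the asymptotic expansions along each face to ensure the resulting function satisfies the compatibility of exponents between $E(D)$ and $E(D')$ for $D\subset D'$. Both of these follow from the compatibility built into the definition of an exponent arrangement (the restriction condition on $E(W)\to E(Z)$ when $W\subset \overline Z$) combined with the fact that the contraction map $Y_C(\RR)\to Z_D(\RR)$ is semi-algebraic and proper over its image, so that Schwartz sections pulled back from $Z_D(\RR)$ remain Schwartz in the $Z_D$-direction on $U_D$.
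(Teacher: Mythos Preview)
Your inductive setup has a structural gap. You induct on $\dim C$ and, after stripping off the deepest stratum $Z_C$, propose to ``apply the inductive hypothesis to $f'$ on $V\smallsetminus Z_C(\RR)$, using the covering $(U_D \smallsetminus Z_C(\RR))_{D\subsetneq C}$.'' But the lemma you are inducting on is stated for a single cone $C'$ together with a decomposition indexed by the non-trivial faces of \emph{that} cone. The set $V\smallsetminus Z_C(\RR)$ is not a neighborhood living in the chart $Y_{C'}$ for any single proper face $C'$ of $C$; it meets orbits $Z_D$ for several incomparable proper faces $D$, and the pieces $U_D$ for $D\subsetneq C$ are not the faces of one smaller cone. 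So the inductive hypothesis, as you have formulated it, simply does not match the situation you need it for. (Your base case is also off: when $C=\{0\}$ there are no non-trivial faces at all, so $V=\varnothing$ and nothing is asserted; the lemma is implicitly about non-trivial $C$.)

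The paper's proof avoids this by inducting differently: it fixes a single face $D$ and proves the characterization \emph{for that face alone}, by induction on $\dim Z_D - \dim Z_{D'}$ where $Z_{D'}$ runs over orbits in $\overline{Z_D}$ carrying a non-zero germ of the section. The key observation is that the asymptotic piece $f_\sigma$ extracted at a deeper orbit $Z_{D'}$ is \emph{a fortiori} a section of $\mathcal F_{D,E}$ near $Z_{D'}$, so one can peel off contributions from the inside out within the closure of $Z_D$. This yields the per-face statement \eqref{involvedquote} directly, and the lemma follows by assembling these over all $D$. Your sufficiency sketch via cutoffs is on the right track, but once one has \eqref{involvedquote} both directions come out together, without needing to manufacture a global candidate and re-verify the recursive definition. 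To repair your argument you would need to strengthen the inductive hypothesis to cover arbitrary open sets meeting only strata of bounded codimension---and that is essentially what the paper's face-by-face argument accomplishes.
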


The picture here shows such a partition of a neighborhood:
\begin{center}
\includegraphics[scale=0.2]{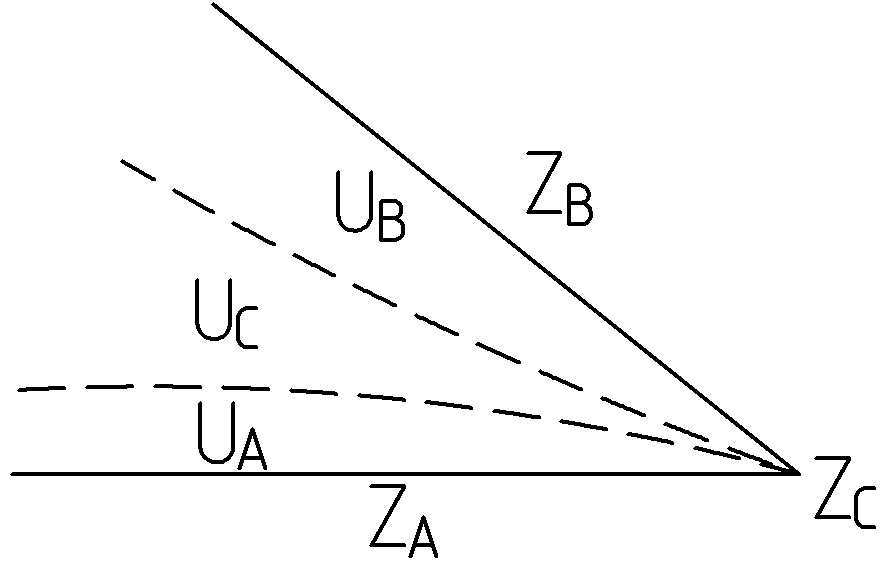}
\end{center}
I will describe a way to get such a partition after the proof; the idea is, essentially, to start by removing from $V$ neighborhoods of the maximal orbits (= one-dimensional edges of $C$), then remove from the remainder neighborhoods of the orbits of codimension two, etc. The proof also has such an inductive structure.

\begin{proof}
Let us say that two orbits are not comparable if one is not contained in the closure of the other. In that case, they can be separated by (semi-algebraic) open neighborhoods.

Let $Z_D$ be a non-open orbit of \emph{maximal} dimension in $Y_C$, that is: $D$ is a face of $C$ of dimension one. I claim:

\begin{quote}
In a neighborhood $U$ of $Z_D$, which is bounded away from any orbit not comparable to $Z_D$, restrictions to $U$ of elements of $\mathcal F_E(Y(\RR))$ are precisely the restrictions of those functions on $T(\RR)$ which differ by elements of $\mathcal F_E(T'(\RR))$ (i.e., sections of rapid decay on $T'(\RR)$) from sections of $\mathcal F_{D,E}$ over the closure of $Z_D$.
\begin{equation}\label{simplequote} \end{equation}
\end{quote}

This is immediately true by definition for those sections whose germs at orbits in the closure of $Z_D$ are zero (i.e., which are sections of $\mathcal F_E$ on an open set not containing any orbits smaller than $Z_D$). We now check it inductively on $\dim Z_D - \dim Z_{D'}$, where $Z_{D'}$ is an orbit in the closure of $Z_D$, of smallest dimension such that the germ of a given section $f$ at $Z_{D'}(\RR)$ is non-zero. Using the notation of the definition, in a neighborhood of $Z_{D'}(\RR)$ the section is equal to $f_\sigma$ plus a section whose germ at $Z_{D'}(\RR)$ is zero. Since $f_\sigma$ is a section of $\mathcal F_{D', E}$  and hence coincides \emph{a fortiori} with a section of $\mathcal F_{D,E}$ in that neighborhood, by the inductive hypothesis we are done.

We started with \eqref{simplequote} because it is easier to state, but in fact it is a special case of the following, which is proved by exactly the same argument:

\begin{quote}
If $Z_D$ is \emph{any} orbit in $Y$, $U$ is a neighborhood $U$ of $Z_D$ which is bounded away from any orbit not comparable to $Z_D$, and $U'$ is obtained from $U$ by removing neighborhoods of all orbits of larger dimension, then restrictions to $U'$ of elements of $\mathcal F_E(Y(\RR))$ are precisely the restrictions of those functions on $T(\RR)$ which differ by elements of $\mathcal F_E(T'(\RR))$ (i.e., sections of rapid decay on $T'(\RR)$) from sections of $\mathcal F_{D,E}$ over the closure of $Z_D$.
\begin{equation}\label{involvedquote} \end{equation}
\end{quote}

And this implies the lemma.
\end{proof}

I explain how to get a partition into subsets $U_D$ as above. We can choose a semi-algebraic lift of the contraction map:
$s:Z_C(\RR)\to T'(\RR),$
and then assume that the given neighborhood of $z\in Z_C(\RR)$ is a compact subset of the set
$$ \{ s(z')\cdot t\,\, | \,\, z' \in Z_C(\RR), t\in T_C(\RR), \log(t)\in C\}.$$
(Recall that under the log map: $T(\RR)\to \mathfrak t_\RR \to \mathfrak t'_\RR$, elements of $T_C$ have image in the linear span of $C$.)

We can then describe a partition by describing a partition of the cone $C$ and pulling it back via the log map.

We do this by ``moving'' the maximal faces of $C$ into the strict interior; that is, if we take a cross-section of the cone with an affine hyperplane $A$ in the linear span of $C$, meeting all of its non-trivial faces, then the cone $C$ is defined by the ``positive'' sides of a set of hyperplanes $H_i\subset A$, and we can consider a small parallel translation $H_i\to H_i'$ of each of those hyperplanes towards the interior of $C$. For a given face $D$, defined as the intersection of the hyperplanes $H_i$, $i\in I_D$, the subset $U_D$ will be determined by the set $D'$ of those elements of the cone $C$ which lie on the ``negative'' side of the corresponding translated hyperplanes $H_i'$, $i\in I_D$. We observe that the set $D'$ only meets the relative interiors of the cones containing $D$ in their closure, and therefore the corresponding set $U_D$ will be bounded away from orbits that are not in the closure of $Z_D$.

\subsection{Completion of the proof of Theorem \ref{isfinite}}

Recall that we have an equivariant toroidal embedding $[H]^\F$ of $[H]=H(k)\backslash H(\adele)$, determined by the fan $\F$ on the anti-dominant Weyl chamber $\mathfrak a^+$, which in turn is determined by the weights of the representation $V$ of $H$.

Let us fix a minimal parabolic $P_0$ with a Levi subgroup $M_0$ as before, and consider all cocharacters into the universal maximal split torus $A$ as cocharacters into $H$ via the embedding $A\to M_0$ determined by $P_0$.

For any cone $C\in \F$ we define
\begin{equation}\label{fC}
f_C (h) := \sum_{\gamma\in V_{C,0}} \int_{V_{C,+}(\adele)} f((\gamma+v) h) dv,
\end{equation}
where the spaces $V_{C,0}$, $V_{C,+}$, $V_{C,-}$ are as in \eqref{decompC}.

It is easy to see:
\begin{lemma}
The function $f_C$ is a smooth function on $[H]_{P_C}$, not depending on the choice of $(P_0,M_0)$, and an $A_C(\adele)$-eigenfunction with eigencharacter $\chi_C$.
\end{lemma}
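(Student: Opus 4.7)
The plan is to verify three claims in turn: (a) that $f_C$ descends to a smooth function on $[H]_{P_C}$, i.e., is smooth on $H(\adele)$ and left-invariant under $M_C(k)N_C(\adele)$; (b) that it transforms under $A_C(\adele)$ by the character $\chi_C$; and (c) that it does not depend on $(P_0,M_0)$. Smoothness on $H(\adele)$ is a routine dominated-convergence argument: integrating the Schwartz function $f$ over $V_{C,+}(\adele)$ produces a Schwartz function on the quotient $V/V_{C,+}(\adele)$, whose restriction to $V_{C,0}(\adele)$ admits an absolutely convergent lattice sum over $V_{C,0}(k)$, uniformly over compacta in $H(\adele)$.

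For the $A_C(\adele)$-eigenfunction property: $A_C$ acts trivially on $V_{C,0}$ by definition of the weight space decomposition, and acts on a top form of $V_{C,+}$ by the character $-\chi_C$ (which is literally the definition \eqref{chiC}). The substitution $v \mapsto va^{-1}$ in the integral then produces the Jacobian $\chi_C(a)$, while the lattice sum is unchanged. For left $M_C(k)$-invariance: since $M_C = C_H(A_C)$ centralizes $A_C$, it preserves each summand in $V = V_{C,-}\oplus V_{C,0}\oplus V_{C,+}$; substituting through the sum (which permutes $V_{C,0}(k)$) and the integral introduces the Jacobian $|\det(m|_{V_{C,+}})|$, which is trivial by the product formula, since $\det(\cdot|_{V_{C,+}})$ is an algebraic character of $M_C$ evaluated at a $k$-point.

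The crucial step is $N_C(\adele)$-invariance. Here the key observation is that the $A_C$-weights on the Lie algebra $\mathfrak n_C$ are strictly positive on the relative interior of $C$, so $N_C$ preserves $V_{C,+}$ and sends $V_{C,0}$ into $V_{C,0}\oplus V_{C,+}$, acting as the identity on the $V_{C,0}$-component. Writing $(\gamma+v)n = \gamma + \gamma_+(n) + vn$ with $\gamma_+(n)\in V_{C,+}(\adele)$, one absorbs $\gamma_+(n)$ into $v$ by translation and uses that $n|_{V_{C,+}}$ is unipotent, hence of determinant $1$. For independence from $(P_0,M_0)$: any two choices are $H(k)$-conjugate, and under conjugation by $g\in H(k)$ the reference cocharacter becomes $g\lambda g^{-1}$, the decomposition is translated by $g$, and $P_C$ becomes $gP_C g^{-1}$; a change of variables in the sum and integral combined with the product formula (to kill the Jacobian of $g$) gives $f_C'(h) = f_C(g^{-1}h)$, which is precisely the canonical identification of functions on $[H]_{P_C}$ with functions on $[H]_{gP_C g^{-1}}$.

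The hard part will be the $N_C(\adele)$-invariance, since it is the only step that genuinely uses the structure of the parabolic beyond the splitting by $A_C$-weights; everything rests on the strict positivity of $A_C$-weights on $\mathfrak n_C$ on the interior of $C$, which keeps the correction term inside $V_{C,+}$ where it can be absorbed by the integration.
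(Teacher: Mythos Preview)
Your proof is correct and follows essentially the same approach as the paper, just with considerably more detail on points the paper leaves implicit (smoothness, the eigenfunction property, and independence of $(P_0,M_0)$). For the $N_C(\adele)$-invariance you argue via the sign of the $A_C$-weights on $\mathfrak n_C$, while the paper phrases the same fact as a dynamical/limit characterization of the affine slices $\gamma+V_{C,+}$ (namely $\lim_{t\to 0}\lambda(t)^{-1}u\lambda(t)=1$ for $u\in\mathcal U(P_C)$); these are equivalent, and both lead to the conclusion that $\mathcal U(P_C)$ preserves each $\gamma+V_{C,+}$ and acts there with trivial Jacobian.
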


\begin{proof}
Indeed, if $P_C, M_C$ are the standard parabolic and Levi in the class associated to $C$, induced from the chosen pair $(P_0,M_0)$, the subspaces $V_{C,0}$, $V_{C,+}$ are $M_C$-stable, since $A_C$ lies in the center of $M$; moreover, although $V_{C,0}$ is not necessarily stable under the unipotent radical $\mathcal U(P_C)$ of $P_C$, any affine subspace of the form $\gamma+V_{C,+}$ is, because it can be characterized as the subspace of those elements for which $\lim_{t\to 0} v\cdot \lambda(t) = \gamma$ for $\lambda$ a cocharacter in the relative interior of $C$, and $\lim_{t\to 0} \lambda(t)^{-1} u \lambda(t) =1$ for all such $\lambda$ and $u\in \mathcal U(P_C)$. The action of $\mathcal U(P_C)$ is by affine automorphisms on those affine subspaces, and since $\mathcal U(P_C)$ has trivial character group, it has to preserve Haar measure.
\end{proof}

Now we will use the derivative arrangements $(\F_C',E)$ defined in \S \ref{criterion}.

By decreasing induction on the dimension of $C$ (s.\ the following remarks), we may assume that we have proven:

\begin{quote}
The map $f\mapsto f_C$ is a continuous map from $\mathcal F(V(\adele))$ to $\Ff_{C, E}(A_C(\RR)\backslash [H]_{P_C}^{\F'_C})$,
 the space of asymptotically finite functions on $[H]_{P_C}$ which are $(A_C(\RR),\chi_C)$-eigenfunctions, 
with fan $\F'_C$ and exponent arrangement as in \S \ref{criterion}, that is: the fan $\F'_C$ on the vector space $\mathfrak a/\mathfrak a_C$ consists of the images of the cones of $\F$ containing $C$, and the arrangement is simply the restriction of $E$ to these cones.
\begin{equation}\label{quote}\end{equation}
\end{quote}

We remark the following:
\begin{itemize}
\item This claim is obtained by inductively applying the $\Sigma_V$-version (not the $\Sigma_N$-version) of Theorem \ref{isfinite} to the Schwartz function 
$$f':\gamma\mapsto \int_{V_{C,+}(\adele)} f((\gamma+v)) dv$$ on the $M(\adele)$-stable vector space $V_{C,0}(\adele)$. Notice that 
\begin{equation}\label{fC2} f_C(m) = \chi_C(m) \Sigma_{V_{C,0}}f'(m)\end{equation}
on the subspace $[M]\subset [H]_{P_C}$, where $\chi_C$ is the (unique) character corresponding to $C$ of the exponent arrangment $E$. (This embedding depends on the choice of parabolic $P_C$, but so does the subspace $V_{C,0}$.)

\item For any cone $D$ which contains $C$ as a face we have $V_{C,0}\supset V_{D,0}$, $V_{C,\pm}\subset V_{D,\pm}$. Thus, the exponent arrangement $E$ restricted to the cones containing $D$ in their closure is \emph{the same} as the exponent arrangement on $\mathfrak a$ obtained by the above recipe from the weights $\Phi(V_{C,0})$ of the representation $V_{C,0}$, \emph{multiplied by $\chi_C$}.

\item The basis of the induction is the case when $C$ is of full dimension, in which case $A_C=A$, $P_C = P_0$ and the quotient $A_C(\RR)\backslash [H]_{P_C}$ is compact, and it is immediately clear that \eqref{fC2} represents a continuous map to the space of smooth $A_C(\RR)$-eigenfunctions with eigencharacter $\chi_C$ on $[M]$.
\end{itemize}

Now we will check the asymptotic finiteness of the function $\Sigma_V f$ in the vicinity of a point $z\in Z_C$ by adapting the criterion of Lemma \ref{lemmacriterion}. Namely, we restrict our attention to a sufficiently small neighborhood $U$, namely a neighborhood satisfying the following conditions:
\begin{itemize}
\item it belongs to a small neighborhood of the $P_C$-cusp which is isomorphic to a neighborhood of the cusp in $P_C(k)\backslash H(\adele)$ under the map \eqref{leftright}; 
\item let $U'$ be the homeomorphic preimage of $U$ in a neighborhood of the cusp in $P_C(k)\backslash H(\adele)$, then $U'$ is the preimage of a neighborhood $U''$ of $z$ in $[H]_{P_C}^\F$ as described before Lemma \ref{lemmacriterion}, namely: it is compact and belongs to orbits which contain $Z_C$ in their closure (where $Z_C$ is, by definition, a stratum both in $[H]$ and in $[H]_{P_C}$.
\end{itemize}

We can then partition $U$ into subsets $U_D$, one for each non-zero face $D$ of $C$, such that $U_D$ is bounded away from all orbits not in the closure of $Z_D$. We may assume that the original neighborhood is sufficiently close to the $P_C$-cusp, and hence also to the $P_C$-cusp for every face $D$ of $C$.

Then I claim:

\begin{lemma}
The restriction of $\Sigma_V f$ to $U_D$ coincides, up to the restriction of a rapidly decaying function on $[H]$ (depending continuously on $f$), with $f_D$.
\end{lemma}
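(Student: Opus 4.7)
The plan is to apply the algebraic identity \eqref{lambda} pointwise for every $h \in U_D \cap H(\adele)$, now with the decomposition $V = V_{D,-}\oplus V_{D,0}\oplus V_{D,+}$ corresponding to the face $D$ of $C$. This rearranges $\Sigma_V f(h)=\sum_{\gamma\in V(k)}f(\gamma h)$ as the sum of three pieces: a ``non-trivial negative'' sum over $\gamma_-\in V_{D,-}(k)\setminus\{0\}$; a Poisson-duality difference indexed by $\gamma_0\in V_{D,0}(k)$; and the ``main term'' which is exactly $f_D(h)$, by \eqref{fC}. It therefore suffices to produce a Schwartz function $\phi\in\mathcal F([H])$, depending continuously on $f$, whose restriction to $U_D$ equals the first two pieces.

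First I would bound the sum over $V_{D,-}(k)\setminus\{0\}$. The defining property of $U_D$ --- that it is bounded away from every $H(\adele)$-orbit not meeting the closure of $Z_D$ --- means, via a Siegel-reduction-theoretic description of $U_D$ inside the $P_D$-cusp, that the $A_D(\RR)$-component of $h\in U_D$ moves toward the $D$-stratum only along directions in the relative interior of $D$, while the remaining variables stay bounded. Any cocharacter $\lambda$ in the relative interior of $D$ acts on $V_{D,-}$ with strictly negative weights, so for $h = \lambda(t)h_0$ with $h_0$ in a compact transversal we get $|f(\gamma_- h)| \ll |t|^N f'(\gamma_-)$ for each $N\geq 0$ and some auxiliary Schwartz $f'$, with the constants depending only on the compact transversal. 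Summing against $\gamma_1\in(V_{D,0}+V_{D,+})(k)$ via domination by a global integral then gives rapid decay in the $D$-height.

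Next I would treat the middle term by Poisson summation on $V_{D,+}(\adele)/V_{D,+}(k)$. For fixed $\gamma_0$, the inner bracket equals
$$\sum_{\gamma^*\in V_{D,+}^\vee(k)\setminus\{0\}} \widehat{h\cdot f_{\gamma_0}}(\gamma^*),$$
where $f_{\gamma_0}(v)=f(\gamma_0+v)$. Since $\lambda$ acts on $V_{D,+}^\vee$ by the negatives of its weights on $V_{D,+}$, and hence again with strictly negative weights, the same combination of Schwartz decay and negative weights forces rapid decay in $t$, uniformly in $\gamma_0$. Joint summation over $\gamma_0\in V_{D,0}(k)$ converges absolutely because $f$ is Schwartz in \emph{all} vector-space directions simultaneously, so dominance by an $\adele$-integral in $\gamma_0$ is available.

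The main obstacle will be upgrading these pointwise estimates to continuous control in all seminorms defining the Schwartz topology on $[H]$ --- that is, ensuring uniform rapid decay under all invariant differential operators in the Archimedean case, and under translation by a fundamental system of open compact subgroups in the non-Archimedean case. The standard resolution is to interchange derivatives (or translations) with the infinite sums and the Poisson identity, using that the action of $H(\adele)$ on $\mathcal F(V(\adele))$ is continuous in all Schwartz seminorms and commutes with polynomial multiplication and differentiation on $V(\adele)$. Combined with the fact that $U_D$ has compact image under the contraction to $Z_D(\RR)$, so that the ``transverse'' variables vary over a compact set while only the $A_D(\RR)^0$-component moves toward the stratum and it does so along $D$, this produces the desired Schwartz function $\phi$ on $[H]$ continuously in $f$. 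The same computation has been carried out with truncation by J. Levy \cite{Levy} and M. Zydor \cite{Zydor1,Zydor2,Zydor3} in closely related settings.
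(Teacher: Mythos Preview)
Your approach is exactly the paper's: apply the identity \eqref{lambda} with the face $D$, identify the third line with $f_D$, and show rapid decay of the first two pieces on $U_D$ via the weight structure and Poisson summation. One point needs correction, though.

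Your parametrization of $U_D$ ``inside the $P_D$-cusp'' with ``$A_D(\RR)$-component \dots\ along directions in the relative interior of $D$, while the remaining variables stay bounded'' is not accurate. Recall that $U$ is a neighborhood of a point $z\in Z_C$ and $U_D$ is only required to be bounded away from strata $Z_E$ with $D\not\subset E$; in particular $U_D$ still approaches $Z_C$ itself (and every $Z_E$ with $D\subset E\subset C$). So the ``remaining variables'' do not stay bounded. The paper handles this by working in the $P_C$-cusp rather than the $P_D$-cusp: one covers $U_D$ by a set of the form $\Omega\,A_C^D\,K$ with $\Omega,K$ compact and $A_C^D$ the preimage under $\log$ of a subcone $D'\subset C$ (properly containing $D$) that is bounded away from every face of $C$ not containing $D$.

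The rapid-decay step then requires the slightly stronger input that every weight $\chi$ of $V_{D,-}$ is bounded strictly negative on \emph{all} of $D'$, not just on the relative interior of $D$. This is exactly where the construction of the fan $\F$ is used: since $\F$ was cut by the weight hyperplanes, $\{\chi=0\}\cap C$ is one of the faces of $C$ not containing $D$, hence a face that $D'$ is bounded away from. The same remark applies to the weights appearing in the Poisson term. With this adjustment your argument is the paper's.
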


By a straightforward adaptation of Lemma \ref{lemmacriterion} and by \eqref{quote}, this is enough to prove the theorem. 

This lemma is proven by a variant of the simplified argument that we used in \S \ref{generalcase} for a single cocharacter $\lambda$. 
By the assumptions on $U''$, it is contained in a set of the form \begin{equation}\label{nbhd} M_C(k)\mathcal U(P_C)(\adele) A_C^+ K \subset [H]_{P_C},
\end{equation}
where $A_C^+$ is the preimage of cone $C$ under the log map
$$ A_C(\RR)\mapsto \mathfrak a_C \otimes_\QQ \RR,$$
and $K$ is a compact subset in $H(\adele)$. We have implicitly chosen a parabolic in class of $P_C$ in order to write the set on the right-hand side of \eqref{nbhd}, and although its Levi subgroup $M_C$ is not needed in order to make sense of the above expression, let us now choose such a decomposition, in order to consider elements of $A_C$ as elements in the center of $M_C$, and hence as elements of $H$. Thus, we have decompositions of the vector space $V$ as in \eqref{decompC}, and similarly when $C$ is replaced by any face $D$ of it, by the choice of Levi $M_D$ induced from $M_C$.

From \eqref{nbhd} it follows that $U\subset[H]$ is covered by the $H(k)$-coset of a set of the form $\Omega A_C^+ K \subset [H]_{P_C}$, where $\Omega$ is a compact subset of $\mathcal U(P_C)(\adele)$. Since the subset $U_D$ is bounded away (by a semi-algebraic neighborhood) from any stratum not in the closure of $Z_D$, it is covered by a subset of the form
$$\Omega A_C^D K \subset [H]_{P_C},$$
where $A_C^D$ is the preimage under the log map of a subcone $D'\subset C$ which is bounded away from any face of $C$ not containing $D$ in its closure. (The reader can compare with the suggested construction of the subsets $U_D$ at the end of \S \ref{criterion}.)

We can now attempt to estimate the restriction of $\Sigma_V f$ to a set of the form $\Omega A_C^D K$ where, we recall, the set $A_C^D$ is considered as a subset of $H(\adele)$ by the choice of specific Levi for the class of $P_C$.

Decomposing the sum over $V(k)$ as in \eqref{lambda} we will get terms that decay rapidly in $U_D$, except for the last one, which is equal to $f_D$. More precisely, consider the term
\begin{equation}\label{exp} \sum_{\gamma_-\in V_{D,-}(k)\smallsetminus\{0\}} \left( \sum_{\gamma_1\in (V_{D,0}+V_{D,+})(k)} f((\gamma_-+\gamma_1)h)\right),\end{equation}
and let $h\in A_C^D$. We can identify $A_C^D$ modulo a compact subgroup with its image $\mathfrak a_C^D$ in $\mathfrak a_C\otimes \RR$. The fact that the latter is bounded away from faces whose closure does not contain $D$ means that for every $A_C$-weight $\chi$ of the representation $ V_{D,-}$, viewed as a functional on $\mathfrak a_C$, the set $\mathfrak a_C^D$ lies in a belonging strictly in the negative half-space of $\chi$. From this, it easily follows that \eqref{exp} is of rapid decay on $A_C^D$, in a way that depends continuously on $f$. To incorporate the whole set $\Omega A_C^D K$ we just need to translate $f$ by $k\in K$ and change our choice of Levi $M_C$ by $\omega M_C \omega^{-1}$, for $\omega\in \Omega$ (and, correspondingly, the decomposition \eqref{decompC} for the face $D$). A way to encode this uniformly is to think of the spaces $V_{D, \pm}, V_{D,0}$ as abstract $k$-vector spaces, and let $\Omega$ parametrize their embeddings into $V$. These abstract spaces come with an action of the abstract torus $A_C$, and as $\omega$ varies in $\Omega$ and $k$ varies in $\Omega K$ we apply the sum \eqref{exp} to the $\omega$-pullback of the $k$-translate of $f$ to these spaces, with $h\in A_C^D$. It is then clear that the above estimates are uniform in the parameters $(\omega, k)$, and hence the expression \eqref{exp} is of rapid decay in $U_D$, in a way that depends continuously on $f$.

The same arguments apply to the second line of \eqref{lambda}, with $f$ replaced by a Fourier transform, and thus the above lemma, and the theorem, have been proved.

\bibliographystyle{plainurl}
\bibliography{stacks}

\articleend

\newpage

\title{Erratum to: The Schwartz space of a smooth semi-algebraic stack}

\author{Yiannis Sakellaridis}
\email{sakellar@rutgers.edu}
\address{Department of Mathematics and Computer Science, Rutgers University -- Newark, 101 Warren Street, Smith Hall 216, Newark, NJ 07102, USA.}

\maketitle

\setcounter{section}{4}

The purpose of this note is to fix two gaps in the construction of Schwartz spaces of semi-algebraic stacks in \cite{SaStacks-appendix}, and to strenghen some statements, replacing quasi-isomorphisms by homotopy equivalences. I am grateful to Avraham Aizenbud, Shachar Carmeli, and Dmitry Gourevitch for pointing out the gaps, and suggesting the stronger statements.

The first gap is in the proofs of Propositions 3.1.2 and 3.1.4, where I misquote \cite[Theorem A.1.1]{AGKl-appendix} and write a Schwartz function as a product of two Schwartz functions. There is also an obvious typo in the statement of Proposition 3.1.4: the sequence appearing should end with $\xrightarrow{\partial_0} \mathcal S(Y)\to 0$. 
Moreover, with this gap corrected, a stronger statement is actually proven in these two propositions than claimed. Namely, the sequence of Proposition 3.1.4 (with the aforementioned typo corrected) is not just strictly exact, but homotopic to zero. I formulate this here as a proposition, which supersedes both of Propositions 3.1.2 and 3.1.4 in the paper, and indicate the corrections needed for a complete proof.

\begin{proposition}\label{theprop}
 Let $\pi: X\to Y$ be a smooth surjective morphism of Nash manifolds. Let $[X]_Y^n=$ the fiber product of $n$ copies of $X$ over $Y$ (whose projection map to $Y$ is still denoted by $\pi$), and consider the complex 
 $$ (\mathcal S([X]_Y^n))_n: \cdots \to \mathcal S([X]_Y^3) \to \mathcal S([X]_Y^{2}) \to \mathcal S(X) \to 0,$$
 with differentials $\partial_n: \mathcal S([X]_Y^{n+1}) \to \mathcal S([X]_Y^{n})$ induced from the alternating sum of push-forwards when a copy of $X$ is deleted, as in \cite[Proposition 3.1.4]{SaStacks-appendix}. Consider $\mathcal S(Y)$ as a complex in degree zero, and the morphism of complexes 
 $$ \pi_!: (\mathcal S([X]_Y^n))_n \to \mathcal S(Y)$$ 
 induced by the push-forward $\pi_!: \mathcal S(X)\to \mathcal S(Y)$. This morphism is a homotopy equivalence.
\end{proposition}

\begin{proof}
\cite[Theorem A.1.1]{AGKl-appendix} states that any $f\in \mathcal S([X]_Y^n)$ can be written as a finite sum 
$$ f(\x) = \sum_{i=1}^m \phi_i(\pi(\x)) f_i(\x),$$
where $\phi_i$ is a Schwartz function on $Y$ and $f_i \in \mathcal S([X]_Y^n)$. Over an Archimedean field it is \emph{not} true, in general, that it can be written as a product $\phi\cdot f_0$, as claimed in \cite{SaStacks-appendix}.

But now, assuming, as in the proofs of \cite[Propositions 3.1.2 and 3.1.4]{SaStacks-appendix}, that the differential $\partial_{n-1} f \in \mathcal S([X]_Y^{n-1})$ vanishes, an extra complication arises, because only the sum $\sum_{i=1}^m \phi_i(\pi(\x)) \partial_{n-1} f_i(\x)$ vanishes, not each term $\partial_{n-1} f_i$ individually. The next step in the proofs is to disintegrate $\phi$ to an element $h$ of some space of ``relative Schwartz measures'' $\mathcal S'(X)$ (whose push-forwards to $Y$ are Schwartz functions --- see the proof of \cite[Proposition 3.1.2]{SaStacks-appendix} for details). For the argument to go through as stated, we need to do this compatibly for all $\phi_i$'s. Namely, let us assume that the base field is $F=\mathbb R$ (because in the non-Archimedean case there is no issue, and in the complex case we may work by restriction of scalars over $\mathbb R$ without changing the final statement). 

Let us first discuss the special case where the morphism $X\to Y$ admits a Nash section $\sigma: Y\to X$, which, in addition, extends to a \emph{tubular neighborhood} $\iota: Y\times B_r \hookrightarrow X$, where $r$ is the relative dimension of the map $\pi$, and $B_r$ is the open unit ball in $\mathbb R^r$. Then, choosing a Schwartz measure $\mu \in \mathcal S(B_r)$ with total mass $1$, we can set $h_i := \iota_! (\phi_i\otimes \mu) \in \mathcal S'(X)$. Then $\pi_! h_i = \phi_i$, and $\sum_i h_i(x_0) \partial_{n-1} f_i(x_1, \dots, x_{n-1}) =0$; the proofs of the two Propositions now go through as stated. Moreover, the tubular neighborhood gives rise to an embedding, again to be denoted by the same letter:
$$ \iota: [X]_Y^n \times B_r \hookrightarrow [X]_Y^{n+1}$$
(with $B_r$ determining the last coordinate), and a choice of $\mu$ as above allows us to define linear maps
$$H_n: \mathcal S([X]_Y^n) \to \mathcal S([X]_Y^{n+1})$$
by $H_n(f)= \iota_!(\mu \otimes f)$. This includes the case of $H_0: \mathcal S(Y) \to \mathcal S(X)$, which is a section for the push-forward map. One then easily checks that $H_n$ is a homotopy between $H_0 \circ \pi_!$ and the identity on the complex $(\mathcal S([X]_Y^n))_n$; in other words, $\pi_!: (\mathcal S([X]_Y^n))_n \to \mathcal S(Y)$ is a homotopy equivalence.

We have up to now assumed that the morphism $X\to Y$ admitted a section with a tubular neighborhood. Such tubular neighborhoods exist locally over $Y$ \cite[2.4.3]{AGdeRham-appendix}, \cite[Theorem 3.6.2]{AGSchwartz-appendix}. The last step to correct the proof is to show that all statements are local over $Y$ (in the semi-algebraic topology). For this, given a (finite) semi-algebraic upen cover $Y = \cup_j Y_j$, we use the ``Schwartz partition of unity'' of \cite[Theorem 4.4.1]{AGSchwartz-appendix}, which is a collection of tempered functions $u_j$, with $u_j$ supported on $Y_j$, $\sum_j u_j=1$, and the property that multiplication by $u_j$ turns a Schwartz function (or measure) on $Y$ to a Schwartz function (or measure) on $Y_j$. Of course, multiplication by $u_j\circ\pi$ will not change the property $\partial_{n-1} f=0$ (of $f\in \mathcal S([X]_Y^n)$), so we are reduced to the case where a section with a tubular neighborhood exists.
\end{proof}

The second gap is in the proof of functoriality in Theorem 3.3.1. Again, once the gap is fixed a stronger statement is actually proven:

\begin{theorem}
Let $\mathfrak X$ be a Nash stack. For any two presentations $X_1\to \mathfrak X$, $X_2\to \mathfrak X$, the Schwartz complexes 
 $$ (\mathcal S([X_i]_{\mathfrak X}^n))_n: \cdots \to \mathcal S([X_i]_{\mathfrak X}^3) \to \mathcal S([X_i]_{\mathfrak X}^{2}) \to \mathcal S(X_i) \to 0$$
 ($i=1,2$) are canonically homotopy equivalent;  and hence can be denoted by $\mathcal S_\bullet(\mathfrak X)$. 

The association $\mathfrak X\mapsto \mathcal S_\bullet(\mathfrak X)$ is functorial with respect to smooth 1-morphisms
of Nash stacks, up to homotopy.
\end{theorem}

\begin{proof}
For notational simplicity, let us in the proof denote $X_1$ by $X$, $X_2$ by $Y$, and $X^{(i)}Y^{(j)}:= [X]_{\mathfrak X}^i \times_{\mathfrak X} [Y]_{\mathfrak X}^j$. We also denote $X^{(1)}Y^{(1)}$ by $R_{XY}$, $X^{(2)}$ by $R_X$, and $Y^{(2)}$ by $R_Y$; hence, for $i, j\ge 1$ we have 
\begin{equation}\label{asproduct}X^{(i)}Y^{(j)} = [R_X]_X^{i-1} \times_X R_{XY} \times_Y [R_Y]_Y^{j-1}.
\end{equation}
This is a unique Nash manifold up to unique isomorphism, once the Nash manifolds $R_{XY}, R_X, R_Y$ (with their morphisms to $X, Y$) have been fixed. The ``presentations'' $X\to \mathfrak X$, $Y\to \mathfrak X$ implicitly include the groupoids $R_X\rightrightarrows X$, $R_Y\rightrightarrows Y$, but the gap in the proof of Theorem 3.3.1 is that it is not taken into account that $R_{XY}$ is only defined up to automorphisms over $X\times Y$. Thus, we need to make sure that, in the proof of Theorem 3.3.1, composition with automorphisms $\tau: R_{XY}\to R_{XY}$ over $X\times Y$ does not change the homotopy class of the equivalence $(\mathcal S(X^{(n)}))_n \xrightarrow\sim (\mathcal S(Y^{(n)}))_n$.

We revisit the proof, in order also to explain that it can be strengthened to a homotopy equivalence. The essential statement is that, if we consider the total complex $\mathbb T_{XY}$ associated to the bicomplex $(\mathcal S(X^{(i)} Y^{(j)}))_{i, j \ge 1}$, its natural push-forward maps to the complexes $(\mathcal S(X^{(i)}))_{i\ge 1}$, $(\mathcal S(Y^{(j)}))_{j\ge 1}$ are homotopy equivalences. 

Let us briefly see why: Applying Proposition \ref{theprop} above, for any $i$, the natural push-forward 
$$ (\mathcal S(X^{(i)}Y^{(j)}))_j \longrightarrow (0 \to \mathcal S(X^{(i)})\to 0)$$
is a homotopy equivalence. The construction of a homotopy inverse relied on choosing, locally, a section with a tubular neighborhood:
$$ X^{(i)} \times B_r \hookrightarrow X^{(i)}Y^{(1)}.$$

In our setting, we can choose once and for all a section with a tubular neighborhood 
\begin{equation}\label{tubular}
\iota:  X\times B_r \hookrightarrow R_{XY},
\end{equation}
 at least locally on $X$. (It is clearly enough to work locally over $X$ here, in order to prove the homotopy equivalence of the total complex with $(\mathcal S(X^{(i)}))_i$.) This \emph{induces} sections 
$$X^{(i)} \times B_r = [R_X]_X^{(i-1)} \times_X  X \times B_r \hookrightarrow X^{(i)}Y^{(1)}  = [R_X]_X^{(i-1)} \times_X R_{XY}$$
for all $i\ge 1$; the resulting homotopy inverses constructed in the proof of Proposition \ref{theprop} (denoted by $H_0$ there) will now be, by construction, \emph{chain maps of complexes}:
$$ H_0: (\mathcal S(X^{(i)}))_{i\ge 1} \to (\mathcal S(X^{(i)}Y^{(1)}))_{i\ge 1} \hookrightarrow \mathbb T_{XY}.$$

Thus, the statement of Proposition \ref{theprop} extends to the morphism of complexes $\mathbb T_{XY} \to (\mathcal S(X^{(i)})_{i\ge 1}$, and shows that it is a homotopy equivalence.

Let us now assume that $\tau: R_{XY}\to R_{XY}$ is an automorphism over $X\times Y$; by \eqref{asproduct}, it induces automorphisms $\tau_!$ of all Schwartz spaces $\mathcal S(X^{(i)} Y^{(j)})$, $i,j\ge 1$. I claim that the composition $\tau_!\circ H_0$ is still a homotopy inverse to the push-forward map $p_X:\mathbb T_{XY} \to (\mathcal S(X^{(i)}))_{i\ge 1}$. Indeed, in the construction of this homotopy inverse, $\tau$ just modifies the tubular neighborhood \eqref{tubular}, that is, $\tau_! H_0$ is obtained by the same construction, using the tubular neighborhood $\tau \circ \iota$. Thus, it is homotopy inverse to the canonical push-forward map $p_X$.

But $\tau_!\circ H_0$ is also homotopy inverse to the composition $p_X\circ \tau^{-1}_!$. We deduce that the morphisms $p_X$ and $p_X\circ \tau^{-1}_!$ are homotopic. The same holds for the analogous morphisms $p_Y$ and $p_Y\circ \tau^{-1}_!$, thus the homotopy class of the equivalence of Schwartz complexes
$$(\mathcal S(X^{(i)}))_{i\ge 1} \xrightarrow\sim (\mathcal S(Y^{(j)}))_{j\ge 1}$$
is fixed under automorphisms of $X\times_\mathfrak X Y$.

\end{proof}

\end{document}